\newcommand*{\rom}[1]{\expandafter\@slowromancap\romannumeral #1@}
 \colorlet{lgray}{white!80!black}
\colorlet{lred}{white!85!red}
\colorlet{lgreen}{white!60!green}
\colorlet{dgreen}{black!30!green}
\colorlet{lpurple}{white!60!purple}
\colorlet{lblue}{white!60!blue}
\definecolor{green}{rgb}{0.1,0.8,0.1}
\definecolor{yellow}{rgb}{1.0,0.85,0.25}
\definecolor{purple}{rgb}{1.0, 0, 1.0}
\definecolor{blue}{rgb}{0, 0, 1.0}
\tikzstyle{unfused}=[lgray, line width=1.5pt, ->]
\tikzstyle{fused}=[lgray, line width=4pt, ->]
\tikzstyle{dual}=[black, line width=1pt, dashed]
\tikzstyle{lightdual}=[black, line width=0.5pt, dashed]
\tikzstyle{cut}=[black, line width=1.0pt]
\theoremstyle{plain}
\newtheorem{theorem}{Theorem}[section]
\newtheorem{lemma}[theorem]{Lemma}
\newtheorem{proposition}[theorem]{Proposition}
\newtheorem{corollary}[theorem]{Corollary}
\newtheorem{conjecture}[theorem]{Conjecture}
\theoremstyle{remark}
\newtheorem{definition}[theorem]{Definition}
\newtheorem{remark}[theorem]{Remark}
\colorlet{shadecolor}{gray!20}
\pgfplotsset{compat=1.9}
\tikzstyle{fleche}=[>=stealth', postaction={decorate}, thick]
\tikzstyle{axis}=[->, >=stealth', thick, gray]
\tikzstyle{paths}=[>->, >=stealth', thick]
\tikzstyle{path}=[->, >=stealth', thick]
\tikzstyle{grille}=[dotted, gray]
\DeclareFontFamily{U}{mathx}{}
\DeclareFontShape{U}{mathx}{m}{n}{<-> mathx10}{}
\DeclareSymbolFont{mathx}{U}{mathx}{m}{n}
\DeclareMathAccent{\widehat}{0}{mathx}{"70}
\DeclareMathAccent{\widecheck}{0}{mathx}{"71}
\newcommand{\arcleftthicklg}{\raisebox{-17pt}{\begin{tikzpicture}
        \draw[thick] (0,0.5) arc (90:270:0.5);
        \node[right] at (-.15,0.65) {\small $x$};
        \node[right] at (-.15,-0.65) {\small $y$};
        \node[left] at (-0.5,0) {\textcolor{red}{\small  $u$}};
\end{tikzpicture}}}
\newcommand{\arcrightthicklg}{\raisebox{-17pt}{\begin{tikzpicture}
        \draw[thick] (0,-0.5) arc (-90:90:0.5);
        \node[left] at (.15,0.65) {\small $x$};
        \node[left] at (.15,-0.65) {\small $y$};
        \node[right] at (0.5,0) {\textcolor{red}{\small  $v$}};
\end{tikzpicture}}}
\newcommand{\bulkrightlg}{\raisebox{-15pt}{\begin{tikzpicture}
        \draw[thick] (0,0)--(0.6,0.6);
        \node[below left] at (.15,0.05) {\small $y$};
        \node[above right] at (0.45,0.55) {\small $x$};
        \node[above] at (0.3,0.3) {\textcolor{red}{\small $\a$}};
\end{tikzpicture}}}
\newcommand{\bulkleftlg}{\raisebox{-15pt}{\begin{tikzpicture}
        \draw[thick] (0.6,0)--(0,0.6);
        \node[below right] at (0.45,.05) {\small $y$};
        \node[above left] at (0.15,0.55) {\small $x$};
        \node[above] at (0.3,0.3) {\textcolor{red}{\small  $\a$}};
\end{tikzpicture}}}
\newcommand{\arcleftthick}{\raisebox{-17pt}{\begin{tikzpicture}
        \draw[thick] (0,0.5) arc (90:270:0.5);
        \node[right] at (-.15,0.65) {\small $x$};
        \node[right] at (-.15,-0.65) {\small $y$};
        \node[left] at (-0.5,0) {\textcolor{red}{\small  $c_1$}};
\end{tikzpicture}}}
\newcommand{\arcrightthick}{\raisebox{-17pt}{\begin{tikzpicture}
        \draw[thick] (0,-0.5) arc (-90:90:0.5);
        \node[left] at (.15,0.65) {\small $x$};
        \node[left] at (.15,-0.65) {\small $y$};
        \node[right] at (0.5,0) {\textcolor{red}{\small  $c_2$}};
\end{tikzpicture}}}
\newcommand{\bulkright}{\raisebox{-15pt}{\begin{tikzpicture}
        \draw[thick] (0,0)--(0.6,0.6);
        \node[below left] at (.15,0.05) {\small $y$};
        \node[above right] at (0.45,0.55) {\small $x$};
        \node[above] at (0.3,0.3) {\textcolor{red}{\small $a$}};
\end{tikzpicture}}}
\newcommand{\bulkleft}{\raisebox{-15pt}{\begin{tikzpicture}
        \draw[thick] (0.6,0)--(0,0.6);
        \node[below right] at (0.45,.05) {\small $y$};
        \node[above left] at (0.15,0.55) {\small $x$};
        \node[above] at (0.3,0.3) {\textcolor{red}{\small $a$}};
\end{tikzpicture}}}
\newcommand{\bulkrightdotted}{\raisebox{-15pt}{\begin{tikzpicture}
        \draw[dashed] (0,0)--(0.6,0.6);
        \node[below left] at (.15,.05) {\small $y$};
        \node[above right] at (0.4,0.50) {\small $x$};
\end{tikzpicture}}}
\newcommand{\bulkleftdotted}{\raisebox{-15pt}{\begin{tikzpicture}
        \draw[dashed] (0.6,0)--(0,0.6);
        \node[below right] at (0.45,0.05) {\small $y$};
        \node[above left] at (0.2,0.5) {\small $x$};
\end{tikzpicture}}}
\DeclareFontFamily{OT1}{pzc}{}
\DeclareFontShape{OT1}{pzc}{m}{it}{ <-> s*[1.2] pzcmi7t }{}
\DeclareMathAlphabet{\mathpzc}{OT1}{pzc}{m}{it}
\DeclareFontFamily{U}{mathx}{}
\DeclareFontShape{U}{mathx}{m}{n}{<-> mathx10}{}
\DeclareSymbolFont{mathx}{U}{mathx}{m}{n}
\DeclareMathAccent{\widehat}{0}{mathx}{"70}
\DeclareMathAccent{\widecheck}{0}{mathx}{"71}
\DeclareMathOperator{\wt}{wt}
\DeclareMathOperator{\stat}{}
\DeclareMathOperator{\lgrw}{LGRW}
\DeclareMathOperator{\grw}{GeoRW}
\DeclareMathOperator{\Geom}{Geo}
\DeclareMathOperator{\Ga}{Gamma}
\DeclareMathOperator{\BM}{BM}
\DeclareMathOperator{\KPZstat}{KPZ}
\DeclareMathOperator{\lpp}{Geo}
\DeclareMathOperator{\lgg}{LG}
\def \be{\begin{equation}}
\def \ee{\end{equation}}
\def \lb{\left(}
\def \rb{\right)}
\def \lbb{\left\{}
\def \rbb{\right\}}
\def \lbe{\lb}
\def \rbe{\rb}
\def \lbE {\left[}
\def \rbE {\right]}
\def \hZ {\mathcal{Z}}
\def \one {\mathds{1}}
\def \bl {\boldsymbol\lambda}
\def \bL {\mathbf{L}}
\def \bX {\boldsymbol X}
\def \bb {\boldsymbol b}
\def \aa {\boldsymbol a}
\def \bbb {\boldsymbol \beta}
\def \aaa{\boldsymbol \alpha}
\def \U {\mathcal{U}}
\def \pU {\mathbf{U}}
\def \cU {\mathscr{U}}
\def \Ub{\U_{\lpp}^{
\begin{tikzpicture}[scale=0.2] 
\draw[dotted] (0,0) -- (1,0)--(1,1)--(0,1)--(0,0); 		
\draw[thick] (0,1) -- (0,0) -- (1,0); 
\end{tikzpicture}}}
\def \Ul{\U_{\lpp}^{
\begin{tikzpicture}[scale=0.2]
		\draw[dotted] (0,0) -- (0,1)--(-1,0)--(0,0);
            \draw[thick] (0,0) -- (-1,0);
		\end{tikzpicture}}}
\def \Ur{\U_{\lpp}^{
\begin{tikzpicture}[scale=0.2]
		\draw[dotted] (0,0) -- (1,0)--(0,-1)--(0,0);
            \draw[ thick] (0,0) -- (0,-1);
		\end{tikzpicture}}}
\def \rmUb{\mathrm{U}_{\lpp}^{
\begin{tikzpicture}[scale=0.2] 
\draw[dotted] (0,0) -- (1,0)--(1,1)--(0,1)--(0,0); 		
\draw[thick] (0,1) -- (0,0) -- (1,0); 
\end{tikzpicture}}}
\def \rmUl{\mathrm{U}_{\lpp}^{
\begin{tikzpicture}[scale=0.2]
		\draw[dotted] (0,0) -- (0,1)--(-1,0)--(0,0);
            \draw[thick] (0,0) -- (-1,0);
		\end{tikzpicture}}}
\def \rmUr{\mathrm{U}_{\lpp}^{
\begin{tikzpicture}[scale=0.2]
		\draw[dotted] (0,0) -- (1,0)--(0,-1)--(0,0);
            \draw[ thick] (0,0) -- (0,-1);
		\end{tikzpicture}}}
\def \rmUblg{\mathrm{U}_{\lgg}^{
\begin{tikzpicture}[scale=0.2] 
\draw[dotted] (0,0) -- (1,0)--(1,1)--(0,1)--(0,0); 		
\draw[thick] (0,1) -- (0,0) -- (1,0); 
\end{tikzpicture}}}
\def \rmUllg{\mathrm{U}_{\lgg}^{
\begin{tikzpicture}[scale=0.2]
		\draw[dotted] (0,0) -- (0,1)--(-1,0)--(0,0);
            \draw[thick] (0,0) -- (-1,0);
		\end{tikzpicture}}}
\def \rmUrlg{\mathrm{U}_{\lgg}^{
\begin{tikzpicture}[scale=0.2]
		\draw[dotted] (0,0) -- (1,0)--(0,-1)--(0,0);
            \draw[ thick] (0,0) -- (0,-1);
		\end{tikzpicture}}}
\def\rellcorner{\begin{tikzpicture}[scale=0.2] 
\draw[dotted] (0,0) -- (1,0)--(1,1)--(0,1)--(0,0); 		
\draw[thick] (0,1) -- (0,0) -- (1,0); 
\end{tikzpicture}}
\def\reacuteangle{\begin{tikzpicture}[scale=0.2]
		\draw[dotted] (0,0) -- (0,1)--(-1,0)--(0,0);
            \draw[thick] (0,0) -- (-1,0);
		\end{tikzpicture}}
\def\rerotateangle{
\begin{tikzpicture}[scale=0.2]
		\draw[dotted] (0,0) -- (1,0)--(0,-1)--(0,0);
            \draw[ thick] (0,0) -- (0,-1);
		\end{tikzpicture}
}
\def \cUb{\cU^{\begin{tikzpicture}[scale=0.2] 
\draw[dotted] (0,0) -- (1,0)--(1,1)--(0,1)--(0,0); 		
\draw[thick] (0,1) -- (0,0) -- (1,0); 
\end{tikzpicture}}}
\def \cUl{\cU^{\begin{tikzpicture}[scale=0.2]
		\draw[dotted] (0,0) -- (0,1)--(-1,0)--(0,0);
            \draw[thick] (0,0) -- (-1,0);
		\end{tikzpicture}}}
\def \cUr{\cU^{\begin{tikzpicture}[scale=0.2]
		\draw[dotted] (0,0) -- (1,0)--(0,-1)--(0,0);
            \draw[ thick] (0,0) -- (0,-1);
		\end{tikzpicture}}}
\def \Ubg{\U_{\lgg}^{\begin{tikzpicture}[scale=0.2] 
\draw[dotted] (0,0) -- (1,0)--(1,1)--(0,1)--(0,0); 		
\draw[thick] (0,1) -- (0,0) -- (1,0); 
\end{tikzpicture}}}
\def \Ulg{\U_{\lgg}^{\begin{tikzpicture}[scale=0.2]
		\draw[dotted] (0,0) -- (0,1)--(-1,0)--(0,0);
            \draw[thick] (0,0) -- (-1,0);
		\end{tikzpicture}}}
\def \Urg{\U_{\lgg}^{\begin{tikzpicture}[scale=0.2]
		\draw[dotted] (0,0) -- (1,0)--(0,-1)--(0,0);
            \draw[ thick] (0,0) -- (0,-1);
		\end{tikzpicture}}}
\def \a {\alpha}
\def \b {\beta}
\def \la{\lambda}
\def \k {\kappa}
\def \d {\mathsf{d}}
\def \e {\mathsf{e}}
\def \p{\mathbf{p}}
\def \q{\mathbf{q}}
\def \ep {\varepsilon}
\def \o {\omega}
\def \vo {\varpi}
\def \hP{\mathcal{P}}
\def \gp {\mathcal{GP}}
\def \htP{\widetilde{\hP}}
\def \X {\mathfrak{X}}
\def \x {\boldsymbol x}
\def \PP {\mathbb{P}}
\def \cP{\mathscr{P}}
\def \RR {\mathbb{R}}
\def \ZZ{\mathbb{Z}}
\def \CC{\mathbb{C}}
\def \Gav{\Ga^{-1}}
\def \PiP{\mathcal{GP}}
\def \hQ{\mathcal{Q}}
\def \gq {\mathcal{GQ}}
\def \ru{\Re u}
\def \wth {\widehat{\wt}} 
\def \S {\mathbb{S}}
\newcommand{\nn}{\varepsilon}
\newcommand{\eps}{\varepsilon}
\newcommand{\R}{\mathbb{R}}
\newcommand{\Sign}{\mathsf{Sign}}
\let\NAT@parse\undefined
\title{Stationary measures for integrable polymers on a strip}
\author[G. Barraquand]{Guillaume Barraquand}
\address{G. Barraquand,
	Laboratoire de Physique de l'\'Ecole Normale Supérieure, 
	ENS, Université PSL, CNRS, Sorbonne Université, Université Paris Cité, F-75005 Paris, France}
\email{barraquand@math.cnrs.fr}
\author[I. Corwin]{Ivan Corwin}
\address{I. Corwin, Columbia University,
	Department of Mathematics,
	2990 Broadway,
	New York, NY 10027, USA.}
\email{ivan.corwin@gmail.com}
\author[Z. Yang]{Zongrui Yang}
\address{Z. Yang, Columbia University,
	Department of Mathematics,
	2990 Broadway,
	New York, NY 10027, USA.}
\email{zy2417@columbia.edu}
\begin{document}
\begin{abstract}
We prove that the stationary measures for the free-energy increment process for the geometric last passage percolation (LPP) and log-gamma polymer model on a diagonal strip is given by a marginal of a two-layer Gibbs measure with a simple and explicit description. This result is shown subject to certain restrictions on the parameters controlling the weights on the boundary of the strip. However, from this description and an analytic continuation argument we are able to access the stationary measure for all boundary parameters. Taking an intermediate disorder limit of the log-gamma polymer stationary measure in a strip we readily recover (modulo convergence of the polymer to the open KPZ equation, Conjecture \ref{conj:convergence}) the conjectural description from \cite{barraquand2021steady} of the open KPZ stationary measure for all choices of boundary parameters $u,v\in \RR$ (thus going beyond the restriction $u+v\geq 0$ from \cite{corwin2021stationary}). 
\end{abstract}
\maketitle

\section{Introduction and main results}
\subsection{Preface}
This paper brings structures that have been valuable in studying full or half-space integrable probabilistic models to bear on time-homogeneous models on an interval with two-sided boundary conditions. Our aim is to provide exact and concise descriptions of the stationary measure for these models. The structures are Gibbs measures (or line ensembles) related to the branching rule for symmetric polynomials (e.g. Schur, $q$-Whittaker, Hall-Littlewood, and spin variants). In the full and half-space context these families of measures (special cases or variants of Macdonald processes) are preserved under Markovian dynamics that include, as marginals, various integrable probabilistic models. Thus the study of these measures, their marginals, and their asymptotics contains rich information about the original models. 

Until now it was unclear how to define versions of these measures indexed by the interval with two-sided boundary conditions. The accomplishment of this paper is the understanding of how to properly define these, now infinite mass, measures (our two-layer Gibbs measures) and the realization that the preservation of Markovian dynamics (which follow from summation identities reminiscent of the Cauchy and Littlewood type identities for symmetric functions) translates into the fact that these measures contain as marginals the stationary probability measures for integrable probabilistic models on an interval. Our approach is structural -- the stationary measures come from variants of known identities in a way that should be generalizable to other Yang-Baxter solvable models in integrable probability. Our work here only focuses on the construction of stationary measures. It is a compelling direction to probe temporal correlations and fluctuations (as has been a focus of attention in the full and half-space contexts for many years).

We demonstrate our approach for two well-studied models, the geometric last passage percolation and log-gamma polymer models. In particular, we show that stationary measures for the increment processes of the last passage time in geometric LPP and the free energy in the log-gamma polymer can be realized as a marginal of certain exponential reweightings of a pair of geometric or log-gamma increment random walks with free starting point (hence the infinite mass of the measure). These are the first descriptions of the stationary measures for these two models in a strip.

Besides serving as a proof of concept for our approach, we chose these models for two other reasons. The first is that the method of matrix product ansatz (MPA) does not easily apply here. That approach \cite{derrida1993exact} involves writing the stationary measure in terms of a product of matrices, one for each particle occupation number (e.g. for open ASEP where there are particles and holes, there are two matrices, $D$ and $E$), which satisfy a certain quadratic algebra with boundary vectors. This approach has been extensively studied for the last 30 years and notably related in the case of open ASEP to Askey-Wilson polynomials \cite{Uchiyama_2004} (see also \cite{CW11})  and processes \cite{Bryc2017} where it has enabled an understanding of the precise phase diagram, large deviations and fluctuations of the open ASEP stationary measure. 

Though there has been work involving multiple particles per site or species of particles, to our knowledge the MPA has never been developed to deal with infinite (countable or uncountable) occupation numbers per site. In particular, for the geometric LPP and log-gamma polymer, the increments of the last passage time or free energy play the role of occupation variables and are precisely of this countable and uncountable type, respectively. We do show here that it is possible to formulate a MPA for these models, though the quadratic algebra relations now involve infinitely many matrices (in the countable case) or operators (in the uncountable case) satisfying relations with infinitely many other of the matrices or operators. Finding representations for these algebras seems quite challenging. However, we show here that the relevant marginal of our two-layer Gibbs measures that describes the stationary measure for these models can be rewritten in matrix product form and verify that our two-layer Gibbs measures satisfy the relevant quadratic algebras. To our knowledge this is the first instance where techniques from integrable probability (i.e., measures written in terms of symmetric functions) have come into direct contact with the method of MPA. We anticipate (and plan to develop in subsequent work) that our two-layer Gibbs approach will also be applicable in many other cases, including those where the MPA has been previously applied, e.g. for open ASEP, and will provide a new and direct route to describe stationary measures and perform asymptotics.

The second reason for our choice of models here is that they permit analytic continuations that allow us to move beyond certain initial limitations on boundary parameters so as to access asymptotics for all choices of boundary parameters. In particular, doing this for the log-gamma polymer and applying an intermediate disorder scaling limit allows us to access the conjectural description from \cite{barraquand2021steady} of the open KPZ equation stationary measure for all choices of Robin boundary condition parameters $u,v\in \RR$  (thus going beyond the restriction $u+v\geq 0$ from \cite{corwin2021stationary}). 
The restrictions in \cite{corwin2021stationary} stem from a similar restriction in the rewriting of the matrix product ansatz in terms of Askey-Wilson processes in \cite{Bryc2017}. The conjecture in \cite{barraquand2021steady} came from the Laplace transform formulas for the open KPZ equation stationary measure derived in \cite{corwin2021stationary} for $u+v\geq 0$ in two steps. First \cite{bryc2021markov,barraquand2021steady} inverted the multipoint Laplace transform formulas to yield a description of the stationary measure in terms of a free-starting point Brownian motion reweighted by a certain exponential functional of its trajectory. This description made sense for $u+v> 0$ but not when $u+v\leq 0$. However, \cite{barraquand2021steady}  then implemented an idea from Liouville quantum mechanics, integrating out the `zero-mode'. The result, remarkably, made sense for all $u,v\in \RR$ and thus led to the conjectured general stationary measure. Justifying the continuation to all $u,v\in \RR$ at the level of the open KPZ equation stationary measure seems quite difficult, e.g. it is unclear how to show that the stationary measure and its conjectural description depend in some manner analytically on the boundary parameters, or that such a dependence results in a unique extension. See \cite{corwin2022some} for more on these works. 

The conditions $u+v> 0$ and $u+v< 0$ define the so-called `fan' and `shock' region in the phase diagram for the open KPZ equation, and likewise arise in the phase diagram for other models like ASEP (along the line $u+v=0$ separating the two phases, stationary measures are Brownian for the KPZ equation, and are generally product measures for discrete models). To our knowledge our work here is the first rigorous derivation of fluctuation limits in the entire phase diagram -- previous work was restricted to the fan region and the product measure line.

The story we develop here puts the reweighted free-starting point Brownian motion stationary measure description from  \cite{bryc2021markov,barraquand2021steady} and the conjectured extension from \cite{barraquand2021steady} into a general context in which the first description follows from the two-layer Gibbs measure structure and preservation property, while the second follows (rigorously) from the uniqueness of analytic continuation of real analytic functions and the analytic dependence of the LPP and the log-gamma stationary measures on boundary parameters.

The stationary measures of geometric LPP or the log-gamma polymer in full space or on a strip with periodic boundary conditions are random walks, i.e. described by a Gibbs measure with only one layer. In full space, this was proved in \cite{balazs2006cube} for exponential LPP and in \cite{seppalainen2012scaling} in the log-gamma case, using elementary identities in distribution. The arguments could be adapted to address periodic boundary conditions. It is remarkable that in presence of more complicated boundary conditions, it suffices to add another layer to the Gibbs measure.

The rest of this introduction is structured as follows. 
Sections \ref{introLPP} and \ref{introloggamma}  respectively introduce the geometric LPP and log-gamma polymer on a strip and Theorems \ref{thm:stationary measure LPP intro} and \ref{thm:stationary measure log-gamma intro}   provide a concise formulas for the stationary measures of these models for all choices of boundary parameters. Section \ref{subsec: stationary measure open KPZ intro} explains how, modulo the convergence (in the spirit of \cite{alberts2014intermediate,wu2018intermediate,parekh2019positive,barraquand2022stationary}) in Conjecture \ref{conj:convergence} of the log-gamma polymer in a strip to the open KPZ equation, we are able to prove in Theorem \ref{thm:KPZ stat} the conjectural open KPZ equation stationary measure formula from \cite{barraquand2021steady} for all boundary parameters. 
Section \ref{subsec:stationary Gibbs line ensembles} explains the Gibbs measure structural mechanism and subsequent analytic continuation argument behind our construction of stationary measures. Section \ref{sec:extensions} discusses potential extension of our approach.

\subsection{Geometric LPP on a strip}\label{introLPP}
We first introduce the geometric last passage percolation (LPP) on a strip and then state the first main theorem of the paper (Theorem \ref{thm:stationary measure LPP intro}) that shows that its stationary measure can be seen as the marginal of a certain reweighting of two independent geometric random walks.

Fix any  $N\in \ZZ_{\geq 1}$ and consider the strip $\left\{(n,m)\in\ZZ^2: 0\leq m\leq n\leq m+N\right\}$ of width $N$ on the integer lattice $\ZZ^2$. Each point $(n,m)$ in the strip is called a vertex of the strip. Vertices $(n,m)$ satisfying $0\leq m<n<m+N$ are called bulk vertices. Vertices $(m,m)$ for $m\geq 0$ are called left boundary vertices and vertices $(m+N,m)$ for $m\geq 0$ are called right boundary vertices. Edges of the lattice $\ZZ^2$ connecting two neighboring vertices of the strip are called edges of the strip.
 
We will use the word `down-right path' to refer to  a path $\hP$ that goes from a left boundary vertex of the strip to a right boundary vertex of the strip, with each step going downwards or rightwards by $1$. We always label vertices and edges of $\hP$ from the up-left start of the path to the down-right end of the path: vertices $\p_0=(n_0,m_0),\dots,\p_N=(n_N,m_N)$ and edges $\e_i$ that connect $\p_{i-1}$ with $\p_i$ for $1\leq i\leq N$.
For $k\in\ZZ_{\geq 0}$, we denote by $\tau_k:\ZZ^2\rightarrow\ZZ^2$ the up-right translation by $(k,k)$, i.e. $\tau_k (x,y) = (x+k,y+k)$. Then the down-right path $\tau_k\hP$ has vertices $\tau_k\p_i$ for $0\leq i\leq N$ and similarly shifted edges.  We denote by $\hP_h$ the horizontal path starting at $(0,0)$, i.e. form by $\p_i=(i,0)$ for $0\leq i\leq N$.

\begin{figure}[h]
    \centering
    \begin{tikzpicture}[scale=1]
\draw[dotted] (0,0)--(3.5,3.5);
\draw[dotted] (6,0)--(9.5,3.5);
\draw[dotted](0,0)--(6,0);
\draw[dotted](1,1)--(7,1);
\draw[dotted] (2,2)--(8,2);
\draw[dotted] (3,3)--(9,3);   
\draw[dotted] (1,0)--(1,1);
\draw[dotted] (2,0)--(2,2);
\draw[dotted](3,0)--(3,3); 
\draw[dotted] (4,0)--(4,3.5);
\draw[dotted] (5,0)--(5,3.5);
\draw[dotted](6,0)--(6,3.5);
\draw[dotted] (7,1)--(7,3.5);
\draw[dotted](8,2)--(8,3.5); 
\draw[dotted] (9,3)--(9,3.5);  
\draw[ultra thick] (1,1)--(3,1)--(3,0)--(6,0);   
\draw[thin] (4,0)--(4,2)--(6,2)--(6,3);
\node[left] at (1,1){$\mathcal{P}$};
\node[above left] at (4,2){$\pi$};
\node  at (0,0) {\small $(0,0)$};
\node  at (1.3,1.15) {\small $G_0(0)$};
\node  at (2.3,1.15) {\small $G_0(1)$};
\node  at (3.3,1.15) {\small $G_0(2)$};
\node at (3.4,0.15) {\small $G_0(3)$};
\node at (4.35,0.15) {\small $G_0(4)$};
\node at (5.35,0.15) {\small $G_0(5)$};
\node at (6.35,0.15) {\small $G_0(6)$};
\node at (4.3,1.1) {\small $\o_{4,1}$};
\node at (4.3,1.9) {\small $\o_{4,2}$};
\node at (5.3,1.9) {\small $\o_{5,2}$};
\node at (6.3,1.9) {\small $\o_{6,2}$};
\node at (7,3.1) {\small $\o_{6,3}\sim \Geom(a^2)$};
\node at (3.8,3.1) {\small $\o_{3,3}\sim \Geom(ac_1)$};
\node at (10,2.9) {\small $\o_{9,3}\sim \Geom(ac_2)$};
\end{tikzpicture}
    \caption{The width 6 strip with a down-right (thick) path $\hP$ and an up-right (thin) path $\pi$ (that intersects $\hP$ only once at $\pi(0)=(4,0)$ and ends at $(6,3)$). The weights $\o$ are shown along the path $\pi$ as well as two boundary weights (with the distributions of the boundary weights and one bulk weight indicated). The initial condition along $\hP$ is given by the $G_0$ random variables labeled there.}
    \label{fig:sample LPP}
\end{figure}
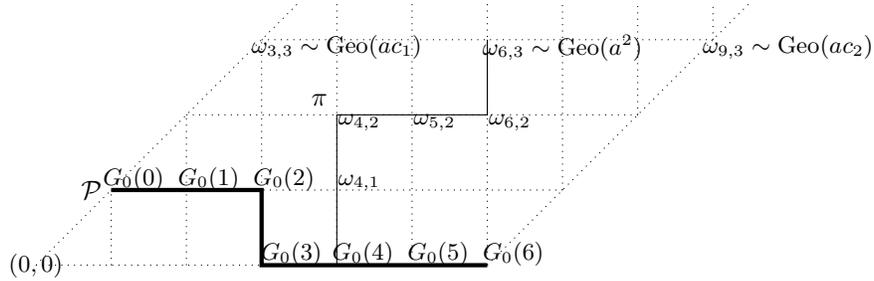 

\begin{definition}[Geometric LPP on a strip] \label{defn:homogeneous geometric LPP}  
A random variable $X\sim\Geom(a)$ has geometric distribution with parameter $a\in(0,1)$ if $\PP(X=k)=(1-a)a^k$ for $k\in\ZZ_{\geq 0}$.
Let $a\in (0,1)$ be a `bulk parameter' and let $c_1,c_2>0$ be `boundary parameters' such that $ac_1<1$ and $ac_2<1$.
Let $(\o_{n,m})_{0\leq m\leq n\leq m+N}$ be a sequence of independent geometric random variables indexed by the vertices of the strip. On a bulk vertex $(n,m)$ we assume that $\o_{n,m}\sim\Geom(a^2)$. On a left boundary vertex $(m,m)$ we assume that $\o_{m,m}\sim\Geom(ac_1)$ and on a right boundary vertex $(m+N,m)$ we assume that $\o_{m+N,m}\sim\Geom(ac_2)$. 
    An `initial condition' is given by a down-right path $\hP$ and a process $\big(G_0(j)\big)_{0\leq j\leq N}$, independent of the geometric variables $(\o_{n,m})_{0\leq m\leq n\leq m+N}$, where below we treat the random variable $G_0(j)$ as the initial condition for the LPP recurrence relation at $j$-th vertex $\p_j=(n_j,m_j)$ of the path for $0\leq j\leq N$. Given such initial condition, define the associated last passage time for all points $(n,m)$ above the path $\hP$ (i.e. all points $(n,m)$ that can be reached from a point $\p_j\in \hP$ via an up-right path of lattice edges in the strip) as
\be\label{eqn:gvar}
G(n,m):=\max_\pi\lb G(\pi(0))+\sum_{i=1}^\ell\o_{\pi(i)}\rb.
\ee
     The maximum is taken over all the up-right lattice paths $\pi$ in the strip that starts from a vertex $\pi(0)\in\hP$ and ends at $\pi(\ell)=(n,m)$ for some length $\ell\geq 0$, and which only touches $\hP$ at  point $\pi(0)$. We have also used the convention that  $G(\p_j)=G_0(j)$ for $0\leq j\leq N$. See Figure \ref{fig:sample LPP} for an illustration.
    The above defined last passage times satisfy the recurrence relation 
    that for all $(n,m)$ above the path $\hP$,
\be\label{eq:recurrence geometric LPP}
G(n,m) = \o_{n,m}+ \begin{dcases*}
\max(G(n-1,m),G(n,m-1)) & if $0<m<n<m+N$,\\
G(n,m-1) & if $0<m=n$,\\
G(n-1,m) & if $0<m<n=m+N$,
\end{dcases*}
\ee
with initial condition that if $(n,m)= \p_j$ for any  $0\leq j\leq N$, then $G(n,m)=G_0(j)$.

Observe from the recurrence \eqref{eq:recurrence geometric LPP} that the expectation of $G(n,n)$ for $n=1,2,\dots$ strictly increases, hence no solution of \eqref{eq:recurrence geometric LPP} is invariant in law under translations $\tau_k$. We instead look at the increments of $G$ along edges.
The law of a process $G_0=\big(G_0(j)\big)_{0\leq j\leq N}$ with $G_0(0)= 0$ is said to be stationary for the geometric LPP on a down-right path $\hP$ if the solution of \eqref{eq:recurrence geometric LPP} with initial condition specified along $\hP$ by $G(\p_j)=G_0(j)$ for $0\leq j\leq N$ has the property that 
the joint distribution of $G_k:=\big(G(\tau_k\p_j)-G(\tau_k\p_0)\big)_{0\leq j\leq N}$ is the same for all $k\in\ZZ_{\geq 0}$, and hence coincides with that of $G_0$. Since we are only concerned with differences, we have normalized $G_0$ to have $G_0(0)\equiv 0$. The process $(G_k)_{k\geq 0}$ is Markov and we say it is ergodic if there exists is a unique stationary measure that is the limit law as $k\to\infty$ for all choices of initial condition $G_0$.
\end{definition}

\begin{definition}[Reweighted geometric random walks]\label{defn:rescaled random walks LPP}
For $a\in(0,1)$, consider two independent random walks $\bL_1=\big(L_1(j)\big)_{1\leq j\leq N}\in \ZZ_{\geq 0}^{N}$ and $\bL_2=\big(L_2(j)\big)_{1\leq j\leq N}\in \ZZ_{\geq 0}^{N}$ starting from  $L_1(0)=L_2(0)=0$ with i.i.d increments distributed for $1\leq j\leq N$ as 
 $$L_1(j)-L_1(j-1) \sim \Geom(a_1),\quad \textrm{and}\quad L_2(j)-L_2(j-1)\sim \Geom(a_2),$$
 and let us denote by $\PP^{a_1,a_2}_{\grw}$ and $\mathbb{E}^{a_1,a_2}_{\grw}$ the associated probability measure and expectation. For $c_1,c_2$ such that $a,ac_1,ac_2\in(0,1)$, we define a new probability measure $\PP_{\stat\lpp}^{a,c_1,c_2}$ by reweighting the measure $\PP^{a,a}_{\grw}$ as 
 \be \label{eq:PstatLPP}
\PP^{a,c_1,c_2}_{\stat\lpp}(\bL_1,\bL_2):= \frac{1}{\hZ^{a,c_1,c_2}_{\lpp}}\,\,(c_1c_2)^{\max_{1\leq j\leq N}\big(L_2(j)-L_1(j-1)\big)}c_2^{L_1(N)-L_2(N)}\PP^{a,a}_{\grw}(\bL_1,\bL_2),
\ee
where $\hZ^{a,c_1,c_2}_{\lpp}$ is a normalizing constant which will be proved to be finite.
\end{definition}

The following is the first main result in this paper. 

\begin{theorem}\label{thm:stationary measure LPP intro}
Assume $a,ac_1,ac_2\in(0,1)$. The law of $\bL_1$ under  $\PP_{\stat\lpp}^{a,c_1,c_2}$ is the (unique) ergodic stationary measure on the horizontal path $\hP_h$ for geometric LPP recurrence relation on a strip (see Definition \ref{defn:homogeneous geometric LPP}).
\end{theorem}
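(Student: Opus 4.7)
The plan is to realize the LPP increment dynamics as the first-layer marginal of a coupled two-layer Markov chain on pairs $(\bL_1, \bL_2)$ and to check that $\PP^{a,c_1,c_2}_{\stat\lpp}$ is preserved by the coupled dynamics; once this is in hand, its first-layer marginal is automatically stationary for LPP. Before doing anything dynamical, I would verify $\hZ^{a,c_1,c_2}_{\lpp}<\infty$ under $a,ac_1,ac_2\in(0,1)$. The difference $L_2(j)-L_1(j-1)$ is a sum of i.i.d.\ random variables with geometric-type tails, so $\max_{1\leq j\leq N}\bigl(L_2(j)-L_1(j-1)\bigr)$ has at most geometric right tail; since the tilting bases $c_1c_2$ and $c_2$ are bounded by $a^{-1}$, the reweighted expectation converges and $\PP^{a,c_1,c_2}_{\stat\lpp}$ is a bona fide probability measure.

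Next I would define a Markov update $(\bL_1,\bL_2)\mapsto(\bL_1',\bL_2')$ whose projection onto $\bL_1$ is exactly one step of the LPP recurrence from $\hP_h$ to $\tau_1\hP_h$. If $\bL_1$ encodes the increments of $G$ along $\hP_h$, then $\bL_1'$ should encode the increments of $G$ along $\tau_1\hP_h$ after applying \eqref{eq:recurrence geometric LPP} with the $\Geom(ac_1), \Geom(a^2), \Geom(ac_2)$ weights on the $N{+}1$ vertices between the two paths. The auxiliary layer $\bL_2$ is updated via a companion sampling rule suggested by the branching rule for (spin) Hall--Littlewood polynomials, as sketched in Section~\ref{subsec:stationary Gibbs line ensembles}.

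The algebraic heart of the argument is verifying that the coupled update preserves $\PP^{a,c_1,c_2}_{\stat\lpp}$. I would reduce this to a Cauchy/Littlewood-type summation identity vertex-by-vertex: integrating out the bulk weights $\omega$, the joint distribution of $(\bL_1',\bL_2')$ must factor into the same product-of-geometrics form $\PP^{a,a}_{\grw}$ times the same reweighting. The boundary weights $\Geom(ac_1)$ and $\Geom(ac_2)$ are responsible, respectively, for the left-edge identity that controls how $\max_{j}\bigl(L_2(j)-L_1(j-1)\bigr)$ evolves and for the right-edge identity that regenerates the $c_2^{L_1(N)-L_2(N)}$ factor. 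A case analysis on whether the argmax of $L_2(j)-L_1(j-1)$ shifts, and on the sign of $L_1(N)-L_2(N)$, should make the telescoping transparent. Uniqueness and ergodicity of the first-layer stationary measure then follow from a routine irreducibility/aperiodicity check on $\ZZ_{\geq 0}^{N}$ (every increment configuration is reached in one step with positive probability because the $\omega$ are full-support geometric) together with a standard coupling argument exploiting the contractivity of the max-plus recurrence \eqref{eq:recurrence geometric LPP}.

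The principal obstacle I anticipate is pinning down the two-layer update rule and verifying the summation identity in Step~3. The second layer is an auxiliary object whose dynamics is designed precisely so that the reweighting factor $(c_1c_2)^{\max_j(L_2(j)-L_1(j-1))}c_2^{L_1(N)-L_2(N)}$ transforms into its own image after one step; this kind of preservation does not hold for generic updates, and extracting the correct rule is where the Yang--Baxter/Cauchy-identity structure alluded to in the introduction does the real work. Handling the boundary vertices separately (both the left-boundary identity involving $c_1$ and the right-boundary identity involving $c_2$) is likely to be the most delicate part, since the max functional and the endpoint exponential together couple the dynamics nontrivially at the edges of the strip.
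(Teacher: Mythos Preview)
Your plan is in the right spirit---two-layer Gibbs structure, Cauchy/Littlewood-type identities, first-layer marginal matching LPP---but there is a genuine gap in Step~3. You propose to show directly that a coupled update on $(\bL_1,\bL_2)$ preserves the reweighting $(c_1c_2)^{\max_j(L_2(j)-L_1(j-1))}c_2^{L_1(N)-L_2(N)}$. This is not what the summation identities deliver, and a case analysis on how the argmax shifts is unlikely to close. The factor $(c_1c_2)^{\max_j(\cdots)}$ is not a local product weight; it arises only \emph{after} a geometric summation over an auxiliary ``zero mode'' $\Delta:=\lambda_1^{(0)}-\lambda_2^{(0)}$. The paper's two-layer Gibbs measure lives on absolute positions $(\bl_1,\bl_2)$ rather than increments, and its boundary weights are the simple exponentials $c_1^{\lambda_1^{(0)}-\lambda_2^{(0)}}$ and $c_2^{\lambda_1^{(N)}-\lambda_2^{(N)}}$ together with interlacing indicators. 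It is \emph{this} local product form that the Cauchy identity~\eqref{eq:diagram Cauchy LPP} and Littlewood identity~\eqref{eq:diagram Littlewood LPP} preserve under the push-block kernels. The max reappears only when one sums over $\Delta\in\ZZ$ (see~\eqref{eq:hat weight LG}--\eqref{eq:relation between hat weight to geometric random walk} in the geometric case), and that geometric series converges precisely when $c_1c_2<1$.

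This leads to the second gap: the identification of the two-layer Gibbs marginal with $\PP^{a,c_1,c_2}_{\stat\lpp}$, and hence the stationarity, is initially proved only under $c_1c_2<1$. Extending to all $a,ac_1,ac_2\in(0,1)$ requires a separate analytic continuation argument: one fixes $c_2,a,\bL_1'$, verifies that both sides of the stationarity equation are real analytic in $c_1$ (power series with finite radius of convergence $1/\max(a,\ldots)$, not merely $1/\max(c_2,a,\ldots)$), and invokes uniqueness of analytic continuation. Your sketch omits this step entirely. Without either the zero-mode detour or an explicit analytic continuation, the proof cannot reach the regime $c_1c_2\geq 1$, which is a substantial part of the theorem's content.
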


This theorem is a particular case of Theorem \ref{thm:main theorem LPP}, which will be proved in Section \ref{sec:LPP}. When $c_1c_2=1$, it is clear from \eqref{eq:PstatLPP} that the law of $\bL_1$ under $\PP_{\stat\lpp}^{a,c_1,c_2}$ is a $\Geom(ac_2)$ distributed increment random walk starting from $0$.  In this case there are no spatial correlations on the increments and the stationarity measure could have been guessed and verified in a simple way using elementary properties of geometric random variables by adapting the local arguments, e.g. as in \cite[Lemma 2.1]{betea2020stationary}, \cite[Lemma 4.1]{balazs2006cube}.

The restriction to considering  the stationary measure on a horizontal path is unnecessary (though simplifies the description of the measure). Our techniques enable us to determine the stationary measure on any down-right path, see Theorem \ref{thm:stationary measure LPP before sum over zero mode} below which provides a description of the stationary measure for an arbitrary path $\mathcal P$. The stationary measure allows us to define a stationary solution to the recurrence \eqref{eq:recurrence geometric LPP}, but now for all $(n,m)$ in the bi-infinite strip. Specifically, let $G$ be the solution to the recurrence with stationary initial condition from Theorem \ref{thm:stationary measure LPP intro} along the horizontal path from $(0,0)$ to $(N,0)$ and for any $k\in \ZZ_{\geq 0}$ and $(n,m)\in \S^{(k)}_N:=\left\{(n,m)\in\ZZ^2: -k\leq m\leq n\leq m+N\right\}$
define $G^{(k)}(n,m) := G(n+k,m+k)-G(k,k)$. By stationarity, for any $k'>k$, the law of $G^{(k')}$ restricted to $\S^{(k)}_N$ is the same of that of $G^{(k')}$. Thus, we can take a limit $k\to\infty$ to define a bi-infinite stationary solution $G^{(\infty)}$. The increments of this solution along any down-right path coincide in law with the stationary measure on that down-right path from Theorem \ref{thm:stationary measure LPP before sum over zero mode}.

Exponential LPP is a simple scaling limit of geometric LPP and thus its stationary measure is also a reweighting of random walks, now with exponentially distributed jumps, of a very similar nature to \eqref{eq:PstatLPP}. (See also  \cite[Section 3]{barraquand2022stationary}, where a similar limiting procedure is discussed in  the case of LPP in a half-quadrant). Exponential LPP can also be obtained as a (different) scaling limit of the log-gamma polymer model on a strip, which we discuss in the next section. 

There is a well-known mapping \cite{rost1981non} between the exponential LPP and the totally asymmetric simple exclusion process (TASEP) in the quadrant where in the level-lines of LPP record the evolution of the TASEP height function. This mapping survives when the geometry is that of the strip, and the boundary parameters now correspond to the rates at which particles are inserted into TASEP on the left boundary and removed on the right. The stationary measure of this version of TASEP (i.e., open TASEP) can be found using the matrix product ansatz \cite{derrida1993exact}. The open TASEP stationary measure is also implicitly defined from the bi-infinite (as described above) stationary solution to the exponential LPP recurrence. 
It is unclear how to obtain an explicit description of the open TASEP stationary measure from this implicit relationship. However, as explained in Section \ref{sec:extensions}, our approach developed here should also be directly applicable to construct the stationary measure for open (T)ASEP, without using such an implicit route.

\subsection{Log-gamma polymer on a strip}\label{introloggamma}
The log-gamma polymer is an exactly solvable directed polymer model on the quadrant $\mathbb{Z}_{>0}^2$, with inverse-gamma distributed weights introduced in \cite{seppalainen2012scaling}. 
Here we introduce a variant, the log-gamma polymer model on a strip and then state the second main theorem of the paper giving its stationary measure in terms of a reweighting of two independent log-gamma random walks.

\begin{definition}[Log-gamma polymer on a strip] \label{defn:homogeneous log gamma polymer}
A random variable $X\sim\Ga^{-1}(\theta)$ has inverse-gamma distribution with parameter $\theta>0$  if it is supported on $\RR_{>0}$ with density 
    $\frac{1}{\Gamma(\theta)}x^{-\theta-1}e^{-1/x}$.
We will write $Y\sim\log\Ga^{-1}(\theta)$ if $e^Y\sim\Ga^{-1}(\theta)$. $Y$ is supported on $\RR$ with density 
\be\label{eq:lgdensity}
\frac{1}{\Gamma(\theta)}\exp(-\theta y-e^{-y}).
\ee
Let $\alpha>0$ be a bulk parameter and let $u,v\in \mathbb R$ be boundary parameters such that $u+\a>0$ and $v+\a>0$.
    Let $(\vo_{n,m})_{0\leq m\leq n\leq m+N}$ be a sequence of independent inverse-gamma random variables indexed by the vertices of the strip. On any bulk vertex $(n,m)$ we assume that $\vo_{n,m}\sim\Gav(2\a)$. On any left boundary vertex $(m,m)$ we assume that $\vo_{m,m}\sim\Gav(u+\a)$ and on any right boundary vertex $(m+N,m)$ we assume that $\vo_{m+N,m}\sim\Gav(v+\a)$. 
    An initial condition is given by a down-right path $\hP$ and a process $\big(z_0(j)\big)_{0\leq j\leq N}$ which is independent of the inverse-gamma variables $(\vo_{n,m})_{0\leq m\leq n\leq m+N}$, where below we treat the random variable $z_0(j)$ as the initial condition for the partition function recurrence relation at the $j$-th vertex $\p_j=(n_j,m_j)$ of the path, for $0\leq j\leq N$. 
Given this, define the associated polymer partition function for all points $(n,m)$ above the path $\hP$  
as
\be\label{eq:defloggammapartitionfunction}
z(n,m):=\sum_\pi\lb z(\pi(0))\prod_{i=1}^\ell\vo_{\pi(i)}\rb.
\ee
     The summation is taken over all the up-right lattice paths $\pi$ in the strip that starts from a vertex $\pi(0)\in\hP$ and ends at $\pi(\ell)=(n,m)$ for some length $\ell\geq 0$, and which only touches $\hP$ at  point $\pi(0)$. We have also used the convention that  $z(\p_j)=z_0(j)$ for $0\leq j\leq N$.
This partition function satisfies the recurrence relation that for all $(n,m)$ above the path $\hP$,
\be\label{eq:recurrence log gamma}
z(n,m) = \vo_{n,m}\times \begin{dcases*}
z(n-1,m)+z(n,m-1) & if $0<m<n<m+N$,\\
z(n,m-1) & if $0<m=n$,\\
z(n-1,m) & if $0<m<n=m+N$,
\end{dcases*}
\ee\
with the initial condition that if $(n,m)=\p_j$ for any $0\leq j\leq N$, then $z(n,m)=z_0(j)$.

As in the LPP case, this recurrence does not have solutions (in law) invariant under translations by $\tau_k$. However, the natural analog of the last passage values here are the free energies $h(n,m):=\log z(n,m)$ along a down-right path. The partition function recurrence relation \eqref{eq:recurrence log gamma} could be rewritten as a recurrence relation for the free energy. The law of a process $h_0=\big(h_0(j)\big)_{0\leq j\leq N}$ with $h_0(0)= 0$ is said to be stationary for the log-gamma polymer on a down-right path $\hP$ if the solution of \eqref{eq:recurrence log gamma} with initial condition specified along $\hP$ by $h(\p_j)=h_0(j)$ for $1\leq j\leq N$ has the property that the joint distribution of $h_k:=\big(h(\tau_k\p_j)-h(\tau_k\p_0)\big)_{0\leq j\leq N}$ is the same for all $k\in\ZZ_{\geq 0}$, and hence coincides with that of $h_0$. As with LPP, $(h_k)_{\geq 0}$ is Markov and the same notion of ergodicity applies. 
\end{definition}
\begin{definition}[Reweighted log-gamma random walks]
For $\theta_1, \theta_2>0$, consider two independent random walks $\bL_1=\big(L_1(j)\big)_{1\leq j\leq N}\in \RR^{N}$ and $\bL_2=\big(L_2(j)\big)_{1\leq j\leq N}\in \RR^{N}$ starting from  $L_1(0)=L_2(0)=0$ with i.i.d increments  distributed for $1\leq j\leq N$ as 
$$L_1(j)-L_1(j-1) \sim \log \Ga^{-1}(\theta_1),\quad \textrm{and}\quad L_2(j)-L_2(j-1) \sim \log \Ga^{-1}(\theta_2),$$
and let us denote by $\PP_{\rm LGRW}^{\theta_1, \theta_2}$ and $\mathbb E_{\rm LGRW}^{\theta_1, \theta_2}$  the associated probability measure and expectation. For $u,v$ such that $\a,u+\a,v+\a>0$, we define a new  probability measure $\PP_{\stat\lgg}^{\alpha, u,v}$
by reweighting the measure  $\PP_{\lgrw}^{\alpha, \alpha}$ as 
\begin{equation} \PP_{\stat\lgg}^{\alpha, u,v}(\bL_1, \bL_2) = \frac{1}{\mathcal Z^{\alpha, u,v}_{\lgg}} \lb\sum_{j=1}^Ne^{L_2(j)-L_1(j-1)}\rb^{-(u+v)}e^{-v(L_1(N)-L_2(N))}\PP_{\lgrw}^{\alpha, \alpha}(\bL_1, \bL_2),
\label{eq:Pstatloggamma}
\end{equation}
where $\hZ^{a,u,v}_{\lgg}$ is a normalizing constant which will be proved to be finite.
\label{def:rescaled random walks}
\end{definition}

The following is the second main result in the paper.
\begin{theorem}\label{thm:stationary measure log-gamma intro}
Assume $\a,u+\a,v+\a\in(0,\infty)$. The law of $\bL_1$ under  $\PP_{\stat\lgg}^{\a,u,v}$ is the (unique) ergodic stationary measure of log-gamma polymer on a strip on the horizontal path $\hP_h$ (see Definition \ref{defn:homogeneous log gamma polymer}).
\end{theorem}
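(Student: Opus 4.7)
The strategy I would follow is the two-layer Gibbs measure approach advertised in Section~\ref{subsec:stationary Gibbs line ensembles}: lift the single-layer log-gamma polymer shift dynamics on $\bL_1$ to a Markov dynamics on coupled pairs $(\bL_1,\bL_2)$, show that the joint two-layer law $\PP^{\a,u,v}_{\stat\lgg}$ of \eqref{eq:Pstatloggamma} is invariant under this lifted dynamics, and then project onto $\bL_1$. The single-layer shift corresponds to attaching a fresh diagonal of inverse-gamma weights to $\hP_h$ and updating $z(\cdot,\cdot)$ by \eqref{eq:recurrence log gamma}. The algebraic input that drives the two-layer lift is a local beta--gamma identity: for independent $X\sim\Gav(\mu)$ and $Y\sim\Gav(\nu)$, the quantity $XY/(X+Y)$ is $\Gav(\mu+\nu)$-distributed and is independent of the Beta-distributed ratio $Y/(X+Y)$. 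This is precisely the identity that underlies the integrability of the log-gamma polymer and plays the role of a Cauchy/Littlewood summation identity for Macdonald-like measures on the strip.

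Before turning to invariance I would verify that $\hZ^{\a,u,v}_{\lgg}<\infty$ under $u+\a,v+\a>0$. The sum $\sum_{j=1}^N e^{L_2(j)-L_1(j-1)}$ is sandwiched between $\max_j e^{L_2(j)-L_1(j-1)}$ and $N$ times that maximum, so finiteness reduces to integrability of $e^{-(u+v)M-v(L_1(N)-L_2(N))}$ against $\PP^{\a,\a}_{\lgrw}$, where $M=\max_{1\leq j\leq N}(L_2(j)-L_1(j-1))$. The right and left tails of the log-gamma random walk increments have finite exponential moments exactly when the tilt parameters stay within $(-\a,\infty)$, and the conditions $u+\a,v+\a>0$ translate into this constraint in each of the relevant sign cases for $u+v$. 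An explicit expression for $\hZ^{\a,u,v}_{\lgg}$ can then be obtained by absorbing the exponential tilts into shifts of the log-gamma parameters.

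The heart of the argument is invariance. One step of the polymer shift by $\tau_1$ introduces fresh weights $\vo_{1,1}\sim\Gav(u+\a)$ on the left boundary, $\vo_{j,1}\sim\Gav(2\a)$ for $1<j<N+1$ in the bulk, and $\vo_{N+1,1}\sim\Gav(v+\a)$ on the right boundary, and the new free-energy increments along $\tau_1\hP_h$ are obtained by iterating \eqref{eq:recurrence log gamma} from left to right. I would lift this to the two-layer setting by simultaneously sampling an auxiliary refresh of $\bL_2$ coupled to the fresh $\vo$-weights, chosen so that after the step the joint distribution of the pair is again $\PP^{\a,u,v}_{\stat\lgg}$. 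In the bulk the required coupling is built vertex by vertex using the beta--gamma identity above, which simultaneously produces a new $\bL_1$ increment, a new $\bL_2$ increment, and an independent residual noise to be fed into the next vertex. At the two boundary vertices the corresponding boundary versions of the identity absorb the reweighting factors $\bigl(\sum_{j=1}^N e^{L_2(j)-L_1(j-1)}\bigr)^{-(u+v)}$ and $e^{-v(L_1(N)-L_2(N))}$ in \eqref{eq:Pstatloggamma}; this is where the boundary parameters $u,v$ enter the two-layer dynamics.

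Ergodicity and uniqueness of the stationary measure follow from a standard coupling argument: using common bulk and boundary noise, two solutions of \eqref{eq:recurrence log gamma} started from arbitrary initial increment profiles along $\hP_h$ are contractive in a suitable sense, so after subtracting the zero mode the two increment processes coalesce in distribution as the number of shifts $k\to\infty$. The main obstacle I anticipate is identifying the correct form of the auxiliary $\bL_2$-update at the two boundary vertices, where the reweighting factors of $\PP^{\a,u,v}_{\stat\lgg}$ interact with the $\Gav(u+\a)$ and $\Gav(v+\a)$ boundary weights in a way that must be exactly compatible with the bulk beta--gamma swaps; once this boundary input is identified, the rest of the proof reduces to standard beta--gamma algebra and projection from $(\bL_1,\bL_2)$ to its first coordinate.
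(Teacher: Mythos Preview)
Your high-level strategy---lift the single-layer polymer shift to a two-layer Markov dynamics and prove invariance of a two-layer law---is exactly the paper's approach. But there is a genuine conceptual gap in how you propose to implement it. You attempt to show directly that the reweighted random walk law $\PP^{\a,u,v}_{\stat\lgg}$ of \eqref{eq:Pstatloggamma} is preserved by a two-layer lift. The obstruction is that the factor $\bigl(\sum_{j=1}^N e^{L_2(j)-L_1(j-1)}\bigr)^{-(u+v)}$ is genuinely non-local: it couples all sites at once and cannot be ``absorbed at the boundary vertices'' by any local identity. The paper works instead with a different two-layer object, the Gibbs measure $\wt_{\lgg}^{\PiP}$ of Definition~\ref{defn: Gibbs measures LG}, which is a product of purely local edge weights (in particular a dashed-edge interaction $e^{-e^{-(\la_1^{(j-1)}-\la_2^{(j)})}}$ between the layers) and has infinite mass by translation invariance. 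Because this measure is local, preservation under local push-block moves follows from the Cauchy and Littlewood type identities of Lemmas~\ref{lem:Cauchy LG}--\ref{lem:Littlewood LG}, proved by explicit $\RR^2$ changes of variables; these are not the beta--gamma identity you invoke, which only governs the first-layer marginal (Lemma~\ref{lem:marginal operator LG}) and does not by itself give two-layer invariance.

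The link between $\wt_{\lgg}^{\PiP}$ and $\PP^{\a,u,v}_{\stat\lgg}$ is made in Proposition~\ref{prop:probability measure coincide LG} by integrating out the zero mode $\Delta=\la_1^{(0)}-\la_2^{(0)}$: the identity $\int_\RR e^{-(u+v)\Delta-e^{-\Delta}S}\d\Delta=\Gamma(u+v)S^{-(u+v)}$ is exactly what produces the non-local factor. This integral, however, converges only when $u+v>0$. To reach the full range $u+\a,v+\a>0$ (in particular $u+v\leq 0$), the paper performs a separate analytic continuation in $u$ (Proposition~\ref{Prop:real analytical LG} and Section~\ref{sec:proof of real analyticity}), using Morera's theorem together with delicate exponential bounds on the transition densities to show both sides of the stationarity equation are holomorphic. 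This step is entirely absent from your outline and, in the paper's framework, cannot be avoided; your sketch as written would at best establish stationarity for $u+v>0$. The paper also handles uniqueness differently, via strict positivity of the transition density and a general irreducibility criterion rather than a coupling argument.
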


This theorem is a particular case of Theorem \ref{thm:main theorem LG}, which will be proved in Section \ref{sec:LG}. When $u+v=0$, it is clear from \eqref{eq:Pstatloggamma}  that the law of $\bL_1$ under $\PP_{\stat\lgg}^{\a,u,v}$ is a $\log \Ga^{-1}(\alpha+v)$ random walk starting from $0$. In this case, the result could be alternatively proved using properties of inverse gamma random variables, adapting the local arguments in  full-space \cite[Lemma 3.2]{seppalainen2012scaling} and half-space \cite[Proposition 4.5]{barraquand2020halfspace}. As in the LPP case, this stationary measure can be used to define a bi-infinite stationary solution to the polymer recurrence relation. By a `zero-temperature' limit transition, the polymer recurrence relation and stationary measure can be shown to converge to the exponential LPP analogs. Another, considerably more complicated limit transition of the log-gamma polymer is to the open KPZ equation.

\subsection{Stationary measure of the open KPZ equation}
\label{subsec: stationary measure open KPZ intro} 
For $L>0$, the open KPZ equation on $[0,L]$ with Neumann boundary conditions with parameters $u,v\in \mathbb R$ is the stochastic PDE formally written as
\begin{equation}
\begin{cases} 
    \partial_t h(t,x) = \frac{1}{2} \partial_{xx} h(t,x) + \frac{1}{2} \left( \partial_x h(t,x)\right)^2 + \xi(t,x), \\ 
    \partial_x h(t,x)\big\vert_{x=0} = u, \\ 
    \partial_x h(t,x)\big\vert_{x=L} = -v, 
    \end{cases}
    \tag{KPZ$_{u,v}$}
    \label{eq:KPZ}
\end{equation}
where $h$ is a function of  $t\in \mathbb R_+$ and $x\in [0,L]$, and $\xi(t,x)$ is a space-time white noise. Definition \ref{def:Hopf-Cole transform} provides a precise meaning via the Hopf-Cole transform to what it means to solve the open KPZ equation.

Let $C([0,L],\mathbb R)$ denote the space of continuous functions from $[0,L]$ to $\mathbb R$. We will say that the law of a random function $h_0\in C([0,L],\mathbb R)$ with $h_0(0)=0$ is stationary for the open KPZ equation on $[0,L]$ if the solution to \eqref{eq:KPZ} with initial condition $h(0,x)=h_0(x)$ is such the law of $h_t:=\big(h(t,x)-h(t,0)\big)_{x\in [0,L]}$ is the same for all $t\geq 0$, and hence coincides with that of $h_0$. 
In recent works by several authors
\cite{corwin2021stationary,bryc2021markov,bryc2021markov2,barraquand2021steady} (see also the review  \cite{corwin2022some}), the stationary measure for the open KPZ equation was determined when $u+v\geq 0$ and $L=1$, and in the general case, a conjectural formula of the stationary measure appears in \cite{barraquand2021steady} (see below Theorem \ref{thm:KPZ stat} for further discussion and background).

We prove in Proposition \ref{prop:convergencestationarymeasures} that this conjectured open KPZ stationary measure arises as an `intermediate disorder' limit of the log-gamma stationary measure from Theorem \ref{thm:stationary measure log-gamma intro}. 
Intermediate disorder scaling of polymers to the KPZ equation was first introduced in \cite{alberts2014intermediate}, where the convergence was proved for directed polymers in the quadrant $\mathbb Z_{\geq 0}^2$, i.e. for models in `full-space' without boundary (see also \cite{CSZ,corwin2017intermediate} for further developments). The result was later extended in \cite{wu2018intermediate} to discrete directed polymer models in a half-quadrant (`half-space'), whose free energy converge to the KPZ equation on $\mathbb R_{ \geq 0}$ with Neumann boundary condition  at 0 (see also \cite{parekh2019positive, barraquand2022stationary} for generalizations). The methods in those works reduce the convergence to sharp estimates on convergence of random walk to Brownian heat kernels with the specified boundary conditions. Conjecture \ref{conj:convergence} formulates the precise intermediate disorder scaling limit under which the log-gamma polymer in a strip (whose width grows with the scaling) should converge to the open KPZ equation, as a space-time process. 
Combining this conjecture with the convergence in Proposition \ref{prop:convergencestationarymeasures} of the log-gamma polymer stationary measure under this scale proves the conjectural description from \cite{barraquand2021steady} of the stationary measures of open KPZ equation for general boundary parameters $u,v$ and arbitrary interval length $L$. We state this result as Theorem \ref{thm:KPZ stat} below.

The following process is the intermediate disorder limit (see Proposition \ref{prop:convergencestationarymeasures}) of the reweighted log-gamma random walk $\PP_{\stat\lgg}^{\a,u,v}$ from Definition \ref{def:rescaled random walks} and the conjectured open KPZ stationary measure from \cite{barraquand2021steady}.

\begin{definition}[Hariya-Yor process]\label{def:KPZstat}
For $v_1,v_2\in \RR$ let  $\PP_{\BM}^{v_1,v_2}$  be the probability measure on $C([0,L], \mathbb R)^2$  of  two independent standard Brownian motions $B_1,B_2$ on $[0,L]$ with drifts $v_1$ and $v_2$ respectively.
For $u,v\in \RR$ define the probability measure  $\PP_{\KPZstat}^{u,v}$  on $C([0,L], \mathbb R)$ through its Radon-Nikodym derivative by
\begin{equation}
\label{eq:PstatKPZ}
\frac{\d \PP_{\KPZstat}^{u,v}}{\d\PP_{\BM}^{0,0}} (B_1,B_2) =  \frac{1}{\mathcal Z_{\KPZstat}^{u,v}}\left( \int_0^L \d s e^{-(B_1(s)-B_2(s))}\right)^{-u-v}e^{-v(B_1(L)-B_2(L))}. 
\end{equation}
By the Cameron-Martin theorem, we may also include drifts in the reference measure and define $\PP_{\KPZstat}^{u,v}$ as 
\begin{equation}
\label{eq:PstatKPZbis}
\frac{\d \PP_{\KPZstat}^{u,v}}{\d\PP_{\BM}^{-v,v}} (B_1,B_2) =  \frac{e^{v^2L}}{\mathcal Z_{\KPZstat}^{u,v}}\left( \int_0^L \d s e^{-(B_1(s)-B_2(s))}\right)^{-u-v}. 
\end{equation}
\end{definition}
\begin{theorem}[conjectured in \cite{barraquand2021steady} --  proved here modulo Conjecture \ref{conj:convergence}]
\label{thm:open KPZ stationary measure}
For any $u,v\in \mathbb R$, the law of $B_1$ under $\PP_{\KPZstat}^{u,v}$ is the (unique) ergodic stationary measure for \eqref{eq:KPZ}. 
\label{thm:KPZ stat}
\end{theorem}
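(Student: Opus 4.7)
The plan is to derive Theorem \ref{thm:KPZ stat} by passing to the intermediate disorder limit of Theorem \ref{thm:stationary measure log-gamma intro}, combining it with the two convergence statements flagged in the excerpt: Proposition \ref{prop:convergencestationarymeasures} (convergence of the reweighted log-gamma walk measure $\PP_{\stat\lgg}^{\alpha_N,u_N,v_N}$ to the Hariya--Yor measure $\PP_{\KPZstat}^{u,v}$) and Claim \ref{conj:convergence} (dynamical convergence of the log-gamma polymer free-energy process on a strip of width $\sim L\sqrt{N}$ to the Hopf--Cole solution of \eqref{eq:KPZ}). Fix $u,v\in\RR$ and $L>0$ and take the parameters $\alpha_N,u_N,v_N$ as prescribed in Claim \ref{conj:convergence}.

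Concretely, I would run the log-gamma polymer on the strip of width $N$ with stationary initial condition $h_0^{(N)}\sim\PP_{\stat\lgg}^{\alpha_N,u_N,v_N}$ along the horizontal path $\hP_h$, and track the shifted free-energy increments $h_k^{(N)}=\big(h(\tau_k\p_j)-h(\tau_k\p_0)\big)_{0\leq j\leq N}$. By Theorem \ref{thm:stationary measure log-gamma intro}, for every $k\in\ZZ_{\geq 0}$ the process $h_k^{(N)}$ has the same law as $h_0^{(N)}$. Now fix $t\geq 0$ and choose $k=k_N(t)$ so that under the intermediate disorder scaling these $k_N(t)$ rows correspond to KPZ time $t$. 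Applying Claim \ref{conj:convergence} with initial data $h_0^{(N)}$ (whose weak limit is $\PP_{\KPZstat}^{u,v}$ by Proposition \ref{prop:convergencestationarymeasures}) yields that the rescaled $h_{k_N(t)}^{(N)}$ converges in law to $h(t,\cdot)-h(t,0)$, where $h$ solves \eqref{eq:KPZ} with initial condition distributed as $\PP_{\KPZstat}^{u,v}$. On the other hand, by the prelimit stationarity, the rescaled $h_{k_N(t)}^{(N)}$ has the same law as the rescaled $h_0^{(N)}$, which converges to $\PP_{\KPZstat}^{u,v}$ by Proposition \ref{prop:convergencestationarymeasures}. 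Hence the law of $h(t,\cdot)-h(t,0)$ equals $\PP_{\KPZstat}^{u,v}$ for all $t\geq 0$, which is exactly stationarity.

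For uniqueness and ergodicity, in the fan region $u+v>0$ I would cite the existing results of \cite{corwin2021stationary,bryc2021markov,barraquand2021steady}. For the shock region $u+v\leq 0$, uniqueness can be transferred from the prelimit: any stationary measure for the Hopf--Cole semigroup on $[0,L]$ with parameters $(u,v)$ should arise as a subsequential weak limit of stationary measures of the log-gamma polymer on a strip under the same scaling, and Theorem \ref{thm:stationary measure log-gamma intro} gives uniqueness at the prelimit level, so every such limit must equal $\PP_{\KPZstat}^{u,v}$. Ergodicity then follows from uniqueness combined with contractivity/coupling for the stochastic heat equation with Robin boundary data, or alternatively by transferring the prelimit ergodic convergence directly to the limit.

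The main obstacle is not the soft argument above, which is just ``convergence preserves stationarity'' combined with Theorem \ref{thm:stationary measure log-gamma intro}, but rather the justification of Claim \ref{conj:convergence} itself in the strip geometry and for \emph{all} $u,v\in\RR$. Previous intermediate disorder results (\cite{alberts2014intermediate,wu2018intermediate,parekh2019positive,barraquand2022stationary}) cover full- or half-space settings with at most a single boundary; in the strip one needs uniform-in-$N$ heat-kernel estimates and moment bounds for the log-gamma partition function on a doubly bounded domain with Robin-type weights on both sides, tightness of the rescaled free-energy process, and identification of its limit as the Hopf--Cole solution with the correct Neumann boundary data at both endpoints, uniformly in the shock regime $u+v<0$ where the negatively weighted boundary exponentials and the $(u+v)$-th power in the Radon--Nikodym derivative of the Hariya--Yor measure make standard Feynman--Kac / chaos-expansion moment bounds delicate. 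This analytic work is precisely what the authors defer to subsequent work, and it is the single bottleneck in translating the two-layer Gibbs description into the open KPZ stationary-measure result.
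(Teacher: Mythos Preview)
Your stationarity argument is correct and is essentially the paper's own proof: run the log-gamma polymer with the stationary initial condition from Theorem \ref{thm:stationary measure log-gamma intro}, use Proposition \ref{prop:convergencestationarymeasures} to identify the limit of the initial data as $\PP_{\KPZstat}^{u,v}$, use Claim \ref{conj:convergence} to identify the limit of the dynamics as the Hopf--Cole solution of \eqref{eq:KPZ}, and conclude that the law of $h(t,\cdot)-h(t,0)$ coincides with $\PP_{\KPZstat}^{u,v}$ for all $t\geq 0$. One cosmetic point: in the paper's scaling the boundary parameters $u,v$ are \emph{not} rescaled with $N$ (only $\alpha$ and $N$ are, see \eqref{eq:scalings}), so your $u_N,v_N$ should simply be $u,v$.

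Your treatment of uniqueness and ergodicity, however, contains a genuine gap. The claim that ``any stationary measure for the Hopf--Cole semigroup on $[0,L]$ with parameters $(u,v)$ should arise as a subsequential weak limit of stationary measures of the log-gamma polymer'' is not something you can get for free: limits of Markov semigroups can in principle acquire additional invariant measures that are not visible at the prelimit level, and there is no general mechanism that forces every invariant measure of the limit to be approximable by invariant measures of the approximating chains. Prelimit uniqueness therefore does not by itself yield limit uniqueness. The paper does not attempt this transfer at all; instead it invokes the direct results of \cite{knizel2022strong, parekh2022ergodicity} (building on \cite{hairer2018strong}), which establish the strong Feller property and ergodicity of the open KPZ/SHE semigroup for all $u,v\in\RR$. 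That is what gives uniqueness and ergodicity in the full parameter range, and you should cite those results rather than try to pull uniqueness through the scaling limit.
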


The measure $\PP_{\KPZstat}^{u,v}$ was first considered in \cite{hariya2004limiting} in a completely different context, motivated by considerations around the Matsumoto-Yor identity \cite{matsumoto2001relationship, matsumoto2005exponential}. The normalization constant $\mathcal Z_{\KPZstat}^{u,v}$ was computed explicitly for $u,v>0$  in \cite[Proposition 4.1]{donati2000striking} and a formula (the analytic continuation of the formula for $u,v>0$) is given in \cite{bryc2021markov2} for all $u,v$ such that $u+v>0$. The fact that the normalizing constant $\mathcal Z_{\KPZstat}^{u,v}$ is finite for all $u,v\in \mathbb R$ was implicit in \cite{hariya2004limiting}. It is equivalent to the fact that when $B$ is a Brownian motion,  
\begin{equation}
	\mathbb E\left[ \left(\int_0^t e^{-B(s)-\mu s}\mathsf ds\right)^m\right] <\infty
	\label{eq:finitenessZKPZ}
\end{equation}
for any $t>0$ and $\mu, m\in \mathbb R$. When $m>1$ or $m<0$ Jensen's inequality and the convexity of the function $x \mapsto x^m$ imply that 
$$ \mathbb E\left[ \left(\int_0^t e^{-B(s)-\mu s}\mathsf ds\right)^m\right] \leq t^{m-1} \int_0^t \mathbb E\left[e^{-mB(s)-m\mu s} \right] \mathsf ds <\infty.$$
When $m\in (0,1)$, \eqref{eq:finitenessZKPZ}  follows from the case $m=1$, which is easy to check.

The stationary measure for the open KPZ equation was first explicitly described in \cite{corwin2021stationary} via multi-point Laplace transform formulas. As mentioned in the preface, this result was limited to  $u+v>0$ and $L=1$. The $u+v>0$ restriction is serious, coming from limitations of the Askey-Wilson process  \cite{Uchiyama_2004,Bryc2017} method used in \cite{corwin2021stationary}; the $L=1$ restriction can likely be lifted. By inverting the Laplace transform formula, an explicit probabilistic description was given in \cite{bryc2021markov, barraquand2021steady},  showing that the stationary measure (constructed in \cite{corwin2021stationary}) has the law of the process 
$W(x)+ U(x)-U(0)$, where $W$ is a Brownian motion with diffusion coefficient $1/2$ and $W(0)=0$, and $U(x)$ is independent from $W$. The law of $U$ is absolutely continuous with respect to the free Brownian (infinite) measure with Lebesgue measure for $U(0)$ and Brownian law from there with diffusion coefficient $1/2$, with Radon-Nikodym derivative proportional to 
\begin{equation}\exp\left( -2 uU(0) -2 vU(L) -\int_0^L \d s e^{-2 U(s)} \right).
\label{eq:Liouvillereweighting}
\end{equation} 
This is reminiscent of path integrals encountered in Liouville quantum mechanics (see references in \cite{barraquand2021steady}) and is a continuum version of the two-layer Gibbs measure that we introduce below in Section \ref{subsec:stationary Gibbs line ensembles}. 

It was further observed in \cite{barraquand2021steady} (see also Section 5 of \cite{corwin2022some} for a summary of this calculation) that it is fruitful to explicitly average over $U(0)$ in the probability measure above, as is a standard procedure in Liouville quantum mechanics. Letting $X(x)=U(x)-U(0)$ and performing this integration out over $U(0)$, the stationary measure has the law of the process $\big((W(x)+X(x)\big)_{x\in [0,L]}$, where, again, $X$ is independent from $W$, and its law  is absolutely continuous with respect to the Brownian measure with diffusion coefficient $1/2$ and $X(0)=0$, with Radon-Nikodym derivative proportional to 
\begin{equation}  \left( \int_0^L \d s e^{-2 X(s)}\right)^{-u-v} e^{-2v X(L)}.
\label{eq:RNDBarraquandLeDoussal}
\end{equation}
Via the change of variables $W=\frac{B_1+B_2}{2}$, $X=\frac{B_1-B_2}{2}$, it is easy to check that the process $B_1=W+X$ is exactly distributed as  $\PP_{\KPZstat}^{u,v}$ defined above.
This shows that Theorem \ref{thm:KPZ stat} was already proven (combining results in \cite{bryc2021markov, bryc2021markov2, barraquand2021steady}) when $u+v\geq 0$ and $L=1$.

It was observed in \cite{barraquand2021steady} that the law of $B_1=W+X$ defined above makes sense for any $u,v\in \mathbb R$. Thus they conjectured that these measures remain stationary for the open KPZ equation for any $u,v\in \mathbb R$. More precisely, it was argued in \cite{barraquand2021steady} that the result in the $u+v>0$ phase should extend to all $u,v\in \mathbb R$ by analytic continuation. However, it is not at all clear how to make this argument mathematically rigorous. In particular, one would need to define what it means for a distribution on $C([0,L],\RR)$ to be analytic, that the open KPZ stationary measure and the law of $B_1$ both depend in this manner on the boundary parameters,  and that this notion of analyticity implies uniqueness. Our Theorem \ref{thm:KPZ stat} confirms this conjecture by essentially developing such an argument at the level of the pre-limiting log-gamma polymer. As explained above, Theorem \ref{thm:KPZ stat} is a limit of Theorem \ref{thm:stationary measure log-gamma intro}, which is proven from the analytic continuation of the $u+v>0$ phase to all $u,v\in \mathbb R$, but in the case of the log-gamma polymer, we are able to justify rigorously this analytic continuation procedure.

We finally remark that ergodicity of the open KPZ equation and uniqueness of its stationary measure included in Theorem \ref{thm:open KPZ stationary measure} follows directly  from recent work of \cite{knizel2022strong, parekh2022ergodicity}, using ideas from \cite{hairer2018strong}. Thus, the content of the theorem is the confirmation of the conjectured form of the stationary measure for all $u,v\in \RR$ and particular for $u+v<0$. 
\subsection{Universality}
The stationary measures of geometric LPP \eqref{eq:PstatLPP}, the log-gamma polymer  \eqref{eq:Pstatloggamma}, and the open KPZ equation \eqref{eq:PstatKPZ} converge to the same large-scale limit,  defined as follows. 
For $\tilde u,\tilde v\in \RR$ we define the probability measure  $\PP_{\infty}^{\tilde u, \tilde v}$  on $C([0,1], \mathbb R)$ through its Radon-Nikodym derivative with respect to $\PP_{\BM}^{-\tilde v,\tilde v}$ by
\begin{equation*}
\frac{\d \PP_{\infty}^{\tilde u,\tilde v}}{\d\PP_{\BM}^{-\tilde v,\tilde v}} (B_1,B_2) =  \frac{1}{\mathcal Z_{\infty}^{\tilde u,\tilde v}}
e^{(\tilde u+\tilde v) \min_{x\in [0,1]}\lbrace B_1(x)-B_2(x)\rbrace}. 
\end{equation*}

For the KPZ equation, recall the probability measure $\PP_{\KPZstat}^{u,v}$ that implicitly depends on a length $L$ which we set to be $L=\eps^{-1}$. For $i=1,2$ we introduce rescaled processes $B_i^{(\eps)}(x)= \eps^{1/2} B_i(\eps^{-1}x)$ and scale boundary parameters as $u=\eps^{1/2}\tilde u, v=\eps^{1/2}\tilde v$. Then $(B_1^{(\eps)}, B_2^{(\eps)})$ weakly converges as $\eps$ goes to $0$  to  $\PP_{\infty}^{\tilde u, \tilde v}$ \cite{barraquand2021steady}. 

For geometric LPP, consider the measure $\PP^{a,c_1,c_2}_{\stat\lpp}(\bL_1,\bL_2)$ and let $N=\eps^{-1}$ and $$B_i^{(\eps)}(x)= \frac{\eps^{1/2}}{\sigma}\left(L_i(\eps^{-1}x)-m\eps^{-1}x\right),$$ where $m=a/(1-a)$ and $\sigma^2=a/(1-a)^2$ are the mean and variance $\mathrm{Geom}(a)$, and scale boundary parameters as $c_1=\exp(-\tilde u \eps^{1/2}/\sigma), c_2=\exp(-\tilde v \eps^{1/2}/\sigma)$. Then, $(B_1^{(\eps)}, B_2^{(\eps)})$ weakly converges as $\eps\to 0$ to  $\PP_{\infty}^{\tilde u, \tilde v}$. 

For the log-gamma polymer, consider the measure $\PP^{\alpha, u,v}_{\stat\lgg}(\bL_1,\bL_2)$ and let $N=\eps^{-1}$. The rescaled processes are defined as in the geometric LPP case above except that  now $m=-\psi(\alpha)$ and $\sigma^2=\psi'(\alpha)$ (where $\psi(z)=\partial_z \log \Gamma(z)$ is the digamma function) are the $\log \Ga^{-1}(\alpha)$ mean and variance, and now we scale boundary parameters as $u=\tilde u \eps^{1/2}/\sigma, v=\tilde v \eps^{1/2}/\sigma$. Again, $(B_1^{(\eps)}, B_2^{(\eps)})$ weakly converges as $\eps \to 0$ to  $\PP_{\infty}^{\tilde u, \tilde v}$ (to see that, it is more convenient to use the alternative definition of $\PP^{\alpha, u,v}_{\stat\lgg}(\bL_1,\bL_2)$ from Lemma \ref{lem:alternative reweighted random walk} below).

The fact that the stationary measures of the three models converge to the same limit (as implied by the above convergence results) is not a coincidence, but a sign of universality. Indeed, the three models are expected to converge at large scale to a universal space-time process defined on the strip $[0,1]\times\mathbb R_{\geq 0}$, depending on two boundary parameters  $\tilde u, \tilde v$: the open KPZ fixed point.  The law of $B_1$ under the probability measure $\PP_{\infty}^{\tilde u, \tilde v}$ should be its (unique for each choice of $(\tilde u,\tilde v)$) stationary measure, as predicted in \cite{barraquand2021steady}. Let us mention that the KPZ fixed point has been constructed only for models without boundaries \cite{matetski2021kpz}, and its construction on domains with boundary is an open problem.  Let us also remark that stationary measures of ASEP converge as well to the same limit \cite{bryc2022asymmetric}, and that by scaling boundary parameters slightly differently than above, one could recover all limiting processes studied in \cite{derrida2004asymmetric, bryc2019limit}. To our knowledge all previous fluctuation limit results have been restricted to the fan region where $u+v\geq 0$ while we access the full range of $u,v\in \RR$ (here for the geometric LPP and log-gamma polymer) by our methods.

\subsection{Stationary Gibbs line ensembles}
\label{subsec:stationary Gibbs line ensembles}
In this section, we will explain a general mechanism behind our construction of stationary measures for the geometric LPP and log-gamma polymer models. Though the stationary measures will generally not be of product form, they will have a relatively simple structure -- namely, they will arise as the law of the top layer of certain two-layer Gibbs measures. As we will explain in Section \ref{sec:stationarymeasuresstrip}, the nature of these Gibbs measures and idea behind showing their invariance under the LPP or polymer recurrence dynamics comes from the study of dynamics on variants and generalizations of Schur processes. Our initial motivation for recognizing this structure came from a comparison between the open KPZ equation stationary measure (for $u+v>0$) and the Gibbs property of the KPZ line ensemble. With that in mind, we first relate that motivation and then introduce our two-layer Gibbs measures. This motivation can, in principle, be skipped and one can go immediately to Section \ref{sec:stationarymeasuresstrip}

\subsubsection{Gibbs line ensembles and stationary measures}
Gibbsian line ensembles are useful to describe our guiding principle and understand its potential for applications beyond the scope of the present paper. The distribution of the process $U$ in \eqref{eq:Liouvillereweighting} is closely related to the interaction potential appearing in the definition of the KPZ line ensemble \cite{corwin2016kpz}. We will not review here the exact definition of this object, but simply mention that it is a probability measure on a family of processes $\left(\Lambda_i(x)\right)_{x\in \mathbb R, i\in \mathbb Z_{>0}}$  such that 
\begin{itemize}
    \item The first curve $\Lambda_1(x)$ has the law of the solution to the KPZ equation on $\mathbb R$, with narrow-wedge initial condition, at a fixed time $t$ (the line ensemble may be constructed for any fixed $t>0$); 
    \item If we fix a subset $[a,b]\times \llbracket 1,n\rrbracket \subset \mathbb R\times \mathbb Z_{>0}$, the law of $\left(\Lambda_i(x)\right)_{x\in [a,b], i\in \llbracket 1,n\rrbracket}$, conditionally on the law of $\left(\Lambda_i(x)\right)$ on the exterior of the subdomain, is absolutely continuous with respect to the law of Brownian bridges joining $\Lambda_i(a)$ to $\Lambda_i(b)$, with a Radon-Nikodym derivative proportional to 
    \begin{equation}
        \exp\left(-\sum_{i=1}^{n} \int_a^b e^{-(\Lambda_i(s)-\Lambda_{i+1}(s))}\d s\right).
        \label{eq:interactionpotential}
    \end{equation} 
\end{itemize}
Line ensembles corresponding to various types of initial conditions for the KPZ equation can be considered as well (e.g. the classes of initial data considered in \cite{BCF15,Borodin2015}). The stationary initial condition is somewhat singular, but it can be seen that if the initial condition approaches a Brownian motion, the first line $\Lambda_1(x)$ gets shifted upwards to $+\infty$ , away  from the other lines $\Lambda_i(x)$ for $i\geq 2$. Due to the form of the interaction \eqref{eq:interactionpotential}, in this limit the first curve becomes independent from the others so that $\Lambda_1$ is distributed as a Brownian motion and we recover that Brownian motion is stationary for the KPZ equation on $\mathbb R$ \cite{bertini1997stochastic}.  

Gibbsian line ensembles can be constructed for various models. They were originally introduced in the context of Brownian last passage percolation \cite{corwin2014brownian}, and can be defined as well for discrete models such as geometric LPP \cite{PS02,Spohn05,CLW16,serio2023tightness} and log-gamma polymer models  \cite{corwin2014tropical,johnston2020scaling, wu2019tightness, dimitrov2021tightness, barraquand2021spatial} in the quadrant $\mathbb Z_{>0}^2$. In the case of lattice models,  the analog of the curves $\Lambda_i(x)$ are no longer absolutely continuous with respect to the Brownian bridge measure, but with respect to random walk bridges with geometric or log-gamma distributed increments respectively. The curves are no longer indexed by $x\in \mathbb R$, but rather by the vertices along a down-right path in the quadrant $\mathbb Z_{>0}^2$, such that the distribution of $\Lambda_1(x)$ coincides with the distribution of passage times and free energies, respectively, along the path. The interaction potential \eqref{eq:interactionpotential} needs to be replaced by some discrete analogue which takes the form of a product of Boltzmann weights, as in a usual Gibbs measure. The structure of Boltzmann weights for geometric LPP or the log-gamma polymer comes respectively from the definition of the Schur process \cite{okounkov2003correlation, borodin2014anisotropic, borodin2011schur} and the Whittaker process \cite{borodin2014macdonald, corwin2014tropical} (in particular, their branching rules) and the relation between these models and Schur/Whittaker processes comes from Markovian dynamics that map between these processes and have marginals that recover the LPP or polymer recurrence relations. These dynamics ultimately relate to the skew Cauchy identity satisfied by the Schur or Whittaker symmetric functions. Note that, as in the continuous case, stationary measures of both models can be understood as the law of the first line of the line ensemble after it has detached from the other lines.  

Line ensembles can also be constructed for models with a single boundary, in particular the log-gamma polymer model in a half-quadrant \cite{barraquand2023kpz}. The specific form of interaction between the lines comes from the half-space Whittaker process \cite{o2014geometric, barraquand2020half, bisi2019point} and ultimately from the skew Littlewood identities. 
The half-space KPZ line ensemble $\left(\Lambda_i(x)\right)_{x\in \mathbb R_{\geq 0}, i\in \mathbb Z_{>0}}$ has not been constructed yet in the literature, but we expect that it can be constructed as a limit of the half-space log-gamma line ensemble introduced in \cite{barraquand2023kpz}. Anticipating a bit, we expect that the half-space KPZ line ensemble would satisfy the following Gibbs property:
the law of  $\left(\Lambda_i(x)\right)_{x\in [0,a], i\in \llbracket 1,n\rrbracket}$, conditionally on the law of $\left(\Lambda_i(x)\right)$ on the exterior of the subdomain, is absolutely continuous with respect to Brownian motions on $[0,a]$ terminating at $\Lambda_i(a)$, with a Radon-Nikodym derivative proportional to 
\begin{equation}
        \exp\left( \sum_{i=1}^{n} (-1)^{i}u\Lambda_i(0)-\int_a^b e^{-(\Lambda_i(s)-\Lambda_{i+1}(s))}\d s\right).
        \label{eq:interactionpotentialhalfspace}
    \end{equation} 
We emphasize that there is now a boundary interaction potential of the form $-u(\Lambda_1(0)-\Lambda_2(0)+\Lambda_3(0)-\dots)$, where $u$ is the boundary parameter of the KPZ equation on $\mathbb R_{\geq 0}$. A similar boundary interaction potential appears in the Radon-Nikodym derivative \eqref{eq:Liouvillereweighting} above, upon identifying $2U$ with $\Lambda_1-\Lambda_2$. Stationary measures for the log-gamma polymer and the KPZ equation in a half-space have been studied in \cite{barraquand2021steady, barraquand2022stationary}. The present paper is largely inspired from the observation that for such initial condition, not only the first line of the line ensemble should separate from the bulk of the line ensemble, but the first two lines together, and the stationary measure for both models should then relate to the distribution of half-space KPZ or log-gamma line ensembles with only two lines. This phenomenon seems to be quite general, as it can be shown that stationary measures of open ASEP also present a similar structure \cite{barraquand2023stationary}. Note that the work \cite{barraquand2022stationary} constructs stationary measures using different ideas that do not involve Gibbs line ensembles, but the framework used in \cite{barraquand2022stationary} (half-space Macdonald processes) is not available in the context of models with two boundaries, which are the focus of the present paper. 

Let us now turn to models with two boundaries. When $u+v>0$, the stationary measure of the KPZ equation on $[0,L]$  \eqref{eq:KPZ} can be seen as the distribution of $\Lambda_1(x)-\Lambda_1(0)$, where the law of the pair of continuous processes $(\Lambda_1,\Lambda_2)$ is absolutely continuous with respect to the product of two Brownian measures on $[0,L]$ (where $\Lambda_1$ and $\Lambda_2$ have starting value distributed as Lebesgue measure and then Brownian law from there with diffusivity $1$), with Radon-Nikodym derivative proportional to
\begin{equation}
     \exp\left(-u(\Lambda_1(0)-\Lambda_2(0))-v(\Lambda_1(L)-\Lambda_2(L))-\int_0^L e^{-(\Lambda_1(s)-\Lambda_{2}(s))}\d s\right).
     \label{eq:twolineGibbsmeasure}
\end{equation} 
This is just a restatement of \eqref{eq:Liouvillereweighting}. It becomes clear now that this description matches the Gibbs property of the KPZ line ensemble on $[0,L]$ restricted to two lines with the interaction potential at both boundaries of the form as in half-space models. Strictly speaking, the probability measure defined by \eqref{eq:twolineGibbsmeasure} is invariant by translation, so that it is an infinite measure. However, since we are eventually interested in the spatial increments $\Lambda_1(x)-\Lambda_1(0)$, we may fix the value of any point, e.g. $\Lambda_2(0)=0$. Provided $u+v>0$, doing this results in a finite measure that can be normalized to be a probability measure.

\subsubsection{Gibbs measures on two-layer graphs and stationary measures on a strip}
\label{sec:stationarymeasuresstrip}
Motivated by the discussion above, we formulate here the stationary measures for the geometric LPP and log-gamma polymer model on a strip in terms of the top curve of discrete analogues of the two-line Gibbsian line ensemble \eqref{eq:twolineGibbsmeasure}. The construction is very similar for both models, and we describe below the main ideas. 

As explained above, in the discrete setting, the Gibbsian line ensemble should be indexed by the vertices along a down-right path. Let us consider some  down-right path $\mathcal P$ in the strip (see Figure \ref{fig:strip and two layer graph in intro} (a)). To such a down-right path, we associate a two-layer graph $\PiP$, depicted in Figure \ref{fig:strip and two layer graph in intro} (b), which consists of two copies of the path $\mathcal P$, rotated counter-clockwise by $\pi/4$, with certain interaction edges between the two paths. We refer to Definition \ref{defn:two path diagram and configuration LPP} below for a precise definition.

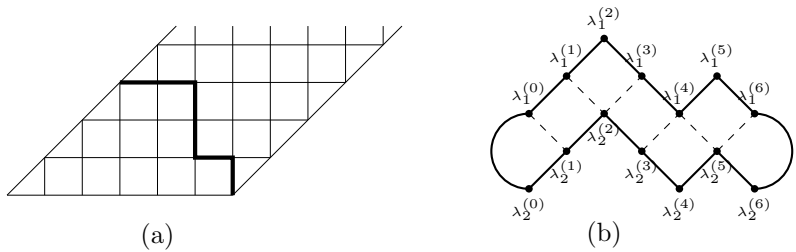
\begin{figure}[h]
\raisebox{-1pt}{\begin{tikzpicture}[scale=0.5]
\draw[very thin] (0,0)--(4.5,4.5);
\draw[very thin] (6,0)--(10.5,4.5);
\draw[very thin] (0,0)--(6,0);
\draw[very thin] (1,1)--(7,1);
\draw[very thin] (2,2)--(8,2);
\draw[very thin] (3,3)--(9,3);  
\draw[very thin] (4,4)--(10,4);  
\draw[very thin] (1,0)--(1,1);
\draw[very thin] (2,0)--(2,2);
\draw[very thin] (3,0)--(3,3);
\draw[very thin] (4,0)--(4,4);
\draw[very thin] (5,0)--(5,4.5);
\draw[very thin] (6,0)--(6,4.5);
\draw[very thin] (7,1)--(7,4.5);
\draw[very thin] (8,2)--(8,4.5); 
\draw[very thin] (9,3)--(9,4.5); 
\draw[very thin] (10,4)--(10,4.5); 
\draw[ultra thick] (3,3)--(5,3)--(5,1)--(6,1)--(6,0);  
\node [below] at (4,-0.5) {(a)};
\end{tikzpicture}}
\quad\quad\quad
\begin{tikzpicture}[scale=0.5]
			\draw[thick] (0,0) -- (2,2) -- (4,0) -- (5,1) -- (6,0);
			\draw[thick] (0,2) -- (2,4) -- (4,2) -- (5,3) -- (6,2);
			\draw[thick] (0,0) arc(270:90:1);
			\draw[thick] (6,2) arc(90:-90:1);
			\fill (0,0) circle(0.1);\node[below] at (0,0) {\fontsize{1}{5}\selectfont $\la_2^{(0)}$};
			\fill (1,1) circle(0.1);\node[below] at (1,1) {\fontsize{1}{5}\selectfont $\la_2^{(1)}$};
			\fill (2,2) circle(0.1);\node[below] at (2,2) {\fontsize{1}{5}\selectfont $\la_2^{(2)}$};
			\fill (3,1) circle(0.1);\node[below] at (3,1) {\fontsize{1}{5}\selectfont $\la_2^{(3)}$};
			\fill (4,0) circle(0.1);\node[below] at (4,0) {\fontsize{1}{5}\selectfont $\la_2^{(4)}$};
			\fill (5,1) circle(0.1);\node[below] at (5,1) {\fontsize{1}{5}\selectfont $\la_2^{(5)}$};
			\fill (6,0) circle(0.1);\node[below] at (6,0) {\fontsize{1}{5}\selectfont $\la_2^{(6)}$}; 
\fill (0,2) circle(0.1);\node[above] at (0,2) {\fontsize{1}{5}\selectfont $\la_1^{(0)}$};
\fill (1,3) circle(0.1);\node[above] at (1,3) {\fontsize{1}{5}\selectfont $\la_1^{(1)}$};
\fill (2,4) circle(0.1);\node[above] at (2,4) {\fontsize{1}{5}\selectfont $\la_1^{(2)}$};
\fill (3,3) circle(0.1);\node[above] at (3,3) {\fontsize{1}{5}\selectfont $\la_1^{(3)}$};
\fill (4,2) circle(0.1);\node[above] at (4,2) {\fontsize{1}{5}\selectfont $\la_1^{(4)}$};
\fill (5,3) circle(0.1);\node[above] at (5,3) {\fontsize{1}{5}\selectfont $\la_1^{(5)}$};
\fill (6,2) circle(0.1);\node[above] at (6,2) {\fontsize{1}{5}\selectfont $\la_1^{(6)}$}; 
\draw[dashed] (0,2) -- (1,1);
\draw[dashed] (1,3) -- (2,2);
\draw[dashed] (2,2) -- (3,3);
\draw[dashed] (3,1) -- (4,2);
\draw[dashed] (4,2) -- (5,1);
\draw[dashed] (5,1) -- (6,2); 
\node [below] at (2,-0.6) {(b)};
		\end{tikzpicture}
  \caption{(a) Strip $\S_N$ for $N=6$, with a down-right (thick) path $\hP$ depicted. (b) Two-layer graph $\PiP$ associated to the path $\hP$.} 
  \label{fig:strip and two layer graph in intro} 
\end{figure}

A two-layer configuration is an assignment of numbers $\la_i^{(j)}$, $i=1,2$, $1\leq j\leq N$ to each vertex of two-layer graph $\PiP$, see Figure \ref{fig:strip and two layer graph in intro} (b). These  take integer values in the case of geometric LPP and real values in the case of log-gamma polymer. We will use the shorthand $\bl_i:=(\la_i^{(0)},\dots,\la_i^{(N)})$ for $i=1,2$ and $\bl=(\bl_1,\bl_2)$. 

We assign to any two-layer configuration on $\PiP$ a weight $\wt^{\PiP}(\bl)$ given by the product of (model dependent) Boltzmann weights over all edges in the graph. The specific form of Boltzmann weights (see \eqref{eq:weightsLPP} and \eqref{eq:weightsLG} below) comes from the structure arising in discrete half-space Gibbs line ensemble \cite{baik2018pfaffian,barraquand2023kpz} discussed in the previous section. 
We will view the weight of a two-layer configuration as an infinite measure over all configurations on $\PiP$ --  referred as the two-layer Gibbs measure below --  which is translation invariant under adding the same constant to all $\la_i^{(j)}$. It may be thought of as a discrete analogue of the measure \eqref{eq:twolineGibbsmeasure}. 
 
It turns out that for both geometric LPP and the log-gamma polymer, the dynamics of the height function can be seen as a marginal of more general Markov dynamics that map the two layer Gibbs measure associated to one path to the two-layer Gibbs measure associated to another path. These Markov dynamics are constructed as follows. First, we notice that any down-right path $\hP$ can be updated to any down-right path $\hQ$ above it by sequentially performing the following three types of `local moves':

$$  
\begin{tikzpicture}[scale=0.5]
		\draw[dotted] (0,0) -- (1,0)--(1,1)--(0,1)--(0,0);
		\draw[very thick] (0,1) -- (0,0) -- (1,0);
		\end{tikzpicture}
  \quad 
\raisebox{5pt}{$\longmapsto$}
\quad 
\begin{tikzpicture}[scale=0.5]
		\draw[dotted] (0,0) -- (1,0)--(1,1)--(0,1)--(0,0);
		\draw[very thick] (0,1) -- (1,1) -- (1,0);
		\end{tikzpicture}
,\quad\quad
\begin{tikzpicture}[scale=0.5]
		\draw[dotted] (0,0) -- (0,1)--(-1,0)--(0,0);
            \draw[very thick] (0,0) -- (-1,0);
		\end{tikzpicture}
  \quad 
\raisebox{5pt}{$\longmapsto$}
\quad 
\begin{tikzpicture}[scale=0.5]
		\draw[dotted] (0,0) -- (0,1)--(-1,0)--(0,0);
		\draw[very thick] (0,0) -- (0,1);
		\end{tikzpicture}
,\quad\quad
\begin{tikzpicture}[scale=0.5]
		\draw[dotted] (0,0) -- (1,0)--(0,-1)--(0,0);
            \draw[very thick] (0,0) -- (0,-1);
		\end{tikzpicture}
  \quad 
\raisebox{5pt}{$\longmapsto$}
\quad 
\begin{tikzpicture}[scale=0.5]
		\draw[dotted] (0,0) -- (1,0)--(0,-1)--(0,0);
		\draw[very thick] (0,0) -- (1,0);
		\end{tikzpicture},
$$
which we respectively refer to as the bulk/left boundary/right boundary local move. 
We  associate to each local move $\hP\mapsto\widetilde{\hP}$ of a down-right path the corresponding local move of the two-layer graph $\PiP\mapsto\mathcal{G}\widetilde{\hP}$. 

We will define three types of Markov kernels $\U^{\rellcorner}$, $\U^{\reacuteangle}$ and $\U^{\rerotateangle}$ that map two-layer configurations $\bl$ on $\PiP$ to two-layer configurations $\bl'$ on $\mathcal{G}\widetilde{\hP}$. These Markov kernels are local, in the sense that they only update the configuration on the vertex which moves during the local move, and the update only depends on the configuration on neighbouring vertices. 
The bulk kernels $\U^{\rellcorner}$ that we chose below are reminiscent to so-called push-block dynamics, originally introduced in the context of the Schur process \cite{borodin2014anisotropic, warren2009some, borodin2011schur} building on ideas from \cite{diaconis1990strong}. It is possible that other type of dynamics, such as the so-called RSK-type  dynamics \cite{borodin2016nearest, matveev2016q, bufetov2018hall, bufetov2019yang} could be used as well, though the push-block type kernels seemed to be the most convenient choice for our purposes. Likewise, our boundary kernels $\U^{\reacuteangle}$ and $\U^{\rerotateangle}$ are similar with push-block type dynamics introduced in the context of half-space Schur and Macdonald processes  \cite{baik2018pfaffian,barraquand2020half}.  The existence of these kernels follows from certain identities that we show about the local weights in the bulk and boundary for the two-layer Gibbs measures. For instance, in the bulk we have 
\begin{equation}
\label{eq:skewCauchypictorial}
\sum_{\k_1,\k_2}\wt \lb\raisebox{-25pt}{\begin{tikzpicture}
        \draw[thick] (-0.4,0.4)--(0,0)--(0.4,0.4);
        \draw[thick] (-0.4,-0.4)--(0,-0.8)--(0.4,-0.4);
        \draw[dashed] (-0.4,-0.4)--(0,0)--(0.4,-0.4);
        \node[above right] at (0.3,0.3) {\tiny $\mu_1$};
        \node[above right] at (0.3,-0.5) {\tiny  $\mu_2$};
        \node[above left] at (-0.3,0.3) {\tiny  $\la_1$};
        \node[above left] at (-0.3,-0.5) {\tiny  $\la_2$};
       \node[above] at (0.2,0.2) {\textcolor{red}{\fontsize{1}{5}\selectfont $b$}};
        \node[above] at (0.2,-0.6) {\textcolor{red}{\fontsize{1}{5}\selectfont $b$}};
        \node[above] at (-0.2,0.2) {\textcolor{red}{\fontsize{1}{5}\selectfont $a$}};
        \node[above] at (-0.2,-0.6) {\textcolor{red}{\fontsize{1}{5}\selectfont $a$}};
        \node[below] at (0,0) {\tiny $\k_1$};
        \node[below] at (0,-0.8) {\tiny $\k_2$};
\end{tikzpicture}}\rb
=\sum_{\pi_1,\pi_2}\wt \lb\raisebox{-25pt}{\begin{tikzpicture}
    \draw[thick] (-0.4,0.4)--(0,0.8)--(0.4,0.4);
    \draw[dashed] (-0.4,0.4)--(0,0)--(0.4,0.4);
    \draw[thick] (-0.4,-0.4)--(0,0)--(0.4,-0.4);
    \node[below right] at (0.3,0.55) {\tiny $\mu_1$};
    \node[below right] at (0.3,-0.25) {\tiny  $\mu_2$};
    \node[below left] at (-0.3,0.55) {\tiny $\la_1$};
    \node[below left] at (-0.3,-0.25) {\tiny $\la_2$};
    \node[above] at (0,0) {\tiny $\pi_2$};
    \node[above] at (0,0.8) {\tiny $\pi_1$};
    \node[above] at (0.25,-0.25){\textcolor{red}{\fontsize{1}{5}\selectfont $a$}};
    \node[above] at (0.25,0.55){\textcolor{red}{\fontsize{1}{5}\selectfont $a$}};
    \node[above] at (-0.25,-0.25){\textcolor{red}{\fontsize{1}{5}\selectfont $b$}};
    \node[above] at (-0.25,0.55){\textcolor{red}{\fontsize{1}{5}\selectfont $b$}};
\end{tikzpicture}}\rb, 
\end{equation}
where on each side of the equation, the weights are products of Boltzmann weights over all the edges of the subgraphs depicted above. 
We have only stated here the identity corresponding to a bulk local move, and we refer to Propositions \ref{prop:Cauchy identity LPP} and \ref{prop:Littlewood identity LPP} (geometric LPP case) and Lemmas \ref{lem:Cauchy LG} and \ref{lem:Littlewood LG} (log-gamma polymer case)  for details. As we explain in Remark \ref{rem:remarkskewCauchy}, these identities may be seen as variations of the (skew) Cauchy and Littlewood identities in symmetric function theory. 

The kernels $\U^{\rellcorner}$, $\U^{\reacuteangle}$ and $\U^{\rerotateangle}$ also have the property that the first layer marginal of the two-layer dynamics coincides with the recurrence relations \eqref{eq:recurrence geometric LPP} and \eqref{eq:recurrence log gamma} satisfied respectively by  geometric LPP passage times  and by the log-gamma polymer partition functions.  
Hence, by construction, those marginal distributions of two-layer Gibbs measures $\wt^{\PiP}(\bl)$ are precisely the stationary measures we are after.

In order to arrive at a simple description for the stationary measures, we perform several additional steps, by adapting to the discrete setting the procedure that transforms the measure \eqref{eq:twolineGibbsmeasure} to the one in Definition \ref{def:KPZstat}. 
First of all, the two-layer Gibbs measures are translation invariant infinite measures. To normalize them into probability measures, we notice that under certain restrictions of the range of boundary parameters (analogous to $u+v>0$ in the case of the open KPZ equation),
the mass of these infinite measures with fixed value $\la_1^{(0)}$ is finite. 
Therefore one obtains bona fide stationary probability measures under those conditions. 

To get rid of the contraint on  boundary parameters, we finally average the aforementioned (stationary) probability measures over the `zero mode' $\Delta=\la_1^{(0)}-\la_2^{(0)}$. Remarkably, this yields probability measures on $\mathbb Z^N$ or $\mathbb R^N$ that are well-defined without any restrictive constraint on boundary parameters. This procedure is analogous to the one performed non-rigorously in \cite{barraquand2021steady} for stationary measures of the open KPZ equation, as we have mentioned below Theorem \ref{thm:open KPZ stationary measure}. Eventually, the proofs of stationarity of these measures to the full-range of boundary parameters follow analytic continuation arguments. More precisely, let us denote the stationary measure by  $\mathrm P: \mathbb K^N \to [0,1]$, where $\mathbb K$ is $\mathbb Z$ or $\mathbb R$. The dynamics of each model can be encoded by a Markov transition kernel $\mathbf U(\mathbf L_1'\vert \mathbf L_1)$ for $\mathbf L_1,\mathbf L_1'\in \mathbb K^N$, where $\mathbf L_1$ denote the sequence of last passage times or free energies along a path centered by the value on the left-boundary, and $\mathbf L_1'$ denote the same sequence along the same path translated by $(1,1)$, i.e., by $\tau_1$. Assume that under some restriction on boundary parameters (say $u+v>0$),  for  all $\mathbf L_1'\in \mathbb K^N$, we have an equation of the form 
\be 
\label{eq:genericstationarity}
 \sum_{\mathbf L_1'\in \mathbb K^N} \mathbf U(\mathbf L_1'\vert \mathbf L_1)\mathrm P(\mathbf L_1)= \mathrm P(\mathbf L_1'),
\ee
such as \eqref{eq:compatibility with probability measure after analytic continuatio LPP} and \eqref{eq:compatibility with probability measure after analytic continuatio LG} below (the summation is an integral when $\mathbb K=\R$), where $\mathbf U$ and $\mathrm P$ depend on boundary parameters. The equation may be analytically extended to a larger range of  $u,v$, provided both sides of \eqref{eq:genericstationarity} are real analytic functions of the variable  $u$. In the case of geometric LPP, we show that the RHS is analytic by direct inspection of an explicit formula, and we prove that the LHS is a power series in the boundary parameter with appropriate radius of convergence. In the log-gamma case the argument is considerably more involved as the sum in \eqref{eq:genericstationarity} becomes an integral. In Section \ref{sec:proof of real analyticity}, we demonstrate a stronger result than real analyticity. Using Morera's theorem and bounds on integrability we show that both sides are holomorphic functions in $u$ in a suitable open set.
\begin{remark}  
For open ASEP a simple argument shows that its stationary measure depends real analytically on bulk and boundary parameters $q,\alpha,\beta,\gamma,\delta$, in the region of the parameter space when the stationary measure is unique (we refer to \cite{derrida1993exact} for a precise definition of the model). 
This actually holds for a  wide class of Markov chains with finite state space.  Suppose $\mathcal{C}_{\boldsymbol{\theta}}$ is a family of continuous-time Markov chains with the same state space $\mathcal{S}$ (where $|\mathcal{S}|<\infty$) and with infinitesimal generator matrices $L_{\boldsymbol{\theta}}$ depending real analytically on a set of parameters $\boldsymbol{\theta}=(\theta_1,\dots,\theta_m)\in\Omega$, where $\Omega$ is an open subset of $\RR^m$. We also assume that the stationary measure of $\mathcal{C}_{\boldsymbol{\theta}}$, denoted $\mu_{\boldsymbol{\theta}}$,   is unique. Then $\mu_{\boldsymbol{\theta}}$ depend real 
analytically on $\boldsymbol{\theta}\in\Omega$. 

Indeed, by definition, the column vector $\mu_{\boldsymbol{\theta}}\in\RR^{|\mathcal{S}|}$ is the unique solution of equations $L_{\boldsymbol{\theta}}^*\mu_{\boldsymbol{\theta}}=0$ and $[1,\dots,1]\mu_{\boldsymbol{\theta}}=1$. 
The system can be solved by Gaussian elimination, so that each component of  $\mu_{\boldsymbol{\theta}}$  can be expressed as a rational function of the coefficients in $L_{\boldsymbol{\theta}}^*$, hence $\mu_{\boldsymbol{\theta}}$ depends real analytically on $\boldsymbol{\theta}\in\Omega$.

This argument does not apply for the geometric LPP and log-gamma polymer models, which have infinite state spaces. It may, however apply to some of the models that we mention now in Section \ref{sec:extensions}
\end{remark} 

\subsection{Extensions of our method}
\label{sec:extensions} Our methods should be extendable to a broad class of models on a strip from integrable probability that satisfy Cauchy and Littlewood type summation identities. From these it should be possible to construct two-layer Gibbs measures and Markovian dynamics that preserve them as well as project on the top layer to the models in question. The class of models we anticipate being able to approach includes those coming from symmetric functions with explicit and local branching relations in the Macdonald hierarchy as well as the symmetric functions coming from stochastic vertex models.

As explained in Section \ref{subsec:Skew Schur functions indexed by signatures LPP}, our construction of geometric LPP stationary measures is closely related to Schur functions.  Based on \cite{o2014geometric}, and the form of the weights \eqref{eq:weightsLG} below, there exists a similar relation between the log-gamma polymer on a strip  and $\mathfrak{gl}_n(\RR)$-Whittaker functions. We have not made this connection fully explicit in Section \ref{sec:LG} below, in order to keep the proofs elementary and self-contained, see Remark \ref{rem:Whittaker}.  
Since the family of $q$-Whittaker functions interpolates between the Schur and Whittaker functions, our methods should be adaptable to the $q$-Whittaker case. Models connected to the $q$-Whittaker process, such as the $q$-pushTASEP are discussed in \cite{matveev2016q, borodin2013discrete} in full-space and in \cite{barraquand2020half,imamura2022solvable} in half-space.  
It would also be interesting to adapt our methods to models related to Hall-Littlewood functions, in particular the stochastic six-vertex model and ASEP \cite{borodin2016stochastic, borodin2016between, he2023boundary}. In this case, the Gibbs line ensemble structure has been already studied in \cite{corwin2018transversal} (in the full-space setting), and it turns out that the summation identity of the form \eqref{eq:skewCauchypictorial} has already been proved in \cite{bufetov2018hall} (see more details in Remark \ref{rem:remarkskewCauchy} below). Hence, provided the appropriate Littlewood type summation identity holds as well, it seems very likely that the method of the present paper can be adapted.

Beyond the class of Macdonald symmetric functions, it would be interesting to consider models associated with symmetric rational functions built as partition functions of stochastic vertex models \cite{corwin2015stochastic, borodin2018higher}, in particular the spin Hall-Littlewood functions \cite{borodin2017family} and spin $q$-Whittaker functions \cite{borodin2021spin}. Again, the summation identity \eqref{eq:skewCauchypictorial} has also been proved for the spin Hall-Littlewood functions, using a Yang-Baxter ``zipper'' argument which is likely to be applicable to many other cases, including non-symmetric functions that are related to colored vertex models and interacting particle systems  with particles of multiple types  \cite{borodin2018coloured, aggarwal2021colored}. Skew Littlewood type identities for spin Hall-Littlewood functions  have been also proved in \cite{chen2021stable, gavrilova2021refined}, and although these are slightly different from what we need to build stationary measures (we need to consider summations over all integer signatures instead of summations over nonnegative signatures), we believe that similar arguments would yield the appropriate identities.

There exists another direction in which our method could be extended.  In the case of the geometric LPP model, our two-layer Gibbs measures can be viewed as  products of skew Schur functions indexed by  integer signatures (not necessarily nonnegative) of length $2$. We refer to Section \ref{subsec:Skew Schur functions indexed by signatures LPP} for background on Schur functions and signatures. 
Thus, the two-layer Gibbs measures have a structure similar to  Schur processes \cite{okounkov2003correlation} and their variants with boundaries  \cite{borodin2005eynard, betea2018free}, modulo the important difference that our measures live on sequences of integer signatures  instead of sequences of partitions, and we are considering an infinite measure instead of a probability measure. We refer to Remark \ref{rem:comparisonwithSchurmeasures} for more details on Schur processes. Based on this analogy, our construction may be generalized in the following directions.
As the summation identities from Section \ref{subsec:Skew Schur functions indexed by signatures LPP} hold for an arbitrary number of variables, they must hold in the ring of symmetric functions, and hence one could study the Gibbs measures associated to other types of specializations of the ring of symmetric functions (see \cite{matveev2016q} for an illustration of how different specializations are related to different stochastic models). Moreover, these summation identities (Proposition \ref{prop:Cauchy identity LPP} and Proposition \ref{prop:Littlewood identity LPP}) also hold for signatures of arbitrary (even) length. Hence one can construct $n$-layer Gibbs measures and associated  Markov dynamics  on sequences of signatures of length $n$. It would be interesting to understand the probabilistic information that is contained in those (infinite) measures. 

\subsection{Outline of the paper}
In Section \ref{sec:LPP}, we consider an inhomogeneous generalization of the geometric LPP model from Definition \ref{defn:inhomogeneous geometric LPP and specializations} and prove Theorem \ref{thm:main theorem LPP} which generalizes Theorem \ref{thm:stationary measure LPP intro}.
Section \ref{sec:LG} contains an inhomogeneous generalization of the log-gamma polymer model from Definition \ref{defn:inhomogeneous LG and specializations} and proves Theorem \ref{thm:main theorem LG} which generalizes Theorem \ref{thm:stationary measure log-gamma intro}.
Section \ref{sec:stationary measure KPZ} consider the intermediate disorder scaling of log-gamma polymer model on a strip (Conjecture \ref{conj:convergence}). Proposition \ref{prop:convergencestationarymeasures} shows that the stationary measure for the log-gamma polymer in Theorem \ref{thm:stationary measure log-gamma intro} converges to the open KPZ stationary measure. Hence, modulo Conjecture \ref{conj:convergence} on convergence of the model, we prove Theorem \ref{thm:KPZ stat}, verifying the conjectured \cite{barraquand2021steady} open KPZ stationary measure.

\subsection*{Notation}
We write $\ZZ_{\geq a}:= \ZZ\cap [a,\infty)$. Bold face letters such as $\bX$ are used to denote vectors. For a distribution $D$, we write $X \sim D$ to mean that $X$ is a random variable with distribution $D$ and generally assume $X$ to be independent of other random variables. We write $U(x'|x)$ (or $U$ in various fonts) for transition probabilities from $x$ to $x'$. When the $x,x'$ take discrete values (i.e., $\ZZ$ or products of $\ZZ$) as in the `geometric' case, $U$ is a transition probability;  when $x,x'$ take continuous values (i.e., $\RR$ or products of $\RR$) as in the `log-gamma' case, $U$ is a transition probability density. In that case, we will also use Dirac delta functions if we want to indicate that $U$ acts as the identity on certain coordinates. We will use a subscript $\lpp$ and $\lgg$ to distinguish between the `geometric' and `log-gamma' cases of various notation. A summary of much of the notation used is contained in the following table.
$$
\begin{tabular}{ |p{4.2cm}|p{2cm}|p{3.2cm}|p{2cm}|p{3.2cm}|  }
\hline 
 & \multicolumn{2}{|c|}{Geometric LPP} & \multicolumn{2}{|c|}{Log-gamma polymer}\\
\hline 
 & Two-layer & First-layer marginal & Two-layer & First-layer marginal \\ 
\hline
Random walk measures & $\PP^{\aa,\bb}_{\grw}$ & & $\PP^{\aaa,\bbb}_{\lgrw}$ & \\
Reweighting functions & $V^{c_1,c_2}_{\lpp}$ & & $V^{u,v}_{\lgg}$ & \\
Reweighted random walks & $\PP^{\bb,c_1,c_2}_{\lpp}$ & $\mathrm{P}^{\bb,c_1,c_2}_{\lpp}$ & $\PP^{\bbb,u,v}_{\lgg}$ & $\mathrm{P}^{\bbb,u,v}_{\lgg}$ \\
Gibbs measures & $\wt_{\lpp}^\PiP$ & $\wt_{\lpp}^{\hP}$& $\wt_{\lgg}^\PiP$& $\wt_{\lgg}^\hP$\\
 Gibbs probability measures & & $\mathrm{P}_{\lpp}^{\hP}$ & & $\mathrm{P}_{\lgg}^{\hP}$\\
Markov Dynamics & $\mathcal{U}_{\lpp}^{\mathcal{GP},\mathcal{GQ}}$ & $\mathbf{U}_{\lpp}^{\mathcal{P},\mathcal{Q}}$ & $\mathcal{U}_{\lgg}^{\mathcal{GP},\mathcal{GQ}}$ &
$\mathbf{U}_{\lgg}^{\mathcal{P},\mathcal{Q}}$\\
Bulk Markov operators & $\mathcal{U}_{\lpp}^{\rellcorner}$ & $\mathbf{U}_{\lpp}^{\rellcorner}$ & $\mathcal{U}_{\lgg}^{\rellcorner}$ & $\mathrm{U}_{\lgg}^{\rellcorner}$\\
Left boundary operators & $\mathcal{U}_{\lpp}^{\reacuteangle}$ & $\mathbf{U}_{\lpp}^{\reacuteangle}$ & $\mathcal{U}_{\lgg}^{\reacuteangle}$ & $\mathrm{U}_{\lgg}^{\reacuteangle}$\\
Right boundary operators & $\mathcal{U}_{\lpp}^{\rerotateangle}$ & $\mathbf{U}_{\lpp}^{\rerotateangle}$ & $\mathcal{U}_{\lgg}^{\rerotateangle}$ & $\mathrm{U}_{\lgg}^{\rerotateangle}$\\
\hline
\end{tabular} 
$$

\subsection*{Acknowledgements} 
G.B. and I.C. would like to acknowledge the `Universality and Integrability in Random Matrix Theory and Interacting Particle Systems' semester program at MSRI in Fall 2021 (NSF grant DMS:1928930) during which discussions related to this paper were initialized.
G.B. was partially supported by Agence Nationale de la Recherche through grants ANR-21-CE40-0019 and ANR-23-ERCB-0007. 
I.C. was partially supported by the NSF through grants DMS:1811143 and DMS:1937254, through the Simons Foundation through a Fellowship in Mathematics grant (\#817655) and an Investigator in Mathematics grant (\#929852), and through the W.~M.~Keck Foundation Science and Engineering Grant on Extreme Diffusion.
Z.Y. was partially supported by I.C.’s NSF grant DMS-1811143 as well as the Fernholz Foundation’s `Summer Minerva Fellows' program.

\section{Stationary measure for geometric LPP on a strip}
\label{sec:LPP}
\subsection{LPP with inhomogeneous weights} 
\label{subsec:LPP with inhomogeneous weights}
We define the geometric LPP with inhomogeneous weights and then state the main theorem of this section constructing its stationary measure on a horizontal path, which generalizes Theorem \ref{thm:stationary measure LPP intro} in the introduction.

\begin{definition}[Inhomogeneous geometric LPP]\label{defn:inhomogeneous geometric LPP and specializations} 
Let $a_1,\dots, a_N>0$ and $c_1,c_2>0$ be such that:
\be\label{eq:condition for inhomogeneous geometric LPP}
a_ia_j<1,\quad a_ic_1<1,\quad a_ic_2<1,\quad\forall 1\leq i,j\leq N.
\ee 
We will always assume these conditions in this paper as they are necessary for the geometric random variables below to be defined. We call the $a_i$ `bulk parameters' and the $c_1,c_2$ `boundary parameters'.
Define $a_{j+kN}=a_j$ for $k\in\mathbb{Z}$ and $1\leq j\leq N$.
We define the inhomogeneous version of geometric LPP on a strip by the same recurrence \eqref{eq:recurrence geometric LPP} and initial condition as the homogeneous model, but now with $\omega_{n,m}\sim\Geom(a_na_m)$ in the bulk $0\leq m<n<m+N$, $\omega_{m,m}\sim\Geom(a_mc_1)$ on the left boundary and $\omega_{m+N,m}\sim\Geom(a_mc_2)$ on the right boundary.  
We label each edge of the strip by a number, which will be needed later to define Gibbs  measures. In particular, we label the horizontal edge $(n-1,m)\rightarrow(n,m)$ by $a_n$ and the vertical edge $(n,m-1)\rightarrow(n,m)$ by $a_m$. See Figure \ref{fig:strip for N=5} for an illustration.
\end{definition}
\begin{figure}[h]
    \centering
    \begin{tikzpicture}[scale=1.5]
\draw[dotted] (0,0)--(3.5,3.5);
\draw[dotted] (5,0)--(8.5,3.5);
\draw[dotted] (0,0)--(5,0);
\draw[dotted] (1,1)--(6,1);
\draw[dotted] (2,2)--(7,2);
\draw[dotted] (3,3)--(8,3);
\draw[dotted] (1,0)--(1,1);
\draw[dotted] (2,0)--(2,2);
\draw[dotted] (3,0)--(3,3);
\draw[dotted] (4,0)--(4,3.5);
\draw[dotted] (5,0)--(5,3.5);
\draw[dotted] (6,1)--(6,3.5);
\draw[dotted] (7,2)--(7,3.5);
\draw[dotted] (8,3)--(8,3.5);
\node at (0,0) {\small $c_1a_5$};
\node at (1,0) {\small $a_1a_5$};
\node at (2,0) {\small $a_2a_5$};
\node at (3,0) {\small $a_3a_5$};
\node at (4,0) {\small $a_4a_5$};
\node at (5,0) {\small $c_2a_5$};
\node at (1,1) {\small $c_1a_1$};
\node at (2,1) {\small $a_2a_1$};
\node at (3,1) {\small $a_3a_1$};
\node at (4,1) {\small $a_4a_1$};
\node at (5,1) {\small $a_5a_1$};
\node at (6,1) {\small $c_2a_1$};
\node at (2,2) {\small $c_1a_2$};
\node at (3,2) {\small $a_3a_2$};
\node at (4,2) {\small $a_4a_2$};
\node at (5,2) {\small $a_5a_2$};
\node at (6,2) {\small $a_1a_2$};
\node at (7,2) {\small $c_2a_2$};
\node at (3,3) {\small $c_1a_3$};
\node at (4,3) {\small $a_4a_3$};
\node at (5,3) {\small $a_5a_3$};
\node at (6,3) {\small $a_1a_3$};
\node at (7,3) {\small $a_2a_3$};
\node at (8,3) {\small $c_2a_3$};
\node at (0.5,0) {\textcolor{red}{\small $a_1$}};
\node at (1.5,0) {\textcolor{red}{\small $a_2$}};
\node at (2.5,0){\textcolor{red}{\small $a_3$}};
\node at (3.5,0) {\textcolor{red}{\small $a_4$}};
\node at (4.5,0) {\textcolor{red}{\small $a_5$}};
\node at (1.5,1) {\textcolor{red}{\small $a_2$}};
\node at (2.5,1) {\textcolor{red}{\small $a_3$}};
\node at (3.5,1) {\textcolor{red}{\small $a_4$}};
\node at (4.5,1) {\textcolor{red}{\small $a_5$}};
\node at (5.5,1) {\textcolor{red}{\small $a_1$}};
\node at (2.5,2) {\textcolor{red}{\small $a_3$}};
\node at (3.5,2) {\textcolor{red}{\small $a_4$}};
\node at (4.5,2) {\textcolor{red}{\small $a_5$}};
\node at (5.5,2){\textcolor{red}{\small $a_1$}};
\node at (6.5,2) {\textcolor{red}{\small $a_2$}};
\node at (3.5,3) {\textcolor{red}{\small $a_4$}};
\node at (4.5,3) {\textcolor{red}{\small $a_5$}};
\node at (5.5,3) {\textcolor{red}{\small $a_1$}};
\node at (6.5,3) {\textcolor{red}{\small $a_2$}};
\node at (7.5,3) {\textcolor{red}{\small $a_3$}};
\node at (1,0.5){\textcolor{red}{\small $a_1$}};
\node at (2,0.5){\textcolor{red}{\small $a_1$}};
\node at (3,0.5){\textcolor{red}{\small $a_1$}};
\node at (4,0.5) {\textcolor{red}{\small $a_1$}};
\node at (5,0.5) {\textcolor{red}{\small $a_1$}};
\node at (2,1.5) {\textcolor{red}{\small $a_2$}};
\node at (3,1.5) {\textcolor{red}{\small $a_2$}};
\node at (4,1.5) {\textcolor{red}{\small $a_2$}};
\node at (5,1.5) {\textcolor{red}{\small $a_2$}};
\node at (6,1.5) {\textcolor{red}{\small $a_2$}};
\node at (3,2.5){\textcolor{red}{\small $a_3$}};
\node at (4,2.5) {\textcolor{red}{\small $a_3$}};
\node at (5,2.5) {\textcolor{red}{\small $a_3$}};
\node at (6,2.5) {\textcolor{red}{\small $a_3$}};
\node at (7,2.5){\textcolor{red}{\small $a_3$}};
\end{tikzpicture}
    \caption{Definition of the model for $N=5$. The numbers on vertices are parameters of the geometric random variables. The numbers labelling the edges are in red.}
    \label{fig:strip for N=5}
\end{figure}
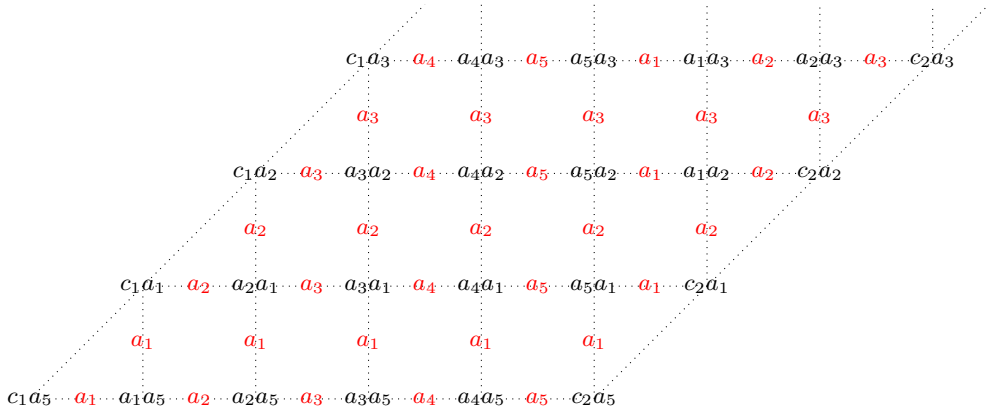

Next we introduce the inhomogeneous version of reweighted geometric random walk generalizing Definition \ref{defn:rescaled random walks LPP}. We then introduce the main theorem of this section that this reweighted random walk is stationary under inhomogeneous geometric LPP, which generalizes Theorem \ref{thm:stationary measure LPP intro}. 

\begin{definition}[Reweighted inhomogeneous geometric random walks] \label{defn:rescaled random walks LPP inhomogeneous}
Assume that $(a_i)_{1\leq i \leq N}\in (0,1)^N$ and $c_1,c_2$ satisfy \eqref{eq:condition for inhomogeneous geometric LPP}.
Suppose $\hP$ is a horizontal path with labels on edges $\bb=(b_1,\dots,b_N)$ from left to right ($\bb$ must be a cyclic shift of $\aa=(a_1,\dots,a_N)$).
Consider two independent random walks  $\bL_1=\big(L_1(j)\big)_{1\leq j\leq N}\in \ZZ_{\geq 0}^{N}$ and $\bL_2=\big(L_2(j)\big)_{1\leq j\leq N}\in \ZZ_{\geq 0}^{N}$ starting from  $L_1(0)=L_2(0)=0$ with independent increments distributed for $1\leq j\leq N$ as
$$L_1{(j)}-L_1{(j-1)} \sim \Geom(b_j),\quad\textrm{and}\quad L_2{(j)}-L_2{(j-1)}\sim \Geom(b_j).$$
We use shorthand $\bL=(\bL_1,\bL_2)$ and denote by $\PP^{\bb,\bb}_{\grw}\lbe\bL\rbe$  and $\mathbb{E}^{\bb,\bb}_{\grw}$  the associated probability measure and expectation.
We define a new probability measure $\PP_{\stat\lpp}^{\bb,c_1,c_2}$ by reweighting the measure $\PP^{\bb,\bb}_{\grw}$ as 
 \be \label{eq:PstatLPP inhomogeneous}
\PP^{\bb,c_1,c_2}_{\stat\lpp}(\bL):= \frac{V^{c_1,c_2}_{\lpp}(\bL)\PP^{\bb,\bb}_{\grw}(\bL)}{\hZ^{\bb,c_1,c_2}_{\lpp}}\quad\textrm{where}\quad V^{c_1,c_2}_{\lpp}(\bL):=(c_1c_2)^{\max_{1\leq j\leq N}(L_2{(j)}-L_1{(j-1)})}c_2^{L_1{(N)}-L_2{(N)}},\ee 
and the normalizing constant 
$\hZ^{\bb,c_1,c_2}_{\lpp}:=\mathbb{E}^{\bb,\bb}_{\grw}\lbE V^{c_1,c_2}_{\lpp}(\bL)\rbE$
 will be proved to be finite in Proposition \ref{prop:finiteness of weights after summing zero mode LPP}. Let $\mathrm{P}^{\bb,c_1,c_2}_{\stat\lpp}(\bL_1)$ denote the marginal law of $\bL_1$ for the probability measure $\PP^{\bb,c_1,c_2}_{\stat\lpp}(\bL)$ where $\bL=(\bL_1,\bL_2)$.
\end{definition}

The following is the main theorem in this section. 

 \begin{theorem}\label{thm:main theorem LPP}
Assume that $(a_i)_{1\leq i \leq N}\in (0,1)^N$ and $c_1,c_2>0$ satisfy \eqref{eq:condition for inhomogeneous geometric LPP}.
Suppose $\bb=(b_1,\dots,b_N)$ are labels on some horizontal path $\hP$, and $\tau_1\bb=(b_2,\dots,b_N,b_1)$ are labels on the translated path $\tau_1\hP$. 
Consider the inhomogeneous geometric LPP model from Definition \ref{defn:inhomogeneous geometric LPP and specializations} starting from an initial condition along $\hP$ given by $(G(\p_j))_{0\leq j\leq N}$ where $(G(\p_j)-G(\p_0))_{1\leq j\leq N}$ is distributed according to $\mathrm{P}^{\bb,c_1,c_2}_{\stat\lpp}$.
Then the distribution of $(G(\tau_1\p_j)-G(\tau_1\p_0))_{1\leq j\leq N}$ coincides with
$\mathrm{P}^{\tau_1\bb,c_1,c_2}_{\stat\lpp}$.
For the homogeneous (i.e. $\aa=(a,\ldots, a)$) case,
$\mathrm{P}^{\aa,c_1,c_2}_{\stat\lpp}$ is the (unique) ergodic stationary measure on the horizontal path $\hP_h$ for the geometric LPP recurrence relation (Definition \ref{defn:homogeneous geometric LPP}).
\end{theorem}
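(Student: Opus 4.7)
The plan is to realize $\mathrm{P}_{\stat\lpp}^{\bb,c_1,c_2}$ as the top-layer marginal of a two-layer Gibbs measure $\wt_{\lpp}^{\PiP}$ on the two-layer graph $\PiP$ associated to $\hP$, and to construct a Markov kernel on two-layer configurations that preserves this Gibbs structure and whose top-layer marginal implements the one-step translation $\hP \mapsto \tau_1\hP$ of the LPP recurrence. This is the concrete implementation, for the strip, of the general mechanism outlined in Section~\ref{subsec:stationary Gibbs line ensembles}.

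First I would pin down the Boltzmann weights on $\PiP$: the edges of each of the two zig-zag copies of $\hP$ carry weights reflecting geometric-distributed increments in each layer (with the edge labels from Figure~\ref{fig:strip for N=5} as parameters), while the dashed interaction edges and the two boundary arcs contribute the factors that will, after partial summation, produce the prefactor $V_{\lpp}^{c_1,c_2}$ in \eqref{eq:PstatLPP inhomogeneous}. Since the two-layer measure is invariant under simultaneous translation of both layers, it is an infinite measure in the zero mode $\Delta=\la_1^{(0)}-\la_2^{(0)}$. Fixing $\la_1^{(0)}$ and summing over the second layer gives a finite measure on the top layer under a narrower condition on boundary parameters, and averaging the result over $\Delta$ then reproduces $V_{\lpp}^{c_1,c_2}(\bL_1,\bL_2)\,\PP_{\grw}^{\bb,\bb}(\bL_1,\bL_2)$; the maximum $\max_j(L_2(j)-L_1(j-1))$ in $V_{\lpp}^{c_1,c_2}$ emerges as the threshold between convergence and divergence of the zero-mode geometric series.

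Next, I would decompose $\hP \mapsto \tau_1 \hP$ as a finite sequence of local moves (of left-boundary, bulk, and right-boundary type). To each local move I attach the corresponding Markov kernel $\cU_{\lpp}^{\rellcorner}$, $\cU_{\lpp}^{\reacuteangle}$, or $\cU_{\lpp}^{\rerotateangle}$ on two-layer configurations, designed so that its top-layer marginal coincides with the LPP update \eqref{eq:recurrence geometric LPP} at the vertex being pushed. Invariance of the two-layer Gibbs weight under each of these kernels is equivalent to a summation identity of Cauchy or Littlewood type, of which \eqref{eq:skewCauchypictorial} is the pictorial version in the bulk; I would prove these either by direct manipulation of geometric series or by recognizing them as the length-two specialization of skew Cauchy/Littlewood identities for Schur functions indexed by signatures. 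Composing the kernels and projecting to the top layer then transports $\mathrm{P}_{\stat\lpp}^{\bb,c_1,c_2}$ forward to $\mathrm{P}_{\stat\lpp}^{\tau_1\bb,c_1,c_2}$.

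The main obstacle is extending this preservation identity from the restricted boundary-parameter regime in which all Gibbs-measure sums converge absolutely to the full admissible region \eqref{eq:condition for inhomogeneous geometric LPP}. Following the analytic continuation scheme outlined around \eqref{eq:genericstationarity}, I would view both sides of the stationarity identity, with the target increment vector fixed, as functions of $(c_1,c_2)$ and show that they are real-analytic (indeed holomorphic, via Morera's theorem and uniform bounds on the tails that rest on the finiteness of $\hZ_{\lpp}^{\bb,c_1,c_2}$ throughout the admissible region, Proposition~\ref{prop:finiteness of weights after summing zero mode LPP}). The identity then extends from the restricted regime to the full regime by uniqueness of analytic continuation. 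In the homogeneous case $\aa=(a,\ldots,a)$, the one-step translation induces an irreducible aperiodic Markov chain on $\ZZ_{\geq 0}^N$ (geometric jumps can realize arbitrary increments with positive probability, and the all-zero jump is also available), so the existence of a stationary probability measure $\mathrm{P}_{\stat\lpp}^{\aa,c_1,c_2}$ upgrades by standard Markov chain theory to positive recurrence, uniqueness, and convergence from any initial condition.
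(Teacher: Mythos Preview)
Your outline matches the paper's proof almost exactly: two-layer Gibbs measure, local push-block kernels built from Cauchy/Littlewood-type identities (Propositions~\ref{prop:Cauchy identity LPP} and~\ref{prop:Littlewood identity LPP}), top-layer marginal recovering the LPP recursion (Lemma~\ref{lem:marginal operator LPP}), summing out the zero mode to land on $\mathrm{P}_{\stat\lpp}^{\bb,c_1,c_2}$ (Proposition~\ref{prop:probability measure coincide LPP}), analytic continuation past the restricted regime, and finally irreducibility for uniqueness.

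The one place you diverge is the analytic continuation mechanism. You propose Morera's theorem with uniform tail bounds; the paper reserves that machinery for the log-gamma case (Section~\ref{sec:proof of real analyticity}), where the state space is $\RR$ and integrals replace sums. For geometric LPP the paper exploits a much cheaper observation: since $\max_{1\leq j\leq N}(L_2(j)-L_1(j-1))\geq L_2(1)\geq 0$, the reweighting $V_{\lpp}^{c_1,c_2}$ is a \emph{nonnegative} power of $c_1$, and the left boundary kernel is $(1-a_1c_1)$ times a power of $a_1c_1$. Hence both sides of the stationarity identity \eqref{eq:in the proof of stationarity after analytic continuation LPP} are of the form $(1-a_1c_1)f(c_1)=g(c_1)$ with $f,g$ power series in $c_1$ with nonnegative coefficients, known to be finite on the full interval $0<c_1<1/\max(a_1,\ldots,a_N)$ by Proposition~\ref{prop:finiteness of weights after summing zero mode LPP}. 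Real analyticity and the extension then follow immediately from the power-series structure, with no complex analysis needed. Your Morera approach would also work here, but it is heavier than necessary.
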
 
The proof of this theorem is given in Section \ref{subsec:proof of main theorem LPP}. In Section \ref{subsec:Two-layer Gibbs-like measure} we construct the two-layer Gibbs measures indexed by down-right paths. 
We prove certain summation identities of symmetric functions in Section \ref{subsec:Skew Schur functions indexed by signatures LPP}, based on which we will construct in Section \ref{subsec:local Markov operators LPP} a Markov dynamics under which the two-layer Gibbs measures are stationary. Under $c_1c_2<1$, in Section \ref{subsec:Marginal distributions LPP} we will turn the two-layer Gibbs measures into probability measures which are stationary under geometric LPP. To obtain stationarity outside $c_1c_2<1$, in Section \ref{subsec:proof of main theorem LPP} we argue by the uniqueness of analytic continuation of real analytic functions.  
Let us also note here that though this theorem is formulated along only horizontal paths, it is possible to formulate and prove such a result along any down-right path in a similar manner.

\subsection{Two-layer Gibbs measure}\label{subsec:Two-layer Gibbs-like measure}
\begin{definition}[Two-layer graph]\label{defn:two path diagram and configuration LPP}
For any down-right path $\hP$ on the strip we create a two-layer graph $\PiP$ by the following steps. We first rotate $\hP$ counter-clockwise by $\pi/4$ to get the upper layer (i.e., all $\downarrow$ edges in $\hP$ become $\searrow$ in $\PiP$ and $\rightarrow$ edges in $\hP$ become $\nearrow$ in $\PiP$). 
The vertices of the upper layer are denoted by $\p_1^{(0)},\dots,\p_1^{(N)}$ and edges $\e_1^{(1)},\dots,\e_1^{(N)}$ from the left to the right, where $\e_1^{(j)}$ connects $\p_1^{(j-1)}$ and $\p_1^{(j)}$, for $1\leq j\leq N$.
The lower layer is the downwards translation of the upper layer by $\sqrt{2}$, with vertices  $\p_2^{(0)},\dots,\p_2^{(N)}$ and edges $\e_2^{(1)},\dots,\e_2^{(N)}$ from left to  right.
We draw dotted edges of slope $\pi/4$ or $-\pi/4$ to connect those pairs of vertices with distance $1$ that have not been connected by the edges in the layers (i.e., the edges between the layers). Finally, we draw two solid arcs on the left and right boundaries, connecting respectively $\p_1^{(0)}$ with $\p_2^{(0)}$ and $\p_1^{(N)}$ with $\p_2^{(N)}$. This is the graph $\PiP$.
Next we label all the solid edges and arcs real numbers (i.e., one-variable `specializations'). For $1\leq j\leq N$ the label for edges $\e_1^{(j)}$ in the upper layer and $\e_2^{(j)}$ in the lower layer is that of the edge $\e_j$ in the down-right path $\hP$ in the strip. The left boundary arc is labelled by $c_1$ and the right boundary arc is labelled by $c_2$.
 
A two-layer configuration $\bl=(\la_1^{(0)},\dots,\la_1^{(N)},\la_2^{(0)},\dots,\la_2^{(N)})$ is an assignment of $\la_i^{(j)}\in\ZZ$ to each vertex $\p_i^{(j)}$ of $\PiP$, for $i=1,2$ and $j=0,\dots,N$. See Figure \ref{fig:two-path diagram} for an illustration of these definitions.
\end{definition}

\begin{figure}[h]
\raisebox{-1pt}{\begin{tikzpicture}[scale=0.9]
\draw[dotted] (0,0)--(3.5,3.5);
\draw[dotted]  (6,0)--(9.5,3.5);
\draw[dotted]  (0,0)--(6,0);
\draw[dotted]  (1,1)--(7,1);
\draw[dotted]  (2,2)--(8,2);
\draw[dotted]  (3,3)--(9,3);   
\draw[dotted]  (1,0)--(1,1);
\draw[dotted]  (2,0)--(2,2);
\draw[dotted]  (3,0)--(3,3);
\draw[dotted]  (4,0)--(4,3.5);
\draw[dotted]  (5,0)--(5,3.5);
\draw[dotted]  (6,0)--(6,3.5);
\draw[dotted]  (7,1)--(7,3.5);
\draw[dotted]  (8,2)--(8,3.5); 
\draw[dotted]  (9,3)--(9,3.5);  
\node at (0.5,0) {\textcolor{red}{\small $a_1$}};
\node at (1.5,0) {\textcolor{red}{\small $a_2$}};
\node at (2.5,0){\textcolor{red}{\small $a_3$}};
\node at (3.5,0) {\textcolor{red}{\small $a_4$}};
\node at (4.5,0) {\textcolor{red}{\small $a_5$}};
\node at (5.5,0) {\textcolor{red}{\small $a_6$}};
\node at (1.5,1) {\textcolor{red}{\small $a_2$}};
\node at (2.5,1) {\textcolor{red}{\small $a_3$}};
\node at (3.5,1) {\textcolor{red}{\small $a_4$}};
\node at (4.5,1) {\textcolor{red}{\small $a_5$}};
\node at (5.5,1) {\textcolor{red}{\small $a_6$}};
\node at (6.5,1) {\textcolor{red}{\small $a_1$}};
\node at (2.5,2) {\textcolor{red}{\small $a_3$}};
\node at (3.5,2) {\textcolor{red}{\small $a_4$}};
\node at (4.5,2) {\textcolor{red}{\small $a_5$}};
\node at (5.5,2){\textcolor{red}{\small $a_6$}};
\node at (6.5,2) {\textcolor{red}{\small $a_1$}};
\node at (7.5,2) {\textcolor{red}{\small $a_2$}};
\node at (3.5,3) {\textcolor{red}{\small $a_4$}};
\node at (4.5,3) {\textcolor{red}{\small $a_5$}};
\node at (5.5,3) {\textcolor{red}{\small $a_6$}};
\node at (6.5,3) {\textcolor{red}{\small $a_1$}};
\node at (7.5,3) {\textcolor{red}{\small $a_2$}};
\node at (8.5,3) {\textcolor{red}{\small $a_3$}};
\node at (1,0.5){\textcolor{red}{\small $a_1$}};
\node at (2,0.5){\textcolor{red}{\small $a_1$}};
\node at (3,0.5){\textcolor{red}{\small $a_1$}};
\node at (4,0.5) {\textcolor{red}{\small $a_1$}};
\node at (5,0.5) {\textcolor{red}{\small $a_1$}};
\node at (6,0.5) {\textcolor{red}{\small $a_1$}};
\node at (2,1.5) {\textcolor{red}{\small $a_2$}};
\node at (3,1.5) {\textcolor{red}{\small $a_2$}};
\node at (4,1.5) {\textcolor{red}{\small $a_2$}};
\node at (5,1.5) {\textcolor{red}{\small $a_2$}};
\node at (6,1.5) {\textcolor{red}{\small $a_2$}};
\node at (7,1.5) {\textcolor{red}{\small $a_2$}};
\node at (3,2.5){\textcolor{red}{\small $a_3$}};
\node at (4,2.5) {\textcolor{red}{\small $a_3$}};
\node at (5,2.5) {\textcolor{red}{\small $a_3$}};
\node at (6,2.5) {\textcolor{red}{\small $a_3$}};
\node at (7,2.5){\textcolor{red}{\small $a_3$}};
\node at (8,2.5){\textcolor{red}{\small $a_3$}};
\draw[thick] (3,3)--(5,3)--(5,1)--(6,1)--(6,0);
\node [below] at (4,-0.6) {(a)};
\end{tikzpicture}}
\begin{tikzpicture}[scale=0.8]
			\draw[thick] (0,0) -- (2,2) -- (4,0) -- (5,1) -- (6,0);
			\draw[thick] (0,2) -- (2,4) -- (4,2) -- (5,3) -- (6,2);
			\draw[thick] (0,0) arc(270:90:1);
			\draw[thick] (6,2) arc(90:-90:1);
			\fill (0,0) circle(0.1);\node[below] at (0,0) {\small $\la_2^{(0)}$};
			\fill (1,1) circle(0.1);\node[below] at (1,1) {\small $\la_2^{(1)}$};
			\fill (2,2) circle(0.1);\node[below] at (2,2) {\small $\la_2^{(2)}$};
			\fill (3,1) circle(0.1);\node[below] at (3,1) {\small $\la_2^{(3)}$};
			\fill (4,0) circle(0.1);\node[below] at (4,0) {\small $\la_2^{(4)}$};
			\fill (5,1) circle(0.1);\node[below] at (5,1) {\small $\la_2^{(5)}$};
			\fill (6,0) circle(0.1);\node[below] at (6,0) {\small $\la_2^{(6)}$}; 
\fill (0,2) circle(0.1);\node[above] at (0,2) {\small $\la_1^{(0)}$};
\fill (1,3) circle(0.1);\node[above] at (1,3) {\small $\la_1^{(1)}$};
\fill (2,4) circle(0.1);\node[above] at (2,4) {\small $\la_1^{(2)}$};
\fill (3,3) circle(0.1);\node[above] at (3,3) {\small $\la_1^{(3)}$};
\fill (4,2) circle(0.1);\node[above] at (4,2) {\small $\la_1^{(4)}$};
\fill (5,3) circle(0.1);\node[above] at (5,3) {\small $\la_1^{(5)}$};
\fill (6,2) circle(0.1);\node[above] at (6,2) {\small $\la_1^{(6)}$}; 
\draw[dashed] (0,2) -- (1,1);
\draw[dashed] (1,3) -- (2,2);
\draw[dashed] (2,2) -- (3,3);
\draw[dashed] (3,1) -- (4,2);
\draw[dashed] (4,2) -- (5,1);
\draw[dashed] (5,1) -- (6,2); 
\node[left] at (-1,1) {\textcolor{red}{\small $c_1$}};
\node[right] at (7,1) {\textcolor{red}{\small $c_2$}};
\node[above] at (0.5,2.5) {\textcolor{red}{\small $a_4$}};
\node[above] at (1.5,3.5) {\textcolor{red}{\small $a_5$}};
\node[above] at (2.5,3.5) {\textcolor{red}{\small $a_3$}};
\node[above] at (3.5,2.5) {\textcolor{red}{\small $a_2$}};
\node[above] at (4.5,2.5) {\textcolor{red}{\small $a_6$}};
\node[above] at (5.5,2.5) {\textcolor{red}{\small $a_1$}};
\node[below] at (0.5,0.5) {\textcolor{red}{\small $a_4$}};
\node[below] at (1.5,1.5) {\textcolor{red}{\small $a_5$}};
\node[below] at (2.5,1.5) {\textcolor{red}{\small $a_3$}};
\node[below] at (3.5,0.5) {\textcolor{red}{\small $a_2$}};
\node[below] at (4.5,0.5) {\textcolor{red}{\small $a_6$}};
\node[below] at (5.5,0.5) {\textcolor{red}{\small $a_1$}};
\node [below] at (3,-1) {(b)};
		\end{tikzpicture}
  \caption{  (a) A down-right path $\hP$ is depicted (the thick path) in the strip $\mathbb{S}_6$.
 (b) Associated to $\hP$, the two-layer graph $\PiP$ is depicted.  
  The solid edges are labelled by numbers in red, which are the same as those labelling edges of $\hP$ in the strip. 
  The left and right boundary arcs are labelled respectively by $c_1$ and $c_2$ in red.
A two-layer configuration is an assignment of numbers $\la_i^{(j)}\in\ZZ$ to  the vertices $\p_i^{(j)}$ of $\PiP$, for $i=1,2$ and $j=0,\dots,N$.}
  \label{fig:two-path diagram}
\end{figure}

Now we define the Gibbs measure on the set of two-layer configurations on the two-layer graph $\gp$.
\begin{definition}[Two-layer geometric Gibbs measure]\label{defn: Gibbs measures LPP}
For $x,y\in \ZZ$, the weights of solid, dashed and arced edges are
\begin{subequations}
\label{eq:weightsLPP}
\be \label{eq:weight geometric LPP boundary}
\wt_{\lpp}\lb\raisebox{-5.5pt}{\arcleftthick}\rb=c_1^{x-y},\quad \wt_{\lpp}\lb\raisebox{-5.5pt}{\arcrightthick}\rb=c_2^{x-y},\ee
\be  \label{eq:weight geometric LPP edges}
\wt_{\lpp}\lb\bulkright\rb=
\wt_{\lpp}\lb\bulkleft\rb=a^{x-y}\one_{x \geq y},\ee
\be  \label{eq:weight geometric LPP dotted edges}
\wt_{\lpp}\lb\bulkrightdotted\rb=
\wt_{\lpp}\lb\bulkleftdotted\rb=\one_{x \geq y}.\ee
\end{subequations}
We define the weight $\wt_{\lpp}^{\PiP}(\bl)$ of a two-layer configuration $\bl$ associated to a two-layer graph $\PiP$ to be the product of the above weights over all labelled solid edges, dotted edges and arcs in $\PiP$. In what follows it will also be convenient to be able to write $\wt_{\lpp}$ of a configuration drawn on a sub-graph of $\PiP$ to denote the weight of that configuration (thus extending the notation in \eqref{eq:weight geometric LPP boundary}-\eqref{eq:weight geometric LPP dotted edges}. For example, in \eqref{eq:diagram Cauchy LPP} below,
\be\label{eq:weight subgraph example}
\wt_{\lpp}\lb\raisebox{-35pt}{\begin{tikzpicture}
        \draw[thick] (-0.6,0.6)--(0,0)--(0.6,0.6);
        \draw[thick] (-0.6,-0.6)--(0,-1.2)--(0.6,-0.6);
        \draw[dashed] (-0.6,-0.6)--(0,0)--(0.6,-0.6);
        \node[above right] at (0.5,0.55) {\small $\mu_1$};
        \node[above right] at (0.5,-0.65) {\small  $\mu_2$};
        \node[above left] at (-0.5,0.55) {\small   $\la_1$};
        \node[above left] at (-0.5,-0.65) {\small  $\la_2$};
        \node[above] at (0.3,0.3) {\textcolor{red}{\small   $b$}};
        \node[above] at (0.3,-0.9) {\textcolor{red}{\small  $b$}};
        \node[above] at (-0.3,0.3) {\textcolor{red}{\small  $a$}};
        \node[above] at (-0.3,-0.9) {\textcolor{red}{\small  $a$}};
        \node[below] at (0,0) {\small  $\k_1$};
        \node[below] at (0,-1.2) {\small  $\k_2$};
\end{tikzpicture}}\rb = a^{\la_1-\k_1}\one_{\la_1\geq \k_1}a^{\la_2-\k_2}\one_{\la_2\geq \k_2}
b^{\mu_1-\k_1}\one_{\mu_1\geq \k_1}b^{\mu_2-\k_2}\one_{\mu_2\geq \k_2}\one_{\k_1\geq \la_2}\one_{\k_1\geq \mu_2}
\ee 
Notice that $\wt_{\lpp}^{\PiP}(\bl)$ is always positive and translation invariant, in the sense that, for all $x\in\ZZ$,

\be\label{eq:translation invariance two layer LPP}\wt_{\lpp}^{\PiP}(\bl)=\wt_{\lpp}^{\PiP}(\bl+x),\ee
where we write $\bl+x=(\la_1^{(0)}+x,\dots,\la_1^{(N)}+x,\la_2^{(0)}+x,\dots,\la_2^{(N)}+x)$.
\end{definition}

We will view the weight $\wt_{\lpp}^{\PiP}(\bl)$ as a measure on 
$\{\bl\in \ZZ^{2N+2}\}$. In fact, using the notation $\Sign_2$ that will be introduced momentarily in Section \ref{subsec:Skew Schur functions indexed by signatures LPP}, the measure is actually supported on $\Sign_2^{N+1}$ since each $\la^{(j)}\in \Sign_2$ has $\la^{(j)}_1\geq \la^{(j)}_2$. Due to the translation invariance \eqref{eq:translation invariance two layer LPP}, $\wt_{\lpp}^{\PiP}$ must have infinite mass. 
However, as we will see in Proposition \ref{prop:finiteness of weights LPP}, the measure of $\bl$ with fixed $\la_1^{(0)}$ (or any other fixed coordinate $\la_i^{(j)}$) is finite under the assumption $c_1c_2<1$. This will be key in turning these measures into probability measures. 
 
\subsection{Skew Schur functions indexed by signatures}\label{subsec:Skew Schur functions indexed by signatures LPP} 
Our construction of Markov dynamics on two-layer Gibbs measures (in Section \ref{subsec:local Markov operators LPP}) will be based on certain summation identities that we discuss now that are similar to Cauchy and Littlewood identities for Schur functions in symmetric function theory.

Fix some positive integer $n$. A signature of length $n$ is a nonincreasing  sequence of integers $\lambda=(\lambda_1\geq \dots\geq \lambda_n)$ where $\lambda_i\in \mathbb Z$ (note that this $\lambda$ should not be confounded with $\bl$). Let us denote by $\Sign_n$ the set of all signatures of length $n$. For two signatures $\mu, \lambda\in \Sign_n$, we will say that $\mu$ interlaces with 
$\lambda$, denoted $\mu\preceq \lambda$, if $\lambda_i\geq \mu_i\geq \lambda_{i+1}\geq \mu_{i+1}$ for all $2\leq i\leq n$. We will also use the notation $\vert \lambda/\mu\vert =\sum_{i=1}^n(\lambda_i-\mu_i)$. For $\mu, \lambda\in \Sign_n$ we define a polynomial in the variable  $a\in \mathbb C$ by 
$$s_{\lambda/\mu}(a) = \mathds{1}_{\mu\preceq\lambda}a^{\vert \lambda/\mu\vert}.$$ 
More generally, we define  multivariate polynomials $s_{\lambda/\mu}(a_1,\dots, a_k) $ by the branching rule 
\begin{equation}
   s_{\lambda/\mu}(a_1,\dots, a_k) = \sum_{\nu\in \Sign_n} s_{\lambda/\nu}(a_1,\dots a_{k-1})s_{\nu/\mu}(a_k).
   \label{eq:branchingrule}
\end{equation} 
When the signatures $\lambda$ and $\mu$ are nonnegative, $s_{\lambda/\mu}$ are skew Schur polynomials, which are symmetric polynomials. We refer to \cite{macdonald1995symmetric} for background on symmetric functions and Schur polynomials.  

We have the following skew Cauchy type identity: 
\begin{proposition}[Cauchy identity]\label{prop:Cauchy identity LPP}
Fix $n,k,\ell\in \ZZ_{\geq 1}$. For any complex  $\aa=(a_1, \dots, a_k)$ and $\bb=(b_1, \dots, b_{\ell})$, such that $\vert a_ib_j \vert<1$ for all $1\leq i\leq k$, $1\leq j\leq \ell$, we have for all $\lambda,\mu\in \Sign_n$
\begin{equation}
\sum_{\kappa\in \Sign_n} s_{\lambda/\kappa}(\aa)s_{\mu/\kappa}(\bb)
= 
\sum_{\pi\in \Sign_n} s_{\pi/\lambda}(\bb)s_{\pi/\mu}(\aa),
\label{eq:skewCauchySchur}
\end{equation} with both sums finite. When $k=\ell=1$, $n=2$, this says that for all $a,b\in \CC$ with $|ab|<1$ and $\la,\mu\in\Sign_2$ 
\be \label{eq:diagram Cauchy LPP}\sum_{\k_1,\k_2\in \ZZ}\wt_{\lpp}\lb
\raisebox{-35pt}{\begin{tikzpicture}
        \draw[thick] (-0.6,0.6)--(0,0)--(0.6,0.6);
        \draw[thick] (-0.6,-0.6)--(0,-1.2)--(0.6,-0.6);
        \draw[dashed] (-0.6,-0.6)--(0,0)--(0.6,-0.6);
        \node[above right] at (0.5,0.55) {\small $\mu_1$};
        \node[above right] at (0.5,-0.65) {\small  $\mu_2$};
        \node[above left] at (-0.5,0.55) {\small   $\la_1$};
        \node[above left] at (-0.5,-0.65) {\small  $\la_2$};
        \node[above] at (0.3,0.3) {\textcolor{red}{\small   $b$}};
        \node[above] at (0.3,-0.9) {\textcolor{red}{\small  $b$}};
        \node[above] at (-0.3,0.3) {\textcolor{red}{\small  $a$}};
        \node[above] at (-0.3,-0.9) {\textcolor{red}{\small  $a$}};
        \node[below] at (0,0) {\small  $\k_1$};
        \node[below] at (0,-1.2) {\small  $\k_2$};
\end{tikzpicture}}\rb
=\sum_{\pi_1,\pi_2\in \ZZ}\wt_{\lpp}\lb\raisebox{-35pt}{\begin{tikzpicture}
    \draw[thick] (-0.6,0.6)--(0,1.2)--(0.6,0.6);
    \draw[dashed] (-0.6,0.6)--(0,0)--(0.6,0.6);
    \draw[thick] (-0.6,-0.6)--(0,0)--(0.6,-0.6);
    \node[below right] at (0.5,0.65) {\small $\mu_1$};
    \node[below right] at (0.5,-0.55) {\small   $\mu_2$};
    \node[below left] at (-0.45,0.66) {\small  $\la_1$};
    \node[below left] at (-0.45,-0.55) {\small  $\la_2$};
    \node[above] at (0,0) {\small  $\pi_2$};
    \node[above] at (0,1.2) {\small  $\pi_1$};
    \node[above] at (0.35,-0.35){\textcolor{red}{\small  $a$}};
    \node[above] at (0.35,0.85){\textcolor{red}{\small  $a$}};
    \node[above] at (-0.35,-0.35){\textcolor{red}{\small  $b$}};
    \node[above] at (-0.35,0.85){\textcolor{red}{\small  $b$}};
\end{tikzpicture}}\rb. 
\ee
\end{proposition}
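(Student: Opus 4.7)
The plan is to induct on $k+\ell$, reducing \eqref{eq:skewCauchySchur} to the single-variable base case $(k,\ell)=(1,1)$, and then establish that base case by an explicit geometric summation.

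For the inductive step, assume the identity is known for all pairs $(k',\ell')$ with $k'+\ell'<k+\ell$. Using the branching rule \eqref{eq:branchingrule} to peel the last variable off $s_{\lambda/\kappa}(\aa)$, I write
\begin{equation*}
\sum_{\kappa\in\Sign_n} s_{\lambda/\kappa}(\aa)\,s_{\mu/\kappa}(\bb)
=\sum_{\kappa'\in\Sign_n} s_{\lambda/\kappa'}(a_1,\ldots,a_{k-1})\sum_{\kappa\in\Sign_n} s_{\kappa'/\kappa}(a_k)\,s_{\mu/\kappa}(\bb).
\end{equation*}
The inner sum is exactly the $(1,\ell)$-instance of \eqref{eq:skewCauchySchur} with $\lambda$ replaced by $\kappa'$, so by the inductive hypothesis it equals $\sum_{\pi'\in\Sign_n} s_{\pi'/\kappa'}(\bb)\,s_{\pi'/\mu}(a_k)$. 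Swapping the $\kappa'$ and $\pi'$ summations, the resulting inner sum $\sum_{\kappa'} s_{\lambda/\kappa'}(a_1,\ldots,a_{k-1})\,s_{\pi'/\kappa'}(\bb)$ is a $(k-1,\ell)$-instance of \eqref{eq:skewCauchySchur} (with $\mu$ replaced by $\pi'$), to which the inductive hypothesis again applies, yielding $\sum_{\pi} s_{\pi/\lambda}(\bb)\,s_{\pi/\pi'}(a_1,\ldots,a_{k-1})$. A final use of the branching rule in reverse, $\sum_{\pi'}s_{\pi/\pi'}(a_1,\ldots,a_{k-1})\,s_{\pi'/\mu}(a_k) = s_{\pi/\mu}(\aa)$, merges the $\aa$ variables back and produces the RHS of \eqref{eq:skewCauchySchur}. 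The analogous induction on $\ell$ collapses everything to the base case $(k,\ell)=(1,1)$.

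For the base case, set $m_i=\min(\lambda_i,\mu_i)$ and $M_i=\max(\lambda_i,\mu_i)$. Substituting $s_{\lambda/\kappa}(a)=\mathds{1}_{\kappa\preceq\lambda}\,a^{|\lambda/\kappa|}$, the joint interlacing conditions $\kappa\preceq\lambda$ and $\kappa\preceq\mu$ confine $\kappa_i$ to the interval $[M_{i+1},m_i]$ for $1\leq i\leq n-1$ (and both sides of \eqref{eq:skewCauchySchur} vanish if any such interval is empty) while leaving $\kappa_n$ free on $(-\infty,m_n]$. The LHS accordingly factors as $a^{\sum_i\lambda_i}b^{\sum_i\mu_i}$ times a product of finite geometric sums $\sum_{\kappa_i=M_{i+1}}^{m_i}(ab)^{-\kappa_i}$ and one infinite geometric series $\sum_{\kappa_n\leq m_n}(ab)^{-\kappa_n}=(ab)^{-m_n}/(1-ab)$, convergent precisely because $|ab|<1$. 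The parallel computation for the RHS, where instead $\pi_i\in[M_i,m_{i-1}]$ for $i\geq 2$ and $\pi_1\geq M_1$, produces an expression of the same structural form. Equality of the two closed forms then follows from the arithmetic identity $m_i+M_i=\lambda_i+\mu_i$ combined with the reflection $\pi_{i+1}=M_{i+1}+m_i-\kappa_i$ on the finitely summed coordinates.

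The main obstacle is controlling convergence and justifying the Fubini interchanges along the induction. Under $|a_ib_j|<1$ every intermediate series is dominated by a geometric progression in $(a_ib_j)$, so all interchanges are legitimate and each intermediate sum is automatically finite. As a final remark, a conceptually cleaner route to the base case is to shift $\lambda,\mu,\kappa,\pi$ simultaneously by $(M^n)$ for $M$ large enough that $\lambda+(M^n),\mu+(M^n)$ become partitions; the finite-range part of the sum then falls under the classical skew Cauchy identity for Schur polynomials, and the extra unbounded coordinate on each side contributes precisely the geometric factor $(1-ab)^{-1}$ that compensates for the absence of a product prefactor in \eqref{eq:skewCauchySchur}.
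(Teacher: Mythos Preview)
Your proof is correct and follows the paper's approach: both reduce to the single-variable case via the branching rule and establish the base case by matching terms through the reflection $\pi_{j+1}=M_{j+1}+m_j-\kappa_j$. The only cosmetic difference is that the paper packages this as a single bijection with indices taken mod $n$ (so the unbounded coordinates $\kappa_n$ and $\pi_1$ are linked by the same formula $\pi_1=M_1+m_n-\kappa_n$), whereas you compute those infinite geometric tails separately and then apply the reflection to the remaining finitely-summed coordinates.
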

\begin{proof}
In view of \eqref{eq:branchingrule}, it suffices to prove the  $k=\ell=1$, $n$ general case (the only one we actually use):
$$
\sum_{\kappa\in \Sign_n: \kappa\preceq \lambda, \kappa\preceq \mu} \prod_{j=1}^n a^{\lambda_j-\kappa_j}b^{\mu_j-\kappa_j}
= 
\sum_{\pi\in \Sign_n: \lambda\preceq \pi, \mu\preceq \pi} \prod_{j=1}^n b^{\pi_j-\lambda_j}a^{\pi_j-\mu_j}.
$$
Both sums are geometric sums that converge when $\vert ab\vert <1$. The equality follows from the change of variables 
$$ \pi_j = -\kappa_{j-1} + \max\lbrace \lambda_j, \mu_j\rbrace +\min\lbrace \lambda_{j-1}, \mu_{j-1} \rbrace,$$
for all $1\leq j\leq n$, where indices are modulo $n$, i.e. $\kappa_0=\kappa_n, \lambda_0=\lambda_n, \mu_0=\mu_n$. This concludes the proof. 
\end{proof}

For $\la\in\Sign_n$ and $c\in\mathbb{C}$ we define the monomial 
$$\tau_\la(c):=c^{\sum_{j=1}^{n}(-1)^{j-1}\la_j}=c^{\la_1-\la_2+\la_3-\la_4+\dots}.$$
 
\begin{proposition}[Littlewood identity]\label{prop:Littlewood identity LPP}
Fix $n\in 2\ZZ_{\geq 1}$. For any $c\in \mathbb C$ and any complex $\aa=(a_1, \dots, a_k)$ such that $\vert a_ia_j \vert<1$ for all $1\leq i<j\leq k$
and $\vert a_i c \vert<1$ for all $1\leq i\leq k$, we have for all  $\k\in \Sign_n$
\begin{equation}
\sum_{\la\in \Sign_n}\tau_{\la}(c) s_{\k/\la}(\aa)
=
\sum_{\pi\in \Sign_n}\tau_{\pi}(c) s_{\pi/\k}(\aa),
    \label{eq:skeLittlewoodidentity}
\end{equation}
with both sums are finite. When $k=1$,  $n=2$, this says that  for all $c,a\in \CC$ with $|ca|<1$ and $\k\in\Sign_2$
\be 
    \label{eq:diagram Littlewood LPP} \sum_{\la_1,\la_2\in \ZZ}\wt_{\lpp}\lb\raisebox{-35pt}{\begin{tikzpicture}
    \draw[dashed](0,0.6)--(0.6,0);
    \draw[thick] (0,-0.6)--(0.6,0);
    \draw[thick] (0,0.6)--(0.6,1.2);
    \node[above right] at (0.5,-.1) {\small $\k_2$};
    \node[above right] at (0.5,1.1) {\small $\k_1$};
    \node[below] at (0,-0.6) {\small $\la_2$};
    \node[above] at (0,0.6) {\small $\la_1$};
    \node[above] at (0.3,-0.3) {\textcolor{red}{\small $a$}};
    \node[above] at (0.3,0.9) {\textcolor{red}{\small $a$}};
    \node[left] at (-0.6,0) {\textcolor{red}{\small $c$}};
    \draw[thick] (0,0.6) arc (90:270:0.6);
\end{tikzpicture}}\rb=\sum_{\pi_1,\pi_2\in \ZZ}\wt_{\lpp}\lb\raisebox{-35pt}{\begin{tikzpicture}
    \draw[dashed](0.6,0)--(0,-0.6);
    \draw[thick] (0,0.6)--(0.6,0);
    \draw[thick] (0,-0.6)--(0.6,-1.2);
    \node[below right] at (0.5,-1.1) {\small $\k_2$};
    \node[below right] at (0.5,.1) {\small $\k_1$};
    \node[below] at (0,-0.6) {\small $\pi_2$};
    \node[above] at (0,0.6) {\small $\pi_1$};
    \node[above] at (0.3,0.3) {\textcolor{red}{\small $a$}};
    \node[above] at (0.3,-0.9) {\textcolor{red}{\small $a$}};
    \node[left] at (-0.6,0) {\textcolor{red}{\small $c$}};
    \draw[thick] (0,0.6) arc (90:270:0.6);
\end{tikzpicture}}\rb. \ee
\end{proposition}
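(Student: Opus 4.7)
The plan is to mirror the structure of the proof of Proposition~\ref{prop:Cauchy identity LPP}. By the branching rule \eqref{eq:branchingrule}, it suffices to establish the identity for $k=1$ with general (even) $n$; the case of arbitrary $k\geq 2$ should then follow by induction on $k$, combining the $k=1$ identity with the Cauchy identity \eqref{eq:skewCauchySchur} in order to shuffle variables through the skew Schur functions.

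For $k=1$, writing $a$ for the single variable, the identity to be proved is
\[
\sum_{\la \preceq \k} c^{\sum_{j=1}^n (-1)^{j-1}\la_j}\, a^{\sum_{j=1}^n (\k_j - \la_j)} \;=\; \sum_{\k \preceq \pi} c^{\sum_{j=1}^n (-1)^{j-1}\pi_j}\, a^{\sum_{j=1}^n (\pi_j - \k_j)}.
\]
The constraint $\la \preceq \k$ confines $\la_j$ to the interval $[\k_{j+1}, \k_j]$ for $j<n$, while only $\la_n\leq \k_n$ is required (so $\la_n$ is unbounded below); symmetrically, $\k \preceq \pi$ places $\pi_j \in [\k_j, \k_{j-1}]$ for $j>1$ with $\pi_1 \geq \k_1$ unbounded above. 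My plan is to exhibit the explicit bijection
\[
\pi_1 := \k_1 + \k_n - \la_n, \qquad \pi_{j+1} := \k_j + \k_{j+1} - \la_j \quad (1 \leq j \leq n-1),
\]
between the two index sets (its inverse is given by the analogous formulas with the roles of $\la$ and $\pi$ interchanged). A direct algebraic computation then shows that this bijection preserves both the $a$-exponent $\sum_j(\k_j-\la_j) = \sum_j(\pi_j-\k_j)$ (the $\k$ contributions telescope neatly) and the $c$-exponent $\sum_j(-1)^{j-1}\la_j = \sum_j(-1)^{j-1}\pi_j$; in the latter check the parity of $n$ is essential, as the cyclic shift in the indices must be compatible with the alternating sign.

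Absolute convergence of both sums under the hypothesis $|ac|<1$ is controlled by the single unbounded coordinate on each side: on the LHS the dependence on $\la_n$ is through $(ca)^{-\la_n}$, and on the RHS the dependence on $\pi_1$ is through $(ca)^{\pi_1}$, both being geometric series in the variable $ca$. The remaining $n-1$ free coordinates on each side are constrained to finite intervals, and together with the absolute summability in the unbounded coordinate this justifies the interchange of sums used in the bijective argument.

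The main obstacle I anticipate is guessing the right bijection. A naive analogue of the Cauchy proof formula (using $\max$ and $\min$ of the two ``sides'') does not succeed here because of the asymmetry between the roles of $\la$ (bounded above by $\k$) and $\pi$ (bounded below by $\k$), compounded by the alternating character $\tau$. Examining the $n=2$ case pictorially, where only $\la_2$ and $\pi_1$ are unbounded, naturally suggests the ``cyclic reflection'' ansatz above, which then extends cleanly to all even $n$.
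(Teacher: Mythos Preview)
Your proposal is correct and follows essentially the same approach as the paper. The bijection you give, $\pi_1 = \k_1 + \k_n - \la_n$ and $\pi_{j+1} = \k_j + \k_{j+1} - \la_j$, is exactly the paper's change of variables $\pi_j = -\lambda_{j-1} + \kappa_j + \kappa_{j-1}$ (indices modulo $n$), just unpacked; the reduction to $k=1$ via the branching rule and the skew Cauchy identity is also the same.
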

\begin{proof}
 Using the branching rule \eqref{eq:branchingrule} and the skew Cauchy identity \eqref{eq:skewCauchySchur}, it is enough to prove the result for $k=1$ (the only case we actually use below):
$$
\sum_{\la\in\Sign_n,\la\preceq\k} c^{\sum_{j=1}^{n}(-1)^{j-1}\la_j} \prod_{j=1}^na^{\k_j-\la_j} =
\sum_{\pi\in\Sign_n,\k\preceq\pi}c^{\sum_{j=1}^{n}(-1)^{j-1}\pi_j} \prod_{j=1}^na^{\pi_j-\k_j} .
$$
Both sums are geometric sums that converge when $\vert ac\vert <1$. The equality follows from change of variables
$$ \pi_j=-\lambda_{j-1}+\kappa_j+\kappa_{j-1}$$
for all $1\leq j\leq n$, where the indices are modulo $n$.
\end{proof}
 
\begin{remark} \label{rem:remarkskewCauchy}
The summation identity \eqref{eq:skewCauchySchur} is similar to the skew-Cauchy identity for Schur functions (see e.g. \cite[Chap I.5 Ex.26]{macdonald1995symmetric}), while \eqref{eq:skeLittlewoodidentity} is similar to skew Littlewood identities \cite[Chap I.5 Ex.27]{macdonald1995symmetric}. In contrast with these more usual skew Cauchy and Littlewood identities that involve sums over integer partitions, our identities in Propositions \ref{prop:Cauchy identity LPP} and \ref{prop:Littlewood identity LPP} involve sums over signatures (not necessarily nonnegative) and do not involve any normalization by the Cauchy kernel. Some generalizations of Proposition \ref{prop:Cauchy identity LPP}  have been considered  for other families of symmetric functions, in particular the Hall-Littlewood functions \cite{bufetov2018hall} and the spin Hall-Littlewood functions \cite{bufetov2019yang}. Although \cite[Eq (3.7)]{bufetov2018hall} can be shown to imply our Proposition \ref{prop:Cauchy identity LPP}, we give another proof that simply matches terms on both sides of the identity.
\end{remark}
 \begin{remark}
 	\label{rem:comparisonwithSchurmeasures}
 Using the notations above, $\wt_{\lpp}^{\PiP}(\bl)$ can be written as a product of skew Schur functions multiplied by boundary weights of the form $c_1^{|\la^{(0)}|_{-}}$ and $c_2^{|\la^{(N)}|_{-}}$ where $|\la|_{-}=\la_1-\la_2$. 
 This structure is similar to the Schur process, originally introduced in \cite{okounkov2003correlation} (as well as the variants considered in \cite{borodin2005eynard,  betea2018free}),  but there are two important differences: (1) $\wt_{\lpp}^{\PiP}(\bl)$ does not define a probability measure, it has infinite mass, and (2) it is a measure on sequences of signatures of fixed length (i.e., length $2$), instead of integer partitions (which have nonnegative coordinates and arbirary lengths). 
 
 The form of boundary weights that we use is essentially the same as in  \cite{borodin2005eynard}  which introduced a variant of the Schur process with one boundary (there, for a partition $\la$ the boundary weight $c_1^{|\la^{(0)}|_{-}}$ is defined by taking $|\la|_{-}=\sum_{i}(-1)^{i+1}\la_i$).
 In contrast, the work \cite{betea2018free} uses a different type of potential at the boundaries, rather of the type $c_3^{|\la^{(0)}|}$ and $c_4^{|\la^{(N)}|}$ with $|\la|=\sum_{i}\lambda_i$, and this allows to construct a Schur process with two boundaries. Further,  \cite[Remark 2.4]{betea2018free} explains that one can mix the two types of boundary condition (i.e., involving $c_1,c_2$ and $c_3,c_4$). Provided $c_3,c_4<1$, the resulting measure on sequences of partitions is finite, and hence can be normalized to be a probability measure. There is an LPP model on a strip connected to such two-boundary Schur processes that involves more complicated and inhomogeneous geometric weight parameters than those in Definition \ref{defn:inhomogeneous geometric LPP and specializations} (see \cite{betea2019new} for some details). In the $c_3,c_4\to 1$ limit, the restriction to the first two layers of this two boundary Schur process formally converge to our Gibbs measure and the inhomogeneous LPP model converges to the homogeneous version from  Definition \ref{defn:inhomogeneous geometric LPP and specializations}. It would be interesting to see if further information can be obtained by use of this relationship and if this provides another route to verify the stationarity of our Gibbs measure under the LPP recurrence relation.
 
 Finally, let us also mention that a two-sided Schur process, which is a probability measure on sequences of signatures (not necessarily nonnegative) of varying length, was considered in \cite{borodin2011schur}, but it does not seem to be related to our Gibbs measures. 
 \end{remark} 
  
\subsection{Local Markov kernels}\label{subsec:local Markov operators LPP}
 In this subsection we will define a two-layer local (in a sense that will be describe below) Markov dynamics on the strip that preserves the two-layer Gibbs measure in Section \ref{subsec:Two-layer Gibbs-like measure}. These are inspired by the push-block dynamics in the full-space \cite{borodin2014anisotropic} and half-space \cite{baik2018pfaffian} and built from the Cauchy (Proposition \ref{prop:Cauchy identity LPP}) and Littlewood (Proposition \ref{prop:Littlewood identity LPP}) identities.

\begin{definition}[Local moves]\label{defn: local moves}
We first define three types of local moves to evolve down-right paths:
\be \label{eq:local move one path bulk}
\begin{tikzpicture}[scale=0.6]
		\draw[dotted] (0,0) -- (1,0)--(1,1)--(0,1)--(0,0);
		\draw[very thick] (0,1) -- (0,0) -- (1,0);
		\end{tikzpicture}
  \quad 
\raisebox{5pt}{\scalebox{1.5}{$\longmapsto$}}
\quad 
\begin{tikzpicture}[scale=0.6]
		\draw[dotted] (0,0) -- (1,0)--(1,1)--(0,1)--(0,0);
		\draw[very thick] (0,1) -- (1,1) -- (1,0);
		\end{tikzpicture}, 
\quad\quad
\begin{tikzpicture}[scale=0.6]
		\draw[dotted] (0,0) -- (0,1)--(-1,0)--(0,0);
            \draw[very thick] (0,0) -- (-1,0);
		\end{tikzpicture}
  \quad 
\raisebox{5pt}{\scalebox{1.5}{$\longmapsto$}}
\quad 
\begin{tikzpicture}[scale=0.6]
		\draw[dotted] (0,0) -- (0,1)--(-1,0)--(0,0);
		\draw[very thick] (0,0) -- (0,1);
		\end{tikzpicture}, 
\quad\quad
\begin{tikzpicture}[scale=0.6]
		\draw[dotted] (0,0) -- (1,0)--(0,-1)--(0,0);
            \draw[very thick] (0,0) -- (0,-1);
		\end{tikzpicture}
  \quad 
\raisebox{5pt}{\scalebox{1.5}{$\longmapsto$}}
\quad 
\begin{tikzpicture}[scale=0.6]
		\draw[dotted] (0,0) -- (1,0)--(0,-1)--(0,0);
		\draw[very thick] (0,0) -- (1,0);
		\end{tikzpicture}. 
\ee 
The solid lines show the path prior to the move and the dotted lines after the move. These moves can be applied anywhere along a down-right path where admissible (i.e., where the down-right path looks like the starting state of the move). The first is a `bulk' move and the second and third are `boundary' moves. 
We will denote a down-right path before and after a local move by $\hP$ and $\htP$. Notice that the vertex set of $\hP$ and $\htP$ differ precisely by one vertex. Call the vertex in this symmetric difference in $\hP$ the `update vertex' and in $\htP$ the `updated vertex'. Similarly, call all vertex touches the edges that are updated between $\hP$ and $\htP$ the `proximate vertices'.
These local moves induce local moves on the associated  two-layer graphs:   
$$\raisebox{-25pt}{\begin{tikzpicture}
        \draw[thick] (-0.3,0.3)--(0,0)--(0.3,0.3);
        \draw[thick] (-0.3,-0.3)--(0,-0.6)--(0.3,-0.3);
        \draw[dashed] (-0.3,-0.3)--(0,0)--(0.3,-0.3);
\end{tikzpicture}}
 \quad\quad
\raisebox{-12pt}{\scalebox{1.5}{$\longmapsto$}}
\quad\quad
\raisebox{-25pt}{\begin{tikzpicture}
    \draw[thick] (-0.3,0.3)--(0,0.6)--(0.3,0.3);
    \draw[dashed] (-0.3,0.3)--(0,0)--(0.3,0.3);
    \draw[thick] (-0.3,-0.3)--(0,0)--(0.3,-0.3);
\end{tikzpicture}}\raisebox{-25pt}{,}
\quad\quad
\raisebox{-25pt}{\begin{tikzpicture}
    \draw[dashed] (0,0.3)--(0.3,0);
    \draw[thick] (0,-0.3)--(0.3,0);
    \draw[thick] (0,0.3)--(0.3,0.6);
    \draw[thick] (0,0.3) arc (90:270:0.3);
\end{tikzpicture}}
 \quad\quad
\raisebox{-12pt}{\scalebox{1.5}{$\longmapsto$}}
\quad\quad
\raisebox{-25pt}{\begin{tikzpicture}
    \draw[dashed] (0.3,0)--(0,-0.3);
    \draw[thick] (0,0.3)--(0.3,0);
    \draw[thick] (0,-0.3)--(0.3,-0.6);
    \draw[thick] (0,0.3) arc (90:270:0.3);
\end{tikzpicture}}\raisebox{-25pt}{,}
\quad\quad
\raisebox{-25pt}{\begin{tikzpicture}
    \draw[dashed] (0,0)--(0.3,0.3);
    \draw[thick] (0.3,0.3)--(0,0.6);
    \draw[thick] (0,0)--(0.3,-0.3);
    \draw[thick] (0.3,-0.3) arc (-90:90:0.3);
\end{tikzpicture}}
 \quad\quad
\raisebox{-12pt}{\scalebox{1.5}{$\longmapsto$}}
\quad\quad
\raisebox{-25pt}{\begin{tikzpicture}
    \draw[dashed] (0.3,0)--(0,0.3);
    \draw[thick] (0,0.3)--(0.3,0.6);
    \draw[thick] (0,-0.3)--(0.3,0);
    \draw[thick] (0.3,0) arc (-90:90:0.3);
\end{tikzpicture}}\raisebox{-25pt}{.}$$  
We use the same language of `update', `updated' and `proximate' vertices when talking about configurations of the associated two-layer graph, e.g. if $\p_j$ is the update vertex for a move then $(\la^{(j)}_1,\la^{(j)}_2)$ are the update coordinates in the configuration $\bl$ for the two-layer graph $\PiP$. 
The shift of $\hP$ by $\tau_1$ can be realized as the composition of a sequence of $N$ bulk moves and a left and right boundary move (in any admissible sequence).
Note that after applying a sequence of local moves that transforms a down-right path $\hP$ to its shift by $\tau_1$, the associated two-layer graph cycles back to its starting state. This is key for the stationarity we will prove.
\end{definition}

\begin{definition}[Two-layer geometric Markov dynamics]\label{def:twolayerdynamics}
A two-layer geometric Markov dynamic on the strip is defined as a collection of transition probabilities (i.e., Markov kernels) $\{\U^{\PiP,\mathcal{G}\widetilde{\hP}}\}$ on the state space $\Sign_2^{N+1}$ associated to each pair $(\hP,\widetilde{\hP})$ of down-right paths connected by a single local move, i.e. $\hP\mapsto\widetilde{\hP}$. Here $\U_{\lpp}^{\PiP,\mathcal{G}\widetilde{\hP}}\lb\widetilde{\bl}|\bl\rb$ is the probability of transitioning from configuration $\bl\in\Sign_2^{N+1}$ for the two-layer graph $\PiP$ to the configuration  $\widetilde{\bl}\in\Sign_2^{N+1}$ for the two-layer graph $\mathcal{G}\widetilde{\hP}$. We require that these transition probabilities are local in that they only depend on the coordinates in $\bl$ associated to vertices proximate to the local move; and they act on the identity on all coordinates except for the update/updated vertices. Specifically, if the updated vertex from $\hP$ to $\widetilde{\hP}$ is a bulk vertex $\p_j$ then
$$\U_{\lpp}^{\PiP,\mathcal{G}\widetilde{\hP}}\lb\widetilde{\bl}|\bl\rb = 
\prod_{i\neq j} \one_{\widetilde{\lambda}^{(i)}=\lambda^{(i)}} \,\cdot\,\Ub(\widetilde{\lambda}^{(j)}|\lambda^{(j-1)},\lambda^{(j)},\lambda^{(j+1)};a,b)
$$
where $\Ub(\pi|\la,\k, \mu)$ is a probability distribution in $\pi\in \Sign_2$ given  $\lambda,\k,\mu\in\Sign_2$, and where $a,b$ are the bulk parameters labeling the edges $(\p_{j-1},\p_j)$ and $(\p_j,\p_{j+1})$ respectively.
If the update vertex from $\hP$ to $\widetilde{\hP}$ is a boundary vertex (say $\p_0$, the left-boundary) then
$$
\U_{\lpp}^{\PiP,\mathcal{G}\widetilde{\hP}}\lb\widetilde{\bl}|\bl\rb = 
\prod_{i\neq 0} \one_{\widetilde{\lambda}^{(i)}=\lambda^{(i)}} \,\cdot\,
\Ul(\widetilde{\lambda}^{(0)}|\lambda^{(0)},\lambda^{(1)};c,a)
$$
where $\Ul(\pi|\la,\k)$ is a probability distribution in $\pi\in \Sign_2$ given  $\lambda,\k\in \Sign_2$, and where $c=c_1$ is the boundary parameter labeling the left-boundary arc, and $a$ is the bulk parameters labeling the edge ($\p_0,\p_1)$.
Similarly, if the update vertex was on the right boundary $\U_{\lpp}^{\PiP,\mathcal{G}\widetilde{\hP}}\lb\widetilde{\bl}|\bl\rb$ is defined via $\Ur(\pi|\k,\mu;a,c)$ for the associated bulk and (right) boundary parameters $a$ and $c=c_2$. Our notation here for the local transition probabilities emphasizes their bulk or boundary nature via the superscript which is harvested from \eqref{eq:local move one path bulk}.

In order that $\U_{\lpp}^{\PiP,\mathcal{G}\widetilde{\hP}}$ preserves the two-layer Gibbs measures it will be sufficient that the local transition probabilities $\Ub,\Ul$ and $\Ur$ preserve the local weights (this is why we have included the dependence on the edge parameters that figure into the weights). Specifically we will assume that for all $\pi,\la,\k,\mu\in \Sign_2$ and all $a,b,c_1,c_2>0$ such that $ab,ac_1,ac_2<1$, 

\begin{subequations}
\begin{alignat}{3}
        \sum_{\k\in\Sign_2}\Ub(\pi|\la,\k, \mu;a,b) \wt_{\lpp}\lb\raisebox{-35pt}{\begin{tikzpicture}
        \draw[thick] (-0.6,0.6)--(0,0)--(0.6,0.6);
        \draw[thick] (-0.6,-0.6)--(0,-1.2)--(0.6,-0.6);
        \draw[dashed] (-0.6,-0.6)--(0,0)--(0.6,-0.6);
        \node[above right] at (0.5,0.55) {\small $\mu_1$};
        \node[above right] at (0.5,-0.65) {\small  $\mu_2$};
        \node[above left] at (-0.5,0.55) {\small   $\la_1$};
        \node[above left] at (-0.5,-0.65) {\small  $\la_2$};
        \node[above] at (0.3,0.3) {\textcolor{red}{\small   $b$}};
        \node[above] at (0.3,-0.9) {\textcolor{red}{\small  $b$}};
        \node[above] at (-0.3,0.3) {\textcolor{red}{\small  $a$}};
        \node[above] at (-0.3,-0.9) {\textcolor{red}{\small  $a$}};
        \node[below] at (0,0) {\small  $\k_1$};
        \node[below] at (0,-1.2) {\small  $\k_2$};
\end{tikzpicture}}\rb
&=
\wt_{\lpp}\lb\raisebox{-35pt}{\begin{tikzpicture}
    \draw[thick] (-0.6,0.6)--(0,1.2)--(0.6,0.6);
    \draw[dashed] (-0.6,0.6)--(0,0)--(0.6,0.6);
    \draw[thick] (-0.6,-0.6)--(0,0)--(0.6,-0.6);
    \node[below right] at (0.5,0.65) {\small $\mu_1$};
    \node[below right] at (0.5,-0.55) {\small   $\mu_2$};
    \node[below left] at (-0.45,0.66) {\small  $\la_1$};
    \node[below left] at (-0.45,-0.55) {\small  $\la_2$};
    \node[above] at (0,0) {\small  $\pi_2$};
    \node[above] at (0,1.2) {\small  $\pi_1$};
    \node[above] at (0.35,-0.35){\textcolor{red}{\small  $a$}};
    \node[above] at (0.35,0.85){\textcolor{red}{\small  $a$}};
    \node[above] at (-0.35,-0.35){\textcolor{red}{\small  $b$}};
    \node[above] at (-0.35,0.85){\textcolor{red}{\small  $b$}};
\end{tikzpicture}}\rb,  \label{eq:diagram definition bulk local operator}  \\
   \sum_{\la\in\Sign_2}\Ul(\pi|\la,\k;c_1,a)\wt_{\lpp}\lb\raisebox{-35pt}{\begin{tikzpicture}
    \draw[dashed](0,0.6)--(0.6,0);
    \draw[thick] (0,-0.6)--(0.6,0);
    \draw[thick] (0,0.6)--(0.6,1.2);
    \node[above right] at (0.5,-.1) {\small $\k_2$};
    \node[above right] at (0.5,1.1) {\small $\k_1$};
    \node[below] at (0,-0.6) {\small $\la_2$};
    \node[above] at (0,0.6) {\small $\la_1$};
    \node[above] at (0.3,-0.3) {\textcolor{red}{\small $a$}};
    \node[above] at (0.3,0.9) {\textcolor{red}{\small $a$}};
    \node[left] at (-0.6,0) {\textcolor{red}{\small $c_1$}};
    \draw[thick] (0,0.6) arc (90:270:0.6);
\end{tikzpicture}}\rb
&=
\wt_{\lpp}\lb\raisebox{-35pt}{\begin{tikzpicture}
    \draw[dashed](0.6,0)--(0,-0.6);
    \draw[thick] (0,0.6)--(0.6,0);
    \draw[thick] (0,-0.6)--(0.6,-1.2);
    \node[below right] at (0.5,-1.1) {\small $\k_2$};
    \node[below right] at (0.5,.1) {\small $\k_1$};
    \node[below] at (0,-0.6) {\small $\pi_2$};
    \node[above] at (0,0.6) {\small $\pi_1$};
    \node[above] at (0.3,0.3) {\textcolor{red}{\small $a$}};
    \node[above] at (0.3,-0.9) {\textcolor{red}{\small $a$}};
    \node[left] at (-0.6,0) {\textcolor{red}{\small $c_1$}};
    \draw[thick] (0,0.6) arc (90:270:0.6);
\end{tikzpicture}}\rb,\label{eq:diagram definition left boundary local operator} \\
\sum_{\mu\in\Sign_2}\Ur(\pi|\k,\mu;a,c_2)\wt_{\lpp}\lb\raisebox{-35pt}{\begin{tikzpicture}
    \draw[dashed] (0,0)--(0.6,0.6);
    \draw[thick] (0.6,0.6)--(0,1.2);
    \draw[thick] (0,0)--(0.6,-0.6);
    \node[above left] at (0.1,-0.1) {\small  $\k_2$};
    \node[above left] at (0.1,1) {\small  $\k_1$};
    \node[below ] at (0.7,-0.6) {\small $\mu_2$};
    \node[above ] at (0.7,0.6) {\small  $\mu_1$};
    \node  at (0.35,-0.1) {\textcolor{red}{\small $a$}};
    \node  at (0.35,1.1) {\textcolor{red}{\small $a$}};
    \node[right] at (1.2,0) {\textcolor{red}{\small  $c_2$}};
    \draw[thick] (0.6,-0.6) arc (-90:90:0.6);
\end{tikzpicture}}\rb
&=
\wt_{\lpp}\lb\raisebox{-35pt}{\begin{tikzpicture}
    \draw[dashed](0.6,0)--(0,0.6);
    \draw[thick] (0,0.6)--(0.6,1.2);
    \draw[thick] (0,-0.6)--(0.6,0);
    \node[ left] at (0.1,-0.6) {\small  $\k_2$};
    \node[ left] at (0.1,0.6) {\small $\k_1$};
    \node[below  ] at (0.7,0) {\small $\pi_2$};
    \node[above  ] at (0.7,1.2) {\small  $\pi_1$};
    \node  at (0.25,-0.1) {\textcolor{red}{\small $a$}};
    \node  at (0.25,1.1) {\textcolor{red}{\small $a$}};
    \node[right] at (1.2,0.6) {\textcolor{red}{\small $c_2$}};
    \draw[thick] (0.6,0) arc (-90:90:0.6);
\end{tikzpicture}}\rb. \label{eq:diagram definition right boundary local operator}
\end{alignat}
\end{subequations} 
These equations do not uniquely specify the transition probabilities. Definition \ref{def:push-block} exhibits one solution.

For any down-right paths $\hQ$ sitting above $\hP$ (i.e., achievable via a sequence of local moves), we define $\mathbb{U}(\hP,\hQ)$ the set of vertices between $\hP$ and $\hQ$, including those on $\hQ$ but excluding those on $\hP$. 
We compose the Markov kernels defined above according to local moves at the sequence of vertices in $\mathbb{U}(\hP,\hQ)$ in the lexicographical order of their coordinates $(n,m)\in\mathbb{U}(\hP,\hQ)$ (in fact, any admissible sequence will result in the same transition probability). This defines a transition probability $\U_{\lpp}^{\PiP,\mathcal{G}\hQ}\lb\bl'|\bl\rb$,
where $\bl\in\Sign_2^{N+1}$ represents a configuration on $\PiP$ and $\bl'\in \Sign_2^{N+1}$ a configuration on $\mathcal{G}\hQ$.
\end{definition}

\begin{corollary}[Measure preservation and shift-invariance]\label{Cor:properties local operators LPP}
We have the following properties:
\begin{enumerate}[wide, labelwidth=0pt, labelindent=0pt]
    \item [(1)] For all $\bl'\in\Sign_2^{N+1}$, we have
    \be\label{eq:compatibility of local operator with wt}
    \sum_{\bl\in\Sign_2^{N+1}}\U_{\lpp}^{\PiP,\mathcal{G}\hQ}\lb\bl'|\bl\rb\wt_{\lpp}^\PiP\lb\bl\rb=\wt_{\lpp}^\gq\lb\bl'\rb.
    \ee
    \item[(2)] For all $x\in\ZZ$ and $\bl,\bl'\in \Sign_2^{N+1}$ we have
    \be\label{eq:translation invariance Markov operators LPP}
    \U_{\lpp}^{\PiP,\mathcal{G}\hQ}\lb\bl'|\bl\rb=\U_{\lpp}^{\PiP,\mathcal{G}\hQ}\lb\bl'+x|\bl+x\rb.
    \ee 
\end{enumerate} 
\end{corollary}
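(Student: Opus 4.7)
The plan is to prove both parts by reducing to the case of a single local move and then iterating. Since the dynamics $\U_{\lpp}^{\PiP,\mathcal{G}\hQ}$ is defined as a composition of the local kernels $\Ub$, $\Ul$, $\Ur$ along a fixed admissible sequence of local moves taking $\hP$ to $\hQ$, both statements reduce to the corresponding statements for a single local move; the full result then follows by induction on the number of local moves.

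For part (1), consider first a single bulk move at vertex $\p_j$ (the boundary cases are analogous, using \eqref{eq:diagram definition left boundary local operator} and \eqref{eq:diagram definition right boundary local operator} in place of \eqref{eq:diagram definition bulk local operator}). The key observation is that the weight $\wt_{\lpp}^{\PiP}(\bl)$ factors as
\[
\wt_{\lpp}^{\PiP}(\bl) \;=\; W_{\text{rest}}(\bl_{\setminus j}) \cdot w_{\PiP}^{\text{loc}}\bigl(\la^{(j-1)},\la^{(j)},\la^{(j+1)}\bigr),
\]
where $w_{\PiP}^{\text{loc}}$ is precisely the product of Boltzmann weights on the subgraph appearing in the left-hand side of \eqref{eq:diagram definition bulk local operator} (with parameters $a,b$ from the edges adjacent to $\p_j$), and $W_{\text{rest}}$ depends only on the coordinates $\bl_{\setminus j}=(\la^{(i)})_{i\neq j}$ that are untouched by the move. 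Since the local kernel factors as $\prod_{i\neq j}\one_{\widetilde{\la}^{(i)}=\la^{(i)}}\cdot\Ub$, summing over $\bl$ collapses the product of indicators, leaving
\[
\sum_{\bl} \U_{\lpp}^{\PiP,\mathcal{G}\widetilde{\hP}}(\widetilde{\bl}|\bl)\wt_{\lpp}^{\PiP}(\bl) = W_{\text{rest}}(\widetilde{\bl}_{\setminus j})\sum_{\la^{(j)}}\Ub(\widetilde{\la}^{(j)}|\widetilde{\la}^{(j-1)},\la^{(j)},\widetilde{\la}^{(j+1)};a,b)\,w^{\text{loc}}_{\PiP}\bigl(\widetilde{\la}^{(j-1)},\la^{(j)},\widetilde{\la}^{(j+1)}\bigr),
\]
which, by the defining equation \eqref{eq:diagram definition bulk local operator}, equals $W_{\text{rest}}(\widetilde{\bl}_{\setminus j})\cdot w^{\text{loc}}_{\mathcal{G}\widetilde{\hP}}(\widetilde{\la}^{(j-1)},\widetilde{\la}^{(j)},\widetilde{\la}^{(j+1)}) = \wt_{\lpp}^{\mathcal{G}\widetilde{\hP}}(\widetilde{\bl})$, as desired. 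Iterating this through the sequence of local moves that connects $\hP$ to $\hQ$ proves \eqref{eq:compatibility of local operator with wt}.

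For part (2), it suffices to check that each of the local kernels $\Ub$, $\Ul$, $\Ur$ specified by Definition \ref{def:push-block} is itself translation invariant, i.e.\ $\Ub(\pi+x|\la+x,\k+x,\mu+x)=\Ub(\pi|\la,\k,\mu)$, and likewise for $\Ul,\Ur$. This is a direct consequence of the translation invariance \eqref{eq:translation invariance two layer LPP} of the Gibbs weights: both sides of the local identities \eqref{eq:diagram definition bulk local operator}--\eqref{eq:diagram definition right boundary local operator} are unchanged under shifting all coordinates by a common $x\in\ZZ$, so the push-block construction (which uses ratios of local weights) produces shift-invariant kernels. Translation invariance is preserved under composition, giving \eqref{eq:translation invariance Markov operators LPP}.

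The main subtlety, and hence the step I expect to require the most care, is the factorization argument in part (1): one must verify that the collection of Boltzmann weights attached to edges outside the diamond-shaped local subgraph (and the corresponding subgraph on $\mathcal{G}\widetilde{\hP}$) are literally identical, so that $W_{\text{rest}}$ is unambiguously defined and independent of the move. This follows from the local nature of the moves on the two-layer graph described in Definition \ref{defn: local moves} -- only edges touching the update vertex and its updated counterpart change -- but the precise bookkeeping of which edges fall inside the local diamond, especially near the boundary arcs where \eqref{eq:diagram definition left boundary local operator} and \eqref{eq:diagram definition right boundary local operator} must be used, needs to be made explicit.
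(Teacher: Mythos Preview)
Your proof is correct and follows the paper's approach: reduce part (1) to a single local move and invoke the defining identities \eqref{eq:diagram definition bulk local operator}--\eqref{eq:diagram definition right boundary local operator}, then obtain part (2) from the translation invariance of the Gibbs weights. Your treatment is in fact more careful than the paper's two-line argument---in particular, you correctly base (2) on the explicit push-block ratio form of the kernels (Definition~\ref{def:push-block}), whereas the paper's terse ``follows from \eqref{eq:compatibility of local operator with wt} and the weights being translation invariant'' leaves implicit that one needs the actual construction of the kernels from the weights, not just the abstract local identities of Definition~\ref{def:twolayerdynamics}.
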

\begin{proof}
Since \eqref{eq:compatibility of local operator with wt} is preserved by composition, we only need to show this property for $\hQ=\widetilde{\hP}$ being a local move of $\hP$, which follows from the assumptions \eqref{eq:diagram definition bulk local operator}, \eqref{eq:diagram definition left boundary local operator} and \eqref{eq:diagram definition right boundary local operator}.  
The translation invariance \eqref{eq:translation invariance Markov operators LPP} follows from \eqref{eq:compatibility of local operator with wt} and the weights being translation invariant.
\end{proof}

\begin{definition}[Geometric push-block dynamics]\label{def:push-block}
For the bulk, there is a unique solution $\Ub(\pi|\la,\k,\mu;a,b)$ to \eqref{eq:diagram definition bulk local operator} which  does not depend on $\k$. Denoting this by $\Ub(\pi\vert \lambda, \mu;a,b)$ observe that it is given by the weight on the right-hand side of \eqref{eq:diagram definition bulk local operator} divided by the sum of weights on the left-hand side (without the $\Ub$ factor). Explicitly plugging in the weights, this can be written as
\be  \label{eq:bulk operator LPP}
\Ub(\pi|\la,\mu;a,b)=\frac{b^{\pi_1+\pi_2-\la_1-\la_2}\one_{\pi\succeq\la}a^{\pi_1+\pi_2-\mu_1-\mu_2}\one_{\pi\succeq\mu}}{\sum_{\k}a^{\la_1+\la_2-\k_1-\k_2}\one_{\la\succeq\k}b^{\mu_1+\mu_2-\k_1-\k_2}\one_{\mu\succeq\k}}.
\ee
Due to the Cauchy identity (Proposition \ref{prop:Cauchy identity LPP}) this is a Markov transition matrix, i.e. it is stochastic.
Similarly, for the left boundary, the unique solution to \eqref{eq:diagram definition left boundary local operator} which does not depend on $\la$ is given explicitly by
\be   \label{eq:left boundary operator LPP}
\Ul(\pi|\k;c_1,a):=\frac{\one_{\pi\succeq\k}a^{\pi_1+\pi_2-\k_1-\k_2}c_1^{\pi_1-\pi_2}}{\sum_{\la}\one_{\k\succeq\la}a^{\k_1+\k_2-\la_1-\la_2}c_1^{\la_1-\la_2}}.
\ee 
Owing to the Littlewood identity (Proposition \ref{prop:Littlewood identity LPP}), this is a Markov transition matrix. Likewise, at the right boundary the unique solution to  \eqref{eq:diagram definition right boundary local operator}
which does not depend on $\mu$ is given explicitly by
\be   \label{eq:right boundary operator LPP}
\Ur(\pi|\k;a,c_2):=\frac{1_{\pi\succeq\k}a^{\pi_1+\pi_2-\k_1-\k_2}c_2^{\pi_1-\pi_2}}{\sum_{\mu}1_{\k\succeq\mu}a^{\k_1+\k_2-\mu_1-\mu_2}c_2^{\mu_1-\mu_2}}. 
\ee
\end{definition}

\subsection{First layer marginal}\label{subsec:Marginal distributions LPP}
We first show that the marginal distribution on the first layer of the two-layer geometric push-block Markov dynamics (Definition \ref{def:push-block}) correspond to the recurrence relation defining geometric LPP.
For a configuration $\bl$ on a two-layer graph we will use the shorthand $\bl_i:=(\bl_i^{(0)},\dots,\bl_i^{(N)})$ for $i=1,2$ so that $\bl=(\bl_1,\bl_2)$.
Under the additional restriction that $c_1c_2<1$, we will then take the marginal measure of the two-layer Gibbs measure $\wt_{\lpp}^{\PiP}$ on the first layer $\bl_1$ and multiply it by a finite normalization constant to define a probability measure $\mathrm{P}_{\lpp}^{\hP}$. We then prove the stationarity of $\mathrm{P}_{\lpp}^{\hP}$ under the geometric LPP recurrence relation. 

\begin{lemma}[First layer dynamics match the geometric LPP recurrence relation]\label{lem:marginal operator LPP} The geometric push-block Markov dynamics (Definition \ref{def:push-block} and \eqref{eq:recurrence geometric LPP}) restricted to the configuration on the upper layer of the two-layer graphs are marginally Markov and agree with the dynamics imposed by the geometric LPP recurrence relation (Definition \ref{defn:inhomogeneous geometric LPP and specializations}). In particular, this means that the law of $\pi_1$ under $\Ub(\pi|\la,\mu;a,b)$ only depends on $\la,\mu$ through $\la_1,\mu_1$ and that law, which we write by $\rmUb(\pi_1|\la_1,\mu_1;a,b)$, is given explicitly by
$$\rmUb(\pi_1|\la_1,\mu_1;a,b)=(1-ab)(ab)^{\pi_1-\max(\la_1,\mu_1)}\one_{\pi_1\geq \max(\la_1,\mu_1)},\qquad \textrm{hence}\quad \pi_1=\max(\la_1,\mu_1)+\Geom(ab)$$ 
for some independent geometric random variable $\Geom(ab)$. Similarly, on the left and right boundaries, the law of $\pi$ under $\Ul(\pi|\k;c_1,a)$ and $\Ur(\pi|\k;a,c_2)$ depend on $\k$ only through $\k_1$. Those laws, which we write as $\rmUl(\pi_1|\k_1;c_1,a)$ and $\rmUr(\pi_1|\k_1;a,c_2)$ respectively, are given explicitly by 
$$\rmUl(\pi_1|\k_1;c_1,a)=(1-ac_1)\one_{\pi_1\geq\k_1}(ac_1)^{\pi_1-\k_1},\qquad 
\rmUr(\pi_1|\k_1;a,c_2)=(1-ac_2)\one_{\pi_1\geq\k_1}(ac_2)^{\pi_1-\k_1}.$$
These (respectively) imply that for independent geometric random variables $\Geom(ac_1)$ and $\Geom(ac_2)$,
$$\pi_1=\k_1+\Geom(ac_1)\qquad \textrm{and}\quad \pi_1=\k_1+\Geom(ac_2).$$

Thus, the law of $\bl'_1$ under $\U_{\lpp}^{\PiP,\mathcal{G}\hQ}\lb\bl'|\bl\rb$
only depends on $\bl_1$ and hence is written as $\mathrm{U}_{\lpp}^{\hP,\hQ}\lb\bl'_1|\bl_1\rb$. These transition probabilities define Markov dynamics on the first layer of the two-layer graph which coincide with the recurrence relation \eqref{eq:recurrence geometric LPP} for geometric LPP, and if we initialize that with $G(\p_j)=\la_1^{(j)}$, $0\leq j\leq N$ on $\hP$ then
the probability that $G(\q_j)=\la_1'^{(j)}$, $0\leq j\leq N$ on $\hQ$ is precisely $\mathrm{U}_{\lpp}^{\hP,\hQ}\lb\bl'_1|\bl_1\rb$.
\end{lemma}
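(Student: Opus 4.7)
The plan is to sum out the second-layer coordinate in each of the three local kernels \eqref{eq:bulk operator LPP}, \eqref{eq:left boundary operator LPP}, \eqref{eq:right boundary operator LPP} by direct computation of finite and geometric sums, then leverage the locality of the push-block construction to propagate the first-layer Markov property to the composition, and finally match the resulting first-layer dynamics to the LPP recurrence \eqref{eq:recurrence geometric LPP} by reading off edge labels from $\PiP$.

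For the bulk kernel I first unpack the interlacings $\pi\succeq\la,\pi\succeq\mu$ in the numerator of \eqref{eq:bulk operator LPP} as $\pi_1\geq\max(\la_1,\mu_1)$ together with the independent constraint $\max(\la_2,\mu_2)\leq\pi_2\leq\min(\la_1,\mu_1)$, and similarly for $\la\succeq\k,\mu\succeq\k$ in the denominator. Pulling $(ab)^{\pi_1}$ outside the $\pi_2$-sum, I observe that $\sum_{\pi_2}(ab)^{\pi_2}$ and $\sum_{\k_1}(ab)^{-\k_1}$ both range over the same integer interval $[\max(\la_2,\mu_2),\min(\la_1,\mu_1)]$; a short manipulation shows their ratio collapses to the monomial $(ab)^{\max(\la_2,\mu_2)+\min(\la_1,\mu_1)}$. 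The remaining geometric tail $\sum_{\k_2\leq\min(\la_2,\mu_2)}(ab)^{-\k_2}=(ab)^{-\min(\la_2,\mu_2)}/(1-ab)$ supplies the normalization, and using $\max(x,y)+\min(x,y)=x+y$ to collapse the exponents yields $(1-ab)(ab)^{\pi_1-\max(\la_1,\mu_1)}\one_{\pi_1\geq\max(\la_1,\mu_1)}$, which manifestly depends on $\la,\mu$ only through $\la_1,\mu_1$.

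The left-boundary case in \eqref{eq:left boundary operator LPP} is structurally identical but simpler: after unpacking the interlacings, the $\pi_2$- and $\la_1$-sums run over the same interval $[\k_2,\k_1]$ and their ratio telescopes to a single monomial, while the unbounded $\la_2$-tail produces the $(1-ac_1)^{-1}$ normalization, giving $(1-ac_1)(ac_1)^{\pi_1-\k_1}\one_{\pi_1\geq\k_1}$; the right boundary is mirror-symmetric. Given these three marginal formulas, the first-layer Markov property of the composed kernel $\U_{\lpp}^{\PiP,\mathcal{G}\hQ}$ follows by induction along the sequence of local moves that builds the composition: at each elementary move the distribution of the updated first-layer coordinate depends only on first-layer values at proximate sites, while all other first-layer coordinates are fixed by the identity action of the local kernel, so the first layer evolves autonomously as a Markov chain.

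Finally, I identify this chain with the geometric LPP recurrence \eqref{eq:recurrence geometric LPP} by reading off the labels of $\PiP$ inherited from the strip. At an interior update with $\p_j=(n,m)$, the two thick edges of $\PiP$ incident to $\p_j$ carry the strip labels $a=a_{m+1}$ and $b=a_{n+1}$ of the vertical edge $(n,m)\to(n,m+1)$ and horizontal edge $(n,m)\to(n+1,m)$; the updated vertex lies at $(n+1,m+1)$ and its bulk weight $\omega_{n+1,m+1}\sim\Geom(a_{n+1}a_{m+1})=\Geom(ab)$ is exactly the increment appearing in $\rmUb$, so $\pi_1=\max(\la_1,\mu_1)+\omega_{n+1,m+1}$ reproduces \eqref{eq:recurrence geometric LPP}. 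The boundary updates match analogously, with the arc label $c_i$ pairing with the adjacent vertical edge label $a_m$ to give $\Geom(a_m c_i)$, exactly the boundary weight distribution from Definition \ref{defn:inhomogeneous geometric LPP and specializations}. The analytic content is elementary; I expect the only delicate part to be the careful bookkeeping of strip labels under the counter-clockwise rotation used to build $\PiP$ and across all three types of local moves.
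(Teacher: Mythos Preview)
Your proof is correct and follows essentially the same route as the paper: both compute the $\pi_2$-marginal of each local kernel directly from \eqref{eq:bulk operator LPP}--\eqref{eq:right boundary operator LPP}, then read off the LPP recurrence. The only cosmetic difference is that the paper first invokes the Cauchy and Littlewood identities (Propositions~\ref{prop:Cauchy identity LPP} and~\ref{prop:Littlewood identity LPP}) to rewrite the denominator sum over $\kappa$ (respectively $\lambda$) as a sum over $\pi$, which makes the kernel visibly factorize as a product of independent $\pi_1$- and $\pi_2$-distributions; this bypasses your ratio-of-finite-geometric-sums step, but the underlying arithmetic is identical.
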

\begin{proof}
This type of result for push-block dynamics has appeared in many cases previously, e.g. \cite{borodin2014anisotropic,baik2018pfaffian}. We will prove the claim about the bulk kernel $\Ub$ and left boundary kernel $\Ul$, since the case of right boundary kernel $\Ur$ follows by renaming the variables in $\Ul$. 

By the Cauchy identity (Proposition \ref{prop:Cauchy identity LPP}), the bulk local kernel \eqref{eq:bulk operator LPP} can be written as:
$$\Ub(\pi|\la,\mu;a,b)=\frac{\one_{\max(\la_1,\mu_1)\leq\pi_1}(ab)^{\pi_1}}{\sum_{\max(\la_1,\mu_1)\leq\pi_1}(ab)^{\pi_1}}
\frac{\one_{\max(\la_2,\mu_2)\leq\pi_2\leq\min(\la_1,\mu_1)}(ab)^{\pi_2}}{\sum_{\max(\la_2,\mu_2)\leq\pi_2\leq\min(\la_1,\mu_1)}(ab)^{\pi_2}}.$$
Summing over $\pi_2$, we arrive at the claimed property and formula
$$\sum_{\pi_2}\Ub(\pi|\la,\mu;a,b)=(1-ab)(ab)^{\pi_1-\max(\la_1,\mu_1)}\one_{\pi_1\geq \max(\la_1,\mu_1)}=\rmUb(\pi_1|\la_1,\mu_1;a,b).$$
By the Littlewood identity (Proposition \ref{prop:Littlewood identity LPP}), the left boundary local kernel \eqref{eq:left boundary operator LPP} similarly can be written as:
$$\Ul(\pi|\k;c_1,a)=\frac{\one_{\k_1\leq\pi_1}(ac_1)^{\pi_1}}{\sum_{\k_1\leq\pi_1}(ac_1)^{\pi_1}}
\frac{\one_{\k_2\leq\pi_2\leq\k_1}\lb a/c_1\rb^{\pi_2}}{\sum_{\k_2\leq\pi_2\leq\k_1}\lb a/c_1\rb^{\pi_2}}.$$
Summing over $\pi_2$, we arrive at the claimed property and formula
$$\sum_{\pi_2}\Ul(\pi|\k;c_1,a)=(1-ac_1)(ac_1)^{\pi_1-\k_1}\one_{\pi_1\geq \k_1}=\rmUl(\pi_1|\k_1;c_1,a).$$
The claimed law of $\bl'_1$ under $\U_{\lpp}^{\PiP,\mathcal{G}\hQ}\lb\bl'|\bl\rb$ and relation to the LPP recurrence now follows immediately.
\end{proof}
 
We now show that fixing the value of $\bl$ at some vertex in the two-layer graph results in a finite partition function and hence the marginal law given that conditioning can be normalized to be a probability measure. 
  
\begin{proposition}\label{prop:finiteness of weights LPP}  
 Assume \eqref{eq:condition for inhomogeneous geometric LPP} and the additional condition $c_1c_2<1$. For any down-right path $\hP$ let 
$$ Z_{\lpp} = \sum_{\la_i^{(j)}\in\ZZ, (i,j)\neq (1,0)} \wt_{\lpp}^{\PiP}(\bl), $$
where $\la_1^{(0)}$ is fixed and the summation runs over all $\la_i^{(j)}\in\ZZ$ for $i=1,2$ and $j=0,\dots,N$ with $(i,j)\neq (1,0)$. Then, $Z_{\lpp}$ is finite and does not does not depend on the choice of  $\hP$ or $\la_1^{(0)}$. 
\end{proposition}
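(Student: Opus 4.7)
I would establish Proposition \ref{prop:finiteness of weights LPP} in three steps: (i) independence from the fixed value of $\la_1^{(0)}$, (ii) independence from the choice of down-right path $\hP$, and (iii) finiteness for the specific path $\hP_h$.

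\textbf{Step (i).} This is immediate from the translation invariance \eqref{eq:translation invariance two layer LPP}: applying the change of summation variable $\la_i^{(j)}\mapsto\la_i^{(j)}+x$ for all $(i,j)$ simultaneously leaves $\wt_{\lpp}^{\PiP}$ invariant while shifting the fixed value of $\la_1^{(0)}$ by $x$. Hence $Z_{\lpp}$ does not depend on the value at which $\la_1^{(0)}$ is fixed.

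\textbf{Step (ii).} Any two down-right paths are connected by a sequence of local moves (Definition \ref{defn: local moves}) together with translations $\tau_k$; translations preserve $Z_{\lpp}$ by the previous step, so it suffices to check invariance under a single local move $\hP\to\widetilde\hP$. I would sum the identity \eqref{eq:compatibility of local operator with wt} of Corollary \ref{Cor:properties local operators LPP} over all $\widetilde\bl\in\Sign_2^{N+1}$ with $\widetilde\la_1^{(0)}$ fixed. The right-hand side is $Z_{\lpp}^{\widetilde\hP}$ by definition. On the left-hand side I would exchange sums (justified by Tonelli since all summands are nonnegative). For bulk and right-boundary moves the updated vertex is not $\p_0$, so the inner sum collapses using the stochasticity of $\Ub$ or $\Ur$, yielding $Z_{\lpp}^{\hP}$. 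For left-boundary moves the updated coordinate is $\la^{(0)}$ itself; here I would apply the marginal formula $\rmUl$ of Lemma \ref{lem:marginal operator LPP} to partially sum over $\widetilde\la_2^{(0)}$, and then use step (i) to pull out $Z_{\lpp}^{\hP}$ independent of $\la_1^{(0)}$, leaving a geometric series in the remaining variable that sums to $1$. In every case $Z_{\lpp}^{\hP}=Z_{\lpp}^{\widetilde\hP}$, with the understanding that both sides may be $+\infty$.

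\textbf{Step (iii).} On the horizontal path $\hP_h$ the weights \eqref{eq:weightsLPP} combine, through the interlacing indicators of the dashed and solid edges, into the signature-indexed skew Schur polynomials of Section \ref{subsec:Skew Schur functions indexed by signatures LPP}:
\[
\wt_{\lpp}^{\mathcal{G}\hP_h}(\bl)=\tau_{\la^{(0)}}(c_1)\,\tau_{\la^{(N)}}(c_2)\prod_{j=1}^N s_{\la^{(j)}/\la^{(j-1)}}(a_j).
\]
Summing over the interior signatures $\la^{(1)},\dots,\la^{(N-1)}$ via the branching rule \eqref{eq:branchingrule} gives $\tau_{\la^{(0)}}(c_1)\,\tau_{\la^{(N)}}(c_2)\,s_{\la^{(N)}/\la^{(0)}}(\aa)$. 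I would then use the Littlewood identity of Proposition \ref{prop:Littlewood identity LPP} (valid since $|a_jc_2|<1$) to perform the sum over $\la^{(N)}$, reducing $Z_{\lpp}$ to a sum over $\la_2^{(0)}$ and one further signature of an explicit rational expression. A final application of Littlewood/Cauchy together with the geometric series $\sum_{k\geq 0}(c_1c_2)^k=(1-c_1c_2)^{-1}$ (which requires exactly the new assumption $c_1c_2<1$) evaluates the remaining sum. All other factors produce convergent geometric series under the standing conditions \eqref{eq:condition for inhomogeneous geometric LPP}.

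The main obstacle is step (iii): carefully iterating the signature-valued Cauchy and Littlewood identities to isolate the ``zero-mode'' geometric series in $\la_2^{(0)}$ whose convergence is controlled precisely by the extra hypothesis $c_1c_2<1$, while all remaining sums converge under the already-assumed inequalities.
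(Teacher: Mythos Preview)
Your Steps (i) and (ii) are correct.  For Step (ii) you route the argument through the Markov kernels and Corollary~\ref{Cor:properties local operators LPP}; the paper takes a shorter path, applying the Cauchy and Littlewood identities \eqref{eq:diagram Cauchy LPP}, \eqref{eq:diagram Littlewood LPP} directly to the local piece of the weight.  (For the left-boundary move, one uses translation invariance to see that $Z_{\lpp}$ is unchanged if one fixes, say, $\la_1^{(1)}$ instead of $\la_1^{(0)}$; then \eqref{eq:diagram Littlewood LPP} sums over the full $\la^{(0)}$.)  Your kernel-based treatment of the left-boundary move does go through as you describe once the translation is made explicit, but it is more work than needed.

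Step (iii) has a gap.  After you apply Proposition~\ref{prop:Littlewood identity LPP} to sum over $\la^{(N)}$ you arrive at
\[
\sum_{\la_2^{(0)},\,\mu}\tau_{\la^{(0)}}(c_1)\,\tau_\mu(c_2)\,s_{\la^{(0)}/\mu}(\aa),
\]
and a ``final application of Littlewood/Cauchy'' to this expression simply cycles back to an equivalent form; it does not isolate a geometric series in $\la_2^{(0)}$ controlled by $c_1c_2$.  The paper's device is to fix $\la_2^{(0)}=0$ rather than $\la_1^{(0)}$ (legitimate by your Step (i) and the freedom to pin any coordinate).  Then $\tau_{\la^{(0)}}(c_1)=c_1^{\la_1^{(0)}}=s_{\la^{(0)}/(0,0)}(c_1)$, and the branching rule collapses the sums over $\la^{(0)},\dots,\la^{(N-1)}$ into the single Schur polynomial $s_{\mu/(0,0)}(c_1,a_1,\dots,a_N)=s_\mu(c_1,\aa)$ with $\mu:=\la^{(N)}$ a nonnegative signature.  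One then bounds $Z_{\lpp}=\sum_{\mu\in\Sign_2}c_2^{\mu_1-\mu_2}s_\mu(c_1,\aa)$ by the corresponding sum over all partitions and invokes the classical Littlewood identity \cite[Ch.~I.5, Ex.~7]{macdonald1995symmetric}, obtaining the finite product
\[
\frac{1}{1-c_1c_2}\prod_{i=1}^N\frac{1}{(1-a_ic_1)(1-a_ic_2)}\prod_{1\le i<j\le N}\frac{1}{1-a_ia_j}.
\]
This is where the hypothesis $c_1c_2<1$ enters, and it is precisely the step your outline was missing.
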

\begin{proof}
For any local move $\hP\mapsto\widetilde{\hP}$, we can use the Cauchy identity \eqref{eq:diagram Cauchy LPP} and the Littlewood identity \eqref{eq:diagram Littlewood LPP}, possibly multiple times,  to check that the sum $Z_{\lpp}$ is the same  for $\hP$ and for  $\widetilde{\hP}$. Therefore this quantity is the same for every down-right path $\hP$, and we only need to prove that it is finite for the horizontal path $\hP_h$. Similarly, the independence with respect to the fixed choice of $\la_1^{(0)}$ follows immediately from the fact that all weights in $\wt_{\lpp}^{\PiP}(\bl)$ are invariant by translation. In fact, on this account we could have instead fixed any other variable than $\lambda_1^{(0)}$ without changing the value of $Z_{\lpp}$. In fact, let us instead  fix $\lambda_2^{(0)}=0$ for this proof. Hence, letting $\lambda=(\lambda_1^{(0)}, \lambda_2^{(0)})$ and $\mu=(\lambda_1^{(N)}, \lambda_2^{(N)})$, and using the branching rule \eqref{eq:branchingrule}, we can write $Z_{\lpp}$ explicitly in terms of skew Schur polynomials (extended to signatures as in Section \ref{subsec:Skew Schur functions indexed by signatures LPP}) as
$$Z_{\lpp} = \sum_{\lambda, \mu\in \Sign_2}\mathds{1}_{\la_2=0} c_1^{\lambda_1-\lambda_2} s_{\mu/\la}(a_1, \dots, a_N)c_2^{\mu_1-\mu_2}.$$
Since $\la_2=0$, we may write $c_1^{\la_1-\la_2}=s_{\la/(0,0)}(c_1)$, so that via the branching rule \eqref{eq:branchingrule},
$$  Z_{\lpp} = \sum_{\lambda, \mu \in \Sign_2}s_{\la/(0,0)}(c_1) s_{\mu/\la}(a_1, \dots, a_N)c_2^{\mu_1-\mu_2} = \sum_{\mu \in \Sign_2}c_2^{\mu_1-\mu_2} s_{\mu/(0,0)}(a_1, \dots, a_N, c_1).$$
When the variables $a_1, \dots, a_N$ satisfy \eqref{eq:condition for inhomogeneous geometric LPP}, all terms in the sum are nonnegative. Thus the sum may be bounded by the corresponding sum over all partitions (i.e. all nonnegative signatures of any length). Denoting by $\mathbb Y$ the set of all partitions, and noticing that $s_{\mu/(0,0)}$ is exactly the usual Schur function $s_{\mu} $, 
$$  Z_{\lpp} \leq \sum_{\mu\in \mathbb Y } c_2^{\mu_1-\mu_2+\mu_3-\dots} s_{\mu}(a_1, \dots, a_N, c_1)= \frac{1}{1-c_1c_2} \prod_{i=1}^N\left( \frac{1}{1-a_ic_1}\frac{1}{1-a_ic_2}\right) \prod_{1\leq i<j\leq n}\frac{1}{1-a_ia_j},$$
where in the last equality we have used a known summation identity, see \cite[Chap I.5, Ex. 7]{macdonald1995symmetric} which is valid as long as the product between any two variables has a modulus in $[0,1)$. Hence, as long as the conditions \eqref{eq:condition for inhomogeneous geometric LPP} and $c_1c_2<1$  are satisfied, the sum $ Z_{\lpp}$ is finite, which concludes the proof. 

Note that instead of explicitly relating to Schur polynomials, we could have proved this result by induction on $N$ and explicit computation (as we will do in proving Proposition \ref{prop:finiteness of weights LG} for the log-gamma polymer).
\end{proof}

We will now prove that the first layer marginal distribution of our two-layer Gibbs measures are stationary measures for the geometric LPP recurrence relation. To state this precisely, we need to introduce a few pieces of notation. 
Recalling the decomposition of $\bl=(\bl_1,\bl_2)$ define first layer marginal weights by 
\be\label{eqn:wtP}
\wt_{\lpp}^{\hP}(\bl_1):=\sum_{\bl_2\in\ZZ^{N+1}}\wt_{\lpp}^{\PiP}(\bl).
\ee
   The translation invariance \eqref{eq:translation invariance two layer LPP} of $\wt_{\lpp}^{\PiP}(\bl)$ implies that  for any $x\in\ZZ$,
$\wt_{\lpp}^{\hP}(\bl_1+x)=\wt_{\lpp}^{\hP}(\bl_1)$.
    When $c_1c_2<1$, by Proposition \ref{prop:finiteness of weights LPP}, for any fixed $\la_1^{(0)}$ we have 
    \be\label{eq:sum one layer being normalizing constant Z LPP}
    Z_{\lpp}=\sum_{\la_1^{(j)}\in\ZZ,1\leq j\leq N}\wt_{\lpp}^{\hP}(\bl_1)<\infty.\ee
    Let us introduce variables that record the first layer configuration centered by $\la^{(0)}_{1}$
    $$L_1{(j)}:=\la_1^{(j)}-\la_1^{(0)}$$ 
    for $1\leq j\leq N$ and the shorthand notation $\bL_1:=(L_1{(1)},\dots,L_1{(N)})$.  
    For $\bL_1\in \ZZ^{N}$ define
\be\label{eqn:Plpph}
\mathrm{P}_{\lpp}^{\hP} \lb \bL_1\rb:=\frac{1}{Z_{\lpp}}\wt_{\lpp}^\hP\lb\bl_1\rb.
\ee
    Due to translation invariance and finiteness of the normalizing constant \eqref{eq:sum one layer being normalizing constant Z LPP}, this 
    is a probability measure. We will show that this is the stationary measure for the geometric LPP recurrence relation. 
    
    Now we need some notation for the corresponding Markov dynamics. Recall the transition probability $\mathrm{U}_{\lpp}^{\hP,\hQ}(\bl_1'|\bl_1)$ for $\bl_1,\bl'_1\in\ZZ^{N+1}$ defined in Lemma \ref{lem:marginal operator LPP} which encodes the dynamics of geometric LPP passage times from the path $\hP$ to the path $\hQ$. We define another transition probability encoding the dynamics of the centered passage times $\bL_1$. For any $\bL_1, \bL_1'\in\ZZ^N$, define 
    \be\label{eq:definition of first marginal operator sum over x}
    \pU_{\lpp}^{\hP,\hQ}(\bL'_1|\bL_1):=\sum_{x\in\ZZ}\mathrm{U}_{\lpp}^{\hP,\hQ}(x,\bL'_1+x|0,\bL_1).
    \ee 
    Owing to the translation invariance of the dynamics defined by $\mathrm{U}_{\lpp}^{\hP,\hQ}(\bl_1'|\bl_1)$, this gives the transition probability from $\bL_1$ to  $\bL_1'$. The  following shows that the weights $\wt_{\lpp}^{\hP}$ from \eqref{eqn:wtP} and the probability measures $\mathrm{P}_{\lpp}^{\hP}$ from \eqref{eqn:Plpph} are stationary with respect to $\mathrm{U}_{\lpp}$ and $\pU_{\lpp}$. 
\begin{theorem}\label{thm:stationary measure LPP before sum over zero mode} 
For any $\bl_1'\in\ZZ^{N+1}$,
\be\label{eq:compatibility of markov operator with one row weitht LPP}
\sum_{\bl_1\in\ZZ^{N+1}}\mathrm{U}_{\lpp}^{\hP,\hQ}(\bl'_1|\bl_1)\wt_{\lpp}^{\hP}(\bl_1)=\wt_{\lpp}^\hQ\lb\bl'_1\rb.
\ee
Assume that $c_1c_2<1$, then for any $\bL_1'\in\ZZ^{N}$,
\be\label{eq:compatibility of probability measure with LPP}
\sum_{\bL_1\in\ZZ^{N}}\pU_{\lpp}^{\hP,\hQ}(\bL'_1|\bL_1)\mathrm{P}_{\lpp}^{\hP} \lb \bL_1\rb=\mathrm{P}_{\lpp}^{\hQ} \lb \bL'_1\rb.
\ee
\end{theorem}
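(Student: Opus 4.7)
The plan is to derive \eqref{eq:compatibility of markov operator with one row weitht LPP} as a second-layer marginalization of the two-layer stationarity already proven in Corollary~\ref{Cor:properties local operators LPP}(1). The key tool is Lemma~\ref{lem:marginal operator LPP}: the first-layer projection of $\U_{\lpp}^{\PiP,\gq}$ is exactly $\mathrm U_{\lpp}^{\hP,\hQ}$, and crucially this projection does not depend on the second-layer input $\bl_2$. Once \eqref{eq:compatibility of markov operator with one row weitht LPP} is in hand, \eqref{eq:compatibility of probability measure with LPP} will follow by unwinding the definitions and performing one change of variables that uses translation invariance to convert the extra summation $\sum_x$ in \eqref{eq:definition of first marginal operator sum over x} into the summation over the zero mode $\la_1^{(0)}\in\ZZ$ that is missing from $\mathrm P_{\lpp}^{\hP}$ relative to $\wt_{\lpp}^{\hP}$. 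All sums involved are of nonnegative terms, so Fubini is free.

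\textbf{Step 1.} I expand $\wt_{\lpp}^{\hP}(\bl_1)$ via \eqref{eqn:wtP} and rewrite $\mathrm U_{\lpp}^{\hP,\hQ}(\bl'_1|\bl_1)=\sum_{\bl'_2\in\ZZ^{N+1}}\U_{\lpp}^{\PiP,\gq}(\bl'_1,\bl'_2|\bl_1,\bl_2)$ using Lemma~\ref{lem:marginal operator LPP} (valid for any single choice of $\bl_2$, by the $\bl_2$-independence assertion). Swapping the order of summation yields
\begin{equation*}
\sum_{\bl_1}\mathrm U_{\lpp}^{\hP,\hQ}(\bl'_1|\bl_1)\wt_{\lpp}^{\hP}(\bl_1)=\sum_{\bl'_2}\sum_{\bl_1,\bl_2}\U_{\lpp}^{\PiP,\gq}(\bl'_1,\bl'_2|\bl_1,\bl_2)\wt_{\lpp}^{\PiP}(\bl_1,\bl_2)=\sum_{\bl'_2}\wt_{\lpp}^{\gq}(\bl'_1,\bl'_2)=\wt_{\lpp}^{\hQ}(\bl'_1),
\end{equation*}
where the middle equality is Corollary~\ref{Cor:properties local operators LPP}(1) and the last is \eqref{eqn:wtP} applied on $\hQ$.

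\textbf{Step 2.} Using the normalization $\la_1^{(0)}=0$ built into \eqref{eqn:Plpph}, one has $\mathrm P_{\lpp}^{\hP}(\bL_1)=Z_{\lpp}^{-1}\wt_{\lpp}^{\hP}(0,\bL_1)$, well-defined because $Z_{\lpp}<\infty$ under $c_1c_2<1$ by Proposition~\ref{prop:finiteness of weights LPP}. Unfolding \eqref{eq:definition of first marginal operator sum over x},
\begin{equation*}
\sum_{\bL_1\in\ZZ^N}\pU_{\lpp}^{\hP,\hQ}(\bL'_1|\bL_1)\mathrm P_{\lpp}^{\hP}(\bL_1)=\frac{1}{Z_{\lpp}}\sum_{x\in\ZZ}\sum_{\bL_1\in\ZZ^N}\mathrm U_{\lpp}^{\hP,\hQ}(x,\bL'_1+x|0,\bL_1)\,\wt_{\lpp}^{\hP}(0,\bL_1).
\end{equation*}
I shift both factors in the integrand by $-x$: translation invariance of $\mathrm U_{\lpp}^{\hP,\hQ}$ descends from Corollary~\ref{Cor:properties local operators LPP}(2) together with the $\bl_2$-independence in Lemma~\ref{lem:marginal operator LPP}, and that of $\wt_{\lpp}^{\hP}$ is inherited from \eqref{eq:translation invariance two layer LPP}. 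Then the bijection $(x,\bL_1)\mapsto\bl_1:=(-x,\bL_1-x)$ from $\ZZ\times\ZZ^N$ onto $\ZZ^{N+1}$ collapses the double sum to $\sum_{\bl_1\in\ZZ^{N+1}}\mathrm U_{\lpp}^{\hP,\hQ}(0,\bL'_1|\bl_1)\wt_{\lpp}^{\hP}(\bl_1)$, which by Step~1 equals $\wt_{\lpp}^{\hQ}(0,\bL'_1)=Z_{\lpp}\,\mathrm P_{\lpp}^{\hQ}(\bL'_1)$. The main obstacle is purely bookkeeping: checking that this change of variables cleanly identifies the $\sum_x$ in the definition of $\pU_{\lpp}$ with precisely the zero-mode summation that promotes $\mathrm P_{\lpp}^{\hP}$ back to $\wt_{\lpp}^{\hP}$, so that Step~1 becomes applicable.
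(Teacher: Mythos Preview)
Your proof is correct and follows essentially the same approach as the paper's own proof: Step~1 marginalizes the two-layer preservation identity from Corollary~\ref{Cor:properties local operators LPP}(1) over $\bl_2$ and $\bl'_2$ using the $\bl_2$-independence of the first-layer kernel (Lemma~\ref{lem:marginal operator LPP}), and Step~2 applies translation invariance to identify the $x$-sum in $\pU_{\lpp}^{\hP,\hQ}$ with the zero-mode sum missing from $\mathrm P_{\lpp}^{\hP}$, reducing to Step~1. The only cosmetic difference is that the paper relabels $x\mapsto -x$ after the shift, writing the intermediate expression as $\mathrm U_{\lpp}^{\hP,\hQ}(0,\bL'_1\mid x,\bL_1+x)\,\wt_{\lpp}^{\hP}(x,\bL_1+x)$ rather than your $(-x,\bL_1-x)$, but the substance is identical.
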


\begin{proof} 
    The first statement \eqref{eq:compatibility of markov operator with one row weitht LPP} follows from summing \eqref{eq:compatibility of local operator with wt} over the second layer $\bl_2$ in conjunction with the definition of $\mathrm{U}_{\lpp}^{\hP,\hQ}(\bl'_1|\bl_1)$ as the marginal of $\U_{\lpp}^{\PiP,\mathcal{G}\hQ}\lb\bl'|\bl\rb$ (Lemma \ref{lem:marginal operator LPP}). 
 The second statement \eqref{eq:compatibility of probability measure with LPP} follows from translation invariance and the definition \eqref{eqn:Plpph}
of $\mathrm{P}_{\lpp}^{\hP} \lb \bL_1\rb$:
\begin{equation*}
    \begin{split}
        \text{LHS}\eqref{eq:compatibility of probability measure with LPP}&=\frac{1}{Z_{\lpp}}\sum_{\bL_1}\sum_x\mathrm{U}_{\lpp}^{\hP,\hQ}\lb x,\bL'_1+x|0,\bL_1\rb\wt^{\hP}_{\lpp}\lb 0,\bL_1\rb\\
        &=\frac{1}{Z_{\lpp}}\sum_{\bL_1}\sum_x\mathrm{U}_{\lpp}^{\hP,\hQ}\lb 0,\bL'_1|-x,\bL_1-x\rb\wt^{\hP}_{\lpp}\lb -x,\bL_1-x\rb\\
        &=\frac{1}{Z_{\lpp}}\wt_{\lpp}^{\hQ}\lb 0,\bL'_1\rb=\text{RHS}\eqref{eq:compatibility of probability measure with LPP}.
    \end{split}
\end{equation*} 
\end{proof}

\subsection{Proof of Theorem \ref{thm:main theorem LPP}}\label{subsec:proof of main theorem LPP} 
So far we have shown that provided $c_1c_2<1$, the stationary measure for the geometric LPP recurrence relation can be realized as a marginal of the two-layer Gibbs measures. In order to go beyond this restriction on $c_1c_2<1$ we will sum out the `zero-mode'. Specifically, we will prove in Proposition \ref{prop:probability measure coincide LPP} that, provided $c_1c_2<1$, for a horizontal path $\hP$ with edge labels $\bb=(b_1,\dots,b_N)$, the probability measure $\mathrm{P}^{\hP}_{\lpp}$ \eqref{eqn:Plpph} defined as a marginal of the two-layer Gibbs measure coincides with $\mathrm{P}^{\bb,c_1,c_2}_{\stat\lpp}$ defined as a marginal of pair of reweighted inhomogeneous random walks (Definition \ref{defn:rescaled random walks LPP inhomogeneous}). We then prove that the probability measure $\mathrm{P}^{\bb,c_1,c_2}_{\stat\lpp}$ is well-defined without the constraint $c_1c_2<1$ and real analytic in these boundary parameters. Combining this with Theorem \ref{thm:stationary measure LPP before sum over zero mode} and the uniqueness of analytic continuation of real analytic functions, we prove Theorem \ref{thm:main theorem LPP}. Let us note that everything done below could be adapted to general down-right paths as opposed to sticking to horizontal paths, as we do now.

\begin{proposition}[Summing out the zero-mode]\label{prop:probability measure coincide LPP}
    Suppose $\hP$ is a horizontal path with labels $\bb=(b_1,\dots,b_N)$. Then the probability measure $\mathrm{P}^{\hP}_{\lpp}$ \eqref{eqn:Plpph}  coincides with $\mathrm{P}^{\bb,c_1,c_2}_{\stat\lpp}$ (Definition \ref{defn:rescaled random walks LPP inhomogeneous}) provided $c_1c_2<1$.  
\end{proposition}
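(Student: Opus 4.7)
The plan is a direct computation: unfold $\wt_{\lpp}^{\PiP}(\bl)$ for a horizontal path explicitly, sum out the lower layer $\bl_2$, and verify that the result matches the reweighted two-walk density from Definition \ref{defn:rescaled random walks LPP inhomogeneous}. The key observation is that summing over the ``zero mode'' $\la_2^{(0)}$ produces exactly the $(c_1c_2)^{\max}$ factor in the reweighting.

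For a horizontal path with edge labels $\bb=(b_1,\dots,b_N)$, the two-layer graph $\PiP$ has a particularly transparent shape (cf. Definition \ref{defn:two path diagram and configuration LPP}): both layers are parallel $\nearrow$ zigzag-straight paths of $N$ solid edges labeled $b_1,\dots,b_N$, joined by the left/right boundary arcs and by $N$ dashed edges, each running $\searrow$ from $\p_1^{(j-1)}$ (upper) to $\p_2^{(j)}$ (lower). Invoking the translation invariance \eqref{eq:translation invariance two layer LPP}, I would fix $\la_1^{(0)}=0$ and set $\Delta:=\la_2^{(0)}\in\ZZ$ together with centered heights $L_i(j):=\la_i^{(j)}-\la_i^{(0)}$ for $i=1,2$. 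Plugging in the Boltzmann weights from Definition \ref{defn: Gibbs measures LPP}, a short calculation gives
\[
\wt_{\lpp}^{\PiP}(\bl)= c_2^{L_1(N)-L_2(N)}(c_1c_2)^{-\Delta}\prod_{j=1}^N b_j^{L_1(j)-L_1(j-1)+L_2(j)-L_2(j-1)}\one_{L_1(j)\geq L_1(j-1)}\one_{L_2(j)\geq L_2(j-1)}\one_{\Delta\leq L_1(j-1)-L_2(j)}.
\]

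The crux is the sum over $\Delta$. Writing $M:=\min_{1\leq j\leq N}(L_1(j-1)-L_2(j))$, the constraint $\Delta\leq M$ combined with $c_1c_2<1$ yields the convergent geometric series
\[
\sum_{\Delta\leq M}(c_1c_2)^{-\Delta}=\frac{(c_1c_2)^{-M}}{1-c_1c_2}=\frac{(c_1c_2)^{\max_{1\leq j\leq N}(L_2(j)-L_1(j-1))}}{1-c_1c_2},
\]
which reproduces precisely the reweighting function $V^{c_1,c_2}_{\lpp}(\bL)=(c_1c_2)^{\max_j(L_2(j)-L_1(j-1))}c_2^{L_1(N)-L_2(N)}$ from \eqref{eq:PstatLPP inhomogeneous} up to the $\bL_1$-independent factor $(1-c_1c_2)^{-1}$. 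The surviving solid-edge factors $\prod_j b_j^{L_i(j)-L_i(j-1)}\one_{L_i(j)\geq L_i(j-1)}$ are, up to the $\bL_1$-independent constant $\prod_j(1-b_j)^2$, exactly the density of $\PP^{\bb,\bb}_{\grw}$. Hence $\wt_{\lpp}^{\hP}(\bl_1)=\sum_{\bl_2}\wt_{\lpp}^{\PiP}(\bl)$ is, as a function of $\bL_1$, proportional to $\sum_{\bL_2}V^{c_1,c_2}_{\lpp}(\bL)\PP^{\bb,\bb}_{\grw}(\bL)$.

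To conclude: both $\mathrm{P}^{\hP}_{\lpp}(\bL_1)=\wt_{\lpp}^{\hP}(\bl_1)/Z_{\lpp}$ (a probability measure by Proposition \ref{prop:finiteness of weights LPP}) and $\mathrm{P}^{\bb,c_1,c_2}_{\stat\lpp}(\bL_1)$ are probability measures on $\ZZ^N$ that agree up to a multiplicative constant in $\bL_1$; hence they coincide, and finiteness of $\hZ^{\bb,c_1,c_2}_{\lpp}$ drops out as a byproduct. The only subtle step is identifying the dashed-edge pattern on a horizontal path, which is immediate from Definition \ref{defn:two path diagram and configuration LPP} since every strip edge of $\hP_h$ has the same orientation; after that the entire argument reduces to the one explicit geometric summation in $\Delta$ displayed above, with $c_1c_2<1$ serving precisely as the convergence condition.
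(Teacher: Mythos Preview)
Your proposal is correct and follows essentially the same approach as the paper: introduce centered variables $L_i(j)$ and a zero-mode variable, expand $\wt_{\lpp}^{\PiP}$ explicitly for the horizontal path, and perform the single geometric sum over the zero mode to recover the reweighting factor $V^{c_1,c_2}_{\lpp}$. The only cosmetic difference is that the paper takes $\Delta=\la_1^{(0)}-\la_2^{(0)}$ (so the constraint reads $\Delta\geq\max_j(L_2(j)-L_1(j-1))$ and the sum is $\sum_{\Delta\geq\max}(c_1c_2)^{\Delta}$), whereas you take $\Delta=\la_2^{(0)}$ with $\la_1^{(0)}=0$, yielding the equivalent mirror-image sum; the computations and conclusions are otherwise line-for-line the same.
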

\begin{proof}
    We use the following set of variables: For all $1\leq j\leq N$ let
$$\Delta:=\la_1^{(0)}-\la_2^{(0)},\quad L_1{(j)}:=\la_1^{(j)}-\la_1^{(0)},\quad L_2{(j)}:=\la_2^{(j)}-\la_2^{(0)}$$
and write $L_1{(0)}=L_2{(0)}=0$ and $\bL_i:=(L_i{(1)},\dots,L_i{(N)})$ for $i=1,2$. 
Recall that $\bL=(\bL_1,\bL_2)$.
We define
$$\wth_{\lpp}^{\bb}\lbe \Delta;\bL\rbe:=\wt_{\lpp}^{\gp}\lbe\bl\rbe,$$
which is well-defined due to the translation invariance \eqref{eq:translation invariance two layer LPP} of $\wt_{\lpp}^{\gp}\lbe\bl\rbe$.
When $c_1c_2<1$, 
\be\label{eq:rewriting weights LPP}
\mathrm{P}_{\lpp}^{\hP}(\bL_1)=\frac{\wt_{\lpp}^{\hP}(\bl_1)}{Z_{\lpp}}=\frac{\sum_{\bL_2}\sum_{\Delta}\wth_{\lpp}^{\bb}\lbe\Delta;\bL\rbe}{\sum_{\bL}\sum_{\Delta}\wth_{\lpp}^{\bb}\lbe\Delta;\bL\rbe}.
\ee
By Definition \ref{defn: Gibbs measures LPP}, we explicit evaluate weight $\wth_{\lpp}^{\bb}\lbe \Delta;\bL\rbe$ as
\be\label{eq:hat weight LPP}
\begin{split}
    \wth_{\lpp}^{\bb}\lbe \Delta;\bL\rbe=&(c_1c_2)^{\Delta}c_2^{L_1{(N)}-L_2{(N)}}\prod_{j=1}^{N}\one_{L_1{(j-1)}-L_2{(j)}+\Delta\geq 0}\\
    &\times\lb \one_{L_1{(N)}\geq\dots\geq L_1{(0)}=0}\prod_{j=1}^{N}  b_j^{L_1{(j)}-L_1{(j-1)}}\rb\lb\one_{L_2{(N)}\geq\dots\geq L_2{(0)}=0}\prod_{j=1}^{N}b_j^{L_2{(j)}-L_2{(j-1)}}\rb \\
    =&\one_{\Delta\geq\max_{1\leq j\leq N}(L_2{(j)}-L_1{(j-1)})}(c_1c_2)^{\Delta}c_2^{L_1{(N)}-L_2{(N)}}\prod_{j=1}^N(1-b_j)^{-2}\PP^{\bb,\bb}_{\grw}\lbe\bL\rbe,
\end{split}
\ee
where $\PP^{\bb,\bb}_{\grw}$ from Definition \ref{defn:rescaled random walks LPP inhomogeneous} is the law of two independent inhomogeneous geometric random walks.

When $c_1c_2<1$, we sum \eqref{eq:hat weight LPP} over $\Delta\in\ZZ$ (as goes into computing the right-hand side of \eqref{eq:rewriting weights LPP}) and obtain:
\be\label{eq:relation between hat weight to geometric random walk}
(1-c_1c_2)\prod_{j=1}^N(1-b_j)^2\lb\sum_{\Delta}\wth_{\lpp}^{\bb}\lbe \Delta;\bL\rbe\rb 
=V^{c_1,c_2}_{\lpp}(\bL)\PP^{\bb,\bb}_{\grw}\lbe\bL\rbe,\ee
where we recall from \eqref{eq:PstatLPP inhomogeneous} 
$ V^{c_1,c_2}_{\lpp}(\bL)=(c_1c_2)^{\max_{1\leq j\leq N}(L_2{(j)}-L_1{(j-1)})}c_2^{L_1{(N)}-L_2{(N)}}$.
Combining this deduction with \eqref{eq:rewriting weights LPP} we arrive at the matching of $\mathrm{P}^{\hP}_{\lpp}$ with $\mathrm{P}^{\bb,c_1,c_2}_{\stat\lpp}$ from \eqref{eq:PstatLPP inhomogeneous}.
\end{proof}

We now prove that the normalization constant $\mathcal{Z}_{\lpp}$ in the definition \eqref{eq:PstatLPP inhomogeneous} of the reweighted inhomogeneous geometric random walk measure $\mathrm{P}_{\stat\lpp}^{\bb,c_1,c_2}$ is finite only assuming \eqref{eq:condition for inhomogeneous geometric LPP} (without the condition $c_1c_2<1$), as claimed in the statement of the main Theorem \ref{thm:main theorem LPP}.

\begin{proposition}\label{prop:finiteness of weights after summing zero mode LPP}
Recalling $V^{c_1,c_2}_{\lpp}(\bL)$ from \eqref{eq:PstatLPP inhomogeneous}  and assuming \eqref{eq:condition for inhomogeneous geometric LPP},
    $\mathbb{E}^{\bb,\bb}_{\grw}\lbE V^{c_1,c_2}_{\lpp}(\bL)\rbE$ is finite. 
\end{proposition}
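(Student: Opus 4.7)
The plan is to split the argument into two regimes depending on the sign of $1 - c_1c_2$, handling each by a different mechanism.

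\textbf{Case 1: $c_1c_2 < 1$.} Here I would leverage the finiteness of the two-layer partition function $Z_{\lpp}$ already established in Proposition \ref{prop:finiteness of weights LPP}, together with the identity \eqref{eq:relation between hat weight to geometric random walk}. Indeed, summing both sides of \eqref{eq:relation between hat weight to geometric random walk} over $\bL \in \ZZ_{\geq 0}^N\times\ZZ_{\geq 0}^N$ (with $\la_1^{(0)}$ held fixed, so that the pair $(\Delta,\bL)$ parametrizes all remaining coordinates of $\bl$) yields
\[
(1-c_1c_2)\prod_{j=1}^N(1-b_j)^2 \, Z_{\lpp} \;=\; \mathbb E_{\grw}^{\bb,\bb}\bigl[V^{c_1,c_2}_{\lpp}(\bL)\bigr],
\]
and by Proposition \ref{prop:finiteness of weights LPP} the left-hand side is finite.

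\textbf{Case 2: $c_1c_2 \geq 1$.} The key observation is that because $\bL_1$ is nondecreasing with $L_1(0)=0$, one has $L_1(j-1)\geq 0$ and $L_2(j)\leq L_2(N)$ for every $1\leq j\leq N$, so
\[
0 \;\leq\; M := \max_{1\leq j\leq N}\bigl(L_2(j)-L_1(j-1)\bigr) \;\leq\; L_2(N).
\]
Since $c_1c_2\geq 1$, the upper bound gives $(c_1c_2)^M \leq (c_1c_2)^{L_2(N)}$, so
\[
V^{c_1,c_2}_{\lpp}(\bL) \;\leq\; (c_1c_2)^{L_2(N)} c_2^{L_1(N)-L_2(N)} \;=\; c_1^{L_2(N)} c_2^{L_1(N)}.
\]
By independence of $\bL_1$ and $\bL_2$ and the standard moment generating function of a sum of independent geometrics,
\[
\mathbb E^{\bb,\bb}_{\grw}\bigl[V^{c_1,c_2}_{\lpp}(\bL)\bigr] \;\leq\; \mathbb E\bigl[c_1^{L_2(N)}\bigr]\, \mathbb E\bigl[c_2^{L_1(N)}\bigr] \;=\; \prod_{j=1}^N \frac{(1-b_j)^2}{(1-b_jc_1)(1-b_jc_2)},
\]
which is finite exactly under the standing assumption \eqref{eq:condition for inhomogeneous geometric LPP}.

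There is no real obstacle in either branch: Case 1 is essentially a bookkeeping restatement of work already done, and Case 2 reduces to an elementary tail bound. The only point requiring a moment of care is the monotonicity-based bound $M\leq L_2(N)$, which is what makes the $c_1c_2\geq 1$ regime tractable without any further assumption on the individual sizes of $c_1,c_2$ beyond $a_ic_1<1$ and $a_ic_2<1$. Note that a naive attempt to avoid the case split by bounding $(c_1c_2)^M \leq 1+(c_1c_2)^{L_2(N)}$ fails, since the resulting term $\mathbb E[c_2^{L_1(N)-L_2(N)}]$ would require $b_j<c_2$, which is not given; the dichotomy above cleanly sidesteps this issue.
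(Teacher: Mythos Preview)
Your proposal is correct and follows essentially the same route as the paper's proof: the same dichotomy on $c_1c_2\lessgtr 1$, the same reduction to $Z_{\lpp}$ via \eqref{eq:relation between hat weight to geometric random walk} in the first case, and the same monotonicity bound $\max_j(L_2(j)-L_1(j-1))\leq L_2(N)$ followed by the geometric moment generating function in the second. The only cosmetic difference is that you write out the explicit product $\prod_j (1-b_j)^2/\bigl((1-b_jc_1)(1-b_jc_2)\bigr)$ where the paper leaves the factors implicit.
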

\begin{proof}
Continuing with the notation from Proposition \ref{prop:probability measure coincide LPP} and using \eqref{eq:relation between hat weight to geometric random walk},
when $c_1c_2<1$ we have 
\be\label{eq: in proof of finiteness LPP}\mathbb{E}^{\bb,\bb}_{\grw}\lbE V^{c_1,c_2}_{\lpp}(\bL)\rbE
=(1-c_1c_2)\prod_{j=1}^N(1-b_j)^2\sum_{\Delta,\bL}\wth_{\lpp}^{\bb}\lb \Delta;\bL\rb.\ee
Observe that $\sum_{\Delta,\bL}\wth_{\lpp}^{\bb}\lb \Delta;\bL\rb=Z_{\lpp}$, which is finite by Proposition \ref{prop:finiteness of weights LPP}. Hence \eqref{eq: in proof of finiteness LPP} is finite.

Now consider the case when $c_1c_2\geq 1$. 
Since we are working along a horizontal path, we have that $L_i{(N)}\geq\dots\geq L_i{(0)}=0$ for $i=1,2$. Thus $L_2{(j)}-L_1{(j-1)}\leq L_2{(N)}$ for all $j$ which implies 
$$V^{c_1,c_2}_{\lpp}(\bL)=(c_1c_2)^{\max_{1\leq j\leq N}(L_2{(j)}-L_1{(j-1)})}c_2^{L_1{(N)}-L_2{(N)}}\leq (c_1c_2)^{L_2{(N)}}c_2^{L_1{(N)}-L_2{(N)}}=c_2^{L_1{(N)}}c_1^{L_2{(N)}}.$$
(Note that this inequality has used here the fact that $c_1c_2\geq 1$ implies $(c_1c_2)^x\leq(c_1c_2)^y$ for $x\leq y$. This is why we separately addressed the $c_1c_2<1$ case.)
Therefore we have
$$\mathbb{E}^{\bb,\bb}_{\grw}\lbE V^{c_1,c_2}_{\lpp}(\bL)\rbE 
    \leq \mathbb E^{\bb,\bb}_{\grw}\left[c_2^{L_1{(N)}}c_1^{L_2{(N)}}\right] 
    =\prod_{j=1}^N \mathbb E^{\bb,\bb}_{\grw}\left[ c_2^{L_1{(j)}-L_1{(j-1)}}\right]\prod_{j=1}^N\mathbb E^{\bb,\bb}_{\grw}\left[ c_1^{L_2{(j)}-L_2{(j-1)}}\right].$$
The expectations on the right-hand side are, however, finite since $c_1a_j<1$ and $c_2a_j<1$ for $1\leq j\leq N$.
\end{proof}

We are now positioned to complete the proof of Theorem \ref{thm:main theorem LPP} using real analytic continuation.

 \begin{proof}[Proof of Theorem \ref{thm:main theorem LPP}]
We recall that  $\pU_{\lpp}^{\hP,\hQ}\lb\bL_1'|\bL_1\rb$ \eqref{eq:definition of first marginal operator sum over x} gives the transition probability from $\bL_1$ to $\bL_1'$ for the geometric LPP recurrence (i.e., if started with initial condition $\bL_1$ along  $\hP$, this is the probability that the recurrence produces $\bL_1'$ for the last passage times along $\hQ$ centered by the value on the left-boundary). Since we are presently only going to consider horizontal paths, we introduce a slight overload of our notation and write $\pU_{\lpp}^{\bb,c_1,c_2}:=\pU_{\lpp}^{\hP,\tau_1\hP}$, where $\hP$ is a horizontal path with labels $\bb=(b_1,\dots,b_N)$. 
Of course our notation $\pU_{\lpp}^{\hP,\tau_1\hP}$ hid the implicit dependence on the edge and boundary parameters which we have now made more explicit in this special horizontal case.
 
The stationarity we aim to prove in this theorem can be rewritten as:
For any $\bL'_1\in\ZZ^N$, we have
\be\label{eq:compatibility with probability measure after analytic continuatio LPP}
    \sum_{ \bL_1\in\ZZ^N }\pU_{\lpp}^{\bb,c_1,c_2}\lb \bL'_1|\bL_1\rb\mathrm{P}^{\bb,c_1,c_2}_{\stat\lpp}(\bL_1)=\mathrm{P}^{\tau_1\bb,c_1,c_2}_{\stat\lpp}(\bL_1').\ee
When $c_1c_2<1$, taking $\hQ=\tau_1\hP$ in Theorem \ref{thm:stationary measure LPP before sum over zero mode} and using the matching from Proposition \ref{prop:probability measure coincide LPP}, implies \eqref{eq:compatibility with probability measure after analytic continuatio LPP}. 
To prove \eqref{eq:compatibility with probability measure after analytic continuatio LPP} holds without the constraint $c_1c_2<1$, we prove that both sides (after multiplying by a common denominator) are real analytic functions, and then argue by the uniqueness of analytic continuation.
    
Observe that \eqref{eq:compatibility with probability measure after analytic continuatio LPP} is equivalent with:
    \be\label{eq:in the proof of stationarity after analytic continuation LPP}
    \sum_{ \bL_1 }\pU_{\lpp}^{\bb,c_1,c_2}\lb \bL'_1|\bL_1\rb\lb\sum_{\bL_2 }V^{c_1,c_2}_{\lpp}(\bL)\frac{\PP^{\bb,\bb}_{\grw}\lbe\bL\rbe}{\prod_{j=1}^N(1-b_j)^2}\rb=\sum_{\bL'_2 }V^{c_1,c_2}_{\lpp}(\bL')\frac{\PP^{\bb,\bb}_{\grw}\lbe\bL'\rbe}{\prod_{j=1}^N(1-b_j)^2}.
    \ee  
We fix all other parameters $c_2,a_1,\dots,a_N$ and variables $\bL'_1$, and regard $c_1$ as the only variable. Then we have currently shown that  \eqref{eq:in the proof of stationarity after analytic continuation LPP} holds on the interval $0<c_1<1/\max(c_2,a_1,\dots,a_N)$. We want to prove that it holds on the larger interval $0<c_1<1/\max(a_1,\dots,a_N)$. 

 We observe that since $\max_{1\leq j\leq N}(L_2{(j)}-L_1{(j-1)})\geq L_2{(1)}\geq 0$,  $V^{c_1,c_2}_{\lpp}(\bL)$ \eqref{eq:PstatLPP inhomogeneous} is a positive power of $c_1$. Hence the parenthesis in the LHS of \eqref{eq:in the proof of stationarity after analytic continuation LPP}, as well as the RHS of the same equation are both power series of $c_1$ with nonnegative coefficients that depend on $\bL_1$ (and respectively $\bL'_1$). The kernel $\pU_{\lpp}^{\bb,c_1,c_2}$ on the left-hand side is the result of composing  $N+1$ local Markov kernels $\U^{\rerotateangle}_{\lpp}\circ\U^{\rellcorner}_{\lpp}\circ\dots\circ\U^{\rellcorner}_{\lpp}\circ\U^{\reacuteangle}_{\lpp}$ (recall from Lemma \ref{lem:marginal operator LPP}) followed by summing over translations as in \eqref{eq:definition of first marginal operator sum over x}. Only the left boundary kernel $\U^{\reacuteangle}_{\lpp}$ depends on $c_1$ and the other kernels are constant with respect to $c_1$. The entries of $\U^{\reacuteangle}_{\lpp}$ are equal to $(1-a_1c_1)$ times a power of $a_1c_1$. Therefore \eqref{eq:in the proof of stationarity after analytic continuation LPP} is of the form $(1-a_1c_1)f(c_1)=g(c_1)$, where $f$ and $g$ are power series with nonnegative coefficients. We know that \eqref{eq:in the proof of stationarity after analytic continuation LPP} holds on the smaller interval $0<c_1<1/\max(c_2,a_1,\dots,a_N)$ and (by Proposition \ref{prop:finiteness of weights after summing zero mode LPP}) that both sides are finite on the larger interval $0<c_1<1/\max(a_1,\dots,a_N)$. By the uniqueness of analytic continuation of real analytic functions (see, e.g. \cite[Corollary 1.2.6]{krantz2002primer}), \eqref{eq:in the proof of stationarity after analytic continuation LPP} holds on the larger interval. 
Since \eqref{eq:in the proof of stationarity after analytic continuation LPP} is equivalent to \eqref{eq:compatibility with probability measure after analytic continuatio LPP} we conclude the  stationarity in Theorem \ref{thm:main theorem LPP}.

Finally we will prove the uniqueness of the stationary measure in the homogeneous $\bb=\aa=(a,\ldots,a)$ case. Observe that by virtue of the variational formula \eqref{eqn:gvar} for LPP, any stationary measure along a horizontal path must be supported on weakly increasing natural numbers. Consider initial conditions whereby $G(0,0)=0$ and $G(j,0)-G(j-1,0)=x_j$ for $1\leq j\leq N$ with $x_1,\ldots,x_N\in \ZZ_{\geq 0}$.  By \cite[Theorem 5.5]{benaim2022markov} it suffices to show that for any $x'_1,\ldots,x'_N\in \ZZ_{\geq 0}$, there is a strictly positive probability that  $G(j+1,1)-G(j,1)=x'_j$ for $1\leq j\leq N$. (This implies that the Markov kernel $\pU_{\lpp}^{\aa,c_1,c_2}$ has a unique stationary measure.) It is easy to demonstrate the strictly positive transition probability. Let $r=1+\sum_{\ell=1}^N x_\ell$. There is a strictly positive probability that the recursion results in updating $G(1,1)=r$ and then sequentially $G(j+1,1)=G(j,1)+x'_j$ for $1\leq j\leq N$. Indeed, this is true because $G(1,1)\geq G(1,0)$, 
$G(j+1,1)\geq\max(G(j,1),G(j+1,0))$ for $1\leq j\leq N-1$
and $G(N+1,1)\geq G(N,1)$,
hence this update has strictly positive probability.
We conclude the proof of uniqueness.
Ergodicity follows from the fact that unique (more generally, extremal) invariant probability measures are ergodic, see for example \cite[Proposition 4.29]{benaim2022markov}.
\end{proof}

\subsection{Matrix product ansatz}
\label{sec:MPA LPP}
When $c_1c_2<1$, the probability measure $\mathrm{P}_{\lpp}^{\hP} \lb \bL_1\rb$ can be written as a matrix product. Recall the definition of $\mathrm{P}_{\lpp}^{\hP} \lb \bL_1\rb$ in terms of the two-layer Gibbs measure in  \eqref{eqn:Plpph}, \eqref{eqn:wtP}. In order to express $\wt_{\lpp}^{\PiP}(\bl)$ as a product of matrices, we define a change of variables  $x_i=\vert \lambda_1^{(i)}-\la_1^{(i-1)}\vert$ (first layer increments) for $1\leq i\leq N$ and $n_i=\la_1^{(i)}-\la_2^{(i)}$ (gaps between layers) for $0\leq i\leq N$.

\begin{definition}
Let us define, for each $x\in \mathbb Z_{\geq 0}$ and $a\in (0,1)$,  infinite matrices  $\mathbf M_x^{\rightarrow}[a], \mathbf M_x^{\downarrow}[a] \in \mathbb R^{\mathbb Z_{\geq 0} \times \mathbb Z_{\geq 0}}$ with matrix elements given by, for $n,n'\in \mathbb Z_{\geq 0}$, 
\begin{equation}
	\mathbf M_x^{\rightarrow}[a](n,n') =\wt_{\lpp}\left.\lb\raisebox{-30pt}{\begin{tikzpicture}
			\draw[thick] (0,0)--(0.6,0.6);
			\draw[thick] (0,-1.2)--(0.6,-0.6);
			\draw[dashed] (0,0)--(0.6,-0.6);
			\node[right] at (0.5,0.55) {\small $\la'_1$};
			\node[right] at (0.6,-0.6) {\small  $\la'_2$};
			\node[left] at (0,0) {\small   $\la_1$};
			\node[left] at (0,-1.2) {\small  $\la_2$};
			\node[above] at (0.3,0.3) {\textcolor{red}{\small   $a$}};
			\node[above] at (0.3,-0.9) {\textcolor{red}{\small  $a$}};
	\end{tikzpicture}}\rb\right\vert_{{\footnotesize \begin{matrix}
				\la_1-\la_2=n\\ \la'_1-\la'_2=n'\\ \la'_1-\la_1=x
	\end{matrix}}}  =  a^{2x+n-n'}\mathds{1}_{n'\geq x}\mathds{1}_{x+n-n'\geq 0},
	\label{eq:defMatrix M right}
\end{equation} 
\begin{equation}\mathbf M_x^{\downarrow}[a](n,n') =\wt_{\lpp}\left. \lb\raisebox{-30pt}{\begin{tikzpicture}
			\draw[thick] (-0.6,0.6)--(0,0);
			\draw[thick] (-0.6,-0.6)--(0,-1.2);
			\draw[dashed] (-0.6,-0.6)--(0,0);
			\node[left] at (-0.5,0.55) {\small   $\la_1$};
			\node[left] at (-0.5,-0.65) {\small  $\la_2$};
			\node[above] at (-0.3,0.3) {\textcolor{red}{\small  $a$}};
			\node[above] at (-0.3,-0.9) {\textcolor{red}{\small  $a$}};
			\node[right] at (0,0) {\small  $\la'_1$};
			\node[right] at (0,-1.2) {\small  $\la'_2$};
	\end{tikzpicture}}\rb\right\vert_{{\footnotesize \begin{matrix}
				\la_1-\la_2=n\\ \la'_1-\la'_2=n'\\ \la_1-\la'_1=x
	\end{matrix}}} = a^{2x +n'-n} \mathds{1}_{n\geq x}\mathds{1}_{x+n'-n\geq 0}.
	\label{eq:defMatrix M down}
\end{equation} 
In particular, $\mathbf M_x^{\downarrow}[a]$ is the transpose of $ \mathbf M_x^{\rightarrow}[a]$. Let us also define vectors $\mathbf w, \mathbf v\in \mathbb R^{\mathbb Z_{\geq 0}}$ with coefficients $ \mathbf w(n)=(c_1)^n$ and $ \mathbf v(n)=(c_2)^n$ for $n\in \mathbb Z_{\geq 0}$.
\label{def:matrices and vectors}
\end{definition}
Thus, via the change of variables, for any path $\hP$ with vertices $\left(\p_i\right)_{0\leq i\leq N}$ and edge labels $\mathbf b$, we have
$$ \wt_{\lpp}^{\PiP}(\bl) = \mathbf w(n_0) \left( \prod_{i=1}^N \mathbf M^{\p_i-\p_{i-1}}_{x_i}[b_i](n_{i-1}, n_i)  \right) \mathbf v(n_N),  $$
where $\p_i-\p_{i-1}\in \lbrace \rightarrow, \downarrow\rbrace$, since $\hP$ is a down-right path. 
Thus, we have that 
\begin{equation}
	 \mathrm{P}_{\lpp}^{\hP} \lb \bL_1\rb= \frac{1}{Z_{\rm Geo}} \mathbf w^t \left( \prod_{i=1}^N \mathbf M^{\p_i-\p_{i-1}}_{x_i}[b_i] \right)\mathbf v, 
	 \label{eq:MPAform}
\end{equation}
where matrices in the product are written from left to right and $x_i=\vert  \bL_i-\bL_{i-1}\vert$.

Such a matrix product expression is reminiscent of the matrix product ansatz, introduced in \cite{derrida1993exact}. This technique can indeed be adapted to geometric LPP. 
\begin{proposition}\label{prop:MPA LPP}
Assume that a family of probability distributions $\mathrm{P}^{\hP}$ on $\mathbb Z_{\geq 0}^N$, indexed by down-right paths $\hP=(\mathbf p_0,  \dots, \mathbf p_N)$ with edge labels $\mathbf b$, takes the form 
\begin{equation}
	\mathrm{P}^{\hP} \lb x_1, \dots, x_N\rb= \frac{1}{Z} \overline{\mathbf w}^t \left( \prod_{i=1}^N \overline{\mathbf M}^{\p_i-\p_{i-1}}_{x_i}[b_i] \right)\overline{\mathbf v}, 
	\label{eq:MPAform2}
\end{equation}
where $\overline{\mathbf M}^{\rightarrow}_{x_i}[b_i], \overline{\mathbf M}^{\downarrow}_{x_i}[b_i]$ are matrices in $\mathbb R^{\mathbb Z_{\geq 0} \times \mathbb Z_{\geq 0}}$,  $\overline{\mathbf w}, \overline{\mathbf v}$ are vectors in $\mathbb R^{\mathbb Z_{\geq 0}}$, and $Z$ is a finite constant chosen so that \eqref{eq:MPAform2} is a well-defined probability measure on $\mathbb Z_{\geq 0}^N$. Then, the measure $\mathrm{P}^{\hP}$ is stationary for the geometric LPP dynamics, in the sense of \eqref{eq:compatibility of probability measure with LPP}, if the following commutation relations hold for all $x,y\geq 0$: 
\begin{subequations}
	\label{eq:MPA}
	\begin{align}
		\overline{\mathbf M}^{\rightarrow}_x[a]\overline{\mathbf M}^{\downarrow}_y[b] &= (ab)^{\min\lbrace x,y\rbrace}(1-ab) \sum_{z\geq \max\lbrace 0, y-x\rbrace}	\overline{\mathbf M}^{\downarrow}_z[b]\overline{\mathbf M}^{\rightarrow}_{x-y+z}[a],\label{eq:MPAbulk}\\ 
		\overline{\mathbf w}^t \overline{\mathbf M}^{\downarrow}_x[a] &= (ac_1)^x(1-ac_1) \sum_{z\geq 0} \overline{\mathbf w}^t \overline{\mathbf M}^{\rightarrow}_z[a],\label{eq:MPAleft}\\ 
		\overline{\mathbf M}^{\rightarrow}_x[a]\overline{\mathbf v} &=(ac_2)^x(1-ac_2) \sum_{z\geq 0}\overline{\mathbf M}^{\downarrow}_z[a]\overline{\mathbf v}.\label{eq:MPAright}
	\end{align}
\end{subequations}
\end{proposition}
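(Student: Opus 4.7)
The plan is to reduce stationarity to the case of a single elementary move and then build up to a general transition $\hP\to\hQ$ by composition. As set up in the proof of Theorem \ref{thm:main theorem LPP}, any transition $\hP\to\hQ$ appearing in \eqref{eq:compatibility of probability measure with LPP} can be realized by applying a sequence of bulk and boundary local moves, with the Markov kernel $\pU_{\lpp}^{\hP,\hQ}$ factoring as the composition of the corresponding elementary kernels from Lemma \ref{lem:marginal operator LPP}. Thus it suffices to show that for each local move $\hP\to\widetilde{\hP}$, the elementary Markov kernel maps the hypothesized matrix-product measure $\mathrm P^{\hP}$ to $\mathrm P^{\widetilde{\hP}}$.

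For a bulk move that flips a $\downarrow\rightarrow$ corner into a $\rightarrow\downarrow$ corner with first-layer values $(\la_1,\k_1,\mu_1)\mapsto(\la_1,\pi_1,\mu_1)$, Lemma \ref{lem:marginal operator LPP} yields the update $\pi_1=\max(\la_1,\mu_1)+G$ with $G\sim\Geom(ab)$, where $a$ labels the $\rightarrow$ edge and $b$ labels the $\downarrow$ edge at that corner. Translating to the absolute increments $(z,w)$ before and $(x,y)$ after the move, a short computation gives
$$T\bigl((x,y)\mid(z,w)\bigr)=(1-ab)(ab)^{\min(x,y)}\mathds{1}_{w-z=x-y},$$
supported on $z\geq\max\{0,y-x\}$. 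Plugging this $T$ into $\sum_{(z,w)}T\bigl((x,y)\mid(z,w)\bigr)\,\overline{\mathbf M}^\downarrow_z[b]\,\overline{\mathbf M}^\rightarrow_w[a]$ reproduces exactly the right-hand side of \eqref{eq:MPAbulk}, which by hypothesis equals the two-factor block $\overline{\mathbf M}^\rightarrow_x[a]\overline{\mathbf M}^\downarrow_y[b]$ of the matrix product for $\widetilde{\hP}$. Since the elementary bulk kernel acts as the identity on all other coordinates and the remaining factors in the matrix products for $\hP$ and $\widetilde{\hP}$ coincide, the desired identity at the level of unnormalized matrix products follows. The boundary moves are handled the same way using \eqref{eq:MPAleft} and \eqref{eq:MPAright}: the updates $\pi_1=\k_1+G$ with $G\sim\Geom(ac_i)$ produce a $y$-independent kernel $T(x\mid y)=(1-ac_i)(ac_i)^x$, which matches the prefactor on the right-hand side of each identity.

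To pass from the unnormalized identities to stationarity of the normalized probability measures, I also need to verify $Z_{\hP}=Z_{\widetilde{\hP}}$ across every local move. This follows by summing both sides of each commutation identity over its free increment indices and using that the kernels $T$ constructed above are probability kernels (a one-line geometric-series check): the result is an equality of the total unnormalized masses, hence of the normalizing constants. The one step I expect to require slight care is the explicit matching above between the Markov kernels of Lemma \ref{lem:marginal operator LPP} and the right-hand sides of \eqref{eq:MPA}, since one must translate between the signed first-layer values $\la_1,\k_1,\mu_1,\pi_1$ (in which the LPP dynamics is formulated) and the absolute increments $x,y,z,w$ (in which the matrix product is written), while tracking that labels stay attached to the direction of an edge in the strip; once this translation is in place, the rest of the argument is purely formal composition.
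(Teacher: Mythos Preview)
Your proposal is correct and follows essentially the same approach as the paper's proof: reduce stationarity to invariance under single local moves, then verify that each of the three commutation relations \eqref{eq:MPAbulk}--\eqref{eq:MPAright} encodes precisely the action of the corresponding first-layer kernel $\rmUb,\rmUl,\rmUr$ from Lemma \ref{lem:marginal operator LPP} on the relevant two-factor block of the matrix product. The paper's proof is terser (it leaves the translation between first-layer values and absolute increments as ``under the appropriate changes of variables''), whereas you spell out the kernel $T$ and the constraint $w-z=x-y$ explicitly; your added remark on matching normalizing constants across local moves is a sensible clarification that the paper's proof leaves implicit.
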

\begin{proof}
As in \cite{yang2022stationary}, which proves a similar result for the stochastic six-vertex model, it suffices to ensure that \eqref{eq:compatibility of probability measure with LPP} holds when $\mathrm{P}_{\lpp}^{\hP}$ is replaced by $\mathrm{P}^{\hP}$ (thought of as a measure on $\bL_1=(0,x_1,x_1+x_2,\ldots,x_1+\cdots x_N)$) from above and when $\hP \mapsto \hQ$ is one of the elementary moves from Definition \ref{defn: local moves}. In other words, it suffices to check that $\mathrm{P}^{\hP}$ pushes forward  to  $\mathrm{P}^{\hQ}$  under the action of local operators  $\rmUb, \rmUl, \rmUr$ from Lemma \ref{lem:marginal operator LPP}, where $\hQ$ is the corresponding locally updated path. Under the appropriate changes of variables, \eqref{eq:MPAbulk} implies the invariance with respect to $\rmUb$. Indeed, if the local move transforms the point $\mathbf p_i$ on the path $\hP$ to a new point $\mathbf q_i$, the increments $x_i=x$ and $x_{i+1}=y$ may arise on $\hQ$  only if the weight $\o_{\mathbf q_i}=\min\lbrace x, y\rbrace$ (which arises with probability $(b_ib_{i+1})^{\min\lbrace x,y\rbrace}(1-b_ib_{i+1})$), while the increments on $\hP$ before the local move can be anything such that $x_{i+1}-x_{i}=x-y$. Likewise, \eqref{eq:MPAleft} implies invariance with respect to $\rmUl$, and \eqref{eq:MPAright} implies invariance with respect to $\rmUr$.
\end{proof}

It is not a priori clear how to find matrices and vectors satisfying \eqref{eq:MPA}. We will see that our two layer Gibbs measures, which originate in a completely different context (i.e. Gibbsian line ensemble and probability measures based on families of symmetric functions),  provide a representation for the quadratic algebra \eqref{eq:MPA}.  
\begin{proposition}
The matrices $\mathbf M_x^{\rightarrow}[a], \mathbf M_x^{\downarrow}[a]$ and vectors $\mathbf w, \mathbf v$ (Definition \ref{def:matrices and vectors}) satisfy the relations \eqref{eq:MPA}. 
\label{prop:MPArepresentation}
\end{proposition}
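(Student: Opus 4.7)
The plan is to verify each of the three algebraic relations \eqref{eq:MPAbulk}, \eqref{eq:MPAleft}, and \eqref{eq:MPAright} by direct calculation from the explicit formulas in Definition \ref{def:matrices and vectors}. Since every matrix entry is a monomial in $a$ (or $b$) multiplied by indicator functions, both sides of each relation reduce to explicit (possibly infinite) geometric sums, and the identities will then follow by elementary algebra; convergence of the infinite geometric sums on the right-hand sides is guaranteed by the standing assumption $ab,ac_1,ac_2\in (0,1)$.

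For the bulk relation \eqref{eq:MPAbulk}, I would evaluate the $(n_0,n_2)$ entry of each side in terms of the gaps $n_0,n_2$ and the first-layer increments $x,y$. The LHS is the finite geometric sum
$$\lb\mathbf M_x^{\rightarrow}[a]\mathbf M_y^{\downarrow}[b]\rb(n_0,n_2)=a^{2x+n_0}b^{2y+n_2}\sum_{n_1=\max(x,y)}^{\min(x+n_0,\,y+n_2)}(ab)^{-n_1},$$
which is nonzero precisely when $y-x\leq n_0$ and $x-y\leq n_2$. On the right-hand side, each inner product $\mathbf M_z^{\downarrow}[b]\mathbf M_{x-y+z}^{\rightarrow}[a]$ evaluated at $(n_0,n_2)$ is an infinite geometric series in the inner gap $n_1$ with ratio $ab$, lower bound $\max(n_0-z,\,n_2-(x-y+z))$, and no upper bound; this series sums to a closed form, which I would then further sum over $z\geq\max(0,y-x)$ and multiply by the prefactor $(ab)^{\min(x,y)}(1-ab)$. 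The resulting closed form is to be matched with the LHS. Conceptually this identity is a rewriting of the Cauchy identity \eqref{eq:diagram Cauchy LPP} once the first-layer increments are identified with the labels $x,y,z$.

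For the left-boundary relation \eqref{eq:MPAleft}, I would evaluate both sides on the basis vector indexed by $n'$. A direct summation gives
$$\lb\mathbf w^t\mathbf M_x^{\downarrow}[a]\rb(n')=\sum_{n=x}^{x+n'}c_1^n a^{2x+n'-n}=\frac{c_1^x a^x\lb a^{n'+1}-c_1^{n'+1}\rb}{a-c_1},$$
while on the right-hand side one first computes $\lb\mathbf w^t\mathbf M_y^{\rightarrow}[a]\rb(n')=\frac{a^y c_1^{n'-y}}{1-ac_1}$ for $0\leq y\leq n'$, sums in $y$ (a finite geometric series), and multiplies by $(ac_1)^x(1-ac_1)$; the result will match the LHS after cancelling $1-ac_1$. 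The right-boundary relation \eqref{eq:MPAright} then follows analogously, exploiting the symmetry $\mathbf M_x^{\downarrow}[a]=\lb\mathbf M_x^{\rightarrow}[a]\rb^t$ together with the swap $c_1\leftrightarrow c_2$. Conceptually both boundary identities are instances of the Littlewood identity \eqref{eq:diagram Littlewood LPP}.

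The main obstacle will be the bookkeeping in the bulk case: one must track the bounds of the finite LHS sum carefully and then split the $z$-sum on the RHS according to which of $n_0-z$ or $n_2-(x-y+z)$ dominates in the lower bound of the $n_1$-series, so that the prefactor $(ab)^{\min(x,y)}(1-ab)$ precisely cancels the $(1-ab)$ denominators coming from the two infinite geometric sums. No step introduces new ideas beyond the Cauchy and Littlewood identities already established in Section \ref{subsec:Skew Schur functions indexed by signatures LPP}.
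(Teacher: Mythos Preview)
Your proposal is correct: direct computation of matrix entries via geometric sums will verify all three relations, and the boundary computation you wrote out checks. The paper, however, takes a different route. Rather than computing entries directly, it first observes that the Cauchy and Littlewood identities \eqref{eq:diagram Cauchy LPP}, \eqref{eq:diagram Littlewood LPP} translate into matrix identities of the form $\sum_z \mathbf M^{\downarrow}_z[a]\mathbf M^{\rightarrow}_{z+d}[b]=\sum_z \mathbf M^{\rightarrow}_{z+d}[b]\mathbf M^{\downarrow}_z[a]$ and analogous boundary versions, and then reduces \eqref{eq:MPA} to a set of ``extra relations'' (e.g.\ $\mathbf M^{\rightarrow}_x[a]\mathbf M^{\downarrow}_y[b]=(ab)^{\min\{x,y\}}(1-ab)\sum_z \mathbf M^{\rightarrow}_{z+x-y}[b]\mathbf M^{\downarrow}_z[a]$). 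These extra relations are proved not by brute force but by writing $\mathbf M^{\rightarrow}_x[a]=\bigl(\sum_{k\geq 0} a^k\mathbf S^k\bigr)a^x\mathbf T^x$ and $\mathbf M^{\downarrow}_x[a]=a^x\mathbf S^x\bigl(\sum_{k\geq 0}a^k\mathbf T^k\bigr)$ in terms of the lower and upper shift operators $\mathbf S,\mathbf T$, and exploiting $\mathbf T^k\mathbf S^k=\mathbf I$ together with eigenrelations $\mathbf w^t\mathbf S=c_1\mathbf w^t$ and $\mathbf T\mathbf v=c_2\mathbf v$.

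One conceptual point worth correcting: your remark that ``no step introduces new ideas beyond the Cauchy and Littlewood identities'' understates what is needed. The paper's decomposition makes explicit that \eqref{eq:MPA} is strictly stronger than the Cauchy and Littlewood identities as matrix statements (those only give equalities between \emph{sums} of products, not a single product expressed as a sum). The extra content comes from the truncated-Toeplitz structure of the particular matrices $\mathbf M_x$, which your direct computation implicitly exploits but does not isolate. Your approach trades this structural insight for elementary (if somewhat involved) bookkeeping; the paper's shift-operator factorization avoids the case-splitting you anticipate in the bulk relation and makes the mechanism transparent.
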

\begin{proof}
The proof relies on the Littlewood and Cauchy identities for skew Schur functions, as well as the truncated Toeplitz structure of the matrices $\mathbf M_x^{\rightarrow}[a], \mathbf M_x^{\downarrow}[a]$. Let us first observe that the Cauchy identity \eqref{eq:diagram Cauchy LPP} and the Littlewood identity  \eqref{eq:diagram Littlewood LPP} may be  rewritten as 
\begin{subequations}
	\label{eq:CauchyLittlewoodMatrix}
	\begin{align}
		 \sum_{z\geq \max\lbrace 0,d\rbrace} \mathbf M^{\downarrow}_z[a]\mathbf M_{z-d}^{\rightarrow}[b]  &=  \sum_{z\geq \max\lbrace 0,d\rbrace} \mathbf M^{\rightarrow}_{z-d}[b]\mathbf M_{z}^{\downarrow}[a], \text{ for all }d\in \mathbb Z, 
		\label{eq:skewCauchymatrix} \\ 
			\sum_{z\geq 0} \mathbf w^t \mathbf M^{\rightarrow}_z[a]  &= 	\sum_{z\geq 0} \mathbf w^t \mathbf M^{\downarrow}_z[a], 
		\label{eq:Littlewoodmatrix1}\\
			\sum_{z\geq 0}  \mathbf M^{\downarrow}_z[a]\mathbf v  &= 	\sum_{z\geq 0} \mathbf M^{\rightarrow}_z[a]\mathbf v.  
		\label{eq:Littlewoodmatrix2}
	\end{align}
\end{subequations}
Equations \eqref{eq:CauchyLittlewoodMatrix} are not directly equivalent to \eqref{eq:MPA}, but equations \eqref{eq:MPA} are implied by equations \eqref{eq:CauchyLittlewoodMatrix} if the following extra relations are satisfied:
\begin{subequations}
	\label{eq:extra relations}
	\begin{align}
		\mathbf M^{\rightarrow}_x[a]\mathbf M^{\downarrow}_y[b]  &= (ab)^{\min\lbrace x,y\rbrace}(1-ab)\sum_{z\geq \max\lbrace 0,y-x\rbrace} \mathbf M^{\rightarrow}_{z+x-y}[a]\mathbf M_{z}^{\downarrow}[b],\label{eq:extra relation bulk}\\ 
		\mathbf w^t \mathbf M^{\downarrow}_x[a]&=(ac_1)^x(1-ac_1)\sum_{y\geq 0} \mathbf w^t \mathbf M^{\downarrow}_y[a],
		\label{eq:extrarelation left}\\ 
		 \mathbf M^{\rightarrow}_x[a] \mathbf v&=(ac_2)^x(1-ac_2)\sum_{y\geq 0} \mathbf M^{\rightarrow}_y[a] \mathbf v.
		\label{eq:extrarelation right}
	\end{align}
\end{subequations} 
Hence, it suffices to prove that the matrices $\mathbf M_x^{\rightarrow}[a], \mathbf M_x^{\downarrow}[a]$ and vectors $\mathbf w, \mathbf v$ satisfy \eqref{eq:extra relations}. Those relations could be verified explicitly by plugging the matrix elements and computing. To avoid such computations, we may observe that truncated Toeplitz matrices are  products of Toeplitz matrices, so that  one can write 
\begin{equation}
	\mathbf M_x^{\rightarrow}[a] = \left( \sum_{k=0}^{+\infty} a^k \mathbf S^k \right) a^x \mathbf T^x, \;\; \;\;\; \mathbf M_x^{\downarrow}[a] = a^x \mathbf S^x  \left( \sum_{k=0}^{+\infty} a^k \mathbf T^k \right), 
	\label{eq:Toeplitz}
\end{equation}
where $\mathbf S$ is the lower shift matrix (that is $\mathbf S(n,n')=\mathds 1_{n=n'+1}$)   and $\mathbf T$ is the upper shift matrix (that is $\mathbf T(n,n')= \mathds 1_{n'=n+1}$). Using \eqref{eq:Toeplitz}, the relations \eqref{eq:extrarelation left} and \eqref{eq:extrarelation left} follow immediately from the eigenrelations $\mathbf w\mathbf S=c_1 \mathbf w$ and $\mathbf T\mathbf v=c_2 \mathbf v$. To prove \eqref{eq:extra relation bulk}, we write 
\begin{align*}
	\sum_{z\geq \max\lbrace 0,y-x\rbrace} \mathbf M^{\rightarrow}_{z+x-y}[a]\mathbf M_{z}^{\downarrow}[b] &= \sum_{k\geq 0, \ell\geq 0, z\geq \max\lbrace x, y\rbrace} a^{k+z-y}b^{z-x+\ell} \mathbf S^k\mathbf T^{z-y}\mathbf S^{z-x} \mathbf T^{\ell},\\
	&= \frac{1}{1-ab}\sum_{k\geq 0, \ell\geq 0 } a^{k+(x-y)_+}b^{(y-x)_++\ell} \mathbf S^k\mathbf T^{(x-y)_+}\mathbf S^{(y-x)_+} \mathbf T^{\ell},\\
	&= \frac{(ab)^{-\min\{x,y\}}}{1-ab} \sum_{k\geq 0, \ell\geq 0 } a^{k+x}b^{y+\ell} \mathbf S^k\mathbf T^{x}\mathbf S^{y} \mathbf T^{\ell}\\
	&= \frac{(ab)^{-\min\{x,y\}}}{1-ab} 	\mathbf M^{\rightarrow}_x[a]\mathbf M^{\downarrow}_y[b], 
\end{align*}
where in the first line we have just used  \eqref{eq:Toeplitz}, in the second line we have used $\mathbf T^k\mathbf S^k=\mathbf I$,  summed over $z$, and used the notation $(x-y)_+=\max\lbrace x-y,0\rbrace$,  and in the third line we have used the more general relation $\mathbf T^x\mathbf S^y = \mathbf T^{(x-y)_+} \mathbf S^{(y-x)_+}$. This proves that \eqref{eq:extra relation bulk} holds, which concludes the proof of Proposition \ref{prop:MPArepresentation}. 
\end{proof}

\section{Stationary measure for log-gamma polymer on a strip}
\label{sec:LG}

The treatment here follows the same steps as that of geometric LPP from Section \ref{sec:LPP}, though at some points we encounter increased complexity (hence the reader is encouraged to first read Section \ref{sec:LPP}). Namely, as state-spaces are now uncountable (products of $\RR$) we work with probability densities instead of probability mass functions; and whereas the proof of Proposition \ref{prop:finiteness of weights LPP} used Schur polynomials, we prove Proposition \ref{prop:finiteness of weights LG} here via induction (a more general result using Whittaker functions may be possible though is not pursued here). The biggest difference, however, comes in the proof of the analytic continuation needed to release the condition $u+v>0$ (or $c_1c_2<1$ in the geometric case). In the geometric case, real analyticity of both sides of \eqref{eq:compatibility with probability measure after analytic continuatio LPP} follows since both sides are power series in $c_1$. The sums in \eqref{eq:compatibility with probability measure after analytic continuatio LPP} are replaced by integrals in \eqref{eq:compatibility with probability measure after analytic continuatio LG} and thus the dependence on $u$ is not as simple. Now, in Section \ref{sec:proof of real analyticity}, we utilize tools from complex analysis (i.e., Morera's theorem) to show that both sides of \eqref{eq:compatibility with probability measure after analytic continuatio LG} are in fact holomorphic on a suitable open set.

\subsection{Log-gamma polymer with inhomogeneous weights} 
\label{subsec:Log-gamma polymer with inhomogeneous weights} 
We define the log-gamma polymer with inhomogeneous weights and then state the main theorem of this section constructing its stationary measure on a horizontal path, which generalizes Theorem \ref{thm:stationary measure log-gamma intro} in the introduction.
\begin{definition}[Inhomogeneous log-gamma polymer]\label{defn:inhomogeneous LG and specializations} 
Let $\a_1,\dots, \a_N\in\RR$ and $u,v\in\RR$ be  such that:
\be\label{eq:condition for inhomogeneous log gamma}
\a_i+\a_j>0,\quad \a_i+u>0,\quad \a_i+v>0,\quad\forall 1\leq i,j\leq N.
\ee 
We will always assume these conditions in this paper as they are necessary for the inverse-gamma random variables below to be defined. We call the $\alpha_i$ `bulk parameters' and the $u,v$ `boundary parameters'.
Define $\a_{j+kN}=\a_j$ for $k\in\mathbb{Z}$ and $1\leq j\leq N$.
We define the inhomogeneous version of log-gamma polymer on a strip by the same recurrence \eqref{eq:recurrence log gamma} and initial condition as the homogeneous model, but now with $\vo_{n,m}\sim\Gav(\a_n+\a_m)$ in the bulk $0\leq m<n<m+N$, $\vo_{m,m}\sim\Gav(\a_m+u)$ on the left boundary and $\vo_{m+N,m}\sim\Gav(\a_m+v)$ on the right boundary.  
We label each edge of the strip by a number, which will be needed later to define Gibbs  measures.
In particular, we label the horizontal edge $(n-1,m)\rightarrow(n,m)$ by $\a_n$ and the vertical edge $(n,m-1)\rightarrow(n,m)$ by $\a_m$. See Figure \ref{fig:strip for N=5} for an illustration in the geometric LPP setting. To transform to the log-gamma setting, each $a_i$ should be replaced by $\a_i$, $c_1$ by $u$ and $c_2$ by $v$.
\end{definition}

Next we introduce the inhomogeneous version of reweighted log-gamma random walk generalizing Definition \ref{def:rescaled random walks}. We then introduce the main theorem of this section that this reweighted random walk is stationary under inhomogeneous log-gamma polymer, which generalizes Theorem \ref{thm:stationary measure log-gamma intro}. 
\begin{definition}[Reweighted inhomogeneous log-gamma random walks]\label{defn:rescaled random walks LG inhomogeneous} 
Assume that $(\alpha_i)_{1\leq i\leq N}\in \mathbb R_{>0}^N$ and $u,v\in \mathbb R$ satisfy \eqref{eq:condition for inhomogeneous log gamma}. Suppose $\hP$ is a horizontal path with labels on edges $\bbb=(\b_1,\dots,\b_N)$ from left to right ($\bbb$ must be a cyclic shift of $\boldsymbol{\a}=(\a_1,\dots,\a_N)$). Consider two independent  random walks $\bL_1=\big(L_1(j)\big)_{1\leq j\leq N}\in \RR^{N}$ and $\bL_2=\big(L_2(j)\big)_{1\leq j\leq N}\in \RR^{N}$ starting from  $L_1{(0)}=L_2{(0)}=0$ with independent increments distributed for $1\leq j\leq N$ as $$L_1{(j)}-L_1{(j-1)}\sim \log(\Ga^{-1}(\b_j)),\quad\textrm{and}\quad L_2{(j)}-L_2{(j-1)}\sim \log(\Ga^{-1}(\b_j)).$$
We use shorthand $\bL=(\bL_1,\bL_2)$ and denote by $\PP^{\bbb,\bbb}_{\lgrw}\lbe\bL\rbe$  the associated probability density (here and below, always against Lebesgue measure), and by $\mathbb{E}^{\bbb,\bbb}_{\lgrw}$ the expectation with respect to this probability measure. 
We define a new probability density $\PP_{\stat\lgg}^{\bbb,u,v}$ by reweighting the density $\PP^{\bbb,\bbb}_{\lgrw}$ as 
 \be \label{eq:PstatLG inhomogeneous}
\PP^{\bbb,u,v}_{\stat\lgg}(\bL):= \frac{V^{u,v}_{\lgg}(\bL)\PP^{\bbb,\bbb}_{\lgrw}\lb\bL\rb}{\hZ^{\bbb,u,v}_{\lgg}} \quad\textrm{where}\quad V_{\lgg}^{u,v}(\bL):=\lb \sum_{j=1}^N e^{L_2{(j)}-L_1{(j-1)}}\rb^{-(u+v)}\!\!\!\!\!\!\!\!\!\!\!\!e^{-v(L_1(N)-L_2(N))},
\ee 
and the normalizing constant  $\hZ^{\bbb,u,v}_{\lgg}:=\mathbb{E}^{\bbb,\bbb}_{\lgrw}\lbE V^{u,v}_{\lgg}(\bL)\rbE$ will be proved to be finite in Proposition \ref{prop:finiteness of weights after summing zero mode LG}.
Let $\mathrm{P}^{\bbb,u,v}_{\stat\lgg}(\bL_1)$ denote the marginal density of $\bL_1$ for the probability density $\PP^{\bbb,u,v}_{\stat\lgg}(\bL)$ where $\bL=(\bL_1,\bL_2)$.
\end{definition}

The following is the main theorem in this section.
 \begin{theorem}\label{thm:main theorem LG}  
 Assume that $(\alpha_i)_{1\leq i\leq N}\in \mathbb R_{>0}^N$ and $u,v\in \mathbb R$ satisfy \eqref{eq:condition for inhomogeneous log gamma}. 
Suppose $\bbb=(\b_1,\dots,\b_N)$ are labels on some horizontal path $\hP$, and $\tau_1\bbb=(\b_2,\dots,\b_N,\b_1)$ are labels on the translated path $\tau_1\hP$. 
Consider the inhomogeneous log-gamma polymer model from Definition \ref{defn:inhomogeneous LG and specializations} starting from an initial condition along $\hP$ given by $(h(\p_j))_{0\leq j\leq N}$ where  $(h(\p_j)-h(\p_0))_{1\leq j\leq N}$ is distributed according to the probability measure with density $\mathrm{P}^{\bbb,u,v}_{\stat\lgg}$ with respect to Lebesgue measure on $\RR^{N}$. Then the  density of $(h(\tau_1\p_j)-h(\tau_1\p_0))_{1\leq j\leq N}$ coincides with $\mathrm{P}^{\tau_1\bbb,u,v}_{\stat\lgg}$. 
For the homogeneous (i.e. $\boldsymbol{\a}=(\a,\ldots, \a)$) case, the probability measure with density $\mathrm{P}^{\a,u,v}_{\stat\lgg}$  is the (unique) ergodic stationary measure on the horizontal path $\hP$ for the log-gamma polymer recurrence relation (see Definition \ref{defn:homogeneous log gamma polymer}).

\end{theorem}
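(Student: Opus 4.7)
The plan is to mirror the proof of Theorem \ref{thm:main theorem LPP} in Section \ref{sec:LPP}, adapted to the continuous state-space setting of the log-gamma polymer. First, I would introduce the two-layer graph $\PiP$ associated to a down-right path $\hP$ as in Definition \ref{defn:two path diagram and configuration LPP}, but now with configurations $\bl \in \RR^{2N+2}$. The two-layer Gibbs density $\wt_{\lgg}^{\PiP}(\bl)$ would be built from boundary, solid-edge, and inter-layer Boltzmann weights chosen so that the log-gamma analogs of the skew Cauchy and Littlewood identities (Lemmas \ref{lem:Cauchy LG} and \ref{lem:Littlewood LG} referenced in the excerpt) hold. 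These are single- or two-variable integral identities for gamma densities that play the exact role \eqref{eq:diagram Cauchy LPP} and \eqref{eq:diagram Littlewood LPP} did in the LPP case.

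Second, using these identities, I would define local Markov transition kernels $\mathcal{U}_{\lgg}^{\rellcorner}$, $\mathcal{U}_{\lgg}^{\reacuteangle}$, $\mathcal{U}_{\lgg}^{\rerotateangle}$ (of push-block type, exhibited as the natural quotient of weight on the left of the identity by its sum, as in Definition \ref{def:push-block}) and compose them along a sequence of local moves connecting $\hP$ to $\tau_1 \hP$. Measure-preservation of $\wt_{\lgg}^{\PiP}$ under these kernels follows by construction, and an explicit marginalization computation — analogous to Lemma \ref{lem:marginal operator LPP} — identifies the first-layer marginal of the composite kernel with the log-gamma recurrence \eqref{eq:recurrence log gamma}. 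The fact that the first coordinate is marginally Markov relies on the observation that the first-layer kernel $\mathrm{U}_{\lgg}^{\rellcorner}(\pi_1|\la_1,\mu_1)$ is independent of $\la_2,\mu_2,\pi_2$; this is the analog of the $\max$-plus structure in the geometric case.

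Third, I would prove the log-gamma analog of Proposition \ref{prop:finiteness of weights LPP}: under $u+v>0$, with $\la_1^{(0)}$ fixed, the integral $Z_{\lgg} = \int \wt_{\lgg}^{\PiP}(\bl)\,d\bl$ is finite, path-independent, and independent of $\la_1^{(0)}$. Here, unlike the LPP case where absorption of Schur sums into Cauchy-kernel identities gave an immediate bound, the proof proceeds by induction on $N$, reducing one coordinate at a time via explicit beta-type integrals. Once this is in hand, defining $\mathrm{P}_{\lgg}^{\hP}(\bL_1) = Z_{\lgg}^{-1}\int \wt_{\lgg}^{\PiP}(\bl)\,d\bl_2$ (with $\la_1^{(0)}$ fixed and $L_1(j) = \la_1^{(j)} - \la_1^{(0)}$) gives the candidate stationary measure, and stationarity on the restricted parameter region $u+v>0$ follows from the preservation of $\wt_{\lgg}^{\PiP}$ (the analog of Theorem \ref{thm:stationary measure LPP before sum over zero mode}). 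Integrating out the zero-mode $\Delta = \la_1^{(0)}-\la_2^{(0)}$ via a single gamma integral then identifies $\mathrm{P}_{\lgg}^{\hP}$ with $\mathrm{P}^{\bbb,u,v}_{\stat\lgg}$, giving the log-gamma version of Proposition \ref{prop:probability measure coincide LPP}.

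The main obstacle, and the step that genuinely differs from the LPP case, is extending the stationarity identity from $u+v>0$ to all $u,v$ satisfying \eqref{eq:condition for inhomogeneous log gamma}. In the geometric setting, both sides of the invariance relation were manifestly power series in $c_1$, and a one-line real-analyticity argument sufficed. Here both sides of the target identity \eqref{eq:compatibility with probability measure after analytic continuatio LG} are Lebesgue integrals over $\RR^N$ depending on $u$ through an analytic integrand, but establishing analyticity in $u$ requires uniform-in-$u$ integrability on a complex neighborhood. The strategy is to apply Morera's theorem, which reduces the problem to verifying that both sides are continuous and satisfy $\oint = 0$ on every triangle in an open subset of $\CC$ containing the target real interval; Fubini then exchanges the contour integral with the Lebesgue integral, provided uniform integrable domination holds. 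This domination is the content of Section \ref{sec:proof of real analyticity}, and in particular reduces finiteness of $\hZ^{\bbb,u,v}_{\lgg}$ on the full parameter range (the analog of Proposition \ref{prop:finiteness of weights after summing zero mode LPP}) to explicit bounds splitting the integrand by whether $\sum_j e^{L_2(j)-L_1(j-1)}$ is large or small. With holomorphicity on a complex open set established, uniqueness of analytic continuation extends equality from $u+v>0$ to the entire admissible range, yielding the stationarity claim. Uniqueness and ergodicity in the homogeneous case follow as in the LPP proof, using that the one-step kernel admits a strictly positive density on $\RR^N$ and hence is positive Harris recurrent.
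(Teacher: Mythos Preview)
Your proposal is correct and follows essentially the same approach as the paper: two-layer Gibbs measures, Cauchy/Littlewood identities, push-block kernels whose first-layer marginal matches the recurrence, finiteness by induction on $N$, zero-mode integration, and analytic continuation via Morera's theorem on a complex neighborhood. One small inaccuracy: the finiteness of $\hZ^{\bbb,u,v}_{\lgg}$ on the full parameter range (Proposition \ref{prop:finiteness of weights after summing zero mode LG}) is not handled by splitting on the size of $\sum_j e^{L_2(j)-L_1(j-1)}$ but by treating the cases $u+v>0$ (via the Gibbs partition function) and $u+v\le 0$ (via a convexity inequality and gamma moments) separately, and this finiteness is established prior to and independently of the Morera-based analyticity argument of Section \ref{sec:proof of real analyticity}.
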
 

The proof of this theorem is given in Section \ref{subsec:proof of main theorem LG}. In Section \ref{subsec:Two-layer Gibbs-like measure LG} we construct the two-layer Gibbs measures indexed by down-right paths. 
We will construct in Section \ref{subsec:local Markov operators LG} a Markov dynamics under which the two-layer Gibbs measures are stationary. 
Under $u+v>0$, in Section \ref{subsec:Marginal distributions LG} we will turn the two-layer Gibbs measures into probability measures which are stationary under the log-gamma polymer recurrence relation. To obtain stationarity outside $u+v>0$, in Section \ref{subsec:proof of main theorem LG} we argue by the uniqueness of analytic continuation of real analytic functions.  
Let us also note here that though this theorem is formulated along only horizontal paths, it is possible to formulate and prove such a result along any down-right path in a similar manner.

\subsection{Two-layer Gibbs measure}\label{subsec:Two-layer Gibbs-like measure LG}
We will use the same two-layer graph $\PiP$ associated to a down-right path $\hP$ as in Definition \ref{defn:two path diagram and configuration LPP}. 
A two-layer configuration $\bl=(\la_1^{(0)},\dots,\la_1^{(N)},\la_2^{(0)},\dots,\la_2^{(N)})$ now is  an assignment of $\la_i^{(j)}\in\RR$ ($\RR$ in place of $\ZZ$) to each vertex $\p_i^{(j)}$ of $\PiP$, for $i=1,2$ and $j=0,\dots,N$.

\begin{definition}[Two-layer log-gamma Gibbs measure]\label{defn: Gibbs measures LG}
For $x,y\in \RR$, the weights of solid, dashed and arced edges are
\begin{subequations}
\label{eq:weightsLG}
    \be\label{eq:weight LG boundary}
\wt_{\lgg}\lb\raisebox{-4.5pt}{\arcleftthicklg}\rb=e^{-u(x-y)},\quad \wt_{\lgg}\lb\raisebox{-4.5pt}{\arcrightthicklg}\rb=e^{-v(x-y)},\ee
\be  \label{eq:weight LG edges}
\wt_{\lgg}\lb\bulkrightlg\rb=\wt_{\lgg}\lb\bulkleftlg\rb=e^{-\alpha(x-y)-e^{-(x-y)}},\ee
\be  \label{eq:weight LG dotted edges}
\wt_{\lgg}\lb\bulkrightdotted\rb=\wt_{\lgg}\lb\bulkleftdotted\rb=e^{-e^{-(x-y)}}.\ee
\end{subequations}
Notice that the weights in \eqref{eq:weight LG edges} are precisely of the form of the density of the log-gamma distribution \eqref{eq:lgdensity}. This explains why log-gamma random walks will naturally arise in this context. 
We define the weight $\wt_{\lgg}^{\PiP}(\bl)$ of a two-layer configuration $\bl$ associated to a two-layer graph $\PiP$ to be the product of the above weights over all labelled solid edges, dotted edges and arcs in $\PiP$.
We will also write $\wt_{\lgg}$ to denote the weight of a configuration drawn on a sub-graph of $\PiP$, in a similar manner as \eqref{eq:weight subgraph example}.
Like \eqref{eq:translation invariance two layer LPP},  $\wt_{\lgg}^{\PiP}\lb\bl\rb$ is always positive and translation invariant, in the sense that, for all $x\in\RR$,
\be\label{eq:translation invariance two layer LG}\wt_{\lgg}^{\PiP}(\bl)=\wt_{\lgg}^{\PiP}(\bl+x).\ee
\end{definition}

We will view the weight $\wt_{\lgg}^{\PiP}(\bl)$ as a measure on 
$\{\bl\in \RR^{2N+2}\}$. Due to the translation invariance \eqref{eq:translation invariance two layer LG}, $\wt_{\lgg}^{\PiP}$ must have infinite mass. 
However, as we will see in Proposition \ref{prop:finiteness of weights LG}, the measure of $\bl$ with fixed $\la_1^{(0)}$ (or any other fixed coordinate $\la_i^{(j)}$) is finite under the assumption $u+v>0$. This will be key in turning these Gibbs measures into probability measures.  

We now prove a Cauchy and a Littlewood type identity for the log-gamma weights.
\begin{lemma}[Cauchy type identity]\label{lem:Cauchy LG}
For any $\alpha,\beta\in \CC$ with $\Re(\alpha+\beta)>0$ and any $\la_1,\la_2,\mu_1,\mu_2\in\RR$
    \be\label{eq:diagram Cauchy LG}
         \int_{\RR^2}\wt_{\lgg}\lb\raisebox{-35pt}{\begin{tikzpicture}
        \draw[thick] (-0.6,0.6)--(0,0)--(0.6,0.6);
        \draw[thick] (-0.6,-0.6)--(0,-1.2)--(0.6,-0.6);
        \draw[dashed] (-0.6,-0.6)--(0,0)--(0.6,-0.6);
        \node[above right] at (0.5,0.55) {\small $\mu_1$};
        \node[above right] at (0.5,-0.65) {\small  $\mu_2$};
        \node[above left] at (-0.5,0.55) {\small   $\la_1$};
        \node[above left] at (-0.5,-0.65) {\small  $\la_2$};
        \node[above] at (0.3,0.25) {\textcolor{red}{\small   $\beta$}};
        \node[above] at (0.3,-0.95) {\textcolor{red}{\small  $\beta$}};
        \node[above] at (-0.3,0.3) {\textcolor{red}{\small  $\alpha$}};
        \node[above] at (-0.3,-0.9) {\textcolor{red}{\small  $\alpha$}};
        \node[below] at (0,0) {\small  $\k_1$};
        \node[below] at (0,-1.2) {\small  $\k_2$};
\end{tikzpicture}}\rb\d\k_1\d\k_2
=\int_{\RR^2}\wt_{\lgg}\lb\raisebox{-35pt}{\begin{tikzpicture}
    \draw[thick] (-0.6,0.6)--(0,1.2)--(0.6,0.6);
    \draw[dashed] (-0.6,0.6)--(0,0)--(0.6,0.6);
    \draw[thick] (-0.6,-0.6)--(0,0)--(0.6,-0.6);
    \node[below right] at (0.5,0.65) {\small $\mu_1$};
    \node[below right] at (0.5,-0.55) {\small   $\mu_2$};
    \node[below left] at (-0.45,0.66) {\small  $\la_1$};
    \node[below left] at (-0.45,-0.55) {\small  $\la_2$};
    \node[above] at (0,0) {\small  $\pi_2$};
    \node[above] at (0,1.2) {\small  $\pi_1$};
    \node[above] at (0.35,-0.35){\textcolor{red}{\small  $\alpha$}};
    \node[above] at (0.35,0.85){\textcolor{red}{\small  $\alpha$}};
    \node[above] at (-0.35,-0.4){\textcolor{red}{\small  $\beta$}};
    \node[above] at (-0.35,0.8){\textcolor{red}{\small  $\beta$}};
\end{tikzpicture}}\rb\d\pi_1\d\pi_2,\ee
with both integrals finite.
\end{lemma}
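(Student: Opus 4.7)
The plan is to prove \eqref{eq:diagram Cauchy LG} by direct computation: both sides can be reduced to iterated integrals of the form "Gamma integral followed by modified Bessel integral", and comparing the resulting closed-form expressions yields the identity.

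First, on the left-hand side, observe that $\k_2$ appears only in the two lower solid edges (to $\la_2$ and $\mu_2$), so the $\k_2$ integral factors out. After collecting exponentials and substituting $u=e^{\k_2}$, it becomes a Gamma integral
\[
\int_\RR e^{(\alpha+\beta)\k_2}\exp\bigl[-e^{\k_2}(e^{-\la_2}+e^{-\mu_2})\bigr]\,\d\k_2 \;=\; \Gamma(\alpha+\beta)\,(e^{-\la_2}+e^{-\mu_2})^{-(\alpha+\beta)},
\]
convergent precisely when $\Re(\alpha+\beta)>0$. The remaining $\k_1$ integral picks up contributions from the two upper solid edges (via $\la_1,\mu_1$) and the two dashed edges (via $\la_2,\mu_2$). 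Grouping terms, setting $P := e^{-\la_1}+e^{-\mu_1}$ and $Q := e^{\la_2}+e^{\mu_2}$, and substituting $u=e^{\k_1}$, this integral takes the form $\int_0^\infty u^{\alpha+\beta-1}e^{-uP-Q/u}\,\d u = 2(Q/P)^{(\alpha+\beta)/2}K_{\alpha+\beta}(2\sqrt{PQ})$, a standard Macdonald function representation. Thus the LHS equals $e^{-\alpha(\la_1+\la_2)-\beta(\mu_1+\mu_2)}\cdot 2\Gamma(\alpha+\beta)R^{-(\alpha+\beta)}(Q/P)^{(\alpha+\beta)/2}K_{\alpha+\beta}(2\sqrt{PQ})$ with $R := e^{-\la_2}+e^{-\mu_2}$.

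A mirror-symmetric computation handles the right-hand side: the variable $\pi_1$ appears only in the two upper solid edges, giving a Gamma integral $\Gamma(\alpha+\beta)(e^{\la_1}+e^{\mu_1})^{-(\alpha+\beta)}$; then $\pi_2$ appears in both the lower solid edges and the dashed edges, producing the Bessel integral $2(Q'/P')^{(\alpha+\beta)/2}K_{\alpha+\beta}(2\sqrt{P'Q'})$ with $P' := e^{\la_2}+e^{\mu_2}=Q$ and $Q' := e^{-\la_1}+e^{-\mu_1}=P$. Hence the Bessel function arguments coincide, and the whole identity reduces to matching prefactors. The elementary identity $e^x+e^y = e^{x+y}(e^{-x}+e^{-y})$ yields $e^{\la_1}+e^{\mu_1} = e^{\la_1+\mu_1}P$ and $Q = e^{\la_2+\mu_2}R$, which is exactly the algebraic relation needed to convert the RHS prefactor into the LHS prefactor after collecting the $(Q/P)^{\pm(\alpha+\beta)/2}$ powers.

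Convergence of each intermediate integral is straightforward: the double-exponential terms $-e^{\pm\k_i}(\cdot)$ give super-exponential decay at one infinity, and the linear factor $e^{(\alpha+\beta)\k_i}$ is integrable at the other infinity under $\Re(\alpha+\beta)>0$. For complex $\alpha,\beta$, both sides are holomorphic in the half-space $\{\Re(\alpha+\beta)>0\}$ by Morera's theorem and dominated convergence, so the identity, once established for real parameters, extends by uniqueness of analytic continuation. The main bookkeeping obstacle is keeping track of the asymmetric prefactors that remain after both Bessel integrals are performed: a priori the LHS has prefactor $e^{-\alpha(\la_1+\la_2)-\beta(\mu_1+\mu_2)}$ and the RHS has $e^{\beta(\la_1+\la_2)+\alpha(\mu_1+\mu_2)}$, and their equality with the Bessel/Gamma factors reshuffled depends crucially on the identity $R\cdot e^{\la_2+\mu_2}=Q$ mentioned above.
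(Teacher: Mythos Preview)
Your proof is correct but takes a different route from the paper's. The paper gives a one-line argument: it exhibits the explicit change of variables
\[
\pi_1=-\k_2+\log\bigl(e^{\la_1}+e^{\mu_1}\bigr)-\log\bigl(e^{-\la_2}+e^{-\mu_2}\bigr),\qquad
\pi_2=-\k_1+\log\bigl(e^{\la_2}+e^{\mu_2}\bigr)-\log\bigl(e^{-\la_1}+e^{-\mu_1}\bigr),
\]
which has unit Jacobian and carries the LHS integrand directly to the RHS integrand. No special functions are needed, and the finiteness claim is dispatched by inspection.

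By contrast, you evaluate each side in closed form: a Gamma integral in the ``outer'' variable ($\k_2$ on the left, $\pi_1$ on the right) followed by a Macdonald $K_{\alpha+\beta}$ integral in the ``inner'' variable, and then match the resulting expressions via the elementary identity $e^x+e^y=e^{x+y}(e^{-x}+e^{-y})$. This is longer and introduces an auxiliary transcendental function, but it buys you something the paper's proof does not: an explicit formula for the common value of both sides, namely a product of $\Gamma(\alpha+\beta)$, a power of $P,Q,R$, and $K_{\alpha+\beta}(2\sqrt{PQ})$. That closed form could be useful if one later wants quantitative bounds on these weights. Your treatment of convergence and of complex $\alpha,\beta$ via Morera and dominated convergence is also more explicit than the paper's terse ``finiteness is easily seen''.
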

\begin{proof}
This identity can be explicitly written as (finiteness is easily seen from this too)
\begin{multline*}
    \int_{\RR^2} \d\pi_1\d\pi_2e^{-\a(\pi_1+\pi_2-\mu_1-\mu_2)-\b(\pi_1+\pi_2-\la_1-\la_2)-e^{-(\pi_1-\la_1)}-e^{-(\la_1-\pi_2)}-e^{-(\pi_2-\la_2)}-e^{-(\pi_1-\mu_1)}-e^{-(\mu_1-\pi_2)}-e^{-(\pi_2-\mu_2)}} \\ =  \int_{\RR^2}\d\k_1\d\k_2 e^{-\a(\la_1+\la_2-\k_1-\k_2)-\b(\mu_1+\mu_2-\k_1-\k_2)-e^{-(\la_1-\k_1)}-e^{-(\k_1-\la_2)}-e^{-(\la_2-\k_2)}-e^{-(\mu_1-\k_1)}-e^{-(\k_1-\mu_2)}-e^{-(\mu_2-\k_2)}},
\end{multline*} 
which follows from change of variables 
$\pi_1=-\k_2+\log\lb e^{\la_1}+e^{\mu_1}\rb-\log\lb e^{-\la_2}+e^{-\mu_2}\rb$ and 
$\pi_2=-\k_1+\log\lb e^{\la_2}+e^{\mu_2}\rb-\log\lb e^{-\la_1}+e^{-\mu_1}\rb$.
\end{proof}
\begin{lemma}[Littlewood type identity]\label{lem:Littlewood LG}
For any $u,\alpha\in \CC$ with $\Re(u+\alpha)>0$ and any $\k_1,\k_2\in\RR$
\be \label{eq:diagram Littlewood LG}
 \int_{\RR^2}\wt_{\lgg}\lb\raisebox{-35pt}{\begin{tikzpicture}
    \draw[dashed](0,0.6)--(0.6,0);
    \draw[thick] (0,-0.6)--(0.6,0);
    \draw[thick] (0,0.6)--(0.6,1.2);
    \node[above right] at (0.5,-.1) {\small $\k_2$};
    \node[above right] at (0.5,1.1) {\small $\k_1$};
    \node[below] at (0,-0.6) {\small $\la_2$};
    \node[above] at (0,0.6) {\small $\la_1$};
    \node[above] at (0.3,-0.3) {\textcolor{red}{\small $\alpha$}};
    \node[above] at (0.3,0.9) {\textcolor{red}{\small $\alpha$}};
    \node[left] at (-0.6,0) {\textcolor{red}{\small $u$}};
    \draw[thick] (0,0.6) arc (90:270:0.6);
\end{tikzpicture}} \rb\d\la_1\d\la_2
=\int_{\RR^2}\wt_{\lgg}\lb\raisebox{-35pt}{\begin{tikzpicture}
    \draw[dashed](0.6,0)--(0,-0.6);
    \draw[thick] (0,0.6)--(0.6,0);
    \draw[thick] (0,-0.6)--(0.6,-1.2);
    \node[below right] at (0.5,-1.1) {\small $\k_2$};
    \node[below right] at (0.5,.1) {\small $\k_1$};
    \node[below] at (0,-0.6) {\small $\pi_2$};
    \node[above] at (0,0.6) {\small $\pi_1$};
    \node[above] at (0.35,0.3) {\textcolor{red}{\small $\alpha$}};
    \node[above] at (0.35,-0.9) {\textcolor{red}{\small $\alpha$}};
    \node[left] at (-0.6,0) {\textcolor{red}{\small $u$}};
    \draw[thick] (0,0.6) arc (90:270:0.6);
\end{tikzpicture}}\rb\d\pi_1\d\pi_2,\ee
with both integrals finite.
\end{lemma}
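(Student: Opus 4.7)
The plan is to prove \eqref{eq:diagram Littlewood LG} by expanding both sides explicitly using the edge weights from Definition \ref{defn: Gibbs measures LG} and then performing a linear change of variables, in direct analogy with the proof of the geometric Littlewood identity (Proposition \ref{prop:Littlewood identity LPP}).

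Expanding the LHS diagram using \eqref{eq:weight LG boundary}--\eqref{eq:weight LG dotted edges} gives the integral of
\begin{equation*}
e^{-u(\la_1-\la_2)}\cdot e^{-\a(\k_1-\la_1)-e^{-(\k_1-\la_1)}}\cdot e^{-\a(\k_2-\la_2)-e^{-(\k_2-\la_2)}}\cdot e^{-e^{-(\la_1-\k_2)}}
\end{equation*}
against $\d\la_1\,\d\la_2$, with an analogous expansion on the RHS. Motivated by the substitution $\pi_j = -\la_{j-1}+\k_j+\k_{j-1}$ (indices modulo $n=2$) used in Proposition \ref{prop:Littlewood identity LPP}, I would apply the change of variables
\begin{equation*}
\pi_1 = \k_1 + \k_2 - \la_2, \qquad \pi_2 = \k_1 + \k_2 - \la_1,
\end{equation*}
whose Jacobian has absolute value $1$. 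Direct computation yields $\pi_1-\pi_2=\la_1-\la_2$, $\pi_1-\k_1=\k_2-\la_2$, $\pi_2-\k_2=\k_1-\la_1$, and $\k_1-\pi_2=\la_1-\k_2$, so each of the four exponential factors on the LHS transforms into exactly the corresponding factor on the RHS, establishing the identity.

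For the finiteness claim, I would substitute $s=\k_1-\la_1$ and $t=\k_2-\la_2$, after which the absolute value of the LHS integrand factorizes (up to a constant depending only on $\k_1-\k_2$) as
\begin{equation*}
e^{(\Re u - \Re \a) s - e^{-s} - c\, e^{s}}\cdot e^{-(\Re u + \Re \a) t - e^{-t}},
\end{equation*}
where $c=e^{-(\k_1-\k_2)}>0$. The $s$-integral is finite for all $u,\a$ thanks to the super-exponential decay of $e^{-e^{-s}}$ as $s\to -\infty$ and of $e^{-c\,e^{s}}$ as $s\to +\infty$, while the $t$-integral converges precisely under the hypothesis $\Re(u+\a)>0$; finiteness of the RHS follows from the same argument after applying the change of variables.

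The main obstacle is essentially bookkeeping — matching four exponential factors under a linear substitution — and I do not expect any serious technical difficulty. The only care required is to correctly identify the weight of each labelled edge in the diagrams (boundary arc, solid bulk edge, or dashed edge) and to check that super-exponential decay really dominates the linear-in-the-exponent terms at both infinities in the $s$ direction, but once the integrals are written out explicitly the identity reduces to the algebraic equalities recorded above.
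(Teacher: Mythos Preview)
Your proof is correct and follows essentially the same approach as the paper: both use the change of variables $\pi_1=\k_1+\k_2-\la_2$, $\pi_2=\k_1+\k_2-\la_1$ to match the integrands term-by-term. Your finiteness argument via the substitution $s=\k_1-\la_1$, $t=\k_2-\la_2$ is more detailed than the paper's, which simply notes that finiteness is easily seen from the explicit form.
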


\begin{proof}
This identity can be explicitly written as  (finiteness is easily seen from this too)
\begin{multline} \label{eq:Littlewood LG}
        \int_{\RR^2}\d\pi_1\d\pi_2e^{-\a(\pi_1+\pi_2-\k_1-\k_2)-u(\pi_1-\pi_2)-e^{-(\pi_1-\k_1)}-e^{-(\k_1-\pi_2)}-e^{-(\pi_2-\k_2)}} \\
        =\int_{\RR^2}\d\la_1\d\la_2e^{-\a(\k_1+\k_2-\la_1-\la_2)-u(\la_1-\la_2)-e^{-(\k_1-\la_1)}-e^{-(\la_1-\k_2)}-e^{-(\k_2-\la_2)}}, 
    \end{multline}
which follows from change of variables
$\pi_1=-\lambda_2+\kappa_1+\kappa_2$ and  $\pi_2=-\lambda_1+\kappa_1+\kappa_2$.
\end{proof}

\begin{remark}
\label{rem:Whittaker}
Identities \eqref{eq:diagram Cauchy LG} and \eqref{eq:diagram Littlewood LG} can be generalized as in Propositions \ref{prop:Cauchy identity LPP} and \ref{prop:Littlewood identity LPP}  to integral identities involving multiple layers (general $n\in \ZZ_{\geq 1}$ for  \eqref{eq:diagram Cauchy LG} and $n\in 2\ZZ_{\geq 1}$ for  \eqref{eq:diagram Littlewood LG}) , and  $\alpha, \beta$ can be replaced by sets of variables. The weights in \eqref{eq:diagram Cauchy LG} and \eqref{eq:diagram Littlewood LG} can be written in terms of Baxter Q operators from \cite{gerasimov2008baxter} which may be thought as skew version of the $\mathfrak{gl}_n(\mathbb R)$-Whittaker functions. Actually, the skew Cauchy type identity is \cite[Theorem 2.3 (2.24)]{gerasimov2008baxter}, up to a change of variables. Given that such skew Cauchy and skew Littlewood identities hold for skew Schur functions as well as skew Whittaker functions, it is likely that similar identities hold as well for skew $q$-Whittaker functions. 
\end{remark} 

\subsection{Local Markov kernels}\label{subsec:local Markov operators LG}
As in the geometric LPP case in Section \ref{subsec:local Markov operators LPP}, in this section we define a two-layer local Markov dynamics on the strip that preserves the two-layer Gibbs measure in Section \ref{subsec:Two-layer Gibbs-like measure LG}. Recall two-layer graph local moves  (Definition \ref{defn: local moves}) and the two-layer geometric Markov dynamics (Definition \ref{def:twolayerdynamics}).

\begin{definition}[Two-layer log-gamma Markov dynamics]\label{def:twolayerdynamicsLG}
A two-layer log-gamma Markov dynamic on the strip is defined as a collection of transition probabilities densities $\{\U_{\lgg}^{\PiP,\mathcal{G}\widetilde{\hP}}\}$ on the state space $\RR^{2N+2}$ associated to each pair $(\hP,\widetilde{\hP})$ of down-right paths connected by a single local move, i.e. $\hP\mapsto\widetilde{\hP}$. Here $\U_{\lgg}^{\PiP,\mathcal{G}\widetilde{\hP}}\lb\widetilde{\bl}|\bl\rb$ is the probability density of transitioning from configuration $\bl\in\RR^{2N+2}$ for the two-layer graph $\PiP$ to the configuration  $\widetilde{\bl}\in\RR^{2N+2}$ for the two-layer graph $\mathcal{G}\widetilde{\hP}$. We require that these transition probability densities are local in that they only depend on the coordinates in $\bl$ associated to vertices proximate to the local move; and they act on the identity on all coordinates except for the update/updated vertices. Specifically, if the updated vertex from $\hP$ to $\widetilde{\hP}$ is a bulk vertex $\p_j$ then
$$\U_{\lgg}^{\PiP,\mathcal{G}\widetilde{\hP}}\lb\widetilde{\bl}|\bl\rb = 
\prod_{i\neq j} \delta_{\widetilde{\lambda}^{(i)}=\lambda^{(i)}} \,\cdot\,\Ubg(\widetilde{\lambda}^{(j)}|\lambda^{(j-1)},\lambda^{(j)},\lambda^{(j+1)};\alpha,\beta)
$$
where $\Ubg(\pi|\la,\k, \mu;\alpha,\beta)$ is a probability density in $\pi\in \RR^2$ given  $\lambda,\k,\mu\in\RR^2$, and where $\alpha,\beta$ are the bulk parameters labeling the edges $(\p_{j-1},\p_j)$ and $(\p_j,\p_{j+1})$ respectively. Notice the presence of Dirac delta functions above in the description of $\U_{\lgg}^{\PiP,\mathcal{G}\widetilde{\hP}}\lb\widetilde{\bl}|\bl\rb$. This implies that the only variable that changes is $\lambda^{(j)}$ and thus it was a bit of an abuse of notation to call this a probability density. However, $\Ubg(\pi|\la,\k, \mu;\alpha,\beta)$ is a bona-fida probability density in $\pi$.
If the update vertex from $\hP$ to $\widetilde{\hP}$ is a boundary vertex (say $\p_0$, the left-boundary) then
$$
\U_{\lgg}^{\PiP,\mathcal{G}\widetilde{\hP}}\lb\widetilde{\bl}|\bl\rb = 
\prod_{i\neq 0} \delta_{\widetilde{\lambda}^{(i)}=\lambda^{(i)}} \,\cdot\,
\Ulg(\widetilde{\lambda}^{(0)}|\lambda^{(0)},\lambda^{(1)};u,\alpha)
$$
where $\Ulg(\pi|\la,\k;u,\alpha)$ is a probability distribution in $\pi\in \RR^2$ given  $\lambda,\k\in \RR^2$, and where $u$ is the boundary parameter labeling the left-boundary arc, and $\alpha$ is the bulk parameters labeling the edge ($\p_0,\p_1)$.
Similarly, if the update vertex was on the right boundary $\U_{\lgg}^{\PiP,\mathcal{G}\widetilde{\hP}}\lb\widetilde{\bl}|\bl\rb$ is defined via $\Ur(\pi|\k,\mu;\alpha,v)$ for the associated bulk and (right) boundary parameters $\alpha$ and $v$. Our notation here for the local transition probabilities emphasizes their bulk or boundary nature via the superscript which is harvested from \eqref{eq:local move one path bulk}.

In order that $\U_{\lgg}^{\PiP,\mathcal{G}\widetilde{\hP}}$ preserves the two-layer log-gamma Gibbs measures it will be sufficient that the local transition probabilities $\Ubg,\Ulg$ and $\Urg$ preserve the local weights. Specifically we will assume that for all $\pi,\la,\k,\mu\in \RR^2$ and all $\alpha,\beta,u,v>0$ such that $\alpha+\beta,\alpha+u,\alpha+v>0$, 

\begin{subequations}
\begin{alignat}{3}
\int_{\RR^2}\Ubg(\pi|\la,\mu,\k;\alpha,\beta)\wt_{\lgg}\lb\raisebox{-35pt}{\begin{tikzpicture}
        \draw[thick] (-0.6,0.6)--(0,0)--(0.6,0.6);
        \draw[thick] (-0.6,-0.6)--(0,-1.2)--(0.6,-0.6);
        \draw[dashed] (-0.6,-0.6)--(0,0)--(0.6,-0.6);
        \node[above right] at (0.5,0.55) {\small $\mu_1$};
        \node[above right] at (0.5,-0.65) {\small  $\mu_2$};
        \node[above left] at (-0.5,0.55) {\small   $\la_1$};
        \node[above left] at (-0.5,-0.65) {\small  $\la_2$};
        \node[above] at (0.3,0.25) {\textcolor{red}{\small   $\beta$}};
        \node[above] at (0.3,-0.95) {\textcolor{red}{\small  $\beta$}};
        \node[above] at (-0.3,0.3) {\textcolor{red}{\small  $\alpha$}};
        \node[above] at (-0.3,-0.9) {\textcolor{red}{\small  $\alpha$}};
        \node[below] at (0,0) {\small  $\k_1$};
        \node[below] at (0,-1.2) {\small  $\k_2$};
\end{tikzpicture}}\rb\d\k_1\d\k_2
=\wt_{\lgg}\lb\raisebox{-35pt}{\begin{tikzpicture}
    \draw[thick] (-0.6,0.6)--(0,1.2)--(0.6,0.6);
    \draw[dashed] (-0.6,0.6)--(0,0)--(0.6,0.6);
    \draw[thick] (-0.6,-0.6)--(0,0)--(0.6,-0.6);
    \node[below right] at (0.5,0.65) {\small $\mu_1$};
    \node[below right] at (0.5,-0.55) {\small   $\mu_2$};
    \node[below left] at (-0.45,0.66) {\small  $\la_1$};
    \node[below left] at (-0.45,-0.55) {\small  $\la_2$};
    \node[above] at (0,0) {\small  $\pi_2$};
    \node[above] at (0,1.2) {\small  $\pi_1$};
    \node[above] at (0.35,-0.35){\textcolor{red}{\small  $\alpha$}};
    \node[above] at (0.35,0.85){\textcolor{red}{\small  $\alpha$}};
    \node[above] at (-0.35,-0.4){\textcolor{red}{\small  $\beta$}};
    \node[above] at (-0.35,0.8){\textcolor{red}{\small  $\beta$}};
\end{tikzpicture}}\rb,  \label{eq:diagram definition bulk local operator LG}  \\
\int_{\RR^2}\Ulg(\pi|\k,\la;u,\alpha)\wt_{\lgg}\lb\raisebox{-35pt}{\begin{tikzpicture}
    \draw[dashed](0,0.6)--(0.6,0);
    \draw[thick] (0,-0.6)--(0.6,0);
    \draw[thick] (0,0.6)--(0.6,1.2);
    \node[above right] at (0.5,-.1) {\small $\k_2$};
    \node[above right] at (0.5,1.1) {\small $\k_1$};
    \node[below] at (0,-0.6) {\small $\la_2$};
    \node[above] at (0,0.6) {\small $\la_1$};
    \node[above] at (0.3,-0.3) {\textcolor{red}{\small $\alpha$}};
    \node[above] at (0.3,0.9) {\textcolor{red}{\small $\alpha$}};
    \node[left] at (-0.6,0) {\textcolor{red}{\small $u$}};
    \draw[thick] (0,0.6) arc (90:270:0.6);
\end{tikzpicture}}\rb\d\la_1\d\la_2
=\wt_{\lgg}\lb\raisebox{-35pt}{\begin{tikzpicture}
    \draw[dashed](0.6,0)--(0,-0.6);
    \draw[thick] (0,0.6)--(0.6,0);
    \draw[thick] (0,-0.6)--(0.6,-1.2);
    \node[below right] at (0.5,-1.1) {\small $\k_2$};
    \node[below right] at (0.5,.1) {\small $\k_1$};
    \node[below] at (0,-0.6) {\small $\pi_2$};
    \node[above] at (0,0.6) {\small $\pi_1$};
    \node[above] at (0.35,0.3) {\textcolor{red}{\small $\alpha$}};
    \node[above] at (0.35,-0.9) {\textcolor{red}{\small $\alpha$}};
    \node[left] at (-0.6,0) {\textcolor{red}{\small $u$}};
    \draw[thick] (0,0.6) arc (90:270:0.6);
\end{tikzpicture}}\rb,\label{eq:diagram definition left boundary local operator LG} \\
\int_{\RR^2}\Urg(\pi|\k,\mu;\alpha,v)\wt_{\lgg}\lb\raisebox{-35pt}{\begin{tikzpicture}
    \draw[dashed] (0,0)--(0.6,0.6);
    \draw[thick] (0.6,0.6)--(0,1.2);
    \draw[thick] (0,0)--(0.6,-0.6);
    \node[above left] at (0.1,-0.1) {\small  $\k_2$};
    \node[above left] at (0.1,1) {\small  $\k_1$};
    \node[below ] at (0.7,-0.6) {\small $\mu_2$};
    \node[above ] at (0.7,0.6) {\small  $\mu_1$};
    \node  at (0.35,-0.1) {\textcolor{red}{\small $\alpha$}};
    \node  at (0.35,1.1) {\textcolor{red}{\small $\alpha$}};
    \node[right] at (1.2,0) {\textcolor{red}{\small  $v$}};
    \draw[thick] (0.6,-0.6) arc (-90:90:0.6);
\end{tikzpicture}}\rb\d\mu_1\d\mu_2
=\wt_{\lgg}\lb\raisebox{-35pt}{\begin{tikzpicture}
    \draw[dashed](0.6,0)--(0,0.6);
    \draw[thick] (0,0.6)--(0.6,1.2);
    \draw[thick] (0,-0.6)--(0.6,0);
    \node[ left] at (0.1,-0.6) {\small  $\k_2$};
    \node[ left] at (0.1,0.6) {\small $\k_1$};
    \node[below  ] at (0.7,0) {\small $\pi_2$};
    \node[above  ] at (0.7,1.2) {\small  $\pi_1$};
    \node  at (0.25,-0.1) {\textcolor{red}{\small $\alpha$}};
    \node  at (0.25,1.1) {\textcolor{red}{\small $\alpha$}};
    \node[right] at (1.2,0.6) {\textcolor{red}{\small $v$}};
    \draw[thick] (0.6,0) arc (-90:90:0.6);
\end{tikzpicture}}\rb. \label{eq:diagram definition right boundary local operator LG}
\end{alignat}
\end{subequations} 
These equations do not uniquely specify the transition probabilities. Definition \ref{def:push-blockLG} exhibits one solution.

As in the geometric case, this defines a transition probability density $\U_{\lgg}^{\PiP,\mathcal{G}\hQ}\lb\bl'|\bl\rb$, for any pair of down-right paths with $\hQ$ above $\PiP$ and where $\bl\in\RR^{2N+2}$ is a configuration on $\PiP$ and $\bl'\in \RR^{2N+2}$ on $\mathcal{G}\hQ$.
\end{definition}

\begin{corollary}
\label{cor:properties local operators LG}
We have the following properties:
\begin{enumerate}[wide, labelwidth=0pt, labelindent=0pt]
    \item [(1)] For all $\bl'\in\RR^{2N+2}$, we have
    \be\label{eq:compatibility of local operator with wt LG}
    \int_{\RR^{2N+2}}\U_{\lgg}^{\PiP,\mathcal{G}\hQ}\lb\bl'|\bl\rb\wt_{\lgg}^{\gp}\lb\bl\rb\d\bl
    =\wt_{\lgg}^\gq\lb\bl'\rb.
    \ee
    \item[(2)] For all $x\in\RR$, we have
    \be\label{eq:translation invariance Markov operators LG}
    \U_{\lgg}^{\PiP,\mathcal{G}\hQ}\lb\bl'|\bl\rb=\U_{\lgg}^{\PiP,\mathcal{G}\hQ}\lb\bl'+x|\bl+x\rb.
    \ee 
\end{enumerate} 
\end{corollary}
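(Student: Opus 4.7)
The plan is to follow exactly the same strategy as in the proof of Corollary \ref{Cor:properties local operators LPP} in the geometric LPP setting, adapted to the continuous log-gamma setup where sums become integrals and probability transition matrices become transition probability densities (with Dirac delta factors encoding the locality of the update).

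For part (1), I would first observe that the kernel $\U_{\lgg}^{\PiP,\mathcal{G}\hQ}$ is by definition obtained as a composition of the elementary kernels $\Ubg$, $\Ulg$, $\Urg$ associated to a sequence of local moves connecting $\hP$ to $\hQ$. Composition of kernels preserves the identity \eqref{eq:compatibility of local operator with wt LG} (the argument is Fubini applied to the nested integrals, using that the integrand is nonnegative and the relevant two-layer weights are integrable in the relevant variables by Lemmas \ref{lem:Cauchy LG} and \ref{lem:Littlewood LG}). So it is enough to verify \eqref{eq:compatibility of local operator with wt LG} when $\hQ=\widetilde{\hP}$ arises from $\hP$ by a single local move. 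In that case, the Dirac delta factors in the definition of the elementary kernel collapse all coordinates outside the update/updated vertex to coincide with those of $\bl'$, and what remains is precisely the integral identity \eqref{eq:diagram definition bulk local operator LG} (for a bulk move), \eqref{eq:diagram definition left boundary local operator LG} (for a left boundary move), or \eqref{eq:diagram definition right boundary local operator LG} (for a right boundary move), multiplied by the product of the Boltzmann weights on edges of $\PiP$ that are not touched by the move (these weights are the same on both sides because the corresponding coordinates and edge labels are preserved by the move). This yields \eqref{eq:compatibility of local operator with wt LG} for a single move, and then composition gives the general case.

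For part (2), the shift invariance of $\U_{\lgg}^{\PiP,\mathcal{G}\hQ}$ is a direct consequence of the explicit form of the elementary kernels $\Ubg$, $\Ulg$, $\Urg$ (in Definition \ref{def:push-blockLG}), which are ratios of translation-invariant Boltzmann weights, and hence themselves translation invariant: if every coordinate in the conditioning and the output is shifted by the same $x\in\RR$, both numerator and denominator in the explicit formulas are unchanged. Shift invariance is preserved under composition, so it lifts from a single local move to $\U_{\lgg}^{\PiP,\mathcal{G}\hQ}$. Equivalently, one can deduce (2) formally by differentiating (or rather substituting) into \eqref{eq:compatibility of local operator with wt LG}: the translation invariance \eqref{eq:translation invariance two layer LG} of $\wt_{\lgg}^{\PiP}$ and $\wt_{\lgg}^{\mathcal{G}\hQ}$ together with uniqueness of the transition density (once positivity of the weights is used) force \eqref{eq:translation invariance Markov operators LG}.

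No serious obstacle is expected: the two identities are formal consequences of the defining compatibility equations \eqref{eq:diagram definition bulk local operator LG}--\eqref{eq:diagram definition right boundary local operator LG} together with the translation invariance \eqref{eq:translation invariance two layer LG} of the two-layer log-gamma Gibbs measure. The only point requiring mild care is justifying Fubini in the composition step, which is straightforward since all weights are strictly positive and the required finite-integral conditions have already been verified in Lemmas \ref{lem:Cauchy LG} and \ref{lem:Littlewood LG} under the parameter constraints \eqref{eq:condition for inhomogeneous log gamma}.
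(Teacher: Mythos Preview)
Your proposal is correct and follows essentially the same approach as the paper, which simply states that the proof is identical to that of Corollary~\ref{Cor:properties local operators LPP} in the geometric case. Your write-up is more detailed (in particular, your direct argument for part (2) via the explicit translation-invariant form of the push-block kernels is slightly more careful than the paper's terse appeal to \eqref{eq:compatibility of local operator with wt LG} and weight translation invariance), but the underlying ideas are the same.
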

\begin{proof}
This is proved exactly as with Corollary \ref{Cor:properties local operators LPP} in geometric case.  
\end{proof}

As in Definition \ref{def:push-blockLG} we define push-block solutions to the local transition probability density identities. 

\begin{definition}[Log-gamma push-block dynamics]\label{def:push-blockLG}
For the bulk, there is a unique solution $\Ubg(\pi|\la,\k,\mu;\alpha,\beta)$ to \eqref{eq:diagram definition bulk local operator LG} which  does not depend on $\k$. Denoting this by $\Ubg(\pi\vert \lambda, \mu;\alpha,\beta)$ observe that it is given by the weight on the right-hand side of \eqref{eq:diagram definition bulk local operator LG} divided by the sum of weights on the left-hand side (without the $\Ubg$ factor). Explicitly plugging in the weights, this can be written as
\begin{multline}\label{eq:bulk operator LG}
    \Ubg\lb\pi|\la,\mu;\alpha,\beta\rb = \\
    \frac{e^{-\a(\pi_1+\pi_2-\mu_1-\mu_2)-\b(\pi_1+\pi_2-\la_1-\la_2)-e^{-(\pi_1-\la_1)}-e^{-(\la_1-\pi_2)}-e^{-(\pi_2-\la_2)}-e^{-(\pi_1-\mu_1)}-e^{-(\mu_1-\pi_2)}-e^{-(\pi_2-\mu_2)}}}{\int_{\RR^2}e^{-\a(\la_1+\la_2-\k_1-\k_2)-\b(\mu_1+\mu_2-\k_1-\k_2)-e^{-(\la_1-\k_1)}-e^{-(\k_1-\la_2)}-e^{-(\la_2-\k_2)}-e^{-(\mu_1-\k_1)}-e^{-(\k_1-\mu_2)}-e^{-(\mu_2-\k_2)}}\d\k_1\d\k_2}. 
\end{multline}
Due to the Cauchy type identity (Lemma \ref{lem:Cauchy LG}) this is a transition probability density.
Similarly, for the left boundary, the unique solution to \eqref{eq:diagram definition left boundary local operator LG} which does not depend on $\la$ is given explicitly by
 \be   \label{eq:left boundary operator LG}
\Ulg(\pi|\k;u,\alpha)=\frac{e^{-\a(\pi_1+\pi_2-\k_1-\k_2)-u(\pi_1-\pi_2)-e^{-(\pi_1-\k_1)}-e^{-(\k_1-\pi_2)}-e^{-(\pi_2-\k_2)}}}{ \int_{\RR^2}e^{-\a(\k_1+\k_2-\la_1-\la_2)-u(\la_1-\la_2)-e^{-(\k_1-\la_1)}-e^{-(\la_1-\k_2)}-e^{-(\k_2-\la_2)}}\d\la_1\d\la_2}.
\ee
Owing to the Littlewood type  identity  (Lemma \ref{lem:Littlewood LG}), this is a transition probability density. Likewise, at the right boundary the unique solution to  \eqref{eq:diagram definition right boundary local operator LG}
which does not depend on $\mu$ is given explicitly by
\be   \label{eq:right boundary operator LG}
\Urg(\pi|\k;\alpha,v)=\frac{e^{-\a(\pi_1+\pi_2-\k_1-\k_2)-v(\pi_1-\pi_2)-e^{-(\pi_1-\k_1)}-e^{-(\k_1-\pi_2)}-e^{-(\pi_2-\k_2)}}}{\int_{\RR^2}e^{-\a(\k_1+\k_2-\mu_1-\mu_2)-v(\mu_1-\mu_2)-e^{-(\k_1-\mu_1)}-e^{-(\mu_1-\k_2)}-e^{-(\k_2-\mu_2)}}\d\mu_1,\d\mu_2}.
\ee 
\end{definition}

\subsection{First layer marginal}
\label{subsec:Marginal distributions LG}
We first show that the marginal distribution on the first layer of the two-layer log-gamma  push-block Markov dynamics correspond to the recurrence relation defining the log-gamma polymer partition function.
For a configuration $\bl$ on a two-layer graph we will use the shorthand $\bl_i:=(\bl_i^{(0)},\dots,\bl_i^{(N)})$ for $i=1,2$ so that $\bl=(\bl_1,\bl_2)$.
Under the additional restriction that $u+v>0$, we will then take the marginal measure of the two-layer Gibbs measure $\wt_{\lgg}^{\PiP}$ on the first layer $\bl_1$ and multiply it by a finite normalization constant to define a probability measure $\mathrm{P}_{\lgg}^{\hP}$. We then prove the stationarity of $\mathrm{P}_{\lgg}^{\hP}$ under the log-gamma polymer recurrence relation.

\begin{lemma}[First layer dynamics match the log-gamma polymer recurrence relation]\label{lem:marginal operator LG} The log-gamma push-block Markov dynamics (Definition \ref{def:push-blockLG}) restricted to the configuration on the upper layer of the two-layer graphs are marginally Markov and agree with the dynamics imposed by the log-gamma polymer recurrence relation (Definition \ref{defn:inhomogeneous LG and specializations} and \eqref{eq:recurrence log gamma}).
In particular, this means that the law of $\pi_1$ under 
$\Ubg(\pi|\la,\mu;\alpha,\beta)$ only depends on $\la,\mu$ 
through $\la_1,\mu_1$ and that law has density 
$\rmUblg(\pi_1|\la_1,\mu_1;\alpha,\beta)$ given explicitly by 
$$\rmUblg(\pi_1|\la_1,\mu_1;\alpha,\beta)=Z(\la_1,\mu_1;\alpha,\beta)^{-1}e^{-(\a+\b)\pi_1-\frac{e^{\la_1}+e^{\mu_1}}{e^{\pi_1}}}$$
for some normalization $Z(\la_1,\mu_1;\alpha,\beta)>0$. This implies that for an independent $\vo\sim\Ga^{-1}(\a+\b)$,
$$e^{\pi_1}=\vo(e^{\la_1}+e^{\mu_1}).$$
Similarly, on the left and right boundaries, the law of $\pi$ under $\Ulg(\pi|\k;u,\alpha)$ and $\Urg(\pi|\k;\alpha,v)$ depend on $\k$ only through $\k_1$. The density of those laws, which we write as $\rmUllg(\pi_1|\k_1;u,\alpha)$ and $\rmUrlg(\pi_1|\k_1;\alpha,v)$ respectively, are given explicitly by 
$$
\rmUllg(\pi_1|\k_1;u,\alpha)=Z(\k_1;u,\alpha)^{-1} e^{-(\a+u)\pi_1-\frac{e^{\k_1}}{e^{\pi_1}}}\qquad 
\rmUrlg(\pi_1|\k_1;\alpha,v)=Z(\k_1;v,\alpha)^{-1}e^{-(\b+v)\pi_1-\frac{e^{\k_1}}{e^{\pi_1}}}
$$
for some normalization $Z(\k_1;u,\alpha),Z(\k_1;v,\alpha)>0$. This (respectively) implies that for an independent $\vo\sim\Ga^{-1}(\a+u)$ and $\vo\sim\Ga^{-1}(\b+v)$, 
$$e^{\pi_1}=\vo e^{\k_1}\qquad \textrm{and}\quad e^{\pi_1}=\vo e^{\k_1}$$

Thus, the law of $\bl'_1$ under $\U_{\lgg}^{\PiP,\mathcal{G}\hQ}\lb\bl'|\bl\rb$
only depends on $\bl_1$ and hence is written as $\mathrm{U}_{\lgg}^{\hP,\hQ}\lb\bl'_1|\bl_1\rb$. These transition probability densities define Markov dynamics on the first layer of the two-layer graph that coincide with the recurrence relation for the log-gamma polymer free energy (the free energy $h$ is log of the partition function $z$ in \eqref{eq:recurrence log gamma}). In particular, if we initialize that recurrence with 
$h(\p_j)=\la_1^{(j)}$, $0\leq j\leq N$ on $\hP$ (i.e. $z(\p_j)=e^{\la_1^{(j)}}$) then the probability density that 
$h(\q_j)=\la_1'^{(j)}$, $0\leq j\leq N$ on $\hQ$ is precisely $\mathrm{U}_{\lpp}^{\hP,\hQ}\lb\bl'_1|\bl_1\rb$. 
\end{lemma}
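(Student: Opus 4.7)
My plan is to mirror the proof of Lemma~\ref{lem:marginal operator LPP} (the geometric LPP analogue), replacing the Cauchy and Littlewood sum identities by the Cauchy type (Lemma~\ref{lem:Cauchy LG}) and Littlewood type (Lemma~\ref{lem:Littlewood LG}) integral identities. The crucial observation that makes everything work is that after these identities are applied, the resulting integrands factorize into $\pi_1$- and $\pi_2$-dependent pieces, which immediately yields the claimed marginal formulas in closed form.

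First I would treat the bulk kernel. Starting from \eqref{eq:bulk operator LG}, use \eqref{eq:diagram Cauchy LG} to rewrite the denominator as a double integral whose integrand has the same structure as the numerator (the RHS of \eqref{eq:diagram Cauchy LG} in the proof of Lemma~\ref{lem:Cauchy LG}). Collecting the $\pi_1$-dependent exponents in that integrand gives
\[
-(\alpha+\beta)\pi_1 - e^{-(\pi_1-\lambda_1)} - e^{-(\pi_1-\mu_1)} = -(\alpha+\beta)\pi_1 - (e^{\lambda_1}+e^{\mu_1})e^{-\pi_1},
\]
while the $\pi_2$-dependent piece depends only on $\lambda_1,\lambda_2,\mu_1,\mu_2$ (through $e^{-\lambda_1}+e^{-\mu_1}$ and $e^{\lambda_2}+e^{\mu_2}$). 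Hence $\Ubg(\pi\vert \lambda,\mu;\alpha,\beta)$ factorizes as a product of a $\pi_1$-density and a $\pi_2$-density, so integrating out $\pi_2$ yields the stated $\rmUblg(\pi_1\vert \lambda_1,\mu_1;\alpha,\beta)$, which is exactly the density of $\log\bigl(\varpi(e^{\lambda_1}+e^{\mu_1})\bigr)$ for $\varpi\sim\Gamma^{-1}(\alpha+\beta)$. This matches the log-gamma recurrence \eqref{eq:recurrence log gamma} in the bulk: $z(n,m)=\varpi_{n,m}(z(n-1,m)+z(n,m-1))$.

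Next I would handle the left boundary identically using Lemma~\ref{lem:Littlewood LG}: rewrite the denominator in \eqref{eq:left boundary operator LG} via the RHS of \eqref{eq:Littlewood LG} and observe that the $\pi_1$-dependent exponent collapses to $-(\alpha+u)\pi_1 - e^{\kappa_1}e^{-\pi_1}$. Integrating out $\pi_2$ gives $\rmUllg(\pi_1\vert \kappa_1;u,\alpha)\propto e^{-(\alpha+u)\pi_1-e^{\kappa_1-\pi_1}}$, which is the density of $\log\varpi+\kappa_1$ for $\varpi\sim\Gamma^{-1}(\alpha+u)$, matching the recurrence $z(m,m)=\varpi_{m,m}z(m,m-1)$ on the left boundary. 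The right boundary is identical up to relabeling, giving $\rmUrlg$ via an independent $\varpi\sim\Gamma^{-1}(\alpha+v)$.

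Finally, since each local kernel $\Ubg,\Ulg,\Urg$ updates only the first-layer coordinate at the updated vertex through a conditional distribution that depends on the pre-update configuration only through the first-layer coordinates of the proximate vertices, the full Markov dynamics $\U_{\lgg}^{\PiP,\mathcal{G}\hQ}$ descends to a well-defined Markov kernel $\mathrm{U}_{\lgg}^{\hP,\hQ}$ on the first layer, and its transitions are precisely those of the log-gamma polymer free-energy recurrence with the correct independent inverse-gamma weights. Composing local moves yields the full statement for arbitrary $\hQ$ above $\hP$. The only step that is not purely computational is the factorization observation in the first paragraph; once that is in hand, the rest is bookkeeping and I do not anticipate any real obstacle.
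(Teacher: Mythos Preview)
Your proposal is correct and follows essentially the same route as the paper: apply the Cauchy and Littlewood type identities (Lemmas~\ref{lem:Cauchy LG} and~\ref{lem:Littlewood LG}) to rewrite the denominator as an integral of the same form as the numerator, observe that the numerator factorizes into a $\pi_1$-part depending only on $\lambda_1,\mu_1$ (resp.\ $\kappa_1$) and a $\pi_2$-part, so the kernel itself is a product of a $\pi_1$-density and a $\pi_2$-density, then integrate out $\pi_2$ and identify the resulting $\pi_1$-density as that of $\log\varpi$ plus the appropriate shift. The paper's proof presents exactly this factorization explicitly and draws the same conclusions.
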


\begin{proof}
    We only prove this statement for bulk kernel $\Ubg$ and left boundary kernel $\Ulg$, since the case of right boundary kernel $\Urg$ follows from renaming the variables in $\Ulg$.

    By the Cauchy type identity  (Lemma \ref{lem:Cauchy LG}), the bulk local kernel \eqref{eq:bulk operator LG} can be written as:
\begin{multline*}
    \Ubg\lb\pi|\la,\mu;\alpha,\beta\rb 
    =\\
    \frac{e^{-(\a+\b)\pi_1-e^{-(\pi_1-\la_1)}-e^{-(\pi_1-\mu_1)}}}{\int_{\RR}e^{-(\a+\b)\pi_1-e^{-(\pi_1-\la_1)}-e^{-(\pi_1-\mu_1)}}\d\pi_1}
\frac{e^{-(\a+\b)\pi_2-e^{-(\la_1-\pi_2)}-e^{-(\pi_2-\la_2)}-e^{-(\mu_1-\pi_2)}-e^{-(\pi_2-\mu_2)}}}{\int_{\RR}e^{-(\a+\b)\pi_2-e^{-(\la_1-\pi_2)}-e^{-(\pi_2-\la_2)}-e^{-(\mu_1-\pi_2)}-e^{-(\pi_2-\mu_2)}}\d\pi_2}.
\end{multline*}
Integrate over $\pi_2$, we arrive at the claimed property and formula 
$$\int_{\RR}\Ubg\lb\pi|\la,\mu;\alpha,\beta\rb\d\pi_2=Z(\la_1,\mu_1;\alpha,\beta)^{-1}e^{-(\a+\b)\pi_1-e^{-(\pi_1-\la_1)}-e^{-(\pi_1-\mu_1)}}=\rmUblg(\pi_1|\la_1,\mu_1;\alpha,\beta).$$
By the Littlewood type identity (Lemma \ref{lem:Littlewood LG}), the left boundary local kernel \eqref{eq:left boundary operator LG} can similarly be written as:
$$\Ulg\lb\pi|\k;u,\alpha\rb=\frac{e^{(-\a-u)\pi_1-e^{-(\pi_1-\k_1)}}}{\int_{\RR}e^{(-\a-u)\pi_1-e^{-(\pi_1-\k_1)}}\d\pi_1}
\frac{e^{(-\a+u)\pi_2-e^{-(\k_1-\pi_2)}-e^{-(\pi_2-\k_2)}}}{\int_{\RR}e^{(-\a+u)\pi_2-e^{-(\k_1-\pi_2)}-e^{-(\pi_2-\k_2)}}\d\pi_2}.$$
Integrate over $\pi_2$, we arrive at the claimed property and formula 
$$\int_{\RR}\Ulg\lb\pi|\k;u,\alpha\rb\d\pi_2=Z(\k_1;u,\alpha)^{-1}e^{(-\a-u)\pi_1-e^{-(\pi_1-\k_1)}}=\rmUllg(\pi_1|\k_1;u,\alpha).$$
The claimed law of $\bl'_1$ under $\U_{\lgg}^{\PiP,\mathcal{G}\hQ}\lb\bl'|\bl\rb$ and relation to the polymer recurrence follows immediately.
\end{proof}

We now show that fixing the value of $\bl$ at some vertex in the two-layer graph results in a finite partition function and hence the marginal law given that conditioning can be normalized to be a probability measure. 
 
\begin{proposition}\label{prop:finiteness of weights LG} 
Assume \eqref{eq:condition for inhomogeneous log gamma} and the additional condition $u+v>0$. For any down-right path $\hP$ let
$$Z_{\lgg}=\int_{\mathbb R^{2N+1}} \wt_{\lgg}^{\mathcal{GP}}\lb \bl\rb \prod_{\substack{i=1,2, \;\; j=0,\dots, N \\ (i,j)\neq (1,0)}}\d\lambda_i^{(j)},$$
where $\la_1^{(0)}$ is fixed and the integral is over the set of all $\la_i^{(j)}\in\RR$ for $i=1,2$ and $j=0,\dots,N$ with $(i,j)\neq (1,0)$. Then, $Z_{\lgg}$ is finite and  does not depend on the choice of  $\hP$ or $\la_1^{(0)}$. 
\end{proposition}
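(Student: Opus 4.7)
I propose to follow the same three-step template as the proof of Proposition \ref{prop:finiteness of weights LPP}, adapting each step to the log-gamma setting. The key distinction is that the final finiteness step cannot be closed using a Schur-polynomial summation identity (cf.\ Remark \ref{rem:Whittaker}); instead, it proceeds by sequential explicit integration. First, path-independence: any two down-right paths are connected by a finite sequence of the three local moves in \eqref{eq:local move one path bulk}, and for each move the Cauchy-type identity (Lemma \ref{lem:Cauchy LG}, for bulk moves) or the Littlewood-type identity (Lemma \ref{lem:Littlewood LG}, for boundary moves), applied to the local subgraph with the complementary coordinates held fixed, shows via Fubini that integrating $\wt_{\lgg}^{\PiP}$ over the affected pair of coordinates coincides with the corresponding integral for $\wt_{\lgg}^{\mathcal{G}\widetilde{\hP}}$. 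Iterating along a sequence of moves connecting $\hP$ to the horizontal path $\hP_h$ reduces the problem to the horizontal case. Second, independence of $\la_1^{(0)}$: this follows immediately from \eqref{eq:translation invariance two layer LG} (shift all coordinates by a common constant), and by the same argument any other single coordinate could equally have been fixed instead.

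The substantive step is proving finiteness on $\hP_h$. Fix $\la_1^{(0)}=0$ and proceed by induction on $N$. For the base case $N=1$ the remaining three-dimensional integral (in $\la_2^{(0)},\la_1^{(1)},\la_2^{(1)}$) factors after the substitutions $s=e^{\pm\lambda}$ into three one-dimensional Mellin integrals, yielding the closed form
\begin{equation*}
    Z_{\lgg}(N=1)\;=\;\Gamma(u+\beta_1)\,\Gamma(v+\beta_1)\,\Gamma(u+v),
\end{equation*}
which is finite under the hypotheses $u+\beta_1>0$, $v+\beta_1>0$, and $u+v>0$. For the inductive step, I plan to peel off the bottom-left corner by integrating first over $\la_2^{(0)}$; its dependence involves only the left arc and the first bottom edge, and the Mellin substitution $s=\exp(\la_2^{(0)}-\la_2^{(1)})$ produces the factor $e^{(u+\beta_1)\la_2^{(1)}}\Gamma(u+\beta_1)$, finite since $u+\beta_1>0$. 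The residual integrand has the structure of a modified two-layer Gibbs measure on a graph of reduced size, and continuing the same procedure (alternating integrations over bottom and top variables from the boundaries inward) yields $Z_{\lgg}$ as a finite product of $\Gamma$-function values.

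The main obstacle is the bookkeeping of the inductive step: as the integrations proceed, the residual factors accumulate non-trivial couplings between the remaining variables (of the form $(e^{-x_1}+\cdots+e^{-x_m})^{-\alpha}$), so care is required to verify that at each stage the parameter of the next $\Gamma$-function remains positive under only the given hypotheses and, crucially, that the very last integration generates exactly the factor $\Gamma(u+v)$ whose finiteness depends on the additional assumption $u+v>0$. Unlike the geometric case in Proposition \ref{prop:finiteness of weights LPP}, there is no closed-form Schur-polynomial identity to short-circuit this computation; the alternative would be to recognize $Z_{\lgg}$ as a $\mathfrak{gl}_N(\RR)$-Whittaker-type integral (cf.\ Remark \ref{rem:Whittaker}) and appeal to known convergence properties, though the paper indicates a preference for keeping the argument elementary.
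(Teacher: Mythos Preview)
Your first two steps (path-independence via Lemmas \ref{lem:Cauchy LG} and \ref{lem:Littlewood LG}, and independence of $\la_1^{(0)}$ via translation invariance) are fine and match the paper. The base case $N=1$ is also correct; you get the same product of three Gamma values.

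The inductive step, however, has a genuine gap, and you have essentially identified it yourself without resolving it. After your first Mellin integration over $\la_2^{(0)}$ (which does produce $\Gamma(u+\beta_1)\,e^{u\la_2^{(1)}}$), the next integration over $\la_2^{(1)}$ must also absorb the dotted-edge factor $e^{-e^{-(\la_1^{(0)}-\la_2^{(1)})}}$ from the horizontal graph $\PiP_h$. The resulting integral is
\[
\int_{\RR} e^{(u+\beta_2)\la_2^{(1)}}\,\exp\!\big(-e^{\la_2^{(1)}}(e^{-\la_2^{(2)}}+e^{-\la_1^{(0)}})\big)\,d\la_2^{(1)}
\;=\;\Gamma(u+\beta_2)\,\big(e^{-\la_2^{(2)}}+e^{-\la_1^{(0)}}\big)^{-(u+\beta_2)},
\]
which is exactly the kind of coupling you flagged. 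From this point on the integrand is no longer a product of log-gamma densities, subsequent integrations do not decouple, and $Z_{\lgg}$ is \emph{not} a product of Gamma functions for $N\ge 2$. So the program of ``alternating integrations yielding a finite product of $\Gamma$-values'' cannot be completed as stated.

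The paper's inductive step is different in kind: it does not attempt an exact evaluation. Instead, it integrates out the entire leftmost column $(\la_1^{(0)},\la_2^{(0)})$ and proves the \emph{inequality}
\[
\int_{\RR^2}\wt_{\lgg}\!\big(\text{left block}\big)\,d\la_1^{(0)}\,d\la_2^{(0)}
\;\le\;C\,e^{-\gamma(\la_1^{(1)}-\la_2^{(1)})},
\]
with $\gamma=\min(\alpha,u)-\delta$ for suitably small $\delta>0$. The right-hand side is exactly a left-arc weight with a new boundary parameter $\gamma$, so the bound reduces $Z_{\lgg}(N,u)$ to (a constant times) $Z_{\lgg}(N-1,\gamma)$, and one checks that $\delta$ can be chosen so that all the required positivity conditions persist down to $N=1$. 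The missing idea in your proposal is precisely this: replace the hoped-for exact peeling by an inequality that preserves the Gibbs-measure structure with a modified boundary parameter.
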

\begin{proof} 
The fact that $Z_{\lgg}$ does not depend on the choice of  $\hP$ or $\la_1^{(0)}$ follows from the same argument as in the geometric LPP case (Proposition \ref{prop:finiteness of weights LPP}), with the only difference that we now  use the log-gamma versions of the Cauchy and Littlewood identities (Lemmas \ref{lem:Cauchy LG} and \ref{lem:Littlewood LG}). Hence we only need to prove that $Z_{\lgg}$ is finite for the horizontal path $\hP_h$. The proof in Proposition \ref{prop:finiteness of weights LPP} bounded the partition function by summations involving Schur polynomials. While in this case it may be possible to do something similar with Whittaker functions in place of Schur polynomials, we instead present an inductive proof in $N$ of this finiteness.

\smallskip
\noindent $N=1$ base case claim: When  $u+v>0$,$v+\a>0$ and $u+\a>0$,
we have 
\be\label{in the proof of proposition finiteness LG before analytic continuation two}\int_{\RR^3}\wt_{\lgg}\lb\raisebox{-40pt}{\begin{tikzpicture}
    \draw[dashed] (0,0.6)--(0.6,0);
    \draw[thick] (0,-0.6)--(0.6,0);
    \draw[thick] (0,0.6)--(0.6,1.2);
    \node[below  ] at (0.8,0) {\small$\la_2^{(1)}$};
    \node[above  ] at (0.8,1.2) {\small $\la_1^{(1)}$};
    \node[below] at (0,-0.6) {\small $\la_2^{(0)}$};
    \node[above] at (0,0.6) {\small $\la_1^{(0)}$};
    \node[below] at (0.35,-0.3) {\textcolor{red}{\small $\a$}};
    \node[above] at (0.35,0.9) {\textcolor{red}{\small $\a$}};
    \node[left] at (-0.6,0) {\textcolor{red}{\small $u$}};
    \node[right] at (1.2,0.8) {\textcolor{red}{\small $v$}};
    \draw[thick] (0,0.6) arc (90:270:0.6);
    \draw[thick] (0.6,0) arc (-90:90:0.6);
\end{tikzpicture}}\rb\d\lb\la_1^{(1)}-\la_1^{(0)}\rb\d\lb\la_2^{(0)}-\la_1^{(0)}\rb\d\lb\la_2^{(1)}-\la_1^{(0)}\rb
<\infty.\ee
A change of variables $\la_1^{(1)}-\la_2^{(1)}=x$, $\la_1^{(1)}-\la_1^{(0)}=s$, $\la_2^{(1)}-\la_2^{(0)}=t$ rewrites the LHS above as
$$\int_{\RR^3}e^{-\a s-e^{-s}}e^{-\a t-e^{-t}}e^{-e^{-(x-s)}}e^{-u(x-s+t)}e^{-vx}\d s\d t\d x=\Gamma(u+v)\Gamma(\a+v)\Gamma(u+\a)<\infty,$$
where the evaluation of the integral uses Fubini. This proves the base case claim.

\smallskip
\noindent Inductive claim: For any $u+\a>0$ and $\varepsilon>0$ we can find $0\leq\delta<\varepsilon$ and $C>0$ such that for $\gamma=\min(\a,u)-\delta$
\be\label{eq:in the proof of proposition finiteness weights LG}
    \int_{\RR^2}\wt_{\lgg}\lb\raisebox{-40pt}{\begin{tikzpicture}
    \draw[dashed](0,0.6)--(0.6,0);
    \draw[thick] (0,-0.6)--(0.6,0);
    \draw[thick] (0,0.6)--(0.6,1.2);
    \node[above right] at (0.5,-.1) {\small $\la_2^{(1)}$};
    \node[above right] at (0.5,1.1) {\small $\la_1^{(1)}$};
    \node[below] at (0,-0.6) {\small $\la_2^{(0)}$};
    \node[above] at (0,0.6) {\small $\la_1^{(0)}$};
    \node[above] at (0.35,-0.3) {\textcolor{red}{\small $\alpha$}};
    \node[above] at (0.35,0.9) {\textcolor{red}{\small $\alpha$}};
    \node[left] at (-0.6,0) {\textcolor{red}{\small $u$}};
    \draw[thick] (0,0.6) arc (90:270:0.6);
\end{tikzpicture}}\rb\d\la_1^{(0)}\d\la_2^{(0)}
\leq Ce^{-\gamma(\la_1^{(1)}-\la_2^{(1)})} 
=C\wt_{\lgg}\lb\raisebox{-33pt}{\begin{tikzpicture}
    \node[below] at (0.2,-0.6) {\small $\la_2^{(1)}$};
    \node[above] at (0.2,0.6) {\small $\la_1^{(1)}$};
    \node[left] at (-0.6,0){\textcolor{red}{\small $\gamma$}};
    \draw[thick] (0,0.6) arc (90:270:0.6);
\end{tikzpicture}}\rb.
\ee
Suppose the above claim holds. Then the partition function $Z_{\lgg}$ is bounded above by a constant times the partition function with $N$ replaced by $N-1$ and the boundary parameter $u$ replaced by $\gamma$. If we choose $\ep>0$ to be the minimum of sums of any two parameters in $u,v,\a_1,\dots,\a_N$, then in the partition function after the reduction, the  sum of any two distinct parameters is still positive. Repeating this reduction until the $N=1$ case and then using the above calculated bound in the case proves the proposition.  

To prove \eqref{eq:in the proof of proposition finiteness weights LG} we let $\la_1^{(1)}-\la_2^{(1)}=x$ and make the change of variables 
$\la_1^{(1)}-\la_1^{(0)}=s$, $\la_2^{(1)}-\la_2^{(0)}=t$ in the integral in the left-hand side of \eqref{eq:in the proof of proposition finiteness weights LG}. That desired inequality is rewritten (after multiplying $e^{ux}$) as 
\be\label{eq:58rewrite}
\int_{\RR^2}e^{(u-\a)s-e^{-s}-e^{s-x}}e^{-(u+\a)t-e^{-t}}\d s\d t\leq Ce^{(u-\gamma)x}.
\ee
We prove \eqref{eq:58rewrite} in two cases. When $\a>u$, by dropping $e^{s-x}>0$ in the exponent we have
$$\text{LHS\eqref{eq:58rewrite}}<\int_{\RR^2}e^{(u-\a)s-e^{-s}}e^{-(u+\a)t-e^{-t}}\d s\d t=\Gamma(\a-u)\Gamma(\a+u).$$
Observe that the right-hand side is a constant with respect to $x$ and thus if we take $\delta=0$ (so that $u-\gamma=0$) and $C=\Gamma(\a-u)\Gamma(\a+u)$, we have $\Gamma(\a-u)\Gamma(\a+u)\leq Ce^{(u-\gamma)x}$ as desired to show \eqref{eq:58rewrite}.
The second case to show \eqref{eq:58rewrite} is when $\a\leq u$ in which case we chose any $0<\delta<\ep$. Rearranging \eqref{eq:58rewrite}, we need to prove that
    $$\int_{\RR^2}e^{(u-\a)s-e^{-s}-e^{s-x}-(u-\a+\delta)x}e^{-(u+\a)t-e^{-t}}\d s\d t\leq C.$$
  The integral in $t$ is constant in $x$ so we only need to prove that there is some $C>0$ such that for any $x\in\RR$,
    $$\int_{\RR}e^{(u-\a)s-e^{-s}-e^{s-x}-(u-\a+\delta)x}\d s\leq C.$$
    The function $e^{s-x}+(u-\a+\delta)x$ goes to infinity at both $x=-\infty$ and $x=\infty$. We taking derivatives shows that this function
    attains its global minimum at $x=s-\ln(u-\a+\delta)$. Thus, 
    $$
     \int_{\RR}e^{(u-\a)s-e^{-s}-e^{s-x}-(u-\a+\delta)x}\d s\leq \int_{\RR}e^{(u-\a)s-e^{-s}-(u-\a+\delta)-(u-\a+\delta)(s-\ln(u-\a+\delta))}\d s
    $$
    which is easily seen to be a finite constant in $x$ (in fact $e^{-(u-\a+\delta)+(u-\a+\delta)\ln(u-\a+\delta)}\Gamma(\delta)$). This completes the proof of  \eqref{eq:58rewrite}, and hence the inductive claim and thus the proposition.
\end{proof}

We will now prove that the first layer marginal distribution of our two-layer Gibbs measures are stationary measures for the log-gamma polymer recurrence relation. To state this precisely, we need to introduce a few pieces of notation. 
Recalling the decomposition of $\bl=(\bl_1,\bl_2)$ define first layer marginal weights by
\be\label{eqn:wtPlgg}
\wt_{\lgg}^{\hP}(\bl_1):=\int_{\RR^{N+1}}\wt_{\lgg}^{\gp}\lb\bl\rb\d\bl_2.
\ee
The translation invariance \eqref{eq:translation invariance two layer LG} of $\wt_{\lpp}^{\PiP}(\bl)$ implies that  for any $x\in\RR$, $\wt_{\lgg}^{\hP}(\bl_1+x)=\wt_{\lgg}^{\hP}(\bl_1)$.
When $u+v>0$, by Proposition \ref{prop:finiteness of weights LG}, for any fixed $\la_1^{(0)}$ we have
\be\label{eq:sum one layer being normalizing constant Z LG}
Z_{\lgg}=\int_{\RR^N}\wt_{\lgg}^{\hP}(\bl_1)\prod_{j=1}^N\d\la_1^{(j)}<\infty.
\ee
Let us introduce variables that record the first layer configuration centered by $\la^{(0)}_{1}$ for $1\leq j\leq N$ and the shorthand notation $\bL_1:=(L_1{(1)},\dots,L_1{(N)})$.  
For $\bL_1\in \RR^N$ define 
\be\label{eqn:Plpphlgg}
\mathrm{P}_{\lgg}^{\hP} \lb \bL_1\rb:=\frac{1}{Z_{\lgg}}\wt_{\lgg}^\hP\lb\bl_1\rb.
\ee
    Due to translation invariance and finiteness of the normalizing constant \eqref{eq:sum one layer being normalizing constant Z LG}, this 
    is a probability density. 
    
    Now we need some notation for the corresponding Markov dynamics. Recall the transition probability density $\mathrm{U}_{\lgg}^{\hP,\hQ}(\bl_1'|\bl_1)$ for $\bl_1,\bl'_1\in\RR^{N+1}$ defined in Lemma \ref{lem:marginal operator LG} which encodes the dynamics of the log-gamma polymer free energy from the path $\hP$ to the path $\hQ$.  We define another transition probability density encoding the dynamics of the centered free energies  $\bL_1$. For any $\bL_1, \bL_1'\in\RR^N$,
    \be\label{eq:definition of first marginal operator sum over x LG}\pU_{\lgg}^{\hP,\hQ}\lb\bL'_1|\bL_1\rb:=\int_{\RR}\mathrm{U}_{\lgg}^{\hP,\hQ}\lb x,\bL'_1+x|0,\bL_1\rb\d x.\ee
    Owing to the translation invariance of the dynamics defined by $\mathrm{U}_{\lgg}^{\hP,\hQ}(\bl_1'|\bl_1)$, this gives the transition probability density from $\bL_1$ to  $\bL_1'$. The  following shows that the weights $\wt_{\lgg}^{\hP}$ from \eqref{eqn:wtPlgg} and the probability density $\mathrm{P}_{\lgg}^{\hP}$ from \eqref{eqn:Plpphlgg} are stationary with respect to $\mathrm{U}_{\lgg}$ and $\pU_{\lgg}$.

\begin{theorem}\label{thm:stationary measure LG before sum over zero mode}
For any $\bl_1'\in\RR^{N+1}$,
\be\label{eq:compatibility of markov operator with one row weitht LG}
\int_{\RR^{N+1}}\mathrm{U}_{\lgg}^{\hP,\hQ}\lb\bl'_1|\bl_1\rb\wt_{\lgg}^{\hP}(\bl_1)\d\bl_1=\wt_{\lgg}^\hQ\lb\bl'_1\rb.
\ee
Assume that $u+v>0$, then for any $\bL_1'\in\RR^{N}$,
\be\label{eq:compatibility of probability measure with LG}
\int_{\RR^N}\pU_{\lgg}^{\hP,\hQ}\lb\bL'_1|\bL_1\rb\mathrm{P}_{\lgg}^{\hP} \lb \bL_1\rb\d\bL_1=\mathrm{P}_{\lgg}^{\hQ} \lb \bL'_1\rb.
\ee 
\end{theorem}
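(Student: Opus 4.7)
The proof will parallel the short argument used for the LPP analog, Theorem \ref{thm:stationary measure LPP before sum over zero mode}, with sums replaced by integrals. The new technical element is the justification of Fubini/Tonelli, which is where the assumption \eqref{eq:condition for inhomogeneous log gamma} and, for the second statement, $u+v>0$ come in.

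For \eqref{eq:compatibility of markov operator with one row weitht LG}, I would start from the two-layer compatibility \eqref{eq:compatibility of local operator with wt LG} in Corollary \ref{cor:properties local operators LG} and integrate both sides over $\bl_2'\in\RR^{N+1}$. On the right the integration yields $\wt_{\lgg}^{\hQ}(\bl_1')$ by the definition \eqref{eqn:wtPlgg}. On the left, I exchange the $\bl_2'$-integral with the $\bl$-integral; Lemma \ref{lem:marginal operator LG} then identifies the inner $\bl_2'$-integral of $\mathcal{U}_{\lgg}^{\mathcal{GP},\mathcal{GQ}}(\bl'\mid\bl)$ with the first-layer kernel $\mathrm{U}_{\lgg}^{\hP,\hQ}(\bl_1'\mid\bl_1)$, which depends only on $\bl_1$. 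The remaining $\bl_2$-integration against $\wt_{\lgg}^{\gp}(\bl)$ then collapses to $\wt_{\lgg}^{\hP}(\bl_1)$ by \eqref{eqn:wtPlgg}, giving the desired identity.

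For \eqref{eq:compatibility of probability measure with LG}, I rewrite both sides using \eqref{eqn:Plpphlgg} and expand $\pU_{\lgg}^{\hP,\hQ}$ via its definition \eqref{eq:definition of first marginal operator sum over x LG}. Multiplying through by $Z_{\lgg}$, the identity to prove becomes
\[
\int_{\RR}\!dx\!\int_{\RR^N}\!d\bL_1\, \mathrm{U}_{\lgg}^{\hP,\hQ}(x,\bL_1'+x\mid 0,\bL_1)\,\wt_{\lgg}^{\hP}(0,\bL_1)=\wt_{\lgg}^{\hQ}(0,\bL_1').
\]
I would invoke the translation invariance of $\mathrm{U}_{\lgg}^{\hP,\hQ}$ (inherited from \eqref{eq:translation invariance Markov operators LG} after taking the first-layer marginal via Lemma \ref{lem:marginal operator LG}) and of $\wt_{\lgg}^{\hP}$ (inherited from \eqref{eq:translation invariance two layer LG}) to replace the integrand with $\mathrm{U}_{\lgg}^{\hP,\hQ}(0,\bL_1'\mid x,\bL_1+x)\,\wt_{\lgg}^{\hP}(x,\bL_1+x)$. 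A change of variables $\bl_1=(x,\bL_1+x)\in\RR^{N+1}$ then reduces the double integral to $\int_{\RR^{N+1}}\mathrm{U}_{\lgg}^{\hP,\hQ}(0,\bL_1'\mid \bl_1)\,\wt_{\lgg}^{\hP}(\bl_1)\,d\bl_1$, which equals $\wt_{\lgg}^{\hQ}(0,\bL_1')$ by the first part.

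The principal technical obstacle is justifying the interchange of integrals. Since the kernels and weights are nonnegative, Tonelli reduces this to checking finiteness of the iterated integrals: for \eqref{eq:compatibility of markov operator with one row weitht LG} this follows from finiteness of $\wt_{\lgg}^{\hQ}(\bl_1')$ in $\bl_1'$, which can be obtained by an inductive bound of the same flavor as Proposition \ref{prop:finiteness of weights LG} using that $\mathrm{U}_{\lgg}^{\hP,\hQ}(\cdot\mid\bl_1)$ is a probability density in its first argument; for \eqref{eq:compatibility of probability measure with LG} it reduces, after the change of variables, to the finiteness of $Z_{\lgg}$, which is exactly what Proposition \ref{prop:finiteness of weights LG} provides under $u+v>0$. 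Once Fubini is secured, the proof is complete.
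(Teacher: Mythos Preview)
Your proposal is correct and follows essentially the same route as the paper: for \eqref{eq:compatibility of markov operator with one row weitht LG} the paper integrates the two-layer identity \eqref{eq:compatibility of local operator with wt LG} over the second layer and invokes Lemma \ref{lem:marginal operator LG}, and for \eqref{eq:compatibility of probability measure with LG} it performs exactly your translation-invariance rewriting and change of variables $\bl_1=(x,\bL_1+x)$ before applying the first part. Your explicit attention to the Tonelli justification is more careful than the paper, which leaves this implicit (nonnegativity plus Proposition \ref{prop:finiteness of weights LG}).
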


\begin{proof} 
    The first statement \eqref{eq:compatibility of markov operator with one row weitht LG} follows from integrating \eqref{eq:compatibility of local operator with wt LG} over the second layer $\bl_2$ in conjunction with the definition of $\mathrm{U}_{\lgg}^{\hP,\hQ}(\bl'_1|\bl_1)$ as the marginal of $\U_{\lgg}^{\PiP,\mathcal{G}\hQ}\lb\bl'|\bl\rb$ (Lemma \ref{lem:marginal operator LG}).
 The second statement \eqref{eq:compatibility of probability measure with LG} follows from translation invariance and the definition \eqref{eqn:Plpphlgg}
of $\mathrm{P}_{\lgg}^{\hP} \lb \bL_1\rb$:
\begin{equation*}
    \begin{split}
        \text{LHS}\eqref{eq:compatibility of probability measure with LG}&=\frac{1}{Z_{\lgg}}
        \int_{\RR^N}\d\bL_1\int_{\R}\d x
        \mathrm{U}_{\lgg}^{\hP,\hQ}\lb x,\bL'_1+x|0,\bL_1\rb\wt^{\hP}_{\lgg}\lb 0,\bL_1\rb\\
        &=\frac{1}{Z_{\lgg}}\int_{\RR^N}\d\bL_1\int_{\R}\d x\mathrm{U}_{\lgg}^{\hP,\hQ}\lb 0,\bL'_1|-x,\bL_1-x\rb\wt^{\hP}_{\lgg}\lb -x,\bL_1-x\rb\\
        &=\frac{1}{Z_{\lgg}}\wt_{\lgg}^{\hQ}\lb 0,\bL'_1\rb=\text{RHS}\eqref{eq:compatibility of probability measure with LG}.
    \end{split}
\end{equation*} 
\end{proof}

\subsection{Proof of Theorem \ref{thm:main theorem LG}}\label{subsec:proof of main theorem LG} 

So far we have shown that provided $u+v>0$, the stationary measure for the log-gamma polymer free energy recurrence relation can be realized as a marginal of the two-layer Gibbs measures. In order to go beyond this restriction on $u+v>0$ we will integrate out the `zero-mode'. Specifically, we will prove in Proposition \ref{prop:probability measure coincide LG} that, provided $u+v>0$, for a horizontal path $\hP$ with edge labels $\bb=(b_1,\dots,b_N)$, the probability density $\mathrm{P}^{\hP}_{\lgg}$ \eqref{eqn:Plpphlgg} defined as a marginal of the two-layer Gibbs measure coincides with $\mathrm{P}^{\bb,c_1,c_2}_{\stat\lgg}$ defined as a marginal of pair of reweighted inhomogeneous random walks (Definition \ref{defn:rescaled random walks LG inhomogeneous}). We then prove that the probability measure $\mathrm{P}^{\bb,c_1,c_2}_{\stat\lgg}$ is well-defined without the constraint $u+v>0$ and real analytic in these boundary parameters. Combining this with Theorem \ref{thm:stationary measure LG before sum over zero mode} and the uniqueness of analytic continuation of real analytic functions, we prove Theorem \ref{thm:main theorem LG}. This proof follows the same approach as used to prove Theorem \ref{thm:main theorem LPP} in Section \ref{subsec:proof of main theorem LPP} with the exception that the proof of real analyticity in the boundary parameter will be trickier here since the transition probabilities no longer depend as power-series on that parameter. This is due to the fact that the state-space in the log-gamma setting is $\RR$, not $\ZZ$. We will use tools from complex analysis to demonstrate the desired real analyticity.

\begin{proposition}[Integrating out the zero-mode]\label{prop:probability measure coincide LG}
    Suppose $\hP$ is a horizontal path with labels $\bbb=(\b_1,\dots,\b_N)$. Then the probability density $\mathrm{P}^{\hP}_{\lgg}$ coincides with $\mathrm{P}^{\bbb,u,v}_{\stat\lgg}$  (Definition \ref{defn:rescaled random walks LG inhomogeneous}) provided $u+v>0$.  
\end{proposition}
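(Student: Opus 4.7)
The plan is to mirror the proof of Proposition~\ref{prop:probability measure coincide LPP}, replacing geometric sums with Gaussian-type integrals involving the log-gamma density. Namely, I would introduce the zero-mode change of variables
$$\Delta := \la_1^{(0)} - \la_2^{(0)}, \qquad L_i(j) := \la_i^{(j)} - \la_i^{(0)} \quad (i=1,2,\ 1\le j\le N),$$
set $L_1(0)=L_2(0)=0$, $\bL_i=(L_i(1),\dots,L_i(N))$, $\bL=(\bL_1,\bL_2)$, and define
$\wth_{\lgg}^{\bbb}(\Delta;\bL) := \wt_{\lgg}^{\PiP}(\bl),$
which is well-defined by the translation invariance \eqref{eq:translation invariance two layer LG}. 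Then
\begin{equation}\label{eq:propose LG ratio}
\mathrm{P}_{\lgg}^{\hP}(\bL_1)
= \frac{\wt_{\lgg}^{\hP}(\bl_1)}{Z_{\lgg}}
= \frac{\int_{\RR^{N}}\int_{\RR}\wth_{\lgg}^{\bbb}(\Delta;\bL)\,\d\Delta\,\d\bL_2}{\int_{\RR^{2N}}\int_{\RR}\wth_{\lgg}^{\bbb}(\Delta;\bL)\,\d\Delta\,\d\bL},
\end{equation}
and the task reduces to computing $\int_{\RR}\wth_{\lgg}^{\bbb}(\Delta;\bL)\,\d\Delta$ explicitly.

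For a horizontal path, the dashed edges in $\PiP$ connect $\p_1^{(j-1)}$ with $\p_2^{(j)}$ (as in the LPP case), and using the Boltzmann weights \eqref{eq:weight LG boundary}--\eqref{eq:weight LG dotted edges} together with the log-gamma density \eqref{eq:lgdensity}, I would write
\begin{equation}\label{eq:propose LG weight decomp}
\wth_{\lgg}^{\bbb}(\Delta;\bL)
= e^{-(u+v)\Delta}\, e^{-v(L_1(N)-L_2(N))}\prod_{j=1}^{N}\Gamma(\b_j)^2\,\PP_{\lgrw}^{\bbb,\bbb}(\bL)\prod_{j=1}^{N} e^{-\exp(-(\Delta+L_1(j-1)-L_2(j)))},
\end{equation}
where the two layers of solid edges produce the increment density $\PP_{\lgrw}^{\bbb,\bbb}(\bL)$ (absorbing the normalizations into the $\prod_j \Gamma(\b_j)^2$ factor), the boundary arcs produce $e^{-u\Delta}\cdot e^{-v(\Delta+L_1(N)-L_2(N))}$, and each dashed edge contributes the indicated exponential.

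Next I would integrate over $\Delta$ via the substitution $s=e^{-\Delta}$, $\d\Delta=-\d s/s$, which turns the $\Delta$-integral into the Gamma integral
$$\int_{\RR} e^{-(u+v)\Delta}\prod_{j=1}^N e^{-s_j s}\,\d\Delta
=\int_0^\infty s^{u+v-1}\exp\!\Bigl(-s\sum_{j=1}^N e^{L_2(j)-L_1(j-1)}\Bigr)\d s
=\Gamma(u+v)\Bigl(\textstyle\sum_{j=1}^N e^{L_2(j)-L_1(j-1)}\Bigr)^{-(u+v)},$$
valid precisely because $u+v>0$. Combined with \eqref{eq:propose LG weight decomp}, this yields
$$\int_{\RR}\wth_{\lgg}^{\bbb}(\Delta;\bL)\,\d\Delta
=\Gamma(u+v)\prod_{j=1}^N\Gamma(\b_j)^2\, V_{\lgg}^{u,v}(\bL)\,\PP_{\lgrw}^{\bbb,\bbb}(\bL),$$
with $V_{\lgg}^{u,v}$ as in \eqref{eq:PstatLG inhomogeneous}. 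Plugging this into \eqref{eq:propose LG ratio}, the $\bL$-independent prefactors cancel between numerator and denominator, and what remains is exactly the marginal density of $\bL_1$ under $\PP_{\stat\lgg}^{\bbb,u,v}$, i.e.\ $\mathrm{P}_{\stat\lgg}^{\bbb,u,v}(\bL_1)$.

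The main potential obstacle is the justification of Fubini at each step: we need to know that $\int_{\RR}\wth_{\lgg}^{\bbb}(\Delta;\bL)\,\d\Delta$ is finite for almost every $\bL$, that the resulting function is integrable against $\d\bL$, and that the order of integration in \eqref{eq:propose LG ratio} can be freely exchanged. Under $u+v>0$, positivity of $\wth_{\lgg}^{\bbb}$ combined with the finiteness of $Z_{\lgg}$ from Proposition~\ref{prop:finiteness of weights LG} (which ensures the total integral is finite) makes Tonelli applicable and bypasses any measurability concern; the explicit $\Delta$-Gamma computation then shows the inner integral is finite pointwise, closing the argument.
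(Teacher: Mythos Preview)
Your proposal is correct and follows essentially the same approach as the paper: the same zero-mode change of variables, the same explicit evaluation of $\wth_{\lgg}^{\bbb}(\Delta;\bL)$, and the same Gamma-integral identity $\int_{\RR}e^{-(u+v)\Delta-e^{-\Delta}S}\,\d\Delta=\Gamma(u+v)S^{-(u+v)}$ to integrate out $\Delta$. Your explicit discussion of the Fubini/Tonelli justification is a welcome addition that the paper leaves implicit.
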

\begin{proof}
    We use the following set of variables: For all $1\leq j\leq N$ let
$$\Delta:=\la_1^{(0)}-\la_2^{(0)},\quad L_1{(j)}:=\la_1^{(j)}-\la_1^{(0)},\quad L_2{(j)}:=\la_2^{(j)}-\la_2^{(0)}.$$
and write $L_1{(0)}=L_2{(0)}=0$ and $\bL_i:=(L_i{(1)},\dots,L_i{(N)})$ for $i=1,2$.
Recall $\bL=(\bL_1,\bL_2)$.
We define:
$$\wth_{\lgg}^{\bbb}\lbe \Delta;\bL\rbe:=\wt_{\lgg}^{\gp}\lbe\bl\rbe,$$
which is well-defined due to the translation invariance \eqref{eq:translation invariance two layer LG} of $\wt_{\lgg}^{\gp}\lbe\bl\rbe$. When $u+v>0$,
\be\label{eq:rewriting weights LG}
\PP_{\lgg}^{\hP}(\bL_1)=\frac{\wt_{\lgg}^{\hP}(\bl_1)}{Z_{\lgg}}=
\frac{\int_{\RR^N}\d\bL_2\int_{\RR}\d\Delta\wth_{\lgg}^{\bbb}\lbe\Delta;\bL\rbe}{\int_{\RR^{2N}}\d\bL\int_{\RR}\d\Delta\wth_{\lgg}^{\bbb}\lbe\Delta;\bL\rbe}.
\ee
By Definition \ref{defn: Gibbs measures LG}, we explicit evaluate this weight as
\be \label{eq:hat weight LG}
\begin{split}
    \wth_{\lgg}^{\bbb}\lbe \Delta;\bL\rbe
    =& e^{-(u+v)\Delta}\prod_{j=1}^Ne^{-e^{-(\Delta+L_1{(j-1)}-L_2{(j)})}}e^{-v(L_1{(N)}-L_2{(N)})}\\
    &\times\lb\prod_{j=1}^N e^{-\b_j(L_1{(j)}-L_1{(j-1)})-e^{-(L_1{(j)}-L_1{(j-1)})}} \rb
    \lb\prod_{j=1}^N e^{-\b_j(L_2{(j)}-L_2{(j-1)})-e^{-(L_2{(j)}-L_2{(j-1)})}}\rb\\
    =&e^{-(u+v)\Delta}\prod_{j=1}^Ne^{-e^{-(\Delta+L_1{(j-1)}-L_2{(j)})}}e^{-v(L_1(N)-L_2(N))}\Gamma(\b_j)^2\PP^{\bbb,\bbb}_{\lgrw}(\bL).
\end{split}
\ee
where $\PP^{\bbb,\bbb}_{\lgrw}$ from Definition \ref{defn:rescaled random walks LG inhomogeneous} is the law of two independent inhomogeneous log-gamma random walks.

When $u+v>0$, we integrate \eqref{eq:hat weight LG} over $\Delta\in\RR$ and obtain
\be\label{eq:integrate hat weight LG}
\Gamma(u+v)^{-1} \prod_{j=1}^N\Gamma(\b_j)^{-2}\int_{\RR}\wth_{\lgg}^{\bbb}\lbe \Delta;\bL\rbe\d \Delta=V_{\lgg}^{u,v}(\bL) \PP^{\bbb,\bbb}_{\lgrw}(\bL), 
\ee 
where we recall  from  \eqref{eq:PstatLG inhomogeneous} that
$$V_{\lgg}^{u,v}(\bL)=\lb \sum_{j=1}^N e^{L_2{(j)}-L_1{(j-1)}}\rb^{-(u+v)}e^{-v(L_1(N)-L_2(N))},$$
and where have used the identity that for $S>0$, 
$$\int_{\RR}e^{-(u+v)\Delta-e^{-\Delta}S}\d \Delta=\Gamma(u+v)S^{-u-v }.$$
By \eqref{eq:rewriting weights LG} and \eqref{eq:integrate hat weight LG} we conclude the proof.
\end{proof}

We now prove that the normalization constant $\mathcal{Z}_{\lgg}$ in the definition \eqref{eq:PstatLG inhomogeneous} of the reweighted inhomogeneous log-gamma random walk measure $\mathrm{P}_{\stat\lgg}^{\bb,c_1,c_2}$ is finite only assuming \eqref{eq:condition for inhomogeneous log gamma} (without the condition $u+v>0$), as claimed in the statement of the main Theorem \ref{thm:main theorem LG}.

\begin{proposition}\label{prop:finiteness of weights after summing zero mode LG}  
Recalling $V^{u,v}_{\lgg}(\bL)$ from \eqref{eq:PstatLG inhomogeneous}  and assuming \eqref{eq:condition for inhomogeneous log gamma},
    $\mathbb{E}^{\bbb,\bbb}_{\lgrw}\lbE V^{c_1,c_2}_{\lgg}(\bL)\rbE$ is finite.
\end{proposition}
\begin{proof}
Continuing with the notation from Proposition \ref{prop:probability measure coincide LG} and using 
\eqref{eq:integrate hat weight LG},  
when $u+v>0$, we have
\be\label{eq: in proof of finiteness LG}
\mathbb{E}^{\bbb,\bbb}_{\lgrw}\lbE V^{u,v}_{\lgg}(\bL)\rbE=\Gamma(u+v)^{-1} \prod_{j=1}^N\Gamma(\b_j)^{-2} \int_{\RR^{2N}}\d\bL\int_{\RR}\d\Delta\wth_{\lgg}^{\bbb}\lbe\Delta;\bL\rbe.\ee
We observe that the integrals equal $Z_{\lgg}$, which is finite by Proposition \ref{prop:finiteness of weights LG}. Hence \eqref{eq: in proof of finiteness LG} is finite.

Now we consider the case when $u+v\leq 0$. (Note that the argument here differs from that in the proof of Proposition \ref{prop:finiteness of weights after summing zero mode LPP}). First observe that 
$$\mathbb{E}^{\bbb,\bbb}_{\lgrw}\lbE V^{u,v}_{\lgg}(\bL)\rbE=\prod_{j=1}^N \Gamma(\beta_j)^{-2} \Gamma(\beta_j+u)\Gamma(\beta_j+v)\mathbb{E}^{\bbb+v,\bbb+u}_{\lgrw}\lbE \lb \sum_{j=1}^N e^{-(L_2{(N)}-L_2{(j)})-L_1{(j-1)}}\rb^{-(u+v)}\rbE.$$
We will prove that the expectation in the right is finite. Using multiple times the inequality $(x+y)^a\leq \max(1,2^{a-1})(x^a+y^a)$ for any $x,y>0$ and $a\geq 0$ we find that there exists a constant $C_{N,u+v}>0$ such that
\be\label{eq:inequality in proof of finiteness LG}
 \begin{split}       \lb \sum_{j=1}^N e^{-(L_2{(N)}-L_2{(j)})-L_1{(j-1)}}\rb^{-(u+v)}
        \leq C_{N,u+v}\sum_{j=1}^N\lb e^{-(L_2{(N)}-L_2{(j)})-L_1{(j-1)}}\rb^{-(u+v)}\\
        =C_{N,u+v}\sum_{j=1}^N\prod_{\ell=1}^{j-1}\lb e^{-(L_1{(\ell)}-L_1{(\ell-1)})}\rb^{-(u+v)}
        \prod_{\ell=j+1}^{N}\lb e^{-(L_2{(\ell)}-L_2{(\ell-1)})}\rb^{-(u+v)}.
    \end{split}
\ee
Recall that under the measure $\PP^{\bbb+v,\bbb+u}_{\lgrw}\lb\bL\rb$, for $1\leq j\leq N$,
$$e^{-(L_1{(j)}-L_1{(j-1)})}\sim\Ga(\b_j+v),\quad e^{-(L_2{(j)}-L_2{(j-1)})}\sim\Ga(\b_j+u).$$
Thus, by \eqref{eq:inequality in proof of finiteness LG}, $\mathbb{E}^{\bbb+v,\bbb+u}_{\lgrw}\lbE \lb \sum_{j=1}^N e^{-(L_2{(N)}-L_2{(j)})-L_1{(j-1)}}\rb^{-(u+v)}\rbE$ can be bounded above by a sum of product of positive moments of independent Gamma random variables with parameters $\a_j+u$ and $\a_j+v$. Such random variables have finite positive moments of all orders, thus  $\mathbb{E}_{\lgrw}^{\bbb,\bbb} \lbE V_{\lgg}^{u,v}(\bL)\rbE$ is finite. 
\end{proof}
 
We are now positioned to complete the proof of Theorem \ref{thm:main theorem LG} using real analytic continuation. As noted above, the proof of real analyticity in the boundary parameters is considerably trickier here since the measures in question are not power series anymore in those parameter. We state the desired real analyticity as Proposition \ref{Prop:real analytical LG} in the proof of Theorem \ref{thm:main theorem LG} and then devote Section \ref{sec:proof of real analyticity} to proving it via a combination of complex analytic tools and explicit estimates on transition densities that  figure into our Markov dynamics.

\begin{proof}[Proof of Theorem \ref{thm:main theorem LG}]
\label{sec:analyticcontinuationLG}
We recall that $\pU_{\lgg}^{\hP,\hQ}\lb\bL_1'|\bL_1\rb$ \eqref{eq:definition of first marginal operator sum over x LG} gives the transition probability density from $\bL_1$ to $\bL_1'$ for the log-gamma free energy recurrence (i.e., if started with initial condition $\bL_1$ along $\hP$, this is the probability that the recurrence produces $\bL_1'$ for the last passage times along $\hQ$ centered by the value on the left-boundary). Since we are presently only going to consider horizontal paths, we introduce a slight overload of our notation and write $\pU_{\lgg}^{\bbb,u,v}:=\pU_{\lgg}^{\hP,\tau_1\hP}$, where $\hP$ is a horizontal path with labels $\bbb=(\b_1,\dots,\b_N)$. 
Of course our notation $\pU_{\lgg}^{\hP,\tau_1\hP}$ hid the implicit dependence on the edge and boundary parameters which we have now made more explicit in this special horizontal case.
 
The stationarity we aim to prove in this theorem can be rewritten as:
For any $\bL'_1\in\RR^N$, we have
\be\label{eq:compatibility with probability measure after analytic continuatio LG}
    \int_{\RR^N}\pU_{\lgg}^{\bbb,u,v}\lb \bL'_1|\bL_1\rb\mathrm{P}^{\bbb,u,v}_{\stat\lgg} \lb \bL_1\rb\d\bL_1=\mathrm{P}^{\tau_1\bbb,u,v}_{\stat\lgg} \lb \bL'_1\rb.\ee
When $u+v>0$, taking $\hQ=\tau_1\hP$ in Theorem \ref{thm:stationary measure LG before sum over zero mode} and using the matching from Proposition \ref{prop:probability measure coincide LG}, implies \eqref{eq:compatibility with probability measure after analytic continuatio LG}.  
To prove \eqref{eq:compatibility with probability measure after analytic continuatio LG} holds without assuming $u+v>0$, we prove (let $\min(\bbb)=\min(\b_1,\ldots,\b_N)=\min(\a_1,\ldots,\a_N)$):
\begin{proposition}\label{Prop:real analytical LG}
Both sides of \eqref{eq:compatibility with probability measure after analytic continuatio LG} are real analytic functions of $u$ for $u\in(-\min(\bbb),\infty)$.    
\end{proposition}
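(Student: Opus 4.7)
The plan is to establish that both sides of \eqref{eq:compatibility with probability measure after analytic continuatio LG} extend to holomorphic functions of $u$ on the open set
$$\Omega := \{u \in \CC : \Re(u) > -\min(\bbb)\},$$
from which real analyticity on $(-\min(\bbb),\infty)$ follows by restriction to the real axis. The only $u$-dependent pieces are the reweighting factor $V^{u,v}_{\lgg}$ (appearing in both $\mathrm{P}^{\bbb,u,v}_{\stat\lgg}$ and $\mathrm{P}^{\tau_1\bbb,u,v}_{\stat\lgg}$) together with the normalizing constants $\hZ^{\bbb,u,v}_{\lgg}$, $\hZ^{\tau_1\bbb,u,v}_{\lgg}$, and the left-boundary local kernel $\rmUllg(\pi_1|\k_1;u,\alpha)$ inside the composition defining $\pU_{\lgg}^{\bbb,u,v}$. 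Each extends holomorphically to $u\in\Omega$: since $\sum_j e^{L_2(j)-L_1(j-1)} > 0$, the factor $(\cdot)^{-(u+v)}$ is entire in $u$ via the principal logarithm, while $\rmUllg$ reads $\Gamma(\alpha+u)^{-1} e^{-(\alpha+u)(\pi_1-\k_1) - e^{-(\pi_1-\k_1)}}$, holomorphic on $\Omega$ because $\Re(\alpha+u)>0$ keeps $\Gamma(\alpha+u)$ finite and nonzero.

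Next, I would express each side of \eqref{eq:compatibility with probability measure after analytic continuatio LG} as (a ratio of) multi-dimensional integrals $\int_{\RR^M} F(u,\bX)\,d\bX$ in which $F(\cdot,\bX)$ is holomorphic on $\Omega$ for each fixed $\bX$. For the RHS this is immediate: numerator $N(u)=\int_{\RR^N}V^{u,v}_{\lgg}(\bL'_1,\bL_2)\PP^{\tau_1\bbb,\tau_1\bbb}_{\lgrw}(\bL'_1,\bL_2)\,d\bL_2$ divided by denominator $D(u)=\hZ^{\tau_1\bbb,u,v}_{\lgg}$. For the LHS, I would expand $\pU_{\lgg}^{\bbb,u,v}$ as the composition of $N+1$ local kernels followed by the translation integral in \eqref{eq:definition of first marginal operator sum over x LG}, and write the marginal form of $\mathrm{P}^{\bbb,u,v}_{\stat\lgg}$, producing a single joint integral divided by $\hZ^{\bbb,u,v}_{\lgg}$. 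Morera's theorem is then applied to each such integral: for any closed triangle $\Delta\subset\Omega$, Fubini yields
$$\oint_{\partial\Delta}\!\int_{\RR^M} F(u,\bX)\,d\bX\,du \;=\; \int_{\RR^M}\oint_{\partial\Delta} F(u,\bX)\,du\,d\bX \;=\; 0,$$
the inner contour integral vanishing for every $\bX$ by Cauchy's theorem.

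The main obstacle is the Fubini domination bound $\int_{\RR^M}\sup_{u\in\Delta}|F(u,\bX)|\,d\bX<\infty$. On $\Delta$ the real part $\Re(u)$ ranges over a compact subinterval $[a,b]\subset(-\min(\bbb),\infty)$, and
$$|V^{u,v}_{\lgg}(\bL)| \;=\; \biggl(\sum_{j=1}^N e^{L_2(j)-L_1(j-1)}\biggr)^{-\Re(u+v)} e^{-v(L_1(N)-L_2(N))}.$$
Since $s\mapsto r^{-s}$ is monotone for each fixed $r>0$, one obtains the sandwich $\sup_{u\in\Delta}|V^{u,v}_{\lgg}(\bL)| \leq V^{a,v}_{\lgg}(\bL) + V^{b,v}_{\lgg}(\bL)$, and integrability of this bound against $\PP^{\bbb,\bbb}_{\lgrw}$ is exactly Proposition \ref{prop:finiteness of weights after summing zero mode LG} applied at the real parameters $a$ and $b$. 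An analogous two-point sandwich controls $|\rmUllg|$, using additionally that $|\Gamma(\alpha+u)|^{-1}$ is continuous, hence bounded, on the compact $\Delta$.

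Finally, to pass from holomorphy of numerators and denominators to real analyticity of the ratios, observe that $\hZ^{\bbb,u,v}_{\lgg}$ and $\hZ^{\tau_1\bbb,u,v}_{\lgg}$ are holomorphic on $\Omega$ by the same Morera argument and strictly positive on the real interval (their integrands being nonnegative), so the ratios are real analytic on $(-\min(\bbb),\infty)$. The delicate point will be the uniform Fubini bound for the LHS in the regime $\Re(u+v)\leq 0$: here one must revisit the second case of the proof of Proposition \ref{prop:finiteness of weights after summing zero mode LG}, which relies on $(\sum x_i)^{-a}\leq C_{N,a}\sum x_i^{-a}$ together with positive-moment estimates for inverse-gamma random variables; carefully tracking continuity of those estimates in the parameter, and tracking how the $u$-dependent $\rmUllg$ interacts with the remaining real-parameter kernels along the chain, is where the principal technical effort of the section lies.
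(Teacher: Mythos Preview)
Your high-level strategy—Morera plus a locally uniform domination bound, with the ratio structure handled by showing the denominators $\hZ$ are holomorphic and nonvanishing on the real axis—matches the paper's, and your two-point sandwich $\sup_{u\in\Delta}|V^{u,v}|\le V^{a,v}+V^{b,v}$ (and likewise for $|\rmUllg|$) is correct. The genuine gap is in the domination for the LHS numerator. After the sandwich, you must show that for real $a,a'\in(-\min(\bbb),\infty)$ and fixed $\bL'_1$,
\[
\int_{\RR^N} \pU_{\lgg}^{\bbb,a,v}(\bL'_1|\bL_1)\,g_{a'}(\bL_1)\,\d\bL_1<\infty,\qquad g_{a'}(\bL_1):=\int_{\RR^N}V^{a',v}_{\lgg}(\bL)\,\PP^{\bbb,\bbb}_{\lgrw}(\bL)\,\d\bL_2,
\]
is finite. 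Proposition~\ref{prop:finiteness of weights after summing zero mode LG} only yields $\int g_{a'}<\infty$, and a Markov transition density need not carry an $L^1$ function to a pointwise-finite one; the bulk moves $\cUb(j)$ involve the nonlinear change of variables $x\mapsto x-\log(1+e^{x+y})$ and a convolution, so integrability does not propagate automatically.

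The paper closes this gap by a route different from the one you sketch. Rather than revisiting the moment estimates of Proposition~\ref{prop:finiteness of weights after summing zero mode LG}, it introduces the class $\X^N(U)$ of functions holomorphic in $u$ on a small disk and \emph{exponentially decaying} in the increment variables $\bX_1$. A four-case analysis (Lemma~\ref{lemma:bounds on initial weight by exponential}) bounds $|H^{\bbb,u,v}(\bX)|$ by a product of $f_\theta$'s, placing $\cP^{\bbb,u,v}_{\lgg}$ in $\X^N(U)$; then each local operator in the factorisation $\cU=\cUr\circ\cUb(N-1)\circ\cdots\circ\cUb(1)\circ\cUl$ is shown to preserve $\X^N(U)$. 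The crucial step is the bulk operator, where one needs (Lemma~\ref{lemma:bulk operator preserve class X}) the pointwise inequality
\[
f_\theta\bigl(x-\log(1+e^{x+y})\bigr)\,e^{-r|x+y|}\le C\,e^{-\delta(|x|+|y|)}
\]
for some $\delta>0$, so that the convolution followed by the nonlinear coordinate change does not destroy exponential decay. This preservation-of-decay argument is the ``principal technical effort'' you flag, and it is what your proposal is missing.
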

This proposition will be proved in Section \ref{sec:proof of real analyticity}. 
Since we know that the equality \eqref{eq:compatibility with probability measure after analytic continuatio LG}  holds on the smaller interval $u\in(-\min(v,\a_1,\dots,\a_N),\infty)$, by the uniqueness of analytic continuation of real analytic functions (see, e.g. \cite[Corollary 1.2.6]{krantz2002primer}) it follows from Proposition \ref{Prop:real analytical LG} that the equality in  \eqref{eq:compatibility with probability measure after analytic continuatio LG}  extended to the larger interval  $u\in(-\min(\bbb),\infty)$. This proves the  stationarity in Theorem \ref{thm:main theorem LG} for all $u,v$ that satisfy \eqref{eq:condition for inhomogeneous log gamma}.

Finally to prove the uniqueness of the stationary measure in the homogeneous $\bbb=(\a,\ldots,\a)$ case, observe that as a function of $\bL_1$ and $\bL'_1$ in $\RR^N$,
$\pU_{\lgg}^{\bbb,u,v}\lb \bL'_1|\bL_1\rb$ is strictly positive. This can be seen directly from the strict positivity of the log-gamma density and the fact that $\pU_{\lgg}^{\bbb,u,v}\lb \bL'_1|\bL_1\rb$ encodes the transition probability density for the log-gamma recursion. 
This strict positivity implies that the Markov chain is irreducible. Thus by  \cite[Theorem 5.5]{benaim2022markov} the stationary measure is unique and the Markov chain is ergodic.
\end{proof}
 
\subsection{Proof of Proposition \ref{Prop:real analytical LG}}\label{sec:proof of real analyticity}   
In the proof we will use increment variables $X_i{(j)}:=L_i{(j)}-L_i{(j-1)}$ for $i=1,2$ and $1\leq j\leq N$, and the shorthand $\bX_i:=(X_i{(1)},\dots,X_i{(N)})$ for $i=1,2$.   
We also write $\bX:=(\bX_1,\bX_2)$.
The Jacobian determinant of the change of variables between $\bX_1$ and $\bL_1$ equals $1$, hence 
\be\label{eqn:XLcov}
\cP_{\lgg}^{\bbb,u,v}\lb \bX_1 \rb:=\mathrm{P}_{\stat\lgg}^{\bbb,u,v} \lb\bL_1\rb, \qquad \textrm{and}\qquad \cU_{\lgg}^{\bbb,u,v} (\bX'_1|\bX_1):=\pU_{\lgg}^{\bbb,u,v}\lb \bL'_1|\bL_1\rb
\ee
define probability densities and transition probability densities in these new variables and \eqref{eq:compatibility with probability measure after analytic continuatio LG} becomes
\be\label{eq:stationarity in X}
    \int_{\RR^N}\cU_{\lgg}^{\bbb,u,v}\lb \bX'_1|\bX_1\rb\cP^{\bbb,u,v}_{\lgg} \lb \bX_1\rb\d\bX_1=\cP^{\tau_1\bbb,u,v}_{\lgg} \lb \bX'_1\rb.
\ee
To prove Proposition \ref{Prop:real analytical LG} it therefore suffices to show real analyticity in $u\in(-\min(\bbb),\infty)$ of both sides of \eqref{eq:stationarity in X}. We actually prove a stronger result which requires a bit of notation. 
\begin{definition}\label{defn:class X of functions} 
For an open disk $U\subset\mathbb{C}$ and $m\in \ZZ_{\geq 1}$, a measurable function $U\times \RR^m\ni (u,\x)\mapsto g_u(\x)\in \CC$ (we use the notation $\x=(x_1,\ldots,x_m)$ here and below) is in $\X^m(U)$ if:
    \begin{enumerate}[wide, labelwidth=0pt, labelindent=0pt]
        \item \label{Property 1 class X} There exists $C,r>0$ such that, for all $(u,\x)\in U\times \RR^m$,
        $$|g_u(\x)|\leq C\exp\lb-r\sum_{i=1}^m|x_i|\rb.$$ 
        \item \label{Property 2 class X} For any fixed $\x\in\RR^m$, $u\mapsto g_u(\x)$ is holomorphic on $U$.
    \end{enumerate}
\end{definition}

\begin{proposition}\label{prop:conclusion of step 1}
For any $u_0\in(-\min(\bbb),\infty)$, there exists an open disk $ U\subset\CC$ containing $u_0$ such that $U\times \RR^N\ni (u,\bX_1)\mapsto \cP_{\lgg}^{\bbb,u,v}\lb \bX_1 \rb$
is in $\X^N(U)$.
\end{proposition}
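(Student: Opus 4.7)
The plan is to decompose $\cP_{\lgg}^{\bbb,u,v}(\bX_1)=N(u,\bX_1)/\hZ^{\bbb,u,v}_{\lgg}$ with $N(u,\bX_1):=\int_{\RR^N} V^{u,v}_{\lgg}(\bL)\,\PP^{\bbb,\bbb}_{\lgrw}(\bX_1,\bX_2)\,d\bX_2$, and to verify holomorphicity and exponential decay of the numerator and denominator separately. The reweighting extends to complex $u$ via
$$
V^{u,v}_{\lgg}(\bL)=\exp\bigl(-(u+v)\log S(\bL)\bigr)\,e^{-v(L_1(N)-L_2(N))},\qquad S(\bL):=\sum_{j=1}^N e^{L_2(j)-L_1(j-1)}>0,
$$
which is entire in $u$ for every fixed $\bL$. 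Fixing $\delta\in(0,u_0+\min(\bbb))$ and $U=\{u\in\CC:|u-u_0|<\delta\}$, the pointwise bound
$$
\bigl|V^{u,v}_{\lgg}(\bL)\bigr|=S(\bL)^{-\Re(u+v)}e^{-v(L_1(N)-L_2(N))}\le V^{u_0-\delta,v}_{\lgg}(\bL)+V^{u_0+\delta,v}_{\lgg}(\bL),\qquad u\in\overline U,
$$
holds by splitting according to whether $S(\bL)\ge 1$ or $S(\bL)<1$ and using monotonicity of $c\mapsto S^{-c}$ in each region.

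By Proposition \ref{prop:finiteness of weights after summing zero mode LG}, this dominant is $\PP^{\bbb,\bbb}_{\lgrw}$-integrable, so Morera's theorem combined with Fubini's theorem yields holomorphicity of $\hZ^{\bbb,u,v}_{\lgg}$ and of $u\mapsto N(u,\bX_1)$ (for each fixed $\bX_1$) on $U$. Since $\hZ^{\bbb,u_0,v}_{\lgg}>0$ by positivity of the integrand for real $u_0$, after shrinking $U$ one may assume $\hZ^{\bbb,u,v}_{\lgg}$ is non-vanishing throughout, and then the ratio $\cP_{\lgg}^{\bbb,u,v}(\bX_1)$ is holomorphic on $U$ for every fixed $\bX_1$, giving property \ref{Property 2 class X} of Definition \ref{defn:class X of functions}.

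For property \ref{Property 1 class X}, I will factor $N(u,\bX_1)=\PP^{\bbb}_{\lgrw}(\bX_1)\cdot I(u,\bX_1)$ with $I(u,\bX_1):=\int V^{u,v}_{\lgg}(\bL)\PP^{\bbb}_{\lgrw}(\bX_2)\,d\bX_2$. The marginal $\PP^{\bbb}_{\lgrw}(\bX_1)$ decays like $C_1 e^{-r_1\sum_j|X_1(j)|}$ for any $r_1<\min(\bbb)$, thanks to the polynomial-exponential decay of $f_\beta(x)=\Gamma(\beta)^{-1}e^{-\beta x-e^{-x}}$ as $x\to+\infty$ and its doubly-exponential decay as $x\to-\infty$. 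It therefore suffices to establish $|I(u,\bX_1)|\le C_2 e^{r_2\sum_j|X_1(j)|}$ uniformly in $u\in U$ with $r_2<r_1$. Applying the domination from the previous step reduces this to estimating $I(u',\bX_1)$ for the two real values $u'\in\{u_0\pm\delta\}$. For $u'+v>0$, I will use the integral representation $S^{-(u'+v)}=\Gamma(u'+v)^{-1}\int_0^\infty t^{u'+v-1}e^{-tS}\,dt$ together with the change of variables $Y_2(j)=L_2(j)-L_1(j-1)$ (Jacobian one) to factorize $e^{-tS(\bL)}=\prod_j e^{-te^{Y_2(j)}}$ and reduce the $\bX_2$-integrals to explicit Gamma integrals depending linear-exponentially on $\bX_1$. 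For $u'+v\le 0$, the subadditive bound $(\sum_j x_j)^a\le N^a\sum_j x_j^a$ with $a=-(u'+v)\ge 0$, combined with the drift-shift identity used in the proof of Proposition \ref{prop:finiteness of weights after summing zero mode LG}, yields the analogous estimate. In both regimes $I(u',\bX_1)$ will be seen to grow at most like $e^{r_2\sum_j|X_1(j)|}$ with $r_2$ of order $|v|+\delta$, which can be made strictly less than $r_1$ by shrinking $\delta$ if necessary.

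The main obstacle is the uniform pointwise growth estimate on $I(u',\bX_1)$: the functional $S(\bL)$ couples $\bL_1$ and $\bL_2$ non-additively, so no single translation of $\bX_2$ decouples the conditional integral, and the two signs of $u'+v$ force essentially different arguments (integral representation in one case, subadditive bound combined with drift shift in the other). Carefully tracking the dependence on each component $X_1(j)$ through the Gamma integrals that arise, and verifying that the resulting growth rate $r_2$ stays strictly below $r_1<\min(\bbb)$, is the most technical point; once it is in hand, combining it with the holomorphicity established above places $\cP_{\lgg}^{\bbb,u,v}$ in $\X^N(U)$.
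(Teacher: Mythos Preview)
Your argument for holomorphicity (Property~\ref{Property 2 class X}) via the domination $|V^{u,v}_{\lgg}|\le V^{u_0-\delta,v}_{\lgg}+V^{u_0+\delta,v}_{\lgg}$ together with Proposition~\ref{prop:finiteness of weights after summing zero mode LG} and Morera's theorem is fine, and is essentially what the paper does.

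The gap is in your treatment of the exponential decay (Property~\ref{Property 1 class X}). Your factorization $N(u,\bX_1)=\PP^{\bbb}_{\lgrw}(\bX_1)\cdot I(u,\bX_1)$ and the claim that $|I(u',\bX_1)|\le C_2 e^{r_2\sum_j|X_1(j)|}$ with $r_2$ ``of order $|v|+\delta$, which can be made strictly less than $r_1$ by shrinking $\delta$'' fails whenever $|v|\ge\min(\bbb)$, and nothing in the hypotheses bounds $v$ from above. Concretely, take $N=1$: then $S(\bL)=e^{X_2(1)}$ does not depend on $\bX_1$ at all, and a direct computation gives $I(u',X_1(1))=C(u')\,e^{-vX_1(1)}$, so $r_2=|v|$ exactly. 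If $v>\beta_1=\min(\bbb)$ your bound $r_2<r_1\le\min(\bbb)$ is impossible, yet $N(u,X_1(1))\propto f_{\beta_1+v}(X_1(1))$ still decays exponentially since $\beta_1+v>0$. The point is that the $e^{-vL_1(N)}$ factor and the $f_{\beta_j}(X_1(j))$ factors must be combined \emph{before} bounding, because $f_{\beta_j}$ decays super-exponentially as $X_1(j)\to-\infty$ and this is precisely what absorbs the growth of $e^{-vL_1(N)}$; your factorization throws away that super-exponential decay by replacing $\PP^{\bbb}_{\lgrw}(\bX_1)$ with an exponential envelope too early.

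The paper avoids this by never factoring out $\PP^{\bbb}_{\lgrw}(\bX_1)$. Instead it bounds the full integrand $H^{\bbb,u,v}(\bX)=V^{u,v}_{\lgg}(\bL)\,\PP^{\bbb,\bbb}_{\lgrw}(\bX)\cdot\prod_j\Gamma(\beta_j)^2$ directly by a product of functions $f_\theta$ in the individual increments $X_1(j),X_2(j)$, via a case analysis on the signs of $\Re u+v$, $\Re u$, and $v$. In each case one selects a single term of $S(\bL)$ (or uses subadditivity when $\Re u+v\le 0$) so that the resulting exponents pair with the $f_{\beta_j}$ factors to produce $f_{\theta_j}$ with all $\theta_j>0$ (each $\theta_j$ being of the form $\beta_j$, $\beta_j\pm v$, or $\beta_j\pm\Re u$, all positive by \eqref{eq:condition for inhomogeneous log gamma}). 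Only after this recombination does one invoke the bound $f_\theta(x)\le C_\theta e^{-\theta|x|}$. Integrating over $\bX_2$ then gives the required decay in $\bX_1$. You could repair your argument along the same lines; the key change is to keep the $\bX_1$-dependent pieces of $V^{u,v}_{\lgg}$ attached to $\PP^{\bbb}_{\lgrw}(\bX_1)$ rather than pushing them into $I$.
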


For transition probability densities $\cU(\x'|\x)$ and $\widetilde{\cU}(\x'|\x)$ with $\x,\x'\in\RR^N$, we define their action on $\CC$-valued measurable functions $\phi(\x)$, $\x\in\RR^N$ and their composition as
\be\label{eq:cucomp}
(\cU\phi)(\x'):=\int_{\RR^N}\cU(\x'|\x)\phi(\x)\d\x,\qquad\textrm{and}\qquad
\widetilde{\cU}\circ \cU (\x''|\x):=\int_{\RR^N}\widetilde{\cU}(\x''|\x')\cU(\x'|\x)\d\x'.
\ee
One can observe that $\lb\widetilde{\cU}\circ \cU\rb\phi=\widetilde{\cU}\lb\cU\phi\rb$.
In particular, if $\phi$ is a real-valued probability density function, then  $\cU\phi$ is also a probability density function.

\begin{proposition}\label{prop:conclusion of step 3}
Assume $U\subset\CC$ is an open disk such that there is an $\ep>0$ so that $U\subset\left\{ z\in\CC: \Re z>\ep-\min(\bbb)\right\}$. 
Then the action of the transition probability density $\cU_{\lgg}^{\bbb,u,v}$ preserves $\X^N(U)$. In other words, if the function  $U\times \RR^N\ni (u,\x)\mapsto g_u(\x)$ is in $\X^N(U)$ then $U\times \RR^N\ni (u,\x)\mapsto (\cU_{\lgg}^{\bbb,u,v} g_u)(\x)$ is in $\X^N(U)$ as well.
\end{proposition}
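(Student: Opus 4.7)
The strategy is to decompose $\cU_{\lgg}^{\bbb,u,v}$ using the local Markov kernels from Lemma \ref{lem:marginal operator LG} together with the zero-mode integration \eqref{eq:definition of first marginal operator sum over x LG}. The passage from $\hP_h$ to $\tau_1 \hP_h$ can be effected by one left boundary local move, $N-1$ bulk local moves, and one right boundary local move, so $\cU_{\lgg}^{\bbb,u,v}$ factors as the composition of the corresponding first-layer local kernels followed by the zero-mode integration. Critically, only the left-boundary kernel $\rmUllg(\cdot;u,\alpha)$ depends on $u$; the bulk kernels $\rmUblg$ and the right-boundary kernel $\rmUrlg$ depend only on the fixed parameters in $\bbb$ and $v$.

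For Property (2) in Definition \ref{defn:class X of functions}, we verify holomorphy of $u \mapsto (\cU_u g_u)(\bX_1')$ via Morera's theorem. For any closed triangle $T \subset U$,
\begin{align*}
\oint_T (\cU_u g_u)(\bX_1')\, du
 &= \oint_T \int_{\RR^N} \cU_u(\bX_1' | \bX_1)\, g_u(\bX_1)\, d\bX_1\, du \\
 &= \int_{\RR^N} \oint_T \cU_u(\bX_1' | \bX_1)\, g_u(\bX_1)\, du\, d\bX_1 = 0.
\end{align*}
The exchange of integrations is justified by the absolute integrability obtained from the decay estimates below (uniform in $u \in T$), and the inner contour integral vanishes by Cauchy's theorem: the only $u$-dependence in $\cU_u(\bX_1'|\bX_1)$ enters through $\rmUllg(\pi|\k;u,\alpha) = \Gamma(\alpha+u)^{-1} e^{(\alpha+u)(\k-\pi) - e^{\k-\pi}}$, which is holomorphic in $u$ on $\{\Re u + \alpha > 0\} \supset U$, and $g_u(\bX_1)$ is holomorphic by hypothesis.

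For Property (1), we first dominate the complex left-boundary kernel by a real one using
\begin{equation*}
|\rmUllg(\pi|\k;u,\alpha)| = C(u)\, \rmUllg(\pi|\k;\Re u,\alpha), \qquad C(u) = \frac{\Gamma(\alpha + \Re u)}{|\Gamma(\alpha+u)|},
\end{equation*}
with $C(u)$ bounded on the compact closure $\bar U$. Shrinking $U$ if necessary so that $\Re u + \min(\bbb) > \eta$ for some uniform $\eta>0$, every log-gamma shape parameter appearing in the composition has real part bounded below by $\eta$. The boundary kernels act as convolutions with random variables $Y \sim \log\Ga^{-1}(\theta)$, and $\mathbb{E}[e^{r|Y|}] \leq \Gamma(\theta - r)/\Gamma(\theta) + \Gamma(\theta + r)/\Gamma(\theta) < \infty$ whenever $0 < r < \theta$, so such convolutions preserve exponential decay at rate $r$ (with an inflated constant). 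The bulk kernel sends $(\la,\mu)\mapsto \log(e^\la + e^\mu) + Y$; using the elementary bound $|\log(e^\la + e^\mu) - \max(\la,\mu)| \leq \log 2$ reduces the analysis to a noisy-max transformation that similarly preserves exponential decay. Finally, the zero-mode integration against the shift parameter $x$, made well-defined by the translation-invariance of the full-coordinate Markov kernel, contributes one further log-gamma-type integration which again preserves exponential decay. Composing these estimates through all $N+1$ local kernels and the zero-mode integration yields a uniform bound $|(\cU_u g_u)(\bX_1')| \leq C' \exp(-r' \sum_{j=1}^N |X_1'^{(j)}|)$ on $U \times \RR^N$, for some $C',r' > 0$.

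The main obstacle will be carefully tracking the exponential decay rate through the composition of $N+1$ local operators and the zero-mode integration, particularly through the nonlinear bulk map $(\la,\mu) \mapsto \log(e^\la + e^\mu)$, while ensuring that the final rate $r'$ remains strictly positive uniformly in $u \in U$. This requires choosing the initial rate $r$ of $g_u$ sufficiently small compared to $\eta$, and shrinking the disk $U$ enough that all intermediate gamma function arguments have real parts bounded away from zero uniformly. These same uniform bounds provide the absolute integrability needed to justify Fubini in the Morera argument.
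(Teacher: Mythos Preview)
Your high-level strategy matches the paper's: decompose $\cU_{\lgg}^{\bbb,u,v}$ into the local kernels and verify that each preserves $\X^N(U)$, with holomorphy established via Morera and exponential decay tracked through the composition. The identification of the left-boundary kernel as the only $u$-dependent piece is also correct.

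There is, however, a genuine confusion in your handling of coordinates and of the zero-mode. Recall that $\X^N(U)$ demands decay in the \emph{increment} variables $X_1(j)=L_1(j)-L_1(j-1)$, and $\cU_{\lgg}^{\bbb,u,v}$ is by construction $\pU_{\lgg}^{\bbb,u,v}$ rewritten in these variables (see \eqref{eqn:XLcov}). Once one passes to increments, the zero-mode integral in \eqref{eq:definition of first marginal operator sum over x LG} is \emph{not} an additional step layered on top of the local moves: it is absorbed entirely into the left-boundary operator, which in increments reads
\[
(\cUl\phi)(x_1',x_2,\ldots,x_N)=\frac{f_{u+\beta_1}(-x_1')}{\Gamma(u+\beta_1)}\int_{\RR}\phi(x_1,x_2,\ldots,x_N)\,dx_1.
\]
This is a \emph{replacement} of $x_1$ (integrate out the old value, multiply by a fresh density), not a convolution. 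Your assertion that the zero-mode ``contributes one further log-gamma-type integration which again preserves exponential decay'' after the local kernels is therefore mistaken; there is no such extra step, and your description of the boundary kernels as convolutions reflects the wrong coordinate system.

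The same issue affects your bulk analysis. The heuristic $|\log(e^{\lambda}+e^{\mu})-\max(\lambda,\mu)|\le\log 2$ is the right intuition in $L$-coordinates, but what you must control is decay in the transformed \emph{increments} $(x_j',x_{j+1}')$, which involve the nonlinear change of variables $x_{j+1}''=x_j'-\log(1+e^{x_j'+x_{j+1}'})$. The paper isolates exactly this as Lemma~\ref{lemma:bulk operator preserve class X}, whose proof requires a two-case estimate on $f_\theta(x-\log(1+e^{x+y}))e^{-r|x+y|}$ that your sketch does not supply. Finally, the proposition is stated for a \emph{fixed} disk $U$ satisfying the $\ep$-hypothesis, so ``shrinking $U$ if necessary'' is not permitted; fortunately the hypothesis already gives the uniform lower bound on $\Re u+\min(\bbb)$ that you need, so this is easily repaired.
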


These two propositions will be proved respectively in Section \ref{subsubsec:proof of proposition conclusion of step 1} and Section \ref{subsubsec:proof of proposition conclusion of step 3}.

\begin{proof}[Proof of Proposition \ref{Prop:real analytical LG}]
For each fixed $u_0\in(-\min(\bbb),\infty)$,
by Proposition \ref{prop:conclusion of step 1} we can choose an open disk $U\subset\CC$ containing $u_0$ such that
functions $U\times \RR^N\ni (u,\bX_1)\mapsto \cP_{\lgg}^{\bbb,u,v}\lb \bX_1 \rb$ and $U\times \RR^N\ni (u,\bX'_1)\mapsto\cP^{\tau_1\bbb,u,v}_{\lgg} \lb \bX'_1\rb$
both belong to $\X^N(U)$.
By Proposition \ref{prop:conclusion of step 3}, after possibly shrinking the disk $U$ to have minimal real part strictly exceeding 
$-\min(\bbb)$, we have that the function $U\times \RR^N\ni (u,\bX'_1)\mapsto\lb\cU_{\lgg}^{\bbb,u,v}\cP_{\lgg}^{\bbb,u,v}\rb\lb \bX_1' \rb$ is also in $\X^N(U)$. In particular this implies that both sides of \eqref{eq:stationarity in X} are holomorphic on $u\in U$ for any fixed $\bX_1'$. 
Since $u_0$ can be chosen through the whole interval $(-\min(\bbb),\infty)$, it follows from this conclusion that both sides of \eqref{eq:stationarity in X} are real analytic functions of $u$ on this interval when the variables $\bbb,v$ and $\bX'_1$ are fixed.
\end{proof}

In preparation for the proofs of Propositions \ref{prop:conclusion of step 1} and \ref{prop:conclusion of step 3} we record here a result that will be used to prove that integrals of certain types of holomorphic functions remain holomorphic. 
\begin{lemma}\label{lem:Morena argument}
    Suppose $U\subset \CC$ is an open disk, $m\in \ZZ_{\geq 1}$ and that $g_u(\x)$ is a function of $(u,\x)\in U\times\RR^m$ such that for any fixed $\x\in \RR^m$, $u\mapsto g_u(\x)$ is holomorphic in $U$. If 
    $\int_{\RR^N}\sup_{u\in U}|g_u(\x)|\d\x<\infty$  (denoting $\d\x=\d x_1\dots\d x_N$)
    then $h(u):=\int_{\RR^N}g_u(\x)\d\x$ is holomorphic in $U$ as well.  
\end{lemma}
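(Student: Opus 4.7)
The plan is to apply Morera's theorem: since $U$ is an open disk, it suffices to verify that $h$ is continuous on $U$ and that $\oint_\gamma h(u)\,du = 0$ for every closed triangular contour $\gamma \subset U$.

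First I would establish continuity of $h$ on $U$. For any sequence $u_n \to u_\infty$ in $U$, the holomorphy assumption gives pointwise convergence $g_{u_n}(\x) \to g_{u_\infty}(\x)$ for each fixed $\x \in \RR^m$ (holomorphic functions are in particular continuous). Since $|g_{u_n}(\x)| \le \sup_{u \in U} |g_u(\x)|$ and the latter is integrable by hypothesis, the dominated convergence theorem yields $h(u_n) \to h(u_\infty)$, so $h$ is continuous on $U$.

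Next I would use Fubini--Tonelli to swap the contour integral with the integral over $\RR^m$. For any triangle $\gamma \subset U$ (parametrize it as a piecewise smooth curve of finite length $\ell(\gamma)$), I would estimate
\begin{equation*}
\int_\gamma \int_{\RR^m} |g_u(\x)|\,d\x\,|du| \;\le\; \ell(\gamma)\cdot \int_{\RR^m} \sup_{u \in U} |g_u(\x)|\,d\x \;<\; \infty,
\end{equation*}
which justifies applying Fubini to obtain
\begin{equation*}
\oint_\gamma h(u)\,du \;=\; \oint_\gamma \int_{\RR^m} g_u(\x)\,d\x\,du \;=\; \int_{\RR^m} \left(\oint_\gamma g_u(\x)\,du\right) d\x.
\end{equation*}
For each fixed $\x$, the function $u \mapsto g_u(\x)$ is holomorphic on the open disk $U$, so Cauchy's theorem gives $\oint_\gamma g_u(\x)\,du = 0$. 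Hence $\oint_\gamma h(u)\,du = 0$ for every triangle $\gamma \subset U$, and Morera's theorem concludes that $h$ is holomorphic on $U$.

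No step should present a serious obstacle here; the only point requiring any care is the Fubini application, which is handled by the uniform-in-$u$ integrable bound supplied by the hypothesis. This is a fairly standard lemma in complex analysis (sometimes phrased as "differentiation under the integral sign for holomorphic parameters"), and the argument above is the textbook one.
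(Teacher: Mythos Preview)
Your proof is correct and follows essentially the same approach as the paper's own proof: establish continuity of $h$ via dominated convergence, then verify the Morera criterion by bounding the iterated integral using the length of the triangular contour and the uniform integrable bound, swapping the order of integration via Fubini, and applying Cauchy's theorem for each fixed $\x$. The arguments match step for step.
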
 
\begin{proof}
    By assumption  $s(\x):=\sup_{u\in U}|g_u(\x)|$ is an integrable function of $\x\in\RR^m$. Suppose $\lbb u_k\rbb_{k=1}^{\infty}$ is a sequence of points in $U$ converging to $u$. Then since $u\mapsto g_u(\x)$ is holomorphic in $U$ (and in particular, continuous), we have that $\lim_{k\rightarrow\infty}g_{u_k}(\x)=g_{u}(\x)$ for any fixed $\x\in\RR^m$, and moreover $|g_{u_k}(\x)|\leq s(\x)$ for all $k$. By the dominated convergence theorem, $\lim_{k\rightarrow\infty}\int_{\RR^m}g_{u_k}(\x)\d\x=\int_{\RR^m}g_{u}(\x)\d\x=:h(u)$. Since the sequence $\lbb u_k\rbb_{k=1}^{\infty}$ is arbitrary, we have proved that
    $h(u)$
    is continuous in $U$. For any triangular contour $\gamma$ contained in $U$, denoting its length by $|\gamma|$, we have
    $$\int_{\gamma}\int_{\RR^m}|g_{u}(\x)|\d\x\d u\leq |\gamma|\int_{\RR^m}\sup_{u\in U}|g_u(\x)|\d\x<\infty.$$
    By Fubini's theorem and the Cauchy integral theorem, we have
    $$\int_{\gamma}h(u)\d u= \int_{\gamma}\int_{\RR^m}g_{u}(\x)\d\x\d u=\int_{\RR^m}\int_{\gamma}g_{u}(\x)\d u\d\x=\int_{\RR^m}0=0.$$
    By Morera's theorem (Theorem 5.1 in Chapter 2 of \cite{stein2010complex}), $h(u)$ is a holomorphic function on $U$.
\end{proof}

\subsubsection{Proof of Proposition \ref{prop:conclusion of step 1}}
\label{subsubsec:proof of proposition conclusion of step 1}
Using \eqref{eqn:XLcov}, the formula in \eqref{eq:PstatLG inhomogeneous} for $\mathrm{P}_{\stat\lgg}^{\bbb,u,v} \lb\bL_1\rb$ is rewritten as
\be\label{eq:quotient expression of P in proof of main theorem LG}\cP_{\lgg}^{\bbb,u,v}\lb \bX_1 \rb
=\frac{\int_{\RR^{N}} H^{\bbb,u,v}(\bX)\d\bX_2}{\int_{\RR^{2N}}H^{\bbb,u,v}(\bX)\d\bX}\ee
where 
$$
H^{\bbb,u,v}(\bX):=\lb \sum_{j=1}^N e^{-\sum_{i=j+1}^NX_2(i)-\sum_{i=1}^{j-1}X_1(i)}\rb^{-(u+v)}\!\!\!\prod_{j=1}^N f_{\b_j+v}\lb X_1(j)\rb f_{\b_j+u}\lb X_2(j)\rb,\quad f_\theta(x):=e^{-\theta x-e^{-x}}.
$$
We will repeatedly use the following bound on $f_{\theta}$.
    \begin{lemma}\label{lem:bounds of log gamma density by exponential}
  For any $\theta>0$, let $C_{\theta}=\max\lb 1,e^{2\theta\ln(2\theta)-2\theta}\rb$. Then $f_{\theta}(x)\leq C_{\theta}e^{-\theta|x|}$ for all $x\in\mathbb{R}$.
\end{lemma}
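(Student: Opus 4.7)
The plan is to split into the two cases $x\geq 0$ and $x<0$ and reduce the inequality to an elementary optimization.

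For $x\geq 0$, we have $|x|=x$ and $e^{-x}\geq 0$, so
\[ f_\theta(x)=e^{-\theta x}\cdot e^{-e^{-x}}\leq e^{-\theta x}=e^{-\theta|x|}, \]
and since $1\leq C_\theta$ by definition, the bound holds immediately with no dependence on $\theta$.

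For $x<0$, I would set $y:=-x>0$, so the claim becomes $e^{2\theta y-e^{y}}\leq C_\theta$, i.e.\ it suffices to show $\sup_{y\geq 0}g(y)\leq \ln C_\theta$ where $g(y):=2\theta y-e^{y}$. Since $g'(y)=2\theta-e^{y}$, the unconstrained maximizer is $y^{*}=\ln(2\theta)$ with value $g(y^{*})=2\theta\ln(2\theta)-2\theta$. I would then distinguish two subcases. If $2\theta\ln(2\theta)-2\theta\leq 0$ (equivalently $2\theta\leq e$): either $y^{*}<0$, in which case $g$ is strictly decreasing on $[0,\infty)$ with $g(0)=-1<0$; or $0\leq y^{*}\leq 1$, in which case $\sup_{y\geq 0}g(y)=g(y^{*})\leq 0$. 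In either situation $\sup_{y\geq 0}e^{g(y)}\leq 1=C_\theta$. If instead $2\theta\ln(2\theta)-2\theta>0$ (equivalently $2\theta>e$), then $y^{*}>0$ lies in the domain, so $\sup_{y\geq 0}e^{g(y)}=e^{2\theta\ln(2\theta)-2\theta}=C_\theta$, and the inequality follows.

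Combining the two cases gives $f_\theta(x)\leq C_\theta e^{-\theta|x|}$ for all $x\in\mathbb{R}$. The proof is essentially a one-variable calculus exercise and presents no real obstacle; the role of the $\max(1,\cdot)$ in the definition of $C_\theta$ is precisely to absorb the regime $2\theta\leq e$ where the unconstrained maximum of $g$ is attained at a negative value of $y$ outside the effective domain.
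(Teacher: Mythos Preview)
Your proof is correct and follows essentially the same approach as the paper: split into $x\geq 0$ (trivial since $e^{-e^{-x}}\leq 1$) and $x<0$, then for $x<0$ reduce to bounding $\sup_{y\geq 0}(2\theta y-e^{y})$ by $\ln C_\theta$. The paper's proof is simply a terser version of yours, compressing your optimization argument into the single observation $C_\theta\geq\exp\bigl(\max_{y\geq 0}(2\theta y-e^y)\bigr)$.
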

\begin{proof}[Proof of Lemma \ref{lem:bounds of log gamma density by exponential}]
    Observe that $C_{\theta}\geq\exp\lb\max_{y\geq 0}(2\theta y-e^y)\rb$. For $x\geq 0$,  $f_{\theta}(x)=e^{-\theta x-e^{-x}}<e^{-\theta x}\leq C_{\theta}e^{-\theta|x|}$. For $x<0$, $C_{\theta}\geq e^{2\theta|x|-e^{|x|}}$ and hence $f_{\theta}(x)=e^{\theta|x|-e^{|x|}}\leq C_{\theta}e^{-\theta|x|}$.
\end{proof}

To control the ratio of integrals in \eqref{eq:quotient expression of P in proof of main theorem LG} we first prove a bound on the norm of
$H^{\bbb,u,v}\lb \bX\rb$. 

\begin{lemma}\label{lemma:bounds on initial weight by exponential}
    For any $u_0\in(-\min(\bbb),\infty)$, there exists an open disk $U\subset\CC$ containing $u_0$, and $C,r>0$ such that
\be\label{eq:exponential bound of weight}
    \left\vert H^{\bbb,u,v}\lb \bX\rb\right\vert\leq C\exp\bigg(-r\Big(\sum_{j=1}^N|X_1{(j)}|+\sum_{j=1}^N|X_2{(j)}|\Big)\bigg).\ee 
\end{lemma}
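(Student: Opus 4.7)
The plan is as follows. First I would reduce to real $u$. Since $v\in\RR$ is fixed and $S=\sum_j e^{A_j}>0$ is real, for any $u\in\CC$ we have
\[|H^{\bbb,u,v}(\bX)| = S^{-\Re(u+v)} \prod_j f_{\b_j+v}(X_1(j))\,f_{\b_j+\Re u}(X_2(j)) = H^{\bbb,\Re u,v}(\bX),\]
so it suffices to prove the bound for the real-valued function $H^{\bbb,u',v}(\bX)$ uniformly as $u'=\Re u$ varies in a small interval. Accordingly, I would choose the disk $U$ of small radius $\delta$ around $u_0$ such that $\b_j+\Re u\geq\eta$ for some $\eta>0$ and all $u\in U$ and $j$, and so that $|u|\leq|u_0|+\delta$ on $U$.

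Second, I would bound the product factor using Lemma \ref{lem:bounds of log gamma density by exponential}, which gives $f_\theta(x)\leq C_\theta e^{-\theta|x|}$ for $\theta>0$. Since $\b_j+v>0$ (a standing hypothesis) and $\b_j+\Re u\geq \eta>0$ (by choice of $U$), this yields
\[\prod_j f_{\b_j+v}(X_1(j))\,f_{\b_j+\Re u}(X_2(j)) \leq K\exp\!\Big(-\gamma_1\textstyle\sum_j|X_1(j)|-\gamma_2\sum_j|X_2(j)|\Big),\]
where $\gamma_1=\min_j(\b_j+v)>0$, $\gamma_2=\min_j(\b_j+\Re u)\geq\eta$, and $K=\prod_j C_{\b_j+v}C_{\b_j+\Re u}$ is uniformly bounded on $U$.

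Third, I would bound the factor $|S^{-(u+v)}|$. Setting $T:=\sum_j(|X_1(j)|+|X_2(j)|)$ and using $|A_j|\leq T$, the simple two-sided estimate $Ne^{-T}\leq S\leq Ne^{T}$ gives $|S^{-(u+v)}|\leq N^{|\Re(u+v)|}e^{|\Re(u+v)|T}$ uniformly on $U$. Combining with the product bound yields $|H^{\bbb,u,v}(\bX)|\leq C\exp\!\big((|\Re(u+v)|-\min(\gamma_1,\gamma_2))T\big)$; if $|u_0+v|<\min(\gamma_1,\gamma_2)$ then shrinking $\delta$ produces the desired exponential decay with some $r>0$.

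The main obstacle is the regime $|u_0+v|\geq\min_j\min(\b_j+v,\b_j+u_0)$, where the crude bound on $S$ is too lossy. The fix exploits the fact that $X_1(k)$ appears in $A_j$ only for $j>k$ and $X_2(k)$ only for $j<k$, so one can write $S=B_k+e^{-X_1(k)}C_k$ with $B_k,C_k>0$ independent of $X_1(k)$ (and analogously for $X_2(k)$). In the direction $X_1(k)\to+\infty$, $S$ stays bounded below by $B_k$, so $|S^{-(u+v)}|$ is controlled, while $f_{\b_k+v}(X_1(k))$ provides decay at rate $\b_k+v>0$; in the direction $X_1(k)\to-\infty$, $S$ grows only like $e^{|X_1(k)|}$, but the super-exponential tail $e^{-e^{-X_1(k)}}=e^{-e^{|X_1(k)|}}$ sitting inside $f_{\b_k+v}(X_1(k))$ more than absorbs any polynomial-in-$|u+v|$ power of this growth. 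Handling each variable and sign in turn---or, equivalently, using the alternative rewriting $H=\tilde S^{-(u+v)}\prod_j f_{\b_j+v}(X_1(j))f_{\b_j-v}(X_2(j))$ together with a Jensen-type inequality $\tilde S^{-(u+v)}\leq N^{-(u+v)-1}\sum_k e^{-(u+v)w_k}$ (for $u+v>0$) to distribute the $\tilde S^{-(u+v)}$ factor across the $f$'s---yields a uniform exponential bound with an $r>0$ depending on $u_0,v,\bbb$, at the cost of a somewhat intricate bookkeeping of cases.
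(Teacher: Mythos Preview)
Your first two reductions---passing to real $u$ via $|H^{\bbb,u,v}(\bX)|=H^{\bbb,\Re u,v}(\bX)$, and bounding the product $\prod_j f_{\beta_j+v}(X_1(j))f_{\beta_j+\Re u}(X_2(j))$ through Lemma~\ref{lem:bounds of log gamma density by exponential}---match the paper exactly. The crude two-sided estimate $Ne^{-T}\le S\le Ne^{T}$ is correct but, as you already note, only closes the argument when $|\Re u+v|<\min(\gamma_1,\gamma_2)$; this does not cover the general statement.

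Your final paragraph contains the right ingredients but is not a proof as written. The variable-by-variable decomposition $S=B_k+e^{-X_1(k)}C_k$ does not directly yield a product-form bound $C\prod_j e^{-r|X_1(j)|-r|X_2(j)|}$, because $B_k$ still depends on all remaining variables and can be arbitrarily small; invoking the super-exponential tail in one coordinate does not by itself control the joint behaviour. Your Jensen remark for $\Re u+v>0$ is closer to what is needed and is essentially the paper's route, but stated more simply: when $\Re u+v>0$ one bounds $S$ below by a \emph{single} summand (the $j=N$ term if $\Re u\le 0$, the $j=1$ term if $v\le 0$, and a product of two such terms if both $\Re u,v>0$). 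This absorbs $S^{-(\Re u+v)}$ into the $f$-factors by merely shifting their parameters (e.g.\ $f_{\beta_j+v}\mapsto f_{\beta_j-\Re u}$), and the shifted parameters remain positive in each case. For $\Re u+v\le 0$ the paper instead expands $S^{-(\Re u+v)}$ via the subadditivity $(\sum_j a_j)^p\le C_{N,p}\sum_j a_j^p$ with $p=-(\Re u+v)\ge 0$, producing a finite sum of terms each of which again only shifts some $f$-parameters. In every case Lemma~\ref{lem:bounds of log gamma density by exponential} then finishes. Your Jensen inequality covers only the $\Re u+v>0$ branch and you do not state the subadditivity step for $\Re u+v\le 0$; filling this in (and choosing the right summand in each sign configuration) is precisely the ``intricate bookkeeping'' you anticipate, and the paper's four-case split is the clean way to organize it.
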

\begin{proof} Observe that, when $u$ can take complex values we have (where $\ru$ is the real part of $u$)
\be\label{eqn:normH}
|H^{\bbb,u,v}(\bX)|=\bigg(\sum_{j=1}^N e^{-\sum_{i=j+1}^NX_2(i)-\sum_{i=1}^{j-1}X_1(i)}\bigg)^{-\ru-v} \prod_{j=1}^N f_{\b_j+v}\lb X_1(j)\rb \prod_{j=1}^Nf_{\b_j+\ru}\lb X_2(j)\rb.
\ee
\begin{enumerate}[wide, labelwidth=0pt, labelindent=0pt]
    \item [(1)] When $\ru+v\leq 0$, using $(x+y)^a\leq \max(1,2^{a-1})(x^a+y^a)$ for any $x,y,a>0$ to expand the first term in \eqref{eqn:normH} (cf. \eqref{eq:inequality in proof of finiteness LG}), we obtain 
     $$
            \left\vert H^{\bbb,u,v}\lb \bX\rb\right\vert\leq  C\sum_{\ell=1}^N\prod_{j=1}^{\ell-1}f_{\b_j+v-\lb\Re u+v\rb}\lb X_1(j)\rb\prod_{j=\ell}^Nf_{\b_j+v}\lb X_1(j)\rb 
              \prod_{j=1}^\ell f_{\b_j+\Re u}\lb X_2(j)\rb\prod_{j=\ell+1}^Nf_{\b_j+\Re u-(\Re u+v)}\lb X_2(j)\rb.
     $$
    \item [(2)] When $\Re u+v> 0$ and $\Re u\leq 0$. 
    \begin{equation*}
        \begin{split}
           \left\vert H^{\bbb,u,v}\lb \bX\rb\right\vert
            \leq&\lb e^{-\sum_{i=1}^{N-1}X_1(i) }\rb^{-(\Re u+v)} \prod_{j=1}^N f_{\b_j+v}\lb X_1(j)\rb\prod_{j=1}^N f_{\b_j+\Re u}\lb X_2(j)\rb\\
            =& f_{\b_N+v}\lb X_1(N)\rb\prod_{j=1}^{N-1} f_{\b_j-\Re u}\lb X_1(j)\rb\prod_{j=1}^N f_{\b_j+\Re u}\lb X_2(j)\rb.
        \end{split}
    \end{equation*}
    \item [(3)] When $\Re u+v> 0$ and $v\leq 0$.
    \begin{equation*}
        \begin{split}
            \left\vert H^{\bbb,u,v}\lb \bX\rb\right\vert\leq&\lb e^{- \sum_{i=2}^NX_2(i)   }\rb^{-(\Re u+v)} \prod_{j=1}^N f_{\b_j+v}\lb X_1(j)\rb\prod_{j=1}^N f_{\b_j+\Re u}\lb X_2(j)\rb\\
            =& f_{\b_1+\Re u}\lb X_2{(1)}\rb\prod_{j=1}^N f_{\b_j+v}\lb X_1(j)\rb\prod_{j=2}^N f_{\b_j-v}\lb X_2(j)\rb.
        \end{split}
    \end{equation*}
    \item [(4)] When $\Re u>0$ and $v>0$.  We use
$$\lb \sum_{j=1}^N e^{-\sum_{i=j+1}^NX_2(i)-\sum_{i=1}^{j-1}X_1(i)}\rb^{-\ru-v}\leq 
\lb e^{\sum_{i=1}^{N-1}X_1(i) }\rb^v \lb e^{\sum_{i=2}^NX_2(i) }\rb^{\ru}.$$
Hence
$$\left\vert H^{\bbb,u,v}\lb \bX\rb\right\vert\leq f_{\b_N+v}\lb  X_1{(N)} \rb f_{\b_1+\Re u}\lb X_2{(1)}\rb 
            \prod_{j=1}^{N-1} f_{\b_j}\lb X_1{(j)} \rb\prod_{j=2}^N f_{\b_j}\lb X_2{(j)} \rb.$$
\end{enumerate}
In each of the above four cases we can then use Lemma \ref{lem:bounds of log gamma density by exponential} to arrive at the claimed bound \eqref{eq:exponential bound of weight}. In particular, the condition that $u_0\in(-\min(\bbb),\infty)$ is necessary here to ensure that we can find an open disk $U$ containing $u_0$ such that for all $u\in U$,  $\b_j+\Re u>0$ (as is necessary to apply Lemma \ref{lem:bounds of log gamma density by exponential} (recall $\theta$ there must be strictly positive).
\end{proof}

\begin{proof}[Proof of Proposition \ref{prop:conclusion of step 1}] 
Recall the formula \eqref{eq:quotient expression of P in proof of main theorem LG} for $\cP_{\lgg}^{\bbb,u,v}\lb \bX_1 \rb$.
The function $u\mapsto H^{\bbb,u,v}(\bX)$ is holomorphic in all of $\CC$ for any fixed choice of $\bbb,v$ and $\bX\in \RR^{2N}$ (the only place $u$ comes in is through the boundary weight which is an exponential in $u$). 
The denominator in \eqref{eq:quotient expression of P in proof of main theorem LG} is an integral of this holomorphic function over $\bX\in\RR^{2N}$. Lemma \ref{lemma:bounds on initial weight by exponential} in conjunction with  Lemma \ref{lem:Morena argument} imply that for any $u_0\in(-\min(\bbb),\infty)$, there exists an open disk $u_0\in U\subset \CC$ on which the denominator is holomorphic as a function of $u$. Moreover, when $u\in U$ is real, the denominator is strictly positive (it is an integral of strictly positive weights), thus we can ensure that the open disk $U$ was chosen small enough so that the modulus of the denominator is bounded away from 0. This means that if we can show that the numerator in \eqref{eq:quotient expression of P in proof of main theorem LG} satisfies Property \eqref{Property 1 class X} and \eqref{Property 2 class X}  in Definition \ref{defn:class X of functions}, then it will follow that $U\times\RR^N\ni (u,\bX_1)\mapsto \cP_{\lgg}^{\bbb,u,v}\lb \bX_1 \rb$ is in $\X^N(U)$ as desired. This is because the inverse of the denominator is holomorphic in $U$ (seeing the denominator is bounded from 0 and holomorphic itself).
Considering now the numerator in \eqref{eq:quotient expression of P in proof of main theorem LG}, Property \eqref{Property 1 class X} in Definition \ref{defn:class X of functions} follows from integrating the bound \eqref{eq:exponential bound of weight} in Lemma \ref{lemma:bounds on initial weight by exponential} over $\bX_2$.
Analyticity \eqref{Property 2 class X} in $u\in U$ for any fixed $\bX_1$ follows from Lemma \ref{lemma:bounds on initial weight by exponential} and Lemma \ref{lem:Morena argument}. 
\end{proof}

\subsubsection{Proof of Proposition \ref{prop:conclusion of step 3}}
\label{subsubsec:proof of proposition conclusion of step 3}
We will omit the superscript and subscript in the transition probability density $\cU_{\lgg}^{\bbb,u,v}$ and write $\cU:=\cU_{\lgg}^{\bbb,u,v}$.
By the definition of log-gamma polymer, we can decompose $\cU$ as
\be\label{eq:decomposition of operator U}\cU=\cUr\circ\cUb(N-1)\circ\dots\circ\cUb(1)\circ\cUl,\ee
(recall \eqref{eq:cucomp}) where $\cUl$ is the left boundary local move with parameter $u+\b_1$, $\cUb(j)$ is the bulk local move with parameter $\b_1+\b_{j+1}$, $1\leq j\leq N-1$, and $\cUr$ is the right boundary local move with 
parameter $v+\b_1$.

The left boundary transition density $\cUl$ replaces $X_1{(1)}$ by an independent $-\log\Ga^{-1}(u+\b_1)$ random variable and leaving other factors unchanged. In terms of  its action on functions (recall \eqref{eq:cucomp}),
       \be\label{eq:density transition left boundary operator}
       \lb\cUl\phi\rb\lb x_1',x_2,\dots,x_N\rb= \frac{1}{\Gamma(u+\b_1)} f_{u+\b_1}(-x_1')\int_{\RR}\phi(x_1,\dots,x_N)\d x_1.\ee
       
    Similarly, the right boundary transition density $\cUr$ replaces $X_1{(N)}$ by an independent $\log\Ga^{-1}(v+\b_1)$ random variable and leaving other factors unchanged. In terms of its action on functions,  
       \be\label{eq:density transition right boundary operator}
       \lb\cUr\phi\rb\lb x_1,x_2,\dots,x'_N\rb= \frac{1}{\Gamma(v+\b_1)} f_{v+\b_1}\lb x_N'\rb\int_{\RR}\phi(x_1,\dots,x_N)\d x_N.\ee
       
For $1\leq j\leq N-1$, the bulk transition density $\cUb(j)$ only acts on $X_1{(j)}$ and $X_1{(j+1)}$ and leaves other factors unchanged. By the recurrence relation we have that $e^{L_1'{(j)}}=\vo\lb e^{L_1{(j-1)}}+e^{L_1{(j+1)}}\rb$ where $\vo\sim\Ga^{-1}(\b_1+\b_{j+1})$.
Hence $e^{X_1'{(j)}}=\vo\lb 1+e^{X_1{(j)}+X_1{(j+1)}}\rb$, or in other words 
\be\label{eq:transform one}
X_1'{(j)}=\log\vo+\log\lb 1+e^{X_1{(j)}+X_1{(j+1)}}\rb.\ee
Since $L'{(j-1)}=L{(j-1)}$ and $L'{(j+1)}=L{(j+1)}$,
we also have \be\label{eq:transform two}X_1'{(j+1)}=X_1{(j)}+X_1{(j+1)}-X_1'{(j)}.\ee
The transformation $\lb X_1{(j)},X_1{(j+1)}\rb\mapsto\lb X_1'{(j)},X_1'{(j+1)}\rb$ by \eqref{eq:transform one} and \eqref{eq:transform two} can be written in two steps:
$$\lb X_1{(j)},X_1{(j+1)}\rb\mapsto\lb X_1''{(j)},X_1''{(j+1)}\rb\mapsto\lb X_1'{(j)},X_1'{(j+1)}\rb,$$ 
where the two steps are given by 
\begin{align*}
\big(X_1''{(j)},X_1''{(j+1)} \big)&=\big(X_1{(j)}+X_1{(j+1)}, \log\vo\big),\\
\big(X_1'{(j)},X_1'{(j+1)}\big) &=\Big( X_1''{(j+1)}+\log\big(1+e^{X_1''{(j)}}\big),X_1''{(j)}-X_1''{(j+1)}-\log\big( 1+e^{X_1''{(j)}}\big)\Big).
\end{align*}
Thus $\cUb(j)$ is the composition of two transition operators $\cUb(j)=\cU^2(j)\circ\mathscr \cU^1(j)$. The first acts as 
\be\label{eq:expression for U1}\lb\cU^1(j)\phi\rb\lb x_1,\dots,x_j'',x_{j+1}'',\dots,x_N\rb=\frac{1}{\Gamma(\b_1+\b_{j+1})} f_{\b_1+\b_{j+1}}\lb x_{j+1}''\rb\int_{\RR}\phi\lb x_1,\dots,x_j''-y,y,\dots,x_N\rb\d y.\ee
The second operator $\cU^2(j)$  acts by applying a coordinate change whereby
$$x_j'=x_{j+1}''+\log\lb1+e^{x_j''}\rb,\quad x_{j+1}'=x_j''-x_{j+1}''-\log\lb1+e^{x_j''}\rb.$$
The inverse of this change of coordinates is given by
$$x_j''=x_j'+x_{j+1}',\quad x_{j+1}''=x_j'-\log\lb1+e^{x_j'+x_{j+1}'}\rb,$$
and the Jacobian $\det\begin{bmatrix}\frac{\partial x_k''}{\partial x_l'}\end{bmatrix}_{k,l\in\{j,j+1\}}=-1$. 
Therefore $\cU^2(j)$ acts on functions $\psi$ as
\begin{equation}\label{eqn:U2action}
    \begin{split}
        \lb\cU^2(j)\psi\rb\lb x_1,\dots,x_j',x_{j+1}',\dots,x_N\rb
&=\psi\lb x_1,\dots,x_j'',x_{j+1}'',\dots,x_N\rb\left|\det\begin{bmatrix}\frac{\partial x_k''}{\partial x_l'}\end{bmatrix}_{k,l\in\{j,j+1\}}\right|\\
&=\psi\lb x_1,\dots,x_j'+x_{j+1}',x_j'-\log\lb1+e^{x_j'+x_{j+1}'}\rb,\ldots,x_N\rb.
    \end{split}
\end{equation}
Therefore, using \eqref{eqn:U2action} and then \eqref{eq:expression for U1} we find that $\cUb(j)\phi$ acts on functions $\phi$ by 
\be   \label{eq:density transition bulk operator}
       \begin{split}
\lb\cUb(j)\phi\rb\lb x_1,\dots,x'_j,x'_{j+1},\dots,x_N\rb
=&\lb\cU^2(j)\lb\cU^1(j)\phi\rb\rb\lb x_1,\dots,x'_j,x'_{j+1},\dots,x_N\rb\\
           =&\lb\cU^1(j)\phi\rb\lb x_1,\dots,x_j'+x_{j+1}',x_j'-\log\lb1+e^{x_j'+x_{j+1}'}\rb,\dots,x_N\rb\\
           =&\frac{1}{\Gamma(\b_1+\b_{j+1})} f_{\b_1+\b_{j+1}}\lb x'_j-\log\lb1+e^{x'_j+x'_{j+1}}\rb\rb\\
           &\times\int_{\RR}\phi\lb x_1,\dots,x'_j+x'_{j+1}-y,y,\dots,x_N\rb\d y.
       \end{split}
\ee

We observe from formulas \eqref{eq:density transition left boundary operator}, \eqref{eq:density transition right boundary operator} and \eqref{eq:density transition bulk operator} that the actions of transition probability densities $\cUl$, $\cUb(j)$ and $\cUr$ on a probability density function $\phi$ can be factorized as a sequence of basic operations starting from $\phi$, involving integration, change of variables, and multiplication with the basic function $f_\theta$. 
The next two lemmas prove that these operators preserve the function spaces $\X^m(U)$ from Definition \ref{defn:class X of functions}.

\begin{lemma}\label{lemma:operations preserving class X}
    We have the following properties of the function spaces $\X^m(U)$ from Definition \ref{defn:class X of functions}. As a convention we will simply say that a function $g_u(\x)$ is in $\X^m(U)$ and likewise when we take some integrals, we will treat the result as a function of $u$ and the remaining variables. 
    \begin{enumerate}[wide, labelwidth=0pt, labelindent=0pt]
        \item \label{class X preserved under integration}
If $g_u(x_1,\dots,x_m)$ is in $\X^m(U)$, then $\int_{\RR}g_u(x_1,\dots,x_m)\d x_1$ is in $\X^{m-1}(U)$.
        \item\label{class X preserved under convolution}
    If $g_u(x_1,\dots,x_m)$ is in $\X^m(U)$ then $\int_{\RR}g_u(x_2-y,y,x_3,\dots,x_m)\d y$ is in $\X^{m-1}(U)$.
        \item\label{class X preserved under multiplication}
If $g_u(x_2,\dots,x_m)$ is in $\X^{m-1}(U)$ and $r_u(x_1)$ is in $\X^{1}(U)$ then
    $r_{u}(x_1)g_u(x_2,\dots,x_m)$ is in $\X^{m}(U)$.
    \item\label{f_u in class X}
    Recall $f_\theta(x):=e^{-\theta x-e^{-x}}$. 
    Assume that $U$ is an open disk such that there are $c\in\CC$ and $\ep>0$ satisfying $U\subset\left\{ z\in\CC: \Re z>\ep-c\right\}$. Assume $\Re\gamma>0$.
    Then $\frac{1}{\Gamma(u+c)}f_{u+c}(-x)$ and $\frac{1}{\Gamma(\gamma)}f_\gamma(x)$ on $\RR$ are in $\X^1(U)$.
    \end{enumerate}
\end{lemma}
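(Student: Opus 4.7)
The plan is to verify the two defining properties of $\X^m(U)$---the uniform exponential decay bound and the pointwise holomorphicity in $u\in U$---separately for each of the four items. Items (1)--(3) are routine consequences of the definition, while item (4) requires an explicit estimate on $f_\gamma$.

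For item (1), integrating the bound $|g_u(\x)|\leq Ce^{-r\sum_i|x_i|}$ in $x_1$ leaves $(2C/r)e^{-r\sum_{i\geq 2}|x_i|}$, providing the required decay. Holomorphicity follows from Lemma \ref{lem:Morena argument}: the uniform bound dominates $\sup_{u\in U}|g_u(x_1,\dots,x_m)|$ by a function integrable in $x_1$, and $g_u$ is pointwise holomorphic in $u$ by assumption. For item (2), the triangle inequality gives $|x_2-y|+|y|\geq |x_2|$ and $|x_2-y|+|y|\geq |y|$, so for any $\ep\in(0,1)$,
\[
r(|x_2-y|+|y|)\geq r(1-\ep)|x_2|+r\ep|y|,
\]
and the $y$-integral yields a constant multiple of $e^{-r(1-\ep)|x_2|}$ while decay in the remaining variables is preserved; holomorphicity is again handled by Lemma \ref{lem:Morena argument}. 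Item (3) is immediate: the product of the bounds $|r_u|\leq C_1e^{-r_1|x_1|}$ and $|g_u|\leq C_2e^{-r_2\sum_{i\geq 2}|x_i|}$ gives $\X^m(U)$-type decay at rate $\min(r_1,r_2)$, and a pointwise product of two functions holomorphic in $u$ is holomorphic.

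Item (4) is the main work. Since $\Gamma(\gamma)^{-1}f_\gamma(x)$ does not depend on $u$, its holomorphicity in $u$ is trivial and its exponential decay in $x$ is exactly Lemma \ref{lem:bounds of log gamma density by exponential}. For $\Gamma(u+c)^{-1}f_{u+c}(-x)=\Gamma(u+c)^{-1}e^{(u+c)x-e^x}$, the hypothesis that $U\subset\{\Re z>\ep-c\}$ gives $\Re(u+c)\geq\ep>0$ on $U$, so $\Gamma(u+c)$ is holomorphic and non-vanishing on $U$ with $|\Gamma(u+c)^{-1}|$ uniformly bounded there; since $u\mapsto e^{(u+c)x-e^x}$ is entire for each $x$, pointwise holomorphicity in $u$ follows. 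For the decay estimate I will split into $x\leq 0$ and $x\geq 0$: on $x\leq 0$, dropping the $-e^x$ term and using $\Re(u+c)\geq\ep$ gives $|f_{u+c}(-x)|\leq e^{\ep x}=e^{-\ep|x|}$; on $x\geq 0$, the super-exponential $-e^x$ term dominates any linear-in-$x$ contribution $\Re(u+c)x$ uniformly on the bounded set $U$, yielding for any fixed $r>0$ a bound $|f_{u+c}(-x)|\leq C_r e^{-rx}$ with $C_r$ independent of $u\in U$.

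The only technically delicate point is ensuring uniform-in-$u$ decay of $f_{u+c}(-x)$ as $x\to+\infty$, but this reduces immediately to noting that $\Re(u+c)$ is bounded above on the bounded set $U$, after which the inequality $\Re(u+c)x\leq e^x/2$ for $x$ large (uniformly in $u\in U$) gives any desired exponential decay rate.
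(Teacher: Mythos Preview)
Your proof is correct and follows essentially the same approach as the paper: verify the exponential bound and the pointwise holomorphicity separately, using Lemma \ref{lem:Morena argument} for the integrals and elementary product/multiplication arguments otherwise. The only minor stylistic difference is in item (4): the paper handles the decay of $\Gamma(u+c)^{-1}f_{u+c}(-x)$ in one line by observing $|f_\theta(x)|=f_{\Re\theta}(x)$ and invoking Lemma \ref{lem:bounds of log gamma density by exponential} (with $\Re(u+c)$ bounded away from $0$ and bounded above on $U$), whereas you give a direct case split $x\le 0$ versus $x\ge 0$ that reproduces the same estimate by hand.
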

\begin{proof}
    We first verify Property \eqref{Property 1 class X} from Definition \ref{defn:class X of functions} in the four cases of the lemma. For Case \eqref{class X preserved under integration} of the lemma, this follows by integrating the bound from Property \eqref{Property 1 class X} for $g_u$ over $x_1\in\RR$. Case \eqref{class X preserved under convolution} follows by observing that the convolution of $e^{-r|x|}$ with itself (for $r>0$) is bounded below by $Ce^{-\frac{r}{2}|x|}$ for some $C>0$. Case \eqref{class X preserved under multiplication} follows by multiplying the bounds from Property \eqref{Property 1 class X} for $g_u$ and $r_u$. Case \eqref{f_u in class X} follows from Lemma \ref{lem:bounds of log gamma density by exponential} in view of $|f_{\theta}(x)|=f_{\Re\theta}(x)$.
    Now we verify Property \eqref{Property 2 class X} from Definition \ref{defn:class X of functions} in the four cases of the lemma. Cases \eqref{class X preserved under integration} and \eqref{class X preserved under convolution} follow from integrating the bound in Property \eqref{Property 1 class X} for $\sup_{u\in U}|g_u|$ and using Lemma \ref{lem:Morena argument}. Case \eqref{class X preserved under multiplication} follows from multiplication of holomorphic functions being holomorphic. Case \eqref{f_u in class X} follows from the holomorphicity of Gamma functions.
\end{proof}
\begin{lemma}
    \label{lemma:bulk operator preserve class X}
    If $g_u(\x)$ is in $\X^m(U)$ and $\theta>0$, then 
    $$p_u(\x):=f_{\theta}\lb x_1-\log\lb 1+e^{x_1+x_2}\rb\rb\int_{\RR}g_u(x_1+x_2-y,y,x_3,\dots,x_m)\d y$$
    is in class $\X^m(U)$.
\end{lemma}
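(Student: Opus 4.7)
The plan is to decompose $p_u(\x) = f_\theta(s) \cdot I_u(x_1+x_2, x_3, \dots, x_m)$, where
\[
s := x_1 - \log\bigl(1+e^{x_1+x_2}\bigr) \quad\text{and}\quad I_u(y, x_3, \dots, x_m) := \int_{\RR} g_u(y-z, z, x_3, \dots, x_m)\,\d z,
\]
and to verify in turn the two defining properties of $\X^m(U)$ from Definition \ref{defn:class X of functions}.

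Property (2) (holomorphicity of $u\mapsto p_u(\x)$ in $U$ for fixed $\x$) will be immediate. Case (\ref{class X preserved under convolution}) of Lemma \ref{lemma:operations preserving class X} gives $I_u \in \X^{m-1}(U)$, so $u \mapsto I_u(x_1+x_2, x_3, \dots, x_m)$ is holomorphic on $U$ for any fixed $\x$; since the prefactor $f_\theta(s)$ does not depend on $u$, the product remains holomorphic.

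For Property (1), I will combine the $\X^{m-1}(U)$ bound
\[
|I_u(x_1+x_2, x_3, \dots, x_m)| \leq C \exp\bigl(-r(|x_1+x_2|+|x_3|+\dots+|x_m|)\bigr)
\]
with $|f_\theta(s)| \leq C_\theta\,e^{-\theta|s|}$ from Lemma \ref{lem:bounds of log gamma density by exponential}. It then suffices to control $|x_1|+|x_2|$ by $|s|$ and $|x_1+x_2|$. Observing that $x_1 - s = \log(1+e^{x_1+x_2})$, a quick case split on the sign of $x_1+x_2$ gives $|x_1 - s| \leq |x_1+x_2|+\log 2$; two triangle inequalities then yield $|x_1| \leq |s|+|x_1+x_2|+\log 2$ and $|x_2| \leq |s|+2|x_1+x_2|+\log 2$, so
\[
|x_1|+|x_2| \leq 3\bigl(|s|+|x_1+x_2|\bigr) + 2\log 2.
\]
Substituting into the product of exponential bounds produces decay in all of $x_1,\dots,x_m$ at rate at least $\min(\theta,r)/3$, which is Property (1).

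The main (modest) obstacle will be the algebraic step above: neither $|s|$ alone nor $|x_1+x_2|$ alone controls $|x_1|+|x_2|$, so the decay in the first two variables has to be recovered jointly from the two factors, and the prefactor $f_\theta(s)$ cannot be discarded. Everything else is a direct application of Lemmas \ref{lemma:operations preserving class X} and \ref{lem:bounds of log gamma density by exponential}.
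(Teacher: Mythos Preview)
Your proof is correct. The overall strategy matches the paper's: both reduce Property~(1) to the inequality
\[
f_\theta\bigl(x_1-\log(1+e^{x_1+x_2})\bigr)\,e^{-r|x_1+x_2|}\leq C\,e^{-\delta(|x_1|+|x_2|)}.
\]
The difference is in how this inequality is established. The paper expands $f_\theta(s)$ explicitly as $e^{-\theta x_1}(1+e^{x_1+x_2})^\theta e^{-e^{-x_1}}e^{-e^{x_2}}$, chooses a parameter $0<s<\min(\theta,r)$, and by a case split on the sign of $x_1+x_2$ bounds the product by $2^\theta f_s(x_1)f_{\theta-s}(-x_2)$ or $2^\theta f_{\theta-s}(x_1)f_s(-x_2)$, only then invoking Lemma~\ref{lem:bounds of log gamma density by exponential}. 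You instead apply Lemma~\ref{lem:bounds of log gamma density by exponential} immediately to get $f_\theta(s)\leq C_\theta e^{-\theta|s|}$ and reduce everything to the additive estimate $|x_1|+|x_2|\leq 3(|s|+|x_1+x_2|)+2\log 2$, which follows from $|x_1-s|=\log(1+e^{x_1+x_2})\leq|x_1+x_2|+\log 2$ and two triangle inequalities. Your route is a bit cleaner in that it avoids unpacking the double-exponential structure of $f_\theta$; the paper's route gives a slightly sharper decay constant (roughly $\min(\theta-s,s)$ versus your $\min(\theta,r)/3$), though this plays no role in the application.
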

\begin{proof}
Holomorphicity of $p_u(\x)$ (Property \eqref{Property 2 class X} from Definition \ref{defn:class X of functions}) follows from Case \eqref{class X preserved under convolution} in Lemma \ref{lemma:operations preserving class X}. 
Now we verify Property \eqref{Property 1 class X} from Definition \ref{defn:class X of functions}. Again, by Case \eqref{class X preserved under convolution} in Lemma \ref{lemma:operations preserving class X} we have
    $$\left\vert\int_{\RR}g_u(x_1+x_2-y,y,x_3,\dots,x_m)\d y\right\vert\leq Ce^{-r\lb|x_1+x_2|+\sum_{j=3}^m|x_j|\rb}$$
    for some $C,r>0$. Property \eqref{Property 1 class X} for $p_u$ will follow if for any $\theta,r>0$ there exists $C,\delta>0$ such that
        $$f_{\theta}\lb x-\log\lb 1+e^{x+y}\rb\rb e^{-r|x+y|}\leq Ce^{-\delta\lb|x|+|y|\rb}$$
        for any $x,y\in\RR$.
    Thus, to conclude this proof we demonstrate this claim.
        Choose $0<s<\min(\theta,r)$. Then
     \begin{equation*}
         \begin{split}
             f_\theta\lb x-\log(1+e^{x+y})\rb e^{-r|x+y|}&=e^{-\theta x}(1+e^{x+y})^\theta e^{-e^{-x}}e^{-e^y}e^{-r|x+y|}\\
             &\leq e^{-\theta x}(1+e^{x+y})^\theta e^{-e^{-x}}e^{-e^y}e^{-s|x+y|}.
         \end{split}
     \end{equation*}
If $x+y\geq 0$, then $e^{x+y}>1$ and $1+e^{x+y} \leq 2 e^{x+y}$. Thus,
\begin{equation*}
    \begin{split}
        e^{-\theta x}(1+e^{x+y})^\theta e^{-e^{-x}}e^{-e^y}e^{-s|x+y|}&\leq 2^\theta e^{-\theta x}e^{\theta (x+y)}e^{-e^{-x}}e^{-e^y}e^{-s(x+y)}\\
        &\leq 2^\theta e^{-sx-e^{-x}}e^{(\theta-s)y-e^y}=2^\theta f_s(x)f_{\theta-s}(-y).
    \end{split}
\end{equation*}
If $x+y<0$, then $e^{x+y}\in(0,1)$ and $1+e^{x+y}\leq 2$. Thus,
\begin{equation*}
    \begin{split}
        e^{-\theta x}(1+e^{x+y})^\theta e^{-e^{-x}}e^{-e^y}e^{-s|x+y|}&\leq 2^\theta e^{-\theta x}e^{-e^{-x}}e^{-e^y}e^{s(x+y)}\\
        &=2^\theta e^{-(\theta-s)x-e^{-x}}e^{sy-e^y}=2^\theta f_{\theta-s}(x)f_s(-y).
    \end{split}
\end{equation*}
Setting $\delta=\min(\theta-s,s)>0$ and using Lemma \ref{lem:bounds of log gamma density by exponential} in both cases we conclude the proof of the claim.
\end{proof}

\begin{proof}[Proof of Proposition \ref{prop:conclusion of step 3}]
By the decomposition \eqref{eq:decomposition of operator U}, we only need to prove that the actions of $\cUl$, $\cUb(j)$ and $\cUr$ preserve the function space $\X^N(U)$. Note that $\X^N(U)$ is invariant under permuting coordinates in $\x$, hence the results of Lemmas \ref{lemma:operations preserving class X} and \ref{lemma:bulk operator preserve class X} can be applied to other coordinates than the first ones. Thus, in light of \eqref{eq:density transition left boundary operator} and \eqref{eq:density transition right boundary operator},  $\cUl$ and $\cUr$ preserve $\X^N(U)$ due to Cases \eqref{class X preserved under integration}, \eqref{f_u in class X} and  \eqref{class X preserved under multiplication} in Lemma \ref{lemma:operations preserving class X}. In light of \eqref{eq:density transition bulk operator}, $\cUb(j)$, $1\leq j\leq N-1$, preserve  $\X^N(U)$ due to Lemma \ref{lemma:bulk operator preserve class X}.
\end{proof}

\subsection{Operator product ansatz}
\label{sec:MPA LG}

As in Section \ref{sec:MPA LPP}, the probability density $\mathrm{P}_{\lgg}^{\hP} \lb \bL_1\rb$ can be written as a matrix product, or more precisely a product of operators. We will use a  change of variables similar as  in Section \ref{sec:MPA LPP}: for $1\leq i\leq N$ let  $\kappa_i=\vert \lambda_1^{(i)}-\la_1^{(i-1)}\vert$ when $\mathbf p_i-\mathbf p_{i-1}=\rightarrow$, $\kappa_i=\vert \lambda_1^{(i-1)}-\la_1^{(i)}\vert$ when $\mathbf p_i-\mathbf p_{i-1}=\downarrow$,  and for $0\leq 1\leq N$ let $r_i=\la_1^{(i)}-\la_2^{(i)}$. 
For each $\kappa\in \mathbb R$ and $\alpha>0$, we define integral operators   $\mathbf O_{\kappa}^{\rightarrow}[\alpha]$ and $ \mathbf O_{\kappa}^{\downarrow}[\alpha]$,  acting on $C(\mathbb R)$,   with kernels given by, for $r,r'\in \mathbb R$,   
	\begin{equation*}
		\mathbf O_{\kappa}^{\rightarrow}[\alpha](r,r') =\wt_{\lgg}\left.\lb\raisebox{-30pt}{\begin{tikzpicture}
				\draw[thick] (0,0)--(0.6,0.6);
				\draw[thick] (0,-1.2)--(0.6,-0.6);
				\draw[dashed] (0,0)--(0.6,-0.6);
				\node[right] at (0.5,0.55) {\small $\la'_1$};
				\node[right] at (0.6,-0.6) {\small  $\la'_2$};
				\node[left] at (0,0) {\small   $\la_1$};
				\node[left] at (0,-1.2) {\small  $\la_2$};
				\node[above] at (0.3,0.3) {\textcolor{red}{\small   $\alpha$}};
				\node[above] at (0.3,-0.9) {\textcolor{red}{\small  $\alpha$}};
		\end{tikzpicture}}\rb\right\vert_{{\footnotesize \begin{matrix}
					\la_1-\la_2=r\\ \la'_1-\la'_2=r'\\ \la'_1-\la_1=\kappa
		\end{matrix}}}  =  e^{-\alpha(r+2\kappa-r')-e^{-\kappa}-e^{-(r'-\kappa)}-e^{-(r+\kappa-r')}},
	\end{equation*} 
	and 
	\begin{equation*}\mathbf O_{\kappa}^{\downarrow}[\alpha](r,r') =\wt_{\lgg}\left. \lb\raisebox{-30pt}{\begin{tikzpicture}
				\draw[thick] (-0.6,0.6)--(0,0);
				\draw[thick] (-0.6,-0.6)--(0,-1.2);
				\draw[dashed] (-0.6,-0.6)--(0,0);
				\node[left] at (-0.5,0.55) {\small   $\la_1$};
				\node[left] at (-0.5,-0.65) {\small  $\la_2$};
				\node[above] at (-0.3,0.3) {\textcolor{red}{\small  $\alpha$}};
				\node[above] at (-0.3,-0.9) {\textcolor{red}{\small  $\alpha$}};
				\node[right] at (0,0) {\small  $\la'_1$};
				\node[right] at (0,-1.2) {\small  $\la'_2$};
		\end{tikzpicture}}\rb\right\vert_{{\footnotesize \begin{matrix}
					\la_1-\la_2=r\\ \la'_1-\la'_2=r'\\ \la_1-\la'_1=\kappa
		\end{matrix}}} = \mathbf O_{\kappa}^{\rightarrow}[\alpha](r',r).
	\end{equation*} 
	We also define functions $\mathbf w(r)=e^{-u r}$ and $\mathbf v(r)=e^{-vr}$. 
This allows to write, for any path $\hP$ with vertices $\left(\p_i\right)_{0\leq i\leq N}$ and edge labels $\boldsymbol \beta$, 
$$ \wt_{\lgg}^{\PiP}(\bl) = \mathbf w(r_0) \left( \prod_{i=1}^N \mathbf O^{\p_i-\p_{i-1}}_{\kappa_i}[\beta_i](r_{i-1}, r_i)  \right) \mathbf v(r_N).$$ 
Thus, $\mathrm{P}_{\lgg}^{\hP}$ may be written in operator product form as 
\begin{equation}
	\mathrm{P}_{\lgg}^{\hP} \lb \bL_1\rb= \frac{1}{Z_{\rm LG}} \left\langle \mathbf w,  \left( \prod_{i=1}^N \mathbf O^{\p_i-\p_{i-1}}_{\kappa_i}[\beta_i] \right)\mathbf v \right\rangle, 
	\label{eq:MPAformLG}
\end{equation}
where $\kappa_i=\pm\left(   \bL_i-\bL_{i-1}\right)$ according to the direction of $\p_i-\p_{i-1}$.  
Proposition \ref{prop:finiteness of weights LG} ensures that the R.H.S. of \eqref{eq:MPAformLG} is well defined as long as $u+v>0$. 

As in Section \ref{sec:MPA LPP}, such an operator product is stationary if the operators $\mathbf O_{\kappa}$ satisfy some relations. 
\begin{proposition}\label{prop:MPA LG}
	Assume that a family of probability densities  $\mathrm{P}^{\hP}$ on $\mathbb R^N$, indexed by down right-paths $\hP=(\mathbf p_0,  \dots, \mathbf p_N)$ with edge labels $\boldsymbol \beta$, takes the form 
	\begin{equation}
		\mathrm{P}^{\hP} \lb \kappa_1, \dots, \kappa_N\rb= \frac{1}{Z} \left\langle \overline{\mathbf w},  \left( \prod_{i=1}^N \overline{\mathbf O}^{\p_i-\p_{i-1}}_{x_i}[\beta_i] \right)\overline{\mathbf v}\right\rangle, 
		\label{eq:MPAformLG2}
	\end{equation}
	where $\overline{\mathbf w}, \overline{\mathbf v}$ are elements of some inner product space and $\overline{\mathbf O}^{\rightarrow}_{\kappa}[\beta_i], \overline{\mathbf O}^{\downarrow}_{\kappa_i}[\beta_i]$ are operators acting on that space. Then, the density $\mathrm{P}^{\hP}$ is stationary for the log-gamma  dynamics, in the sense of \eqref{eq:compatibility of probability measure with LG}, if the following commutation relations hold for all $\kappa, \gamma\in \mathbb R$ (recall that $f_\theta(x):=e^{-\theta x-e^{-x}}$): 
	\begin{subequations}
		\label{eq:MPA LG}
		\begin{align}			
			\overline{\mathbf O}^{\rightarrow}_{\kappa}[\alpha]\overline{\mathbf O}^{\downarrow}_{\gamma}[\beta] &= f_{\alpha+\beta}(-\log(e^{-\kappa}+e^{-\gamma})) \int_{\mathbb R}dz 	\overline{\mathbf O}^{\downarrow}_{z+\gamma-\kappa}[\beta]\overline{\mathbf O}^{\rightarrow}_{z}[\alpha],\label{eq:MPAbulk LG}\\ 
			\overline{\mathbf w}^t \overline{\mathbf O}^{\downarrow}_{\kappa}[\alpha] &= f_{\alpha+u}(\kappa)  \int_{\mathbb R}dz  \overline{\mathbf w}^t \overline{\mathbf O}^{\rightarrow}_{z}[\alpha],\label{eq:MPAleft LG}\\ 
			\overline{\mathbf O}^{\rightarrow}_{\kappa}[\alpha]\overline{\mathbf v} &=f_{\alpha+v}(\kappa) \int_{\mathbb R}dz \overline{\mathbf O}^{\downarrow}_z[\alpha]\overline{\mathbf v}.\label{eq:MPAright LG}
		\end{align}
	\end{subequations}
\end{proposition}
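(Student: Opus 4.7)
The plan is to argue in parallel to Proposition~\ref{prop:MPA LPP}. First, I would observe that any down-right path $\hQ$ above $\hP$ is reached from $\hP$ by a sequence of the three elementary moves from Definition~\ref{defn: local moves}, and the composite transition density $\pU_{\lgg}^{\hP,\hQ}$ factorizes into the corresponding composition of single-step kernels $\rmUblg$, $\rmUllg$, $\rmUrlg$ furnished by Lemma~\ref{lem:marginal operator LG}. Hence it suffices to check that for each elementary move $\hP\mapsto\hQ$, a density $\mathrm{P}^{\hP}$ of the form~\eqref{eq:MPAformLG2} pushes forward to $\mathrm{P}^{\hQ}$ of the same form under the corresponding local kernel.

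For the bulk move replacing $\mathbf p_i$ by $\mathbf q_i$, the recurrence $e^{h(\mathbf q_i)}=\varpi(e^{h(\mathbf p_{i-1})}+e^{h(\mathbf p_{i+1})})$ with $\varpi\sim\Ga^{-1}(\alpha+\beta)$ from Lemma~\ref{lem:marginal operator LG} specifies the conditional distribution of the new free energy. Recasting in the increment coordinates $(\kappa,\gamma)$ attached to the two relevant edges, the difference $h(\mathbf p_{i+1})-h(\mathbf p_{i-1})$ is invariant under the move, which translates into the affine relation between the two new increments that becomes the $z+\gamma-\kappa$ subscript on the right-hand side of~\eqref{eq:MPAbulk LG}. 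The density $f_{\alpha+\beta}/\Gamma(\alpha+\beta)$ of $\log\varpi$, together with the Jacobian of the change of variables from free energies to increments and the shift coming from $\log(e^{h(\mathbf p_{i-1})}+e^{h(\mathbf p_{i+1})})$, would then combine to produce precisely the prefactor $f_{\alpha+\beta}(-\log(e^{-\kappa}+e^{-\gamma}))$ appearing in~\eqref{eq:MPAbulk LG}, with the integration over $z$ corresponding to marginalization over the remaining new increment. The overall $\Gamma$-normalizations are absorbed into the ambient normalizing constant $Z$.

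For the left boundary move, the update rule $e^{\pi_1}=\varpi e^{\kappa_1}$ with $\varpi\sim\Ga^{-1}(\alpha+u)$ from Lemma~\ref{lem:marginal operator LG} identifies the inserted first-edge increment as $\log\Ga^{-1}(\alpha+u)$-distributed, with density proportional to $f_{\alpha+u}$; this supplies exactly the prefactor $f_{\alpha+u}(\kappa)$ in~\eqref{eq:MPAleft LG}, while the integral $\int dz$ corresponds to marginalization over the old first increment (which is absorbed by the update). The right boundary move is handled identically via~\eqref{eq:MPAright LG} with $u$ replaced by $v$.

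The principal obstacle is the careful bookkeeping of signs and Jacobians in the bulk step. The signed free-energy increments along right and down edges have opposite signs, so the absolute-value variable $\kappa$ from~\eqref{eq:MPAformLG} must be tracked in each orientation case (right-then-down versus down-then-right), and the change of variables from $(h(\mathbf p_{i-1}), h(\mathbf q_i), h(\mathbf p_{i+1}))$ to the increment coordinates introduces a nontrivial Jacobian that must combine cleanly with the $\log\Ga^{-1}$ density to reproduce the stated prefactor. Once this accounting is correctly performed, the verification of each of~\eqref{eq:MPAbulk LG}--\eqref{eq:MPAright LG} becomes a direct algebraic match with the push-forward under the corresponding local kernel, completing the proof.
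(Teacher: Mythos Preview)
Your proposal is correct and follows essentially the same approach as the paper: reduce to single elementary moves via the factorization of $\pU_{\lgg}^{\hP,\hQ}$ into local kernels from Lemma~\ref{lem:marginal operator LG}, then verify that each of the relations \eqref{eq:MPAbulk LG}--\eqref{eq:MPAright LG} encodes precisely the push-forward of the matrix-product density under the corresponding $\rmUblg$, $\rmUllg$, $\rmUrlg$. The paper's proof is terser about the bookkeeping you flag as the ``principal obstacle'' (it simply notes that the new increments $\kappa,\gamma$ on $\hQ$ force $\varpi_{\mathbf q_i}=1/(e^{-\kappa}+e^{-\gamma})$, whose log has density $f_{\alpha+\beta}(-\log(e^{-\kappa}+e^{-\gamma}))$, while the old increments are constrained only by a fixed difference), but your more explicit tracking of signs and Jacobians amounts to the same computation.
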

\begin{proof}
	As in the proof of Proposition \ref{prop:MPA LPP} it suffices to check that $\mathrm{P}^{\hP}$ pushes forward to $\mathrm{P}^{\hQ}$ under the action of local operators  $\rmUblg, \rmUllg, \rmUrlg$ from Lemma \ref{lem:marginal operator LG}, where $\hQ$ is the corresponding updated path. Under the appropriate changes of variables, \eqref{eq:MPAbulk LG} implies the invariance with respect to $\rmUblg$. Indeed, if the local move transforms the point $\mathbf p_i$ on the path $\hP$ to a new point $\mathbf q_i$, the increments $\kappa_i=\kappa$ and $\kappa_{i+1}=\gamma$ may arise on $\hQ$  only if the weight $\o_{\mathbf q_i}=1/(e^{-\kappa}+e^{-\gamma})$ (which arises with density $f_{\beta_i+\beta_{i+1}}(-\log(e^{-\kappa}+e^{-\gamma}))$), while the increments on $\hP$ before the local move can be anything such that $\kappa_{i+1}-\kappa_i=x-y$. Likewise, \eqref{eq:MPAleft LG} implies invariance with respect to $\rmUllg$, and \eqref{eq:MPAright LG} implies invariance with respect to $\rmUrlg$.
\end{proof}
Again, as in Section \ref{sec:MPA LPP}, it can be checked that the operators $\mathbf O_{\kappa}$ defined above provide a representation of the quadratic algebra defined by the relations \eqref{eq:MPA LG}.

\section{Stationary measure of the open Kardar-Parisi-Zhang equation}
\label{sec:stationary measure KPZ}
\subsection{The open KPZ equation}
\label{subsec:The open KPZ equation}
The open Kardar-Parisi-Zhang (KPZ) equation \eqref{eq:KPZ} on an interval (Section \ref{subsec: stationary measure open KPZ intro}) is rigorously defined using the following Hopf-Cole solution.
\begin{definition}[Hopf-Cole solution to the open-KPZ equation \cite{corwin2016open}]
\label{def:Hopf-Cole transform}
Let $C(E,F)$ denote the space of continuous functions from $E$ to $F$, equipped with the uniform topology. Fix $u,v\in \mathbb R$. A stochastic process $h(t,x)\in C(\mathbb R_{\geq 0}\times [0,L], \mathbb R)$  is a solution to \eqref{eq:KPZ} with initial condition $h(0,x)= h_0(x)$ if $h(t,x)=\log Z(t,x)$ where $Z(t,x)$ is the solution of the  multiplicative noise stochastic heat equation
\begin{equation}
\begin{cases} 
    \partial_t Z(t,x) = \frac{1}{2} \partial_{xx} Z(t,x) +  Z(t,x) \xi(t,x), \\ 
    \partial_x Z(t,x)\Big\vert_{x=0} = \mu Z(t,0), \\ 
    \partial_x h(t,x)\Big\vert_{x=L} = \nu Z(t,0), 
    \end{cases}
    \tag{SHE$_{u,v}$}
    \label{eq:SHE}
\end{equation}
with $\mu=u-1/2$, $\nu=-(v-1/2)$ and  initial condition $Z(0,x)=e^{h_0(x)}$.  A stochastic process $Z(t,x)\in C(\mathbb R_{\geq 0}\times [0,L], \mathbb R_{>0})$ is a solution to \eqref{eq:SHE} if for all $t> 0$ and $x\in [0,L]$, it is measurable with respect to the filtration of the space-time white noise  $\xi$ generated up to and including time $t$ and if it satisfies
\begin{equation}
    Z(t,x) = \int_{0}^L p_{\mu, \nu}(0,y; t,x)Z(0,y) \d y + \int_0^t \int_0^L  p_{\mu, \nu} (s,y; t,x) Z(s,y)\xi(\d s, \d y),
\end{equation}
where the last integral is a stochastic integral in time in the Itô sense, and $p_{\mu, \nu}(s,y; t,x)$ is the heat kernel on $[0,L]$ with Robin type boundary condition, that is the unique solution to 
\begin{equation}\label{eq:heat kernel}
    \begin{cases}
    \partial_t p_{\mu, \nu}(s,y; t,x) = \frac 1 2 \partial_{xx} p_{\mu, \nu}(s,y; t,x),\\
    \partial_x p_{\mu, \nu}(s,y; t,x) \Big\vert_{x=0}=\mu p_{\mu, \nu}(s,y; t,0) \text{ for all }s<t, y\in [0,L],\\ 
        \partial_x p_{\mu, \nu}(s,y; t,x) \Big\vert_{x=L}=\nu p_{\mu, \nu}(s,y; t,L) \text{ for all }s<t, y\in [0,L],\\
        \lim_{t\to s}  p_{\mu, \nu}(s,y; t,x)=\delta(x-y) \text{ in the weak sense on }\mathbb L^2([0,L]). 
    \end{cases}
\end{equation}
\end{definition}
\cite[Proposition 2.7]{corwin2016open}, \cite[Proposition 4.2]{parekh2017kpz} show that \eqref{eq:SHE} admits a unique  solution (subject to a finite second moment condition) that is almost surely positive so that its logarithm is finite.  
\subsection{From discrete to continuous polymers}
\label{subsec:From discrete to continuous polymers}
As discussed in Section \ref{subsec: stationary measure open KPZ intro}, under intermediate disorder scaling the partition function of discrete directed polymer models converge to solutions of the stochastic heat equation. This was demonstrated for full-space polymers in \cite{alberts2014intermediate}. This was extended to the half-space setting in \cite{wu2018intermediate,parekh2019positive, barraquand2022stationary}. Based on these works it is clear how to formulate the convergence of the partition function of the log-gamma polymer model on a strip to a solution of \eqref{eq:SHE}.

Let  $z(n,m)$ be the log-gamma polymer partition function defined in \eqref{eq:defloggammapartitionfunction} with some initial condition $z_0$ on the horizontal path $\mathcal P_h$. We fix $L>0$ and scale the width of the strip and the bulk parameter as well as defined the rescaled free energy in terms of a parameter $\nn>0$ going to zero as 
\begin{equation}
\label{eq:scalings}
\alpha=\frac{1}{2}+\nn^{-1},\qquad N=\nn^{-1} L,\qquad  h^{(\nn)}(t,x):= \log\left( \nn^{-\nn^{-2} t+\nn^{-1} x}\,\cdot\, z\left(\frac{\nn^{-2} t}{2}+\nn^{-1} x, \frac{\nn^{-1} t }{2}\right) \right)
\end{equation} 
This defines $h^{(\nn)}(t,x)$ for $t$ and $x$ so that $\nn^{-2}t/2\in \mathbb Z_{\geq 0}$ and $\nn^{-1} x\in \llbracket 0,\nn^{-1} L\rrbracket$, and we extend $h^{(\nn)}$  to all $(t,x)\in \mathbb R_{\geq 0}$ by linear interpolation. 
We have used scalings very similar with those used in \cite[Theorem 4.1]{barraquand2022stationary}, except that $\nn$ is denoted $n^{-1/2}$ there.
We assume that the rescaled initial condition  $h_0^{(\nn)}(x):= \log\left( \nn^{-\nn^{-1} x}z_0(\nn^{-1} x)\right)$ satisfies 
$\sup_{x\in [0,L]} \mathbb E\left[ e^{2h_0^{(\nn)}(x)}\right]<\infty$
uniformly in $\nn$,  and that $h_0^{(\nn)}$  weakly  converges as $\nn\to 0$  to some continuous process $h_0$ in the space  $C([0,L],\mathbb R)$. Then we expect the following. 
\begin{conjecture} [Space-time process convergence to open KPZ]
For all $t>0$, $u,v\in \mathbb R$, as $\nn\to 0$,
\begin{equation}
h^{(\nn)}(t,x) \Rightarrow h(t,x)
    \label{eq:conjectured convergence}
\end{equation}
in the space $C([0,L],\mathbb R)$, where $h(t,x)$ is the solution to \eqref{eq:KPZ} with initial condition $h_0$. 
\label{conj:convergence}
\end{conjecture}

The proof of this conjecture should follow from the general framework of \cite{alberts2014intermediate} in full-space and its extensions \cite{wu2018intermediate,parekh2019positive,barraquand2022stationary} in half-space (see the discussion in Section \ref{subsec: stationary measure open KPZ intro} in the introduction). The general approach is to rewrite the partition function in terms of a discrete chaos series and then show convergence to the continuum chaos series that defines the solution to \eqref{eq:SHE}. The discrete chaos series involves discrete heat kernels for random walks subject to boundary conditions on the strip. That convergence requires more than just the (fairly classical) point-wise convergence of kernels to their continuum Brownian limits. Rather, it requires temporal and spatial regularity estimates uniform in $N$. Such sharp estimates should be accessible via discrete time modifications of the continuous time random walk estimates in \cite{corwin2016open,parekh2017kpz}. Since these techniques are rather technically involved and orthogonal to the integrable focus of this paper, we will leave the proof of Conjecture \ref{conj:convergence} to subsequent work.

 We will see in Proposition \ref{prop:convergencestationarymeasures} that the stationary measure $\PP_{\stat\lgg}^{\alpha, u,v}$ of the log-gamma free energy on a strip defined in terms of reweighted log-gamma random walks (Definition \ref{def:rescaled random walks}) converges under intermediate disorder scaling to its continuous analogue defined in terms of reweighted Brownian motions (Definition \ref{def:KPZstat}). We first record an alternative expression of $\PP_{\stat\lgg}^{\alpha, u,v}$ that will be useful in the proof of  Proposition \ref{prop:convergencestationarymeasures}.

\begin{lemma}\label{lem:alternative reweighted random walk}
    When $\alpha-v>0$, we have:
    \begin{equation} \PP_{\stat\lgg}^{\alpha, u,v}(\bL)  =\frac{\Gamma(\alpha-v)^N\Gamma(\alpha+v)^N}{\mathcal Z_{\lgg}^{\alpha, u,v}\Gamma(\alpha)^{2N}} \lb\sum_{j=1}^Ne^{L_2{(j)}-L_1{(j-1)}}\rb^{-(u+v)}\PP_{\lgrw}^{\alpha+v, \alpha-v}(\bL).
\label{eq:Pstatloggammabis}
\end{equation}
\end{lemma}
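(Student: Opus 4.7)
The plan is to derive the identity by a direct Girsanov-type manipulation that absorbs the boundary exponential factor $e^{-v(L_1(N)-L_2(N))}$ into a drift shift of the two log-gamma random walks. This is a discrete analogue of the Cameron–Martin computation already used to pass between \eqref{eq:PstatKPZ} and \eqref{eq:PstatKPZbis} for the Brownian reference measure.

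First, I would start from the original definition \eqref{eq:Pstatloggamma} of $\PP_{\stat\lgg}^{\alpha,u,v}$ and write
\[
L_1(N) = \sum_{j=1}^N \bigl(L_1(j)-L_1(j-1)\bigr), \qquad L_2(N) = \sum_{j=1}^N \bigl(L_2(j)-L_2(j-1)\bigr),
\]
so that the exponential factor factorizes as a product over increments:
\[
e^{-v(L_1(N)-L_2(N))} = \prod_{j=1}^N e^{-v(L_1(j)-L_1(j-1))}\, e^{\,v(L_2(j)-L_2(j-1))}.
\]

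Next, using the explicit density $\frac{1}{\Gamma(\theta)}e^{-\theta y - e^{-y}}$ of a $\log\Gamma^{-1}(\theta)$ increment, I would combine each such factor with the corresponding increment density under $\PP_{\lgrw}^{\alpha,\alpha}$. Concretely, for the first walk
\[
e^{-v(L_1(j)-L_1(j-1))}\cdot \tfrac{1}{\Gamma(\alpha)}e^{-\alpha(L_1(j)-L_1(j-1)) - e^{-(L_1(j)-L_1(j-1))}} = \tfrac{\Gamma(\alpha+v)}{\Gamma(\alpha)}\cdot \tfrac{1}{\Gamma(\alpha+v)}e^{-(\alpha+v)(L_1(j)-L_1(j-1)) - e^{-(L_1(j)-L_1(j-1))}},
\]
and similarly each factor $e^{v(L_2(j)-L_2(j-1))}$ turns the $j$-th increment density of the second walk from $\log\Gamma^{-1}(\alpha)$ to $\log\Gamma^{-1}(\alpha-v)$ at the cost of a multiplicative constant $\Gamma(\alpha-v)/\Gamma(\alpha)$. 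Taking the product over $1\leq j\leq N$ yields exactly $\PP_{\lgrw}^{\alpha+v,\alpha-v}(\bL)$ times the prefactor $\Gamma(\alpha+v)^N\Gamma(\alpha-v)^N/\Gamma(\alpha)^{2N}$, which is the claim.

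There is no real obstacle in this proof: the only subtle point is that the rewriting requires the shifted parameter $\alpha-v$ to be strictly positive so that the $\log\Gamma^{-1}(\alpha-v)$ density (and in particular $\Gamma(\alpha-v)$) is well defined, which is precisely the hypothesis $\alpha-v>0$; the complementary hypothesis $\alpha+v>0$ is already guaranteed by the standing condition \eqref{eq:condition for inhomogeneous log gamma} (in the homogeneous case, taking the label $\alpha$ for every edge of $\hP_h$).
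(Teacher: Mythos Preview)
Your proof is correct and follows exactly the same approach as the paper: reduce the claim to the identity $e^{-v(L_1(N)-L_2(N))}\PP_{\lgrw}^{\alpha,\alpha}(\bL)=\frac{\Gamma(\alpha-v)^N\Gamma(\alpha+v)^N}{\Gamma(\alpha)^{2N}}\PP_{\lgrw}^{\alpha+v,\alpha-v}(\bL)$ and verify it by absorbing the exponential factor increment-by-increment into the log-gamma densities. The paper states this in one line; you simply make the computation explicit.
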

\begin{proof}
By comparing \eqref{eq:Pstatloggamma}   with \eqref{eq:Pstatloggammabis}, one only need to prove that
$$e^{-v(L_1(N)-L_2(N))}\PP_{\lgrw}^{\alpha,\alpha}\lb\bL\rb=\frac{\Gamma(\alpha-v)^N\Gamma(\alpha+v)^N}{\Gamma(\alpha)^{2N}}\PP_{\lgrw}^{\alpha+v,\alpha-v}\lb\bL\rb,$$
which follows immediately from the definition \eqref{eq:weight LG edges} of the log-gamma random walk densities.
\end{proof} 

\begin{remark} 
When $\alpha-v>0$ and $\alpha-u>0$, we may write: 
\begin{equation*} \PP_{\stat\lgg}^{\alpha, u,v}(\bL)  = \frac{\Gamma(\alpha-v)^N\Gamma(\alpha-u)^N}{\mathcal Z_{\lgg}^{\alpha, u,v}\Gamma(\alpha)^{2N}} \lb\sum_{j=1}^Ne^{L_2{(j)}+ L_1{(N)}-L_1{(j-1)}}\rb^{-(u+v)}\PP_{\lgrw}^{\alpha-u, \alpha-v}(\bL).
\label{eq:Pstatloggammater}
\end{equation*} 
which implies that 
\begin{equation}\label{eq:normalizationloggamma} 
\mathcal Z_{\lgg}^{\alpha, u,v} = \frac{\Gamma(\alpha-v)^N\Gamma(\alpha-u)^N}{\Gamma(\alpha)^{2N} }\mathbb E_{\lgrw}^{\alpha-u, \alpha-v}\left[ \left( \sum_{j=1}^N \prod_{i=1}^j w_{2,i}\prod_{i=j}^N w_{1,i}\right)^{-u-v}\right],
\end{equation}
where the $w_{1,i}$ are independent $\Ga^{-1}(\alpha-u)$ random variables and the  $w_{2,i}$ are independent $\Ga^{-1}(\alpha-v)$ random variables. Thus, the sum in \eqref{eq:normalizationloggamma} can be seen as the partition function of a (full-space) log-gamma polymer model on a subset of the $\mathbb Z^2$ lattice of size $N\times 2$.  
\end{remark}

\begin{proposition}[Stationary measure convergence to open KPZ]
For $i\in \lbrace 1,2\rbrace$ defined
\begin{equation} 
B_i^{(\nn)}(x) = -\nn^{-1} x\log(\nn)+ L_i{(\nn^{-1} x)}.
\label{eq:rescaledrandomwalks}
\end{equation}
and let $\alpha$ and $N$ be scaled as in \eqref{eq:scalings}. For any $u,v\in \R$, the law of $(B_1^{(\nn)}, B_2^{(\nn)})$, when $(\bL_1,\bL_2)$ is distributed as $\PP_{\stat\lgg}^{\alpha, u,v}(\bL_1,\bL_2)$ (Definition \ref{def:rescaled random walks}), converges as $\nn\to 0$ to the law of $(B_1,B_2)$ under $\PP_{\KPZstat}^{u,v}$ (Definition \ref{def:KPZstat}). 
\label{prop:convergencestationarymeasures}
\end{proposition}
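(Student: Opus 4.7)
The plan is to rewrite both measures in the form most convenient for passing to the limit. Using Lemma \ref{lem:alternative reweighted random walk} and the Cameron-Martin identity \eqref{eq:PstatKPZbis}, the boundary reweightings $e^{-v(L_1(N)-L_2(N))}$ and $e^{-v(B_1(L)-B_2(L))}$ are absorbed into shifts of drift of the reference laws, so that $\PP_{\stat\lgg}^{\alpha,u,v}$ becomes a reweighting of $\PP_{\lgrw}^{\alpha+v,\alpha-v}$ (times explicit Gamma-function normalizations) by the bulk factor $F(\bL):=\bigl(\sum_{j=1}^N e^{L_2(j)-L_1(j-1)}\bigr)^{-(u+v)}$, and $\PP_{\KPZstat}^{u,v}$ becomes a reweighting of $\PP_{\BM}^{-v,v}$ by $G(B):=\bigl(\int_0^L e^{-(B_1(s)-B_2(s))}\d s\bigr)^{-(u+v)}$.

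With this setup, the proof decomposes into three convergences. First, a Donsker invariance principle: using the Stirling-type expansions $\psi(1/2+\nn^{-1}+c)=-\log\nn+c\nn+O(\nn^2)$ and $\psi'(1/2+\nn^{-1}+c)=\nn+O(\nn^2)$, one computes that the increments of $L_i$ under $\PP_{\lgrw}^{\alpha+v,\alpha-v}$ have means $\log\nn\mp v\nn+O(\nn^2)$ and variances $\nn+O(\nn^2)$, so the rescaled processes $B_i^{(\nn)}$ from \eqref{eq:rescaledrandomwalks} converge weakly in $C([0,L],\RR)^2$ to the law $\PP_{\BM}^{-v,v}$. Second, a Riemann-sum step: the change of variables $L_i(j)=B_i^{(\nn)}(\nn j)+j\log\nn$ yields $e^{L_2(j)-L_1(j-1)}=\nn\cdot e^{B_2^{(\nn)}(\nn j)-B_1^{(\nn)}(\nn(j-1))}$, so $F(\bL)=\bigl(\nn\sum_j e^{B_2^{(\nn)}(\nn j)-B_1^{(\nn)}(\nn(j-1))}\bigr)^{-(u+v)}$; jointly with $B^{(\nn)}\Rightarrow B$, the continuous mapping theorem gives $F(\bL)\Rightarrow G(B)$. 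Third, from the expansion $\log[\Gamma(\alpha+v)\Gamma(\alpha-v)/\Gamma(\alpha)^2]=v^2\psi'(\alpha)+O(\nn^2)$ together with $N\psi'(\alpha)\to L$, one gets $\Gamma(\alpha-v)^N\Gamma(\alpha+v)^N/\Gamma(\alpha)^{2N}\to e^{v^2 L}$, which matches the Cameron-Martin prefactor in \eqref{eq:PstatKPZbis}.

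Given these three convergences, the proof reduces to showing $\mathbb E_{\PP_{\lgrw}^{\alpha+v,\alpha-v}}[F(\bL)]\to \mathbb E_{\PP_{\BM}^{-v,v}}[G(B)]$; combined with the joint weak convergence of $(B^{(\nn)}, F(\bL))$ to $(B, G(B))$, this yields weak convergence of the reweighted measures via a standard argument (e.g.\ Skorohod representation and bounded convergence). The main obstacle is therefore uniform integrability of $F(\bL)$ in $\nn$. I would split into the two sign cases of $u+v$. When $u+v>0$, the AM-GM inequality $\sum_j e^{L_2(j)-L_1(j-1)}\geq N\exp\bigl(\tfrac{1}{N}\sum_j(L_2(j)-L_1(j-1))\bigr)$, after cancelling the $\log\nn$ centering, yields an estimate of the form $F(\bL)\leq L^{-(u+v)}\exp\bigl(-\tfrac{u+v}{L}\int_0^L(B_2^{(\nn)}-B_1^{(\nn)})\d s+o(1)\bigr)$, whose exponential moments are uniformly controlled since $\int_0^L B_i^{(\nn)}$ is a linear functional of the walk with uniformly bounded variance. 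When $u+v<0$, one applies the convex Jensen bound $(\sum a_j)^k\leq N^{k-1}\sum a_j^k$ with $k=|u+v|(1+\delta)\geq 1$, and uses the explicit log-gamma moment generating function $\mathbb E[e^{sY}]=\Gamma(\theta-s)/\Gamma(\theta)$ for $Y\sim \log\Ga^{-1}(\theta)$. A direct computation of the resulting geometric sum in $j$, combined with the Stirling asymptotics above, shows that the $\nn^k$ growth from $\mathbb E[e^{k(L_2(j)-L_1(j-1))}]$ exactly cancels the $N^{k-1}=(\nn^{-1}L)^{k-1}$ factor, yielding an $O(1)$ bound uniform in $\nn$. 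This closes the uniform integrability argument and completes the proof.
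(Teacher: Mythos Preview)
Your proposal is correct and follows essentially the same route as the paper: rewrite via Lemma~\ref{lem:alternative reweighted random walk}, apply Donsker for the drifted log-gamma walks, pass the Riemann-sum functional to the limit by continuous mapping, and upgrade weak convergence to convergence of the reweighted measures via uniform integrability of the weight. Your tracking of the prefactor $\Gamma(\alpha-v)^N\Gamma(\alpha+v)^N/\Gamma(\alpha)^{2N}\to e^{v^2L}$ is in fact more careful than the paper, which absorbs this into the normalization without comment. For the $u+v<0$ case your Jensen bound and explicit Gamma-moment computation coincide with the paper's.

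The only substantive difference is the $u+v>0$ uniform integrability step: the paper uses Jensen for the convex function $x\mapsto x^{-k}$ to bound $(\sum a_j)^{-k}\le N^{-k-1}\sum a_j^{-k}$ and then computes individual moments $\mathbb E[a_j^{-k}]$ via Gamma ratios, whereas you use AM--GM to reduce to an exponential moment of a linear functional of the walk. Both approaches work, but your justification ``uniformly controlled since $\int_0^L B_i^{(\nn)}$ is a linear functional with uniformly bounded variance'' is not quite enough as stated: bounded variance alone does not give uniform exponential moments. You need to actually bound the moment generating function, e.g.\ using $\log\mathbb E[e^{s(Y-\log\nn)}]=s\!\cdot\!(\pm v)\nn+\tfrac{s^2}{2}\nn+O(s^3\nn^2)$ uniformly for bounded $s$, and then sum over the increments with coefficients of order $1$. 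This computation is straightforward and yields an $O(1)$ bound, so the gap is easily closed; the paper's Jensen approach avoids it entirely by reducing to polynomial moments of single terms.
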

\begin{proof}
    For a random variable  $w\sim  \Ga^{-1}(\alpha+v)$, if $\alpha=\frac{1}{2}+ \nn^{-1}$, we have the asymptotics 
\begin{equation}
\mathbb E\left[ \log(w) \right] = -\Psi(\alpha+v) = \log(\nn) - \nn v  + O\left(\nn^2\right), \;\; \mathrm{Var}\left[ \log(w) \right]= \Psi_1(\alpha+v) =  \nn - v \nn^2+ O\left(\nn^3\right),
    \label{ea:loggammafirstmoments}
\end{equation}
where $\Psi(z)=\partial_z \log \Gamma(z)$ and  $\Psi_1(z)=\partial_z\Psi(z)$ are the digamma and trigamma functions.
We will use the alternative expression of $\PP_{\stat\lgg}^{\alpha, u,v}$ given by Lemma \ref{lem:alternative reweighted random walk}. 
Recall the scalings \eqref{eq:scalings} and  \eqref{eq:rescaledrandomwalks}. Donsker's theorem implies that the law of $(B_1^{(\nn)},B_2^{(\nn)})$, when $(\bL_1,\bL_2)$ is distributed as  $\PP_{\lgrw}^{\alpha+v, \alpha-v}(\bL)$ (i.e., log-gamma random walks), converges as $\nn\to 0$ to the law of $(B_1,B_2)$ under $\PP_{\BM}^{-v,v}$ (i.e., two independent Brownian motions on $[0,L]$ with drifts $-v$ and $v$ respectively). This implies the weak convergence 
\begin{equation}
\left( \nn \sum_{j=1}^{\nn^{-1} L} e^{B_2^{(\nn)}(\nn j)-B_1^{(\nn)}(\nn(j-1))} \right)^{-u-v} \xRightarrow[\nn\to 0]{}
 \left( \int_0^L \d s e^{-(B_1(s)-B_2(s))}\right)^{-u-v}.
    \label{eq:weakconvergence}
\end{equation}
Recall that the normalization constant $\mathcal Z^{\alpha, u,v}$  in \eqref{eq:Pstatloggammabis} can be written as 
\begin{equation}
    \frac{\mathcal Z^{\alpha, u,v} \Gamma(\alpha+u)^N}{\Gamma(\alpha-v)^N}= \mathbb E_{\lgrw}^{\alpha+v, \alpha-v}\left[ \left( \nn \sum_{j=1}^{\nn^{-1} L} e^{B_2^{(\nn)}(\nn j)-B_1^{(\nn)}(\nn(j-1))} \right)^{-u-v}\right]. 
    \label{eq:normalizationdiscrete}
\end{equation}
We claim that the family of random variables in the left-hand-side of \eqref{eq:weakconvergence} is uniformly integrable, so that it converges in expectation, and we deduce that $\mathcal Z^{\alpha, u,v}$ converges  as $\nn\to 0$ to 
$$  \mathcal Z_{\KPZstat}^{u,v}= \mathbb E_{\BM}^{-v,v}\left[  \left( \int_0^L \d s e^{-(B_1(s)-B_2(s))}\right)^{-u-v} \right],$$
where $\mathbb E_{\BM}^{-v,v}$ is the expectation associated to the probability measure $\PP_{\BM}^{-v,v}$ from Definition \ref{def:KPZstat}. 
More generally, we obtain that for any bounded and continuous function $F: C([0,L], \mathbb R)^2\to \mathbb R$, 
\begin{multline}\mathbb E_{\rm LGRW}^{\alpha+v, \alpha-v}\left[ F\left(B_1^{(\nn)}, B_2^{(\nn)}\right) \left( \nn \sum_{j=1}^{\nn^{-1} L} e^{B_2^{(\nn)}(\nn j)-B_1^{(\nn)}(\nn (j-1))} \right)^{-u-v}\right] \\ 
\xrightarrow[\nn\to 0]{}\mathbb E_{\BM}^{-v,v}\left[ F\left(B_1, B_2 \right)  \left( \int_0^L \d s e^{-(B_1(s)-B_2(s))}\right)^{-u-v} \right].
\label{eq:convergenceinitialcondition}\end{multline} 
Combining \eqref{eq:normalizationdiscrete} and \eqref{eq:convergenceinitialcondition} implies that  $\PP_{\stat\lgg}^{\alpha, u,v}$ weakly converges to $\PP_{\KPZstat}^{u,v}$. 

Therefore, to conclude the proof, it remains to justify the uniform integrability claimed above. When $u+v<0$, it suffices to show that for some $k> \max\lbrace - (u+v),1 \rbrace$, the expectation 
$$C_{\alpha, u,v}:= \mathbb E_{\lgrw}^{\alpha+v, \alpha-v}\left[ \left( \sum_{j=1}^{\nn^{-1} L}  e^{L_2{(j)}-L_1{(j-1)}} \right)^k\right]$$
is uniformly bounded as $\nn\to 0$. By the convexity of $x\mapsto x^k$ for $k>1$, Jensen's inequality implies that 
\begin{align*}
    C_{\alpha, u,v} \leq \mathbb E_{\rm LGRW}^{\alpha+v, \alpha-v} \left[\left(\tfrac{L}{\nn}\right)^{k-1}  \sum_{j=1}^{\nn^{-1} L} \left(e^{L_2{(j)}-L_1{(j-1)}}\right)^k \right]= \left(\tfrac{L}{\nn}\right)^{k-1} \sum_{j=1}^{\nn^{-1} L} \left( \frac{\Gamma(\alpha+v+k)}{\Gamma(\alpha+v)} \right)^{j-1}\!\!\!\left( \frac{\Gamma(\alpha-v-k)}{\Gamma(\alpha-v)} \right)^{j}.
\end{align*}
The equality above used the expression in terms of gamma function ratios for moments of gamma and inverse gamma random variables, along with the fact that $e^{L_2(j)}$ is a product of $j$ independent inverse Gamma variables with parameter $\alpha-v$, while $e^{-L_1(j-1)}$ is a product of $j-1$ independent Gamma variables with parameter $\alpha+v$.
It is now straight-forward (writing ratios of gamma functions as rational functions) to verify that each summand is bounded above by $C'_{L,v,k}\nn^k$ for some constant $C'_{L,v,k}>0$, so that $C_{\alpha, u,v}$ is uniformly bounded (and tending to zero) as $\nn\to 0$. 
When $u+v>0$, we may use the convexity of the function $x\mapsto x^{-k}$ and the argument is very similar. This concludes the proof. 
\end{proof}
\subsection{Proof of Theorem \ref{thm:open KPZ stationary measure} modulo Conjecture \ref{conj:convergence} }
\label{subsec:Proof of open KPZ stationary measure}
Let  $z(n,m)$ be the log-gamma polymer partition function defined in \eqref{eq:defloggammapartitionfunction} on a strip of size $N=\nn^{-1}L$, with initial condition  $z_0=e^{h_0^{(\nn)}}$ on the horizontal path $\mathcal P_h$. Assume that $h_0^{(\nn)}$ follows the same law as $\mathbf L_1$ under the measure $\PP_{\stat\lgg}^{\alpha, u,v}$  from Definition \ref{def:rescaled random walks}, where we recall $\alpha=\frac{1}{2}+\nn^{-1}$. 
Along the path $\mathcal P_h$ shifted by $(\nn^{-2} t/2, \nn^{-2} t/2)$, we know from Theorem \ref{thm:stationary measure log-gamma intro} that the law of $ \left(z(\nn^{-2} t/2+i, \nn^{-2} t/2)/z(\nn^{-2} t/2, \nn^{-2} t/2)\right)_{0\leq i\leq N}$ is the same as $\left(z_0(i)\right)_{0\leq i\leq N}$. Using the weak convergences from Proposition \ref{prop:convergencestationarymeasures} and Conjecure \ref{conj:convergence}, this implies that if $h_0$ is distributed as $B_1$ under $\PP_{\KPZstat}^{u,v}$, then  for any $t>0$, the law of $h(t,x)-h(t,0)$ is also that of $B_1$ under $\PP_{\KPZstat}^{u,v}$. This concludes the proof.

\bibliographystyle{goodbibtexstyle.bst}
\bibliography{stationaryGibbs.bib}

\end{document}